\newtheorem{thm}{Theorem}[section]
\newtheorem{theorem}[thm]{Theorem}
\newtheorem{corollary}[thm]{Corollary}
\newtheorem{proposition}[thm]{Proposition}
\newtheorem{lemma}[thm]{Lemma}
\newtheorem*{remark}{Remark}
\theoremstyle{definition}
\newtheorem{definition}[thm]{Definition}
\def\m{{\mathbf{m}}}
\def\R{{\mathbb{R}}}
\def\Z{{\mathbb{Z}}}
\newcommand{\QQ}{\mathsf{Q}}
\newcommand{\RR}{\mathcal{R}}
\newcommand{\B}{\mathscr{B}}
\newcommand{\M}{\mathfrak{M}}
\newcommand{\C}{\mathbb{C}}
\newcommand{\D}{\mathcal{D}}
\newcommand{\DD}{\mathscr{D}}
\newcommand{\T}{\mathbb{T}}
\newcommand{\LL}{\mathcal{L}}
\newcommand{\I}{\mathcal{I}}
\newcommand{\F}{\mathcal{F}}
\newcommand{\K}{\mathcal{K}}
\newcommand{\OO}{\mathcal{O}}
\newcommand{\PP}{\mathsf{P}}
\newcommand{\PPP}{\mathscr{P}}
\newcommand{\s}{\textbf{s}}
\newcommand{\cc}{\textbf{c}}
\newcommand{\pp}{\mathfrak{p}}
	\newcommand{\norm}[1]{\left\|#1\right\|}
				\address{\parbox{\linewidth}{Department of Mathematics, University of Maryland, College Park, MD 20742, USA\\
				\\
			\emph{Current address:} Faculty of Mathematics and Computer Science, Nicolaus Copernicus University, Chopina 12/18, 87-100 Toru\' n, Poland}}			
		\email{mkim16@mat.umk.pl}
\date{\today}
\title[Limit theorems for higher rank actions]{Limit theorems for higher rank actions on Heisenberg nilmanifolds}
\author[Minsung Kim]{Minsung Kim*}
\subjclass[2010]{37A17, 37A20, 37A44,  60F05} 
\keywords{Higher rank abelian actions, Limit theorems, Heisenberg groups.}
\begin{document}
\begin{abstract} 
 The main result of this paper is a construction of finitely additive measures for higher rank abelian actions on Heisenberg nilmanifolds. Under a full measure set of Diophantine conditions for the generators of the action, we construct \emph{Bufetov functionals} on rectangles on $(2g+1)$-dimensional Heisenberg manifolds. We prove that deviation of the ergodic integral of higher rank actions is described by the asymptotic of Bufetov functionals for a sufficiently smooth function. As a corollary, the distribution of normalized ergodic integrals which have variance 1, converges along certain subsequences to a non-degenerate compactly supported measure on the real line.
\end{abstract}

\maketitle
%\setcounter{tocdepth}{1}
% Hide subsection in table of contents.

\tableofcontents

%\newpage
\section{Introduction}
\subsection{Introduction}
The asymptotic behavior and limiting distribution of ergodic averages of translation flows were studied by A. Bufetov in his series of works \cite{bufetov2014finitely, bufetov2010holder, B14}. He constructed finitely-additive H\"older measures and cocycles over translation flows that are known as \emph{Bufetov functionals}. Such functionals are constructed to derive the deviation of ergodic integrals of translation flows by providing a new proof of the celebrated work of G. Forni \cite{forni2002deviation}. Bufetov observed a relation between his finitely-additive H\"older measures and Forni's invariant distributions which already appeared in the work of solving cohomological equations. Following these observations, a bijection between the space of such functionals and the space of invariant distributions was constructed analogously for other parabolic flows and higher rank actions (e.g. \cite{bufetov2013, BS13, BF14, forni2020time,forni2020equidistribution}). %Bufetov also proved the probabilistic results for normalized ergodic integrals as a part of his main theorem.
It also turned out that his construction of such functionals is closely related to limit shapes of ergodic sums for interval exchange transformations \cite{MMY10,SUY20}.

In this paper, our main results are on effective equidistributions for higher rank abelian actions and limit theorems on Heisenberg nilmanifolds. We firstly introduce the construction of the Bufetov functional for higher rank abelian actions.  The main argument is based on the renormalization method and induction argument  inspired by the work of Cosentino and Flaminio \cite{CF15}. %from  . However, in our case, we extended their constructions to the rectangular domain and allowed general products of intervals. %imposed more flexibilities on each directions of shape.
Then we prove that the functional exists for a full measure set of frame $\alpha$ under a sufficient Diophantine condition for the recurrence (see \S \ref{sec;dio}).
Likewise, we  construct the bijections between functionals and invariant currents,  which appeared in \cite{CF15} to obtain a new deviation formula.
As a corollary, we prove the existence of the limit distributions of (normalized) ergodic integrals for higher rank actions. More specifically, as a random variable, the normalized ergodic integrals converge in distribution along certain subsequences to a non-degenerate, compactly supported measure on the real line. 

It is natural to consider possible applications to the limit theorems of theta sums. It is known that the structure of  Heisenberg flows (or quasi-abelian flows on higher steps, respectively), as a first return map on a transverse torus can be expressed as a skew-translation map. These relations provide the correspondence between the bound of ergodic integrals of some nilflows and  exponential sums (so called \emph{Weyl sums}). In several contexts, such bound of Weyl sums and effective equidistribution of nilflows were discovered and compared independently \cite{FJK77,FF06, T15, F16}. Furthermore, their limit theorems were studied in a similar context by several authors as well \cite{griffin2014limit, cellarosi2016quadratic, forni2020time, CGO22}. 
%, its limit theorem of theta sums was studied by J. Griffin J. Marklof and Cellarosi-Marklof \cite{griffin2014limit, cellarosi2016quadratic, CGO22}. It is recently generalized by Forni and Kanigowski \cite{forni2020time}.
As a minor application on multi-parameters, we obtain limit theorems for theta series on Siegel half spaces, which generalize previous results \cite{GG04,M99} (see  \cite{tolimieri1978heisenberg,mumford2007tata1, mumford2007tata} for a general introduction). 

In the later sections, an $L^2$-lower bound of the Bufetov functional on a transverse torus and its analyticity on the higher dimensional rectangular domain are obtained. These parts indicate that our functionals do not vanish on a certain transverse torus and
they extend to the complex domain in the weighted space (of fast-decaying coefficient of functionals). From these results, polynomial type lower bound  for the analytic function are obtained on certain sub-level sets.
 %Such polynomial estimates on the measure are derived by real analyticity of a functional along the leaves of a foliation transverse to the actions based on results of \cite{bru}.
 This tool was originally devised by Forni and Kanigowski  to study the bound of correlations for time-changes of Heisenberg nilflows \cite{forni2020time}. Unfortunately, it turned out that we can not obtain similar results for higher rank abelian actions because the time-changes of higher rank actions on Heisenberg nilmanifolds are all \emph{trivial}. This follows from the triviality of the first cohomology group (see \cite[Theorem 3.16]{CF15}). Therefore the time-changes of higher rank actions are conjugated to the linear action and never mixing.

Regarding mixing properties for time-changes of nilflows, they were firstly studied for Heisenberg nilflows \cite{AFU11}. Then, the method was extended to non-trivial time-changes for any uniquely ergodic nilflows on general nilmanifolds \cite{Rav18, avila2021mixing}. On time-changes of Heisenberg flows, multiple mixing and disjointness \cite{forni2020multiple} were also obtained by verifying a certain type of divergence of orbits, so called Ratner's property.  In $\Z^k$-actions, mixing of shapes for automorphisms on nilmanifolds was proved \cite{gorodnik2014exponential, gorodnik2015mixing}.

 Comprehensive studies of temporal limit theorems for horocycle flows were carried out in the work of D. Dolgopyat and O. Sarig \cite{DS17}. Recent work of Ravotti \cite{Rav21} reproved spatial  and temporal limit theorems for horocycle flows  independently, by following Ratner's argument, not relying on the methods for invariant distributions (or currents). %His work did not rely on the methods from invariant distributions of solving the cohomological equations. However,
It is still unknown if temporal (or weaker) limit theorems for nilflows can be proved, and it will be a possible subject of further works.

\subsection{Definitions and statement of results} We review the definitions about Heisenberg manifold and its moduli space.

\subsubsection{Heisenberg manifold}
Let $g\geq 1$ and $\mathsf{H}^g$ be the standard $2g+1$ dimensional Heisenberg group and set $\mathsf{\Gamma}:= \Z^g \times \Z^g \times \frac{1}{2} \Z$ a discrete and co-compact subgroup of $\mathsf{H}^g$. We shall call it standard lattice of $\mathsf{H}^g$ and the quotient $M := \mathsf{H}^g /\mathsf{\Gamma} $ will be called \emph{Heisenberg manifold}. Lie algebra $\mathfrak{h}^g = Lie(\mathsf{H}^g)$ is equipped with a basis $ (X_1, \cdots, X_g, Y_1, \cdots, Y_g, Z)$ satisfying canonical commutation relations
\begin{equation}\label{commute}
[X_i,Y_j] = \delta_{ij}Z, \quad \text{ where } \delta_{ij} = 1 \text{ if } i=j, \text{ and } \delta_{ij} = 0 \text{ otherwise.}
\end{equation}

For $1 \leq d \leq g$, let $\mathsf{P} := \mathsf{P}^d$ be a subgroup of $\mathsf{H}^g$ where its Lie algebra $\mathfrak{p}:= Lie(\mathsf{P}^d)$ is generated by $\{X_1, \cdots, X_d\}$. For any $\alpha \in Sp_{2g}(\R)$, set $(X_i^\alpha, Y_i^\alpha, Z) = \alpha^{-1}(X_i,Y_i,Z)$ for $1 \leq i \leq d$. We define a parametrization of the subgroup $\PP^{d,\alpha} = \alpha^{-1}(\PP^d)$ according to
$$\PP_x^{d,\alpha} := \exp(x_1X_1^\alpha+\cdots+x_dX_d^\alpha), \ \ x = (x_1, \cdots, x_d) \in \R^d.$$

By central extension of $\R^{2g}$ by $\R$, we have an exact sequence
$$0 \rightarrow Z(\mathsf{H}^g) \rightarrow \mathsf{H}^g \rightarrow \R^{2g} \rightarrow 0.$$
The natural projection  $pr: M \rightarrow \mathsf{H}^g /(\mathsf{\Gamma}Z(\mathsf{H}^g)) $ maps $M$ onto a $2g$-dimensional torus $\T^{2g}:= \R^{2g}/\Z^{2g}$.

\subsubsection{Moduli space}
The group of automorphisms of $\mathsf{H}^g$ that are trivial on the center is denoted by $Aut_0(\mathsf{H}^g) = Sp_{2g}(\R) \ltimes \R^{2g}$. Since dynamical properties of actions are invariant under inner automorphism, we restrict our interest to $Sp_{2g}(\R)$. We regard $Sp_{2g}(\R)$ as the \emph{deformation space} of the standard Heisenberg manifold $M$ and  call the quotient $\M_g = Sp_{2g}(\R) / Sp_{2g}(\Z)$ the \emph{moduli space} of (standard) Heisenberg manifold.

\emph{Siegel modular variety} is a double coset space $\Sigma_g = K_g\backslash Sp_{2g}(\R)/ Sp_{2g}(\Z)$ where $K_g$ is a maximal compact subgroup $Sp_{2g}(\R) \cap SO_{2g}(\R)$ of $Sp_{2g}(\R)$.
For $\alpha \in Sp_{2g}(\R) $, we denote $[\alpha]:=\alpha Sp_{2g}(\Z) $ by its projection on the moduli space $\M_g$ and write $[[\alpha]]:= K_g\alpha Sp_{2g}(\Z)$ the projection of $\alpha$ to the Siegel modular variety $\Sigma_g$.

Double coset $K_g\backslash Sp_{2g}(\R)/ 1_{2g}$ is identifed to the Siegel upper half space $\mathfrak{H}_g:= \{Z \in Sym_g(\C) \mid \Im(Z) >0 \}$. \emph{Siegel upper half space} of genus $g$ is a complex manifold of symmetric complex $g\times g$ matrices $Z = X+iY$ with positive-definite symmetric imaginary part $\Im(Z) = Y$ and arbitrary (symmetric) real part $X$. Denote by $\Sigma_g \thickapprox Sp_{2g}(\Z) \backslash \mathfrak{H}_g$.

\subsubsection{Representation}\label{sec;repr}
We write the Hilbert sum decomposition of
\begin{equation}\label{rep}
L^2(M) =  \bigoplus_{n\in\Z}H_n
\end{equation}
into closed $\mathsf{H}^g$-invariant subspaces. Set $f = \sum_{n \in \Z}f_n \in L^2(M)$ and $f_n \in H_n$ where
$$H_n = \{f \in L^2(M) \mid \exp(tZ)f = \exp(2\pi \iota n K t)f\}$$
for some fixed $K>0$. The center $Z(\mathsf{H}^g)$ has spectrum $2\pi \Z \backslash \{0\}$ on $L^2_0(M)$, the space of zero mean. Then the space $L^2(M)$ splits as Hilbert sum of $\mathsf{H}^g$-module $H_n$, which is equivalent to irreducible representation $\pi$.

By Stone-Von Neumann theorem, the unitary irreducible representation of the Heisenberg group of non-zero central parameter $K>0$ is unitarily equivalent to the Schr\"odinger representation  $\pi$.
By differentiating the Schr\"odinger representation, we obtain a representation of the Lie algebra $\mathfrak{h}^g$ on Schwartz space $\mathscr{S}(\R^g) \subset L^2(\R^g)$ (as a $C^\infty$-vector).  This is called \emph{infinitesimal derived representation} $d\pi_*$ with parameter $n \in \Z$, and for each $k = 1, 2, \cdots, g$,
\begin{equation}\label{def;derived}
d\pi_*(X_k) = \frac{\partial{}}{\partial{x_k}}, \quad d\pi_*(Y_k) = 2\pi \iota n K x_k, \quad d\pi_*(Z) = 2\pi \iota n K
\end{equation}
acts on $L^2(\R^g) \simeq L^2(H_n)$.

Given a basis $(V_i)$ of the Lie algebra, we set a Laplacian $\Delta = -\sum_iV_i^2$ and define $L^2$-Sobolev norm $\norm{f}_s^2 =  \langle f, (1+\Delta)^sf \rangle$ where $\langle \cdot, \cdot \rangle$ is an ordinary inner product.
For $s >0$, the \emph{Sobolev space} $W^s(M)$ is defined by a completion of $C^\infty(M)$ equipped with the norm $\norm{\cdot}_s$. The Sobolev space $W^s(M) = \bigoplus_{n\in \Z} W^s(H_n)$ decomposes to closed $\mathsf{H}^g$-invariant subspaces $W^s(H_n) = W^s(M) \cap H_n$.

\subsubsection{Renormalization flow}\label{sec;renorm}
Denote diagonal matrix $\delta_i = diag(d_1, \cdots, d_g)$ with $d_i = 1$, $d_k = 0$ if $k \neq i$.  Then, for each $1 \leq i \leq g$, we denote $\hat\delta_i =  \begin{bmatrix} \delta_i & 0 \\  0 & -\delta_i\end{bmatrix} \in \mathfrak{sp}_{2g} = Lie(Sp_{2g}(\R))$.

Any such $\hat\delta_i$ generate one-parameter subgroups of automorphism (renormalization flow) $r_i^t := e^{t\hat\delta_i}$. We denote (rank $d$) renormalization actions $r_\textbf{t} := r_{i_1}^{t_1}\cdots r_{i_d}^{t_d}$ for $\textbf{t} = (t_1,\cdots, t_d)$ and $1 \leq i_1, \cdots, i_d \leq g$. We also write the corresponding automorphism
\begin{equation}\label{def;expmap}
\exp (\bm{t} \hat\delta(d)): (x,y,z) \mapsto (e^{\bm{t} \hat \delta}x,e^{\bm{-t} \hat \delta}y,z)
\end{equation}
of the Heisenberg group (see \S \ref{sec;renorma} for its use).

\subsection*{Main results}
One of the main objects in this paper is to construct finitely-additive measures defined on the space of rectangles on the Heisenberg manifold $M$. We state our results with   an overview of Bufetov functional.

\begin{definition}\label{def;rect}
For $(m,\textbf{T}) \in M \times \R_+^d$, denote the \emph{standard rectangle} for action $\PP$,
\begin{equation}\label{eqn;rect}
\Gamma^X_\textbf{T}(m) = \{\PP^{d,\alpha}_{\bm{t}}(m) \mid \bm{t} \in U(\textbf{T}) = [0,\textbf{T}^{(1)}]\times \cdots \times [0,\textbf{T}^{(d)}]  \}.
\end{equation}
\end{definition}

Let $\QQ_y^{d,Y} := \exp(y_1Y_1+\cdots+y_dY_d), \ y = (y_1, \cdots, y_d) \in \R^d$ be an action generated by elements $Y_i$ of standard basis. Let $\phi^Z_z:= \exp(zZ), \ z \in \R$ be a flow generated by the central element $Z$.
\begin{definition} Let $\mathfrak{R}$ be the collection of the \emph{generalized rectangles} in $M$. For any $1 \leq j \leq d \leq g$ and $\bm{t} = (t_1, \cdots, t_d)$, we set
\begin{equation}\label{def;genrec}
\mathfrak{R}:= \bigcup_{1 \leq i \leq d}\bigcup_{(y,z) \in \R^j\times \R}\bigcup_{(m,\textbf{T}) \in M \times \R_+^d}\{(\phi^Z_{t_i z})\circ\QQ_y^{j,Y}\circ\PP^{d,\alpha}_{\bm{t}}(m) \mid  \bm{t} \in U(\textbf{T}) \}.
\end{equation}
\end{definition}

\begin{theorem}\label{hi}
For any irreducible representation $H$, there exists a finitely-additive measure $\hat\beta_H(\Gamma) \in \C$ defined on every standard $d$-rectangle $\Gamma$ such that the following holds:
\begin{enumerate}
\item $(\emph{Additive property})$ For any decomposition of disjoint rectangles $\Gamma = \bigcup_{i=1}^n \Gamma_i$ or whose intersections have zero measure,
$$\hat\beta_H(\alpha,\Gamma) = \sum_{i=1}^n\hat\beta_H(\alpha,\Gamma_i).$$
\item $(\emph{Scaling property})$ For $\bm{t} \in \R^d$,
$$\hat\beta_H(r_{\bm{t}}(\alpha),\Gamma) = e^{-(t_1 + \cdots+  t_d)/2}\hat\beta_H(\alpha,\Gamma).$$

\item $(\emph{Invariance property})$ For any action $\QQ_\tau^{j,Y}$ generated by $Y_i$'s for $\tau \in \R_+^j$ and  $1 \leq j \leq d$,
$$\hat\beta_H(\alpha,(\QQ^{j,Y}_\tau)_*\Gamma) = \hat\beta_H(\alpha,\Gamma).$$
%\item $(\emph{Vanishing property})$ Any volume of multi-dimensional rectangle contained in a weakly unstable manifold, is uniformly bounded. Then by scaling property,
%$$\hat\beta_H(\alpha,\Gamma) = 0$$

\item $(\emph{Bounded property})$ For any rectangle $\Gamma \in \mathfrak{R}$, there exists a constant $C(\Gamma) >0$ such that for $\hat X = \hat X_1\wedge \cdots \wedge \hat X_d$,
$$|\hat\beta_H(\alpha,\Gamma)| \leq C(\Gamma)(\int_{\Gamma}|\hat X| )^{d/2}.$$
\end{enumerate}
\end{theorem}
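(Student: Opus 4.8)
The plan is to build the finitely-additive measure $\hat\beta_H$ representation-by-representation, exploiting the Stone--von Neumann decomposition $L^2(M) = \bigoplus_n H_n$ and working inside a fixed irreducible $H = H_n$. In each such component the derived representation acts on $L^2(\R^g)$ via the Schrödinger model \eqref{def;derived}, so the abelian action $\PP^{d,\alpha}$ is generated by the commuting first-order operators $d\pi_*(X_k^\alpha) = \partial/\partial x_k$ (after the symplectic change of frame $\alpha$). I would first identify the relevant space of $\PP$-invariant distributions (equivalently, invariant currents in the language of \cite{CF15}): these are the $\mathfrak{p}^\alpha$-invariant functionals on the Schwartz space $\mathscr{S}(\R^g)$, and the functional $\hat\beta_H(\Gamma)$ should be defined by pairing the current supported on the rectangle $\Gamma$ against such an invariant object. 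Concretely, for a standard rectangle $\Gamma^X_\textbf{T}(m)$ one integrates a suitable invariant distribution over the box $U(\textbf{T})$ in the $x$-variables, which builds in the additive and invariance structure from the outset.

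The core of the argument is the renormalization-and-induction scheme of Cosentino--Flaminio. The renormalization flow $r_i^t = e^{t\hat\delta_i}$ from \S\ref{sec;renorm} acts on the generators so that $X_i^\alpha$ is dilated by $e^{t}$ while $Y_i^\alpha$ is contracted by $e^{-t}$; correspondingly, a rectangle of sidelength $\textbf{T}^{(i)}$ is rescaled, and the invariant-distribution normalization transforms homogeneously. First I would establish that a normalized, renormalization-covariant invariant functional exists on rectangles of bounded size; then I would use the flow $r_{\bm{t}}$ to transport this to rectangles of arbitrary size, defining
\begin{equation}\label{eqn;betadef}
\hat\beta_H(\alpha,\Gamma) = e^{(t_1+\cdots+t_d)/2}\,\hat\beta_H\bigl(r_{\bm{t}}(\alpha),\,(r_{\bm{t}})_*\Gamma\bigr),
\end{equation}
and checking that under the Diophantine recurrence condition of \S\ref{sec;dio} the limit along the renormalizing sequence is well defined and independent of the auxiliary scale. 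The exponent $1/2$ in the scaling property (2) is dictated by the interplay between the $d$-dimensional contraction of the box and the factor $e^{-(t_1+\cdots+t_d)/2}$ arising from the Jacobian of the central/transverse directions; verifying this exponent is the step that forces the precise form of property (2).

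The scaling property (2) is then immediate from \eqref{eqn;betadef}, and the additivity (1) follows because the defining pairing is linear in the current associated to $\Gamma$, so disjoint (or measure-zero-overlapping) decompositions split the current additively. The invariance property (3) under $\QQ^{j,Y}_\tau$ reduces to the statement that the chosen invariant distribution is annihilated by $d\pi_*(Y_i)$ in the appropriate quotient sense — this is where the higher-rank commutation relations \eqref{commute} enter, since $[X_i,Y_j]=\delta_{ij}Z$ acts as a scalar on $H_n$ and the central direction is factored out. For the bounded property (4), I would estimate the invariant functional in the Sobolev norm $\norm{\cdot}_s$ on $W^s(H_n)$ and compare it against the flux $\int_\Gamma |\hat X|$ of the $d$-form $\hat X = \hat X_1 \wedge \cdots \wedge \hat X_d$ through the rectangle; the power $d/2$ reflects an $L^2$-type (rather than $L^1$) control, matching the square-root growth typical of Bufetov functionals. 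The main obstacle I anticipate is the convergence and uniqueness of the renormalization limit in the higher-rank setting: unlike the single-flow case, one must control the joint action of several commuting renormalizations simultaneously, and the Diophantine condition must be strong enough to guarantee recurrence of $r_{\bm{t}}(\alpha)$ to a compact part of the moduli space $\M_g$ for the relevant multi-time directions — establishing that this holds on a full-measure set of frames $\alpha$ is the technical heart of the construction.
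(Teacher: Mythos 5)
Your high-level strategy is the same as the paper's: work in a fixed irreducible component $H$, extract from the rectangle's Birkhoff current its component along a distinguished invariant (basic) current, and define $\hat\beta_H$ as the renormalized limit $\lim e^{-(t_1+\cdots+t_d)/2}\B^{-s}_{H,r_{-\bm{t}}(\alpha)}(\Gamma)$ under a Diophantine recurrence condition. However, you explicitly defer the convergence of this limit as "the main obstacle," and that is not a peripheral detail — it is the content of the theorem. The paper's Lemma \ref{35} proves convergence by turning the renormalization covariance into a system of $d$ iterated first-order ODEs for the coefficient $\B^{-s}_{\alpha,\bm{t}}(\Gamma)$, whose inhomogeneous term is the remainder $\K_{\alpha,\bm{t},s}(\Gamma)$; the limit exists precisely because Theorem \ref{thm;3.3} and Lemma \ref{04} bound this remainder by $Hgt([[r_{-\bm{t}}(\alpha)]])^{1/4}$, which is integrable against $e^{-(\tau_1+\cdots+\tau_d)/2}$ exactly on the set $DC(L)$ of condition \eqref{cond;newdc}. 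Theorem \ref{thm;3.3} is itself a nontrivial induction on the rank $d$ (decomposing the current into boundary and remainder parts and renormalizing the $2d$ faces), and none of this machinery appears in your outline. Without it, neither the existence of $\hat\beta_H$ nor the quantitative bound needed for property (4) is established.

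Your proposed route to the invariance property (3) rests on a false premise. You claim invariance "reduces to the statement that the chosen invariant distribution is annihilated by $d\pi_*(Y_i)$ in the appropriate quotient sense," but the $\PP$-invariant distribution is not annihilated by $Y_i$: in the Schr\"odinger model $d\pi_*(Y_k)$ is multiplication by $2\pi\iota nKx_k$, and indeed the commutation relation \eqref{commute} together with $Z$ acting by a nonzero scalar on $H_n$ forbids simultaneous $X_i$- and $Y_i$-invariance. Moreover the pushforward $(\QQ^{j,Y}_\tau)_*\Gamma$ of a standard rectangle is a \emph{stretched} rectangle in the central direction (this is exactly the identity $\phi_{y_i}^{Y_i}(\Gamma_\textbf{T}^X(m)) = [\Gamma_\textbf{T}^X(\phi_{y_i}^{Y_i}(m))]_{i,t}^Z$ used in Lemma \ref{9.2}, where the cocycle picks up a nontrivial oscillatory factor rather than being invariant under translation of the base point). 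The paper's actual argument (Lemma \ref{lem;invar}) is geometric: it considers the $(d+1)$-dimensional current $D(\Gamma,\Gamma_\QQ)$ swept out by the $\QQ$-orbits joining $\Gamma$ to $\Gamma_\QQ$, observes that its boundary equals $\Gamma_\QQ-\Gamma$ up to $\QQ$-orbit faces, and shows that under $r_{-t}$ the volume of this current stays uniformly bounded because the $Y$-directions contract while the $X$-directions expand; hence the difference $\B^{-s}_{\alpha,t}(\Gamma_\QQ)-\B^{-s}_{\alpha,t}(\Gamma)$ remains bounded and is killed by the normalization $e^{-(t_1+\cdots+t_d)/2}$. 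You would need to replace your representation-theoretic claim with an argument of this kind.
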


\begin{corollary}\label{cor;extend} The functional $\hat\beta_H$ defined on standard rectangle $\Gamma^X_\textbf{T}$ extends to  the class $\mathfrak{R}$.
%All the properties of functionals $\hat\beta_H$ in Theorem \ref{hi} extends to the class of $\mathfrak{R}$.
\end{corollary}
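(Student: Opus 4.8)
The plan is to prove Corollary~\ref{cor;extend} by extending $\hat\beta_H$ from standard rectangles $\Gamma^X_\textbf{T}(m)$ to the larger class $\mathfrak{R}$ in two stages, corresponding to the two additional factors $\QQ_y^{j,Y}$ and $\phi^Z_{t_i z}$ appearing in the definition~\eqref{def;genrec}. Since a generalized rectangle in $\mathfrak{R}$ has the form $(\phi^Z_{t_i z})\circ\QQ_y^{j,Y}\circ\PP^{d,\alpha}_{\bm{t}}(m)$ with $\bm{t}\in U(\textbf{T})$, the natural strategy is to \emph{define} the functional on such a set by transporting the value on the underlying standard rectangle. First I would treat the $\QQ_y^{j,Y}$ direction: the invariance property (3) of Theorem~\ref{hi} already asserts $\hat\beta_H(\alpha,(\QQ^{j,Y}_\tau)_*\Gamma)=\hat\beta_H(\alpha,\Gamma)$, so I would set the value of $\hat\beta_H$ on $\QQ_y^{j,Y}\circ\PP^{d,\alpha}_{\bm{t}}(m)$ equal to its value on the corresponding standard rectangle $\Gamma^X_\textbf{T}(m')$ (where $m'=\QQ_y^{j,Y}(m)$ absorbs the base-point shift), and then verify this assignment is well-defined, i.e.\ independent of the way a given element of $\mathfrak{R}$ is presented.

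Next I would handle the central $\phi^Z_{t_i z}$ factor. Here the key observation is representation-theoretic: on each irreducible component $H=H_n$ the central element $Z$ acts as the scalar $\exp(2\pi\iota n K t)$ by~\eqref{def;derived}, so pushing a rectangle forward by the central flow $\phi^Z_{t_i z}$ multiplies the functional by a unimodular constant depending only on $n$, $z$, and the scaling $t_i$. I would therefore define $\hat\beta_H$ on the full generalized rectangle by the formula obtained from the standard-rectangle value times this explicit central phase factor. The finite additivity (property (1)) and the bound (property (4)) then transfer to $\mathfrak{R}$ essentially for free, because both the invariance under $\QQ_y^{j,Y}$ and the unimodular central scalar preserve the volume integral $\int_\Gamma |\hat X|$ that controls the bound, and additive decompositions of a generalized rectangle pull back to additive decompositions of the underlying standard one.

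The main technical point I expect to require care is \emph{well-definedness} of the extension: a single set in $\mathfrak{R}$ may admit several representations as $(\phi^Z_{t_i z})\circ\QQ_y^{j,Y}\circ\PP^{d,\alpha}_{\bm{t}}(m)$, owing to the group relations~\eqref{commute} in the Heisenberg algebra (in particular $[X_i,Y_j]=\delta_{ij}Z$ couples the $X$-, $Y$-, and $Z$-directions, so reordering the composition $\phi^Z\circ\QQ^Y\circ\PP^{X}$ produces compensating central shifts). I would resolve this by pinning down a normal form for elements of $\mathfrak{R}$ using the commutation relations, checking that the invariance property (3) and the central scaling cancel exactly the discrepancies introduced by changing the order of the factors, so that the assigned value is unambiguous. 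Once well-definedness is secured, the four properties of Theorem~\ref{hi} extend verbatim by the argument above, completing the proof that $\hat\beta_H$ is defined on all of $\mathfrak{R}$.
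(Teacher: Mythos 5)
There is a genuine gap in your handling of the central factor. In the definition \eqref{def;genrec} of $\mathfrak{R}$, the central time in $\phi^Z_{t_i z}$ is $t_i z$, where $t_i$ is the $i$-th coordinate of the parameter $\bm{t}$ that \emph{varies} over $U(\textbf{T})$. So the set $(\phi^Z_{t_i z})\circ\PP^{d,\alpha}_{\bm{t}}(m)$ is not a central translate of the standard rectangle but a \emph{stretched} (sheared) rectangle in the sense of Definition \ref{def;sstretch}. Your claim that the central flow contributes only a unimodular scalar $e^{2\pi\iota nKt}$ is the identity \eqref{orth}, which applies to a \emph{constant} central shift of the base point; it does not apply here. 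The correct formula is the integration-by-parts identity of Lemma \ref{parts},
\[
\hat\beta_H(\alpha,[\Gamma_{\bm{T}}]^Z_{i,t}) = e^{2\pi \iota t n KT^{(i)}}\hat\beta_H(\alpha,\Gamma_{\bm{T}}) - 2\pi \iota n K t \int_0^{T^{(i)}}e^{2\pi \iota n K t s_i}\hat\beta_H(\alpha,\Gamma_{T,i,s})\,ds_i,
\]
which contains a non-trivial integral of functionals over restricted rectangles in addition to the phase factor. Establishing this formula (and checking that the $\iota_Z\omega$ contribution vanishes under renormalization) is precisely what makes the extension to the stretched class legitimate; defining the value by transport with a phase alone would assign the wrong value and would not show the defining limit exists on the new class of currents.

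The rest of your outline matches the paper: the $\QQ^{j,Y}_y$ direction is handled by the invariance property (Lemma \ref{lem;invar}), and the ordering/well-definedness issue you raise is dispatched in the paper simply by observing that $Z$ is central, so $\phi^Z$ commutes with both $\PP$ and $\QQ$ and a fixed central translate of a rectangle is the rectangle based at the translated point; no normal-form analysis is needed beyond that. To repair your argument, replace the "unimodular phase" step by an appeal to Lemma \ref{parts} (or reprove that identity), and then combine it with the $\QQ$-invariance and the centrality of $Z$ exactly as you propose.
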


\begin{remark}
By the additive property of functionals on rectangles, it is plausible to extend the shape of the domain to a general boundary by approximaion. However, this method may cause limitations on estimate by having a weaker bound for ergodic integrals. It may be also interesting to compare the methods of Shah \cite{S09a,S09b} and to obtain effective equidistribution results for smooth (or Lipschitz) boundary in our settings.
\end{remark}

Let us consider arbitrary two $d$-standard rectangles $U({\textbf{T}_1})$ and $U({\textbf{T}_2})$. For convenience, these rectangles are translated to intersect at only one vertex as in the figure.
Without loss of generality, assume that $d$ distinct faces of $U({\textbf{T}_1})$ emanate from the origin and $U({\textbf{T}_1}) \cap U({\textbf{T}_2}) = \{(\textbf{T}^{(l)}_1)_{1\leq l \leq d}\}$.
Denote by $\mathbf{P}(\textbf{T}_1)$ a collection of $2^d$ vertices $v = (v^{(1)},v^{(2)}, \cdots, v^{(d)})\in \R_+^d$ of $U(\textbf{T}_1)$.

Then we define a vector $\textbf{T}_{v} = (\textbf{T}^{(l)}_v) \in \R_+^d$ associated with $v$ given by
$$
\textbf{T}^{(l)}_v :=
\begin{cases}
\textbf{T}_{2}^{(l)} & \text{if} \quad v^{(l)} = \textbf{T}_{1}^{(l)}\\
\textbf{T}_{1}^{(l)} & \text{if} \quad  v^{(l)} = 0.
%$$\textbf{T}_1 = (T_1^{(1)},T_1^{(2)}, \cdots, T_1^{(d)})$$
\end{cases}
$$

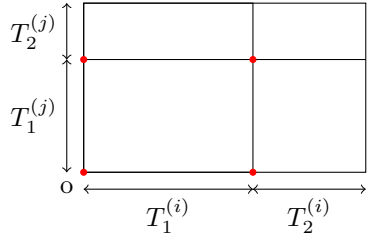
\begin{figure}\label{figg;fig2}\centering
\begin{tikzpicture}[scale=0.75]
\draw [draw=black] (5,3) rectangle (0,0);
\draw [draw=black] (3,3) rectangle (0,0);
\draw[draw=black] (0,2) -- (5,2);
   \draw[<->](0,-0.3)--(3,-0.3);
   \draw node[below] at (1.5,-0.3) {$T_{1}^{(i)}$};
%      \draw[<->](3,-0.3)--(5,-0.3);
   \draw node[below] at (4,-0.3) {};
      \draw[<->](-0.3,0)--(-0.3,2);
         \draw node[left] at (-0.3,1) {$T_{1}^{(j)}$};
%            \draw[<->](-0.3,2)--(-0.3,3);
         \draw node[left] at (-0.3,2.5) {$T^{(j)}_2$};
            \draw[<->](-0.3,2)--(-0.3,3);
                     \draw node[below] at (4,-0.3) {$T^{(i)}_2$};
               \draw[<->](3,-0.3)--(5,-0.3);
%\filldraw [fill=RedOrange, draw=black] (11.1,4.5) rectangle (0.3,0.3);
    \filldraw [red] (0,0) circle (1.5pt);
        \filldraw [red] (3,2) circle (1.5pt);
                \filldraw [red] (0,2) circle (1.5pt);
                        \filldraw [red] (3,0) circle (1.5pt);
                           \draw node[left][below] at (-0.3,0) {o};
\end{tikzpicture}
    \caption{Illustration of the rectangles $U({\textbf{T}_1})$ and $U({\textbf{T}_2})$ on $i,j$-th coordinate.}

\end{figure}

\begin{corollary}\label{hi2}  Let us denote
\begin{equation}\label{def;cocycle}
\beta_H(\alpha,m,\textbf{T}) := \hat\beta_H(\alpha,\Gamma^X_{\textbf{T}}(m)).
\end{equation}
The function $\beta_H$  satisfies the following properties:
\begin{enumerate}
\item \emph{(Cocycle property)} For all $(m,\textbf{T}_1,\textbf{T}_2) \in M \times \R_+^d \times \R_+^d$,
%$$\beta_H(\alpha,m,\textbf{T}_1 + \textbf{T}_2) = \beta_H(\alpha,m,\textbf{T}_1) + \beta_H(\alpha,\PP_{\textbf{T}_1}^{d,\alpha}(m),\textbf{T}_2).$$
$$\beta_H(\alpha,m,\textbf{T}_1 + \textbf{T}_2) = \sum_{v \in \mathbf{P}(\textbf{T}_1)} \beta_H(\alpha,\PP_{v}^{d,\alpha}(m),\textbf{T}_v).$$
\item \emph{(Scaling property)} For all $m \in M$ and $\bm{t} = (t_1, \cdots, t_d)$,
$$\beta_H(r_{\bm{t}}(\alpha),m,\bm{T}) = e^{-(t_1 + \cdots + t_d)/2}\beta_H(\alpha,m,\bm{T}).$$

\item $(\emph{Bounded property})$ Let us denote the largest length of edges of $U(\bm{T})$ by  $T_{max} = \displaystyle\max_{1 \leq i \leq d}\textbf{T}^{(i)} $. Then there exists a constant $C >0$ such that
$$\beta_H(\alpha,m,\bm{T}) \leq C T_{max}^{d/2}.$$
%$$\textbf{T}_{1,j}^+ = (T_1^{(1)},T_1^{(2)}, \cdots, T_1^{(d)})$$

\item \emph{(Orthogonality)} For all $\bm{T} \in \R^d$, $\beta_H(\alpha,\cdot,\bm{T})$ belongs to an irreducible component, i.e,
$$\beta_H(\alpha,\cdot,\bm{T}) \in H \subset L^2(M). $$
\end{enumerate}
\end{corollary}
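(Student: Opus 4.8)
The plan is to obtain all four properties directly from Theorem \ref{hi} via the defining identity \eqref{def;cocycle}, $\beta_H(\alpha,m,\textbf{T})=\hat\beta_H(\alpha,\Gamma^X_{\textbf{T}}(m))$. Three of the four are restrictions of the axioms of $\hat\beta_H$ to standard rectangles: the cocycle property should come from the additive property, the scaling property from the scaling property, and the bounded property from the fourth (bounded) property; only the orthogonality requires information about the construction of $\hat\beta_H$ beyond the listed axioms.

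For the cocycle property I would first note that the commutation relations \eqref{commute} give $[X_i,X_j]=0$, so $X_1^\alpha,\dots,X_d^\alpha$ span an abelian subalgebra and $\PP^{d,\alpha}\cong\R^d$ acts with $\PP^{d,\alpha}_{\bm s}\circ\PP^{d,\alpha}_{v}=\PP^{d,\alpha}_{\bm s+v}$. Hence the box $U(\textbf{T}_1+\textbf{T}_2)=\prod_{l=1}^d[0,\textbf{T}_1^{(l)}+\textbf{T}_2^{(l)}]$ splits into $2^d$ sub-boxes by cutting each edge at $\textbf{T}_1^{(l)}$, and a choice of half in every coordinate is indexed exactly by a vertex $v\in\mathbf{P}(\textbf{T}_1)$. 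The sub-box selected by $v$ starts at $\PP^{d,\alpha}_v(m)$ and has side lengths $\textbf{T}_v$ (as defined before the statement), so by abelianness it equals $\Gamma^X_{\textbf{T}_v}(\PP^{d,\alpha}_v(m))$. These $2^d$ rectangles tile $\Gamma^X_{\textbf{T}_1+\textbf{T}_2}(m)$ with overlaps only on faces of zero measure, and the additive property of Theorem \ref{hi} gives the claimed identity.

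The scaling property follows from the scaling property of Theorem \ref{hi} applied to $\Gamma=\Gamma^X_{\textbf{T}}(m)$, once one records that the renormalization automorphism $\exp(\bm t\,\hat\delta(d))$ of \eqref{def;expmap} carries the $\alpha$-frame to the $r_{\bm t}(\alpha)$-frame covariantly, so that the two labeled standard rectangles correspond and only the factor $e^{-(t_1+\cdots+t_d)/2}$ survives. For the bounded property I would evaluate the fourth property of Theorem \ref{hi} on a standard rectangle: since $\hat X=\hat X_1\wedge\cdots\wedge\hat X_d$ is the volume form of the $X^\alpha$-frame, one has $\int_{\Gamma^X_{\textbf{T}}(m)}|\hat X|=\prod_{i=1}^d\textbf{T}^{(i)}$, and bounding each edge by $T_{max}$ yields $\beta_H(\alpha,m,\bm T)\le C\,T_{max}^{d/2}$.

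The property I expect to be the real point is the orthogonality, which is not a formal consequence of the four axioms: it asserts that for fixed $\alpha$ and $\bm T$ the function $m\mapsto\beta_H(\alpha,m,\bm T)$ lies in the single summand $H=H_n$ of \eqref{rep}. Here I would return to the construction, in which $\hat\beta_H$ is built from the $H$-invariant distribution (current) attached to $H_n$ and the dependence on the base point enters only through the $\mathsf{H}^g$-translation, which preserves each $H_n$. Concretely it suffices to verify that $m\mapsto\beta_H(\alpha,m,\bm T)$ carries the central character of $H_n$, i.e. $\beta_H(\alpha,\phi^Z_s(m),\bm T)=e^{2\pi\iota nKs}\,\beta_H(\alpha,m,\bm T)$; this is exactly the behaviour of the functional under the central flow recorded in the extension to $\mathfrak{R}$ (Corollary \ref{cor;extend}) together with the action \eqref{def;derived} of $Z$. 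Establishing this central equivariance from the construction, rather than from the abstract properties, is the main obstacle.
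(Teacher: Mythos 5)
Your treatment of properties (1)--(3) matches the paper's: the paper simply declares them immediate consequences of Theorem \ref{hi}, and your explicit tiling of $U(\textbf{T}_1+\textbf{T}_2)$ into the $2^d$ sub-boxes indexed by $\mathbf{P}(\textbf{T}_1)$, using commutativity of the $X_i^\alpha$ and the additive property, is exactly the intended argument. One caveat on the bounded property: plugging $\int_{\Gamma^X_{\bm T}}|\hat X|=\prod_i \textbf{T}^{(i)}$ into the exponent $d/2$ of Theorem \ref{hi}(4) gives $T_{max}^{d^2/2}$, not the claimed $T_{max}^{d/2}$; to reach the stated exponent you should instead apply the scaling property anisotropically with $t_i=\log \textbf{T}^{(i)}$, which extracts the factor $e^{(t_1+\cdots+t_d)/2}=vol(U(\bm T))^{1/2}\le T_{max}^{d/2}$ against the functional of a rectangle with all edges at most $1$.

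The genuine gap is in the orthogonality. You correctly single it out as the non-formal part, but your proposed route --- establish the central equivariance $\beta_H(\alpha,\phi^Z_s(m),\bm T)=e^{2\pi\iota nKs}\,\beta_H(\alpha,m,\bm T)$ and conclude membership in $H$ --- is both unfinished and, even if finished, insufficient. It is unfinished because you explicitly defer the equivariance as ``the main obstacle''; note moreover that the paper derives the identity \eqref{orth} \emph{from} the orthogonality property, so invoking that identity here would be circular. It is insufficient because the central character only determines the parameter $n$, hence would only place $\beta_H(\alpha,\cdot,\bm T)$ in $H_n=\bigoplus_{i=1}^{\mu(n)}H_{i,n}$ (cf. \eqref{def:irred}), not in a single irreducible component $H$ when the multiplicity $\mu(n)>1$. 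The paper's actual argument is different: using the scaling relation \eqref{eqn;excursion tT} and Lemma \ref{35}, it writes $D^H_\alpha(f)\,\beta_H(\alpha,\cdot,\bm t)$ as the limit, as $|U(\bm T)|\to\infty$, of the normalized Birkhoff integrals $vol(U(\bm T))^{-1/2}\left\langle \PPP^{d,r_{-\log\bm t}(\alpha)}_{U(\bm{tT})}m,\omega_{f_H}\right\rangle$ with $f_H\in H$. Since $H$ is a closed $\mathsf{H}^g$-invariant subspace, each such Birkhoff integral lies in $H$ as a function of $m$, and the error bound of Lemma \ref{35} makes the normalized convergence uniform, so the limit remains in $H$. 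That limiting argument is the step your proposal is missing.
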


By representation theory introduced in \S \ref{sec;repr}, for any $f = \sum_Hf_H \in W^s(M)$, define a \emph{Bufetov cocycle} associated to $f$  as a sum %$\omega$
\begin{equation}\label{BBB}
\beta^f(\alpha,m,\bm{T})  = \sum_H D_\alpha^H(f)\beta_{H}(\alpha,m,\bm{T}).
\end{equation}

\emph{Notation.} Let $\hat X_i^\alpha$ be a  1-form  dual to the vector field $X_i^\alpha$, in the sense that
$\hat X_i^\alpha(X_i^\alpha) = 1$ and $\hat X_i^\alpha(X_j^\alpha) = 0$ if $i\neq j$ on $M$.
Given a Jordan region $U$ and a point $m \in M$, set  $\PPP^{d,\alpha}_Um$ the \emph{Birkhoff integrals} (currents) associated to the action $\mathsf{P}^{d,\alpha}_x$  given by
$$\left\langle \PPP^{d,\alpha}_Um , \omega_f \right\rangle := \int_U f(\mathsf{P}^{d,\alpha}_xm)dx_1\cdots dx_d, \quad \text{for } x \in \R^d,$$
for any degree $d$ $\mathfrak{p}$-form $\omega_f = f\hat X_1^\alpha \wedge \cdots  \wedge \hat X_d^\alpha$ with a smooth function $f \in C^\infty_0(M)$ with zero averages.

\begin{theorem}\label{6.2type}
For all $s >  d(d+11)/4+g+1$, there exists a constant $C_s >0$ such that
for almost all frequency $\alpha$, for all $f \in W^s(M)$ and for all $(m,\bm{T}) \in M \times \R^d$, we have
\begin{equation}\label{eqn:asymp}
|\left\langle \PPP^{d,\alpha}_{U(\bm{T})}m , \omega_f \right\rangle - \beta^f(\alpha,m,\bm{T}) | \leq C_s\norm{\omega_f}_{s}
\end{equation}
for $U(\bm{T}) = [0,T^{(1)}]\times \cdots \times [0,T^{(d)}]$ and $\omega_f = f\omega^{d,\alpha} \in  \Lambda^d\mathfrak{p} \otimes W^s(M)$.
\end{theorem}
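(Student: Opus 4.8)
The plan is to prove \eqref{eqn:asymp} by a renormalization argument in the spirit of Bufetov and of Cosentino--Flaminio \cite{CF15}, reducing the global comparison to a uniform estimate at unit scale and then summing the contributions along the renormalization orbit. By the orthogonality property of Corollary \ref{hi2}(4) and the definition \eqref{BBB} of $\beta^f = \sum_H D_\alpha^H(f)\beta_H$, the problem splits over the irreducible components $H = H_n$, since $D_\alpha^{H'}(f_H)=0$ for $H'\neq H$; it suffices to bound the difference for each component $f_H$ and then resum against the Sobolev weights, the series converging precisely because $s$ is taken large. So I fix an irreducible component and write $D(\alpha, m, \bm{T})$ for the difference between the Birkhoff current $\left\langle \PPP^{d,\alpha}_{U(\bm{T})}m , \omega_{f_H} \right\rangle$ and $\beta^{f_H}(\alpha, m, \bm{T}) = D_\alpha^H(f)\beta_H(\alpha,m,\bm{T})$.

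First I would establish the base estimate: for $\beta$ in a fixed compact set $K\subset\M_g$ and for a rectangle $U(\bm{\tau})$ all of whose edges lie in $[1,e]$, one has $|D(\beta,p,\bm{\tau})|\leq C\norm{\omega_{f_H}}_s$ uniformly in $p\in M$. This is where the regularity threshold enters: the Birkhoff integral restricts $f$ to the $d$-dimensional orbit rectangle sitting inside the $(2g+1)$-dimensional manifold, so a Sobolev trace and elliptic estimate is needed, and the invariant distribution $D_\alpha^H$ is identified with the renormalization eigen-distribution capturing the leading term. The loss of roughly $g+1$ derivatives reflects the transverse directions and the central parameter, while the $d(d+11)/4$ loss accumulates from the $d$-fold iteration and the induction on the rank described below.

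Next I would exploit the multiplicative structure. Using the Diophantine hypothesis of \S\ref{sec;dio}, for almost every $\alpha$ the renormalization orbit $r_{\bm{t}}(\alpha)$ returns to $K$ along a sequence of times $\bm{t}_0<\bm{t}_1<\cdots<\bm{t}_N$ with controlled gaps. Choosing $\bm{t}_N$ so that the renormalized edges $e^{-t_N^{(i)}}T^{(i)}$ become of unit size, I decompose $U(\bm{T})$ into sub-rectangles adapted to these return times by repeated use of the cocycle property of Corollary \ref{hi2}(1). The scaling property of Corollary \ref{hi2}(2), matched against the $e^{-(t_1+\cdots+t_d)}$ volume scaling of the Birkhoff integral under \eqref{def;expmap}, shows that the principal part of each sub-rectangle integral is exactly the corresponding piece of the Bufetov functional. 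Thus $D(\alpha,m,\bm{T})$ is expressed as a weighted sum of unit-scale errors $D(r_{\bm{t}_k}\alpha, m_k, \bm{\tau}_k)$, the weights being the decaying factors $e^{-\frac{1}{2}\sum_{i=1}^{d}(t_N^{(i)}-t_k^{(i)})}$ coming from the half-power scaling of $\beta_H$. Each summand is bounded by the base estimate, and the geometric decay of the weights makes the total a convergent series dominated by $C_s\norm{\omega_{f_H}}_s$.

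The rank-$d$ case requires an induction on $d$: a $d$-rectangle integral is viewed as an iterated integral of $(d-1)$-rectangle integrals in the remaining directions, and the cocycle and scaling relations are applied one coordinate at a time, so that the single-direction argument above is invoked $d$ times. The main obstacle is precisely this bookkeeping at the bounded scale — establishing the uniform base estimate with an explicit, $d$-dependent Sobolev loss, and verifying that the invariant distributions $D_\alpha^H$ produced by renormalization capture the leading asymptotics in all $d$ directions simultaneously. Controlling the simultaneous recurrence of the multi-parameter orbit to $K$, and ensuring that the off-diagonal coboundary contributions decay at the predicted geometric rate uniformly over $K$ and across irreducible components, is the delicate part; once this is in place the summation and the resummation over $H$ are routine, and the constant $C_s$ absorbs the quadratic-in-$d$ regularity loss.
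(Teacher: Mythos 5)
Your overall scheme --- split over irreducible components, compare the Birkhoff current to the functional at unit scale, and propagate along the renormalization orbit --- is the right general philosophy, but the mechanism you propose for the propagation has a genuine gap in this multi-parameter setting, and it is not the mechanism the paper uses. You assume that for almost every $\alpha$ the orbit $r_{\bm t}(\alpha)$ returns to a fixed compact set $K\subset\M_g$ along times $\bm t_0<\cdots<\bm t_N$ ``with controlled gaps,'' and you then bound each sub-rectangle error by a base estimate that is uniform only over $K$. For a.e.\ $\alpha$ the $\R^d$-orbit makes cusp excursions of unbounded length, so the gaps are not controlled, and the errors accumulated during an excursion are not covered by the compact-set base estimate; for $d\ge 2$ there is the further unresolved problem (which you yourself flag as ``the delicate part'') of arranging simultaneous returns in all $d$ renormalization directions. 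Without quantifying the error during excursions in terms of how far into the cusp the orbit travels, the geometric series does not close. The full-measure condition the paper actually works with is not a recurrence-with-controlled-gaps condition but the integrability condition \eqref{cond;newdc} on $e^{-(t_1+\cdots+t_d)/2}\,\mathrm{Hgt}([[r_{-\bm t}(\alpha)]])^{1/4}$, which tolerates long excursions provided they are integrable against the exponential weight; this is exactly what the logarithm law (Proposition \ref{prop;jackma}) guarantees almost surely.

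Concretely, the paper replaces your discrete cocycle decomposition by a continuous one: the current $\PPP^{d,\alpha}_{U}m$ is split into boundary and remainder parts as in \eqref{eqn;decom}, the remainder at renormalization time $\bm t$ is bounded by $\mathrm{Hgt}([[r_{-\bm t}(\alpha)]])^{1/4}$ via Stokes' theorem and the inductive estimate of Theorem \ref{thm;3.3} and Lemma \ref{04}, and the coefficient $\B^{-s}_{H,r_{-\bm t}(\alpha)}(\Gamma)$ is shown to satisfy a first-order ODE in each parameter $t_i$ (Lemma \ref{35}), whose iterated solution yields simultaneously the existence of $\hat\beta_H$ and the error bound \eqref{asym} with constant $C(s,\Gamma)(1+L)$ for $[\alpha]\in DC(L)$. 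Theorem \ref{6.2type} is then immediate from pairing \eqref{asym} with $\omega_{f_H}$ and resumming over central characters with the extra $1/2$ derivative (Lemma \ref{6.1type} and Corollary \ref{cor;dev}); your resummation step is consistent with this. To repair your argument you would need to upgrade your base estimate to one valid on all of $\M_g$ with an explicit dependence on the height of the renormalized frame, at which point it essentially becomes the paper's Theorem \ref{thm;3.3}, and the ``controlled gaps'' hypothesis becomes unnecessary.
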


The family of random variable
$$E_{\bm{T_n}}(f): = \frac{1}{{vol(U(\bm{T_n}))}^{1/2}}\left\langle \PPP^{d,\alpha}_{U(\bm{T_n})}(m) , \omega_f \right\rangle$$
is defined where $U(\bm{T_n})$ is a sequence of rectangles. The point $m \in M$ is distributed accordingly to the probability measure $vol$ on $M$.  %W m \sim \mu.
Our goal is to understand the asymptotic behavior of the probability distributions of $E_{\bm{T_n}}(f)$ as $U(\bm{T_n}) \nearrow \R^d$ in a sense of Følner.

\begin{theorem}\label{limit}
Let $\{\bm{T_n}\}$ be any sequence such that
$$\lim_{n\rightarrow \infty} r_{\log \bm{T_n}} [\alpha] = \alpha_\infty \in \M_g.$$
For every closed form $\omega_f \in \Lambda^d\mathfrak{p} \otimes W^s(M)$ with $s> d(d+11)/4+g+1$, which is not a coboundary, the limit distribution of the family of random variables $E_{\bm{T_n}}(f)$ exists along a subsequence of $\{\bm{T_n}\}$.
In particular, for almost all $\alpha$, the limit distribution of $E_{\bm{T_n}}(f)$ has compact support.
\end{theorem}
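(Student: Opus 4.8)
The plan is to derive Theorem~\ref{limit} from the uniform approximation in Theorem~\ref{6.2type} together with the scaling and bounded properties of the cocycle recorded in Corollary~\ref{hi2}. First I would use Theorem~\ref{6.2type} to replace the ergodic current by the Bufetov cocycle. Since $s > d(d+11)/4 + g + 1$, for almost every $\alpha$, every $f \in W^s(M)$ and every $m$,
\[
\left| \left\langle \PPP^{d,\alpha}_{U(\bm{T_n})}m , \omega_f \right\rangle - \beta^f(\alpha,m,\bm{T_n}) \right| \le C_s \norm{\omega_f}_s ,
\]
so after dividing by $vol(U(\bm{T_n}))^{1/2}$ the normalized random variable $E_{\bm{T_n}}(f)$ differs from the normalized cocycle $\widetilde E_{\bm{T_n}}(f) := vol(U(\bm{T_n}))^{-1/2}\,\beta^f(\alpha,\cdot,\bm{T_n})$ by at most $C_s\norm{\omega_f}_s\, vol(U(\bm{T_n}))^{-1/2}$, uniformly in $m$. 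As $U(\bm{T_n}) \nearrow \R^d$ in the F{\o}lner sense we have $vol(U(\bm{T_n})) \to \infty$, so this difference tends to $0$ deterministically; hence $E_{\bm{T_n}}(f)$ and $\widetilde E_{\bm{T_n}}(f)$ have the same limit distribution whenever either exists, and it suffices to study the latter.

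Second, I would renormalize. Writing $\bm{t}_n = \log\bm{T_n}$ and $\alpha_n = r_{\bm{t}_n}(\alpha)$, the scaling property of Corollary~\ref{hi2}(2), combined with the action of the renormalization automorphism $\exp(\bm{t}_n\hat\delta(d))$ on the standard rectangle $\Gamma^X_{\bm{T_n}}(m)$, identifies the large rectangle at the frame $\alpha$ with the unit rectangle $\bm{1}=(1,\dots,1)$ at the frame $\alpha_n$; the scaling factor is exactly $e^{-(t_{n,1}+\cdots+t_{n,d})/2} = \prod_i (T_n^{(i)})^{-1/2} = vol(U(\bm{T_n}))^{-1/2}$, which cancels the normalization. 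Because the automorphism preserves the volume measure $vol$ on $M$, the distribution of $\widetilde E_{\bm{T_n}}(f)$ under $m \sim (M,vol)$ coincides with the distribution of $m \mapsto \beta^f(\alpha_n, m, \bm{1})$. This is where I expect the main difficulty: one must track carefully how the renormalization automorphism transports both the base point $m$ and the defining form $\omega_f$ (absorbing the change into the coefficients $D^H_\alpha(f)$ via the orthogonality in Corollary~\ref{hi2}(4)), and verify that the transported variable is genuinely a unit-rectangle cocycle at $\alpha_n$, so that the whole family is expressed through a single, scale-independent object.

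Finally, I would invoke compactness. The hypothesis $r_{\bm{t}_n}[\alpha] = [\alpha_n] \to \alpha_\infty$ in $\M_g$ shows $\{[\alpha_n]\}$ is precompact, hence stays in a compact part of the moduli space; on such a set the constant in the bounded property of Corollary~\ref{hi2}(3) is uniform, and evaluated at $\bm{1}$ (so $T_{max}=1$) it yields $|\beta^f(\alpha_n,m,\bm{1})| \le C$ for all $n$ and all $m$, with $C$ independent of $n$. Thus the variables $\beta^f(\alpha_n,\cdot,\bm{1})$ are uniformly bounded, their laws are supported in a fixed disk and so tight; by Prokhorov's theorem a subsequence converges weakly to a compactly supported limit measure on $\R$. (The passage to a subsequence is forced because, absent a continuity statement for $\alpha \mapsto \beta^f(\alpha,\cdot,\bm{1})$ and a canonical lift of $\alpha_\infty$ to $Sp_{2g}(\R)$, only tightness — not full convergence — is available.) This establishes existence of the limit distribution along a subsequence of $\{\bm{T_n}\}$ and its compact support, for almost every $\alpha$. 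The non-coboundary assumption enters only to guarantee non-degeneracy: were $\omega_f$ a coboundary the integrals would remain bounded and $\widetilde E_{\bm{T_n}}(f)$ would collapse to $\delta_0$, whereas the $L^2$-lower bound for the cocycle on a transverse torus keeps $\beta^f(\alpha_\infty,\cdot,\bm{1})$ non-trivial.
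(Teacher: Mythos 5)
Your first two steps are sound and consistent with the paper: Theorem \ref{6.2type} does reduce the problem to the normalized cocycle, and the renormalization is exactly the identity \eqref{eqn;excursion tT} with $\bm{t}=\bm{1}$, namely $\beta_H(\alpha,m,\bm{T})=vol(U(\bm{T}))^{1/2}\beta_H(r_{\log \bm{T}}(\alpha),m,\bm{1})$ (note this is a pointwise identity in $m$, so no measure-preservation argument is needed). The gap is in your last step. You assert that precompactness of $\{r_{\log \bm{T_n}}[\alpha]\}$ in $\M_g$ makes the constant in the bounded property of Corollary \ref{hi2}(3) uniform, hence $\sup_n\sup_m|\beta^f(\alpha_n,m,\bm{1})|<\infty$. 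But that constant is not a function of where $[\alpha_n]$ sits in moduli space: by Lemma \ref{35} and Lemma \ref{6.1type} it is controlled by the Diophantine constant $L_n$ for which $[\alpha_n]\in DC(L_n)$, and by \eqref{eqn;LT} one has $L_{\bm{T_n}}\leq L\,vol(U(\bm{T_n}))^{-1/2}+E_\M(\alpha,\bm{T_n})$, where the excursion $E_\M(\alpha,\bm{T_n})$ integrates the height function over the entire renormalization orbit segment up to time $\log\bm{T_n}$. For a generic Roth-type (but not bounded-type) $\alpha$ this excursion grows with $n$ (roughly like a power of $\log T_{max}$), and convergence of the endpoints $r_{\log\bm{T_n}}[\alpha]$ says nothing about the height excursions in between. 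So the uniform $L^\infty$ bound --- and with it your claim that the laws are supported in a fixed disk and the compact support of the limit --- does not follow. (Tightness itself can be rescued from the $L^2$ bounds of Corollary \ref{522} via Chebyshev, but that only yields a weak limit, not a compactly supported one.)

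The paper's argument is structured precisely to avoid this: Lemma \ref{7.1} constructs a \emph{continuous} modular function $\theta_H:Aut_0(\mathsf{H}^g)\to L^2(M)$ with $vol(U(\bm{T}))^{-1/2}\langle \PPP^{d,\alpha}_{U(\bm{T})}(\cdot),\omega_f\rangle\to \theta_H(r_{\log\bm{T}}(\alpha))D_\alpha^H(f)$ in $L^2(M)$, summed into $\theta^f$ in Lemma \ref{lll}, and Corollary \ref{4.4} identifies $\theta_H(\alpha)=\beta_H(\alpha,\cdot,1)$. Since $r_{\log\bm{T_n}}[\alpha]\to\alpha_\infty$ and $\theta^f$ is continuous, $E_{\bm{T_n}}(f)$ converges in $L^2$ --- hence in distribution --- along a subsequence to the distribution of the single function $\theta^f(\alpha_\infty)$; compact support is then read off from the boundedness of that one limit function when $\alpha_\infty\in DC$ (a full-measure condition), not from uniform bounds along the sequence. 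To repair your proof you would need either to restrict to bounded-type $\alpha$, where $E_\M(\alpha,\bm{T})$ is uniformly bounded and your Prokhorov argument goes through, or to identify the limit as the paper does.
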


We finish the section by giving some remarks on a new adaptation of transfer operator techniques from hyperbolic dynamics. %A recent improvement in this direction is an adaptation of transfer operator techniques from hyperbolic dynamics.
The method stems from the analysis of the transfer operator, firstly treated by P. Gieulietti and C.Liverani \cite{giulietti2019parabolic}. They set up a non-linear flow on the torus and proved asymptotics of ergodic averages with expansions of invariant distributions and eigenvalues of transfer operators called \emph{Ruelle resonances} (see also \cite{AB18, B19, faure2019ruelle, forni2020ruelle} for applications in parabolic flows).

A recent work of L. Simonelli and O. Butterley \cite{butterley2020parabolic} reproved some of the results of Flaminio and Forni \cite{FF06} by analytical methods of hyperbolic theory, not relying on representation theory.  Their work was restricted to a periodic type of flow, (the flow is only renormalized by partially hyperbolic diffeomorphism) but their methods showed indirect similarities with the work of Forni-Kanigowski \cite{forni2020time} and our current work.\footnote{For instance, it is plausible to view their construction of functionals obtained by spectral projection as a functional $\hat\beta_H$ on each sub-representation $H$ in our setting (see  \cite[\S 4]{butterley2020parabolic}).}
However, it is still not known if it is possible to extend their approach to full measure set of automorphism $\alpha$, requiring an existence of new renormalization cocycle, so called \emph{Transfer cocycle}. It is also not well-studied how to replace the use of Anisotropic norm, not relying on the previous result from Faure-Tsujii \cite{FT15}.\footnote{Furthermore, instead of relying on the methods of Sobolev constant techniques introduced in \cite{FF06}, we do not have proper tools on Anisotropic spaces yet.}
%\medskip

On higher step nilmanifolds, there does not exist a renormalization flow (moduli space $\M$ is trivial). It is not possible to construct the Bufetov functionals by the same strategies introduced in here, but other methods in handling non-renormalizable flows are possibly applied (cf. \cite{FF14,FFT15, kim2020effective}). \medskip

\emph{Outline of the paper.} In section \ref{sec;2}, we give basic definitions on higher rank actions, moduli spaces and Sobolev spaces. In section \ref{sec;3}, we state main theorems and prove constructions of Bufetov functionals with the properties. In section \ref{sec;4}, we prove asymptotic formula of ergodic integrals and limit theorems for normalized ergodic integrals.  In section \ref{sec;5},  we prove $L^2$-lower bound of the Bufetov functionals on transverse torus to the actions. In section \ref{sec;6}, analyticity of functional and extensions of domain are provided.
In section \ref{sec;7}, there exist measure estimates of functionals on the sets where values of functionals are small. This result only holds when frame $\alpha$ is of  bounded type.

\section{Analysis on Heisenberg manifolds}\label{sec;2}
This section reviews the definitions of Sobolev space, currents, representation, and renormalization flows on moduli space. 
%In this section, we review the definitions of Sobolev space, currents, representation and renormalization flows on moduli space.

\subsection{Sobolev space}
\subsubsection{Sobolev norm}
Given a basis $ (V_i)$ of the Lie algebra $\mathfrak{h}^g$, we write a new basis $((\alpha^{-1})_*V_i)$ for $\alpha \in Aut_0(\mathsf{H}^g)$. Similarly, denote by Laplacian $\Delta_\alpha = -\sum_i(\alpha^{-1})_*V_i^2$ with respect to the new basis. For any $s \in \R$ and any $f \in C^\infty(M)$, \emph{Sobolev norm} is defined by
$$\norm{f}_{\alpha,s} = \langle f, (1+\Delta_\alpha)^sf \rangle^{1/2}.$$
Let $W_\alpha^s(M)$ be a completion of $C^\infty(M)$ with the norm above. The dual space of $W_\alpha^s(M)$ is denoted by $W_\alpha^{-s}(M)$ and it is isomorphic to $W_\alpha^s(M)$. Extending it to the exterior algebra, define the Sobolev spaces of the form $\Lambda^d\mathfrak{p} \otimes W_\alpha^s(M)$ of cochains of degree $d$ and we use the same notations for the norm.

\subsubsection{Sobolev bundle}
The group $Sp_{2g}(\Z)$ acts on the right on the trivial bundles
$$Sp_{2g}(\R) \times W^s(M) \rightarrow Sp_{2g}(\R)$$
where
$$(\alpha,\varphi) \mapsto (\alpha,\varphi)\gamma = (\alpha \gamma,\gamma^*\varphi), \quad \gamma \in Sp_{2g}(\Z).$$

 We obtain the quotient flat bundle of Sobolev spaces over the moduli space:
$$(Sp_{2g}(\R) \times W^s(M)) / Sp_{2g}(\Z) \rightarrow \mathfrak{M}_g = Sp_{2g}(\R) / Sp_{2g}(\Z)$$
and the fiber over $[\alpha] \in \mathfrak{M}_g$ is locally identified with the space $W_\alpha^{s}(M)$.

By invariance of $Sp_{2g}(\Z)$ action, the class of $(\alpha,\varphi)$ is denoted by $[\alpha,\varphi]$ and $Sp_{2g}(\Z)$-invariant Sobolev norm is written by
$$  \norm{f}_{\alpha,s}:= \norm{[\alpha, f]}_s.$$

We denote the bundle of $\mathfrak{p}$-forms of degree $j$ of Sobolev order $s$ by $A^j(\mathfrak{p}, \M^s)$. The space of continuous linear functional on $A^j(\mathfrak{p},\M^s)$ will be called the \emph{space of currents} of dimension $j$ and denoted by $A_j(\mathfrak{p},\M^{-s})$.
There is  a flat bundle of (currents) distribution $A_j(\mathfrak{p}, \M^{-s})$ whose fiber over $[\alpha]$ is locally identified with the space $W_\alpha^{-s}(M)$. Likewise, the space of Sobolev currents $A_j(\mathfrak{p}, W^{-s}(M))$ of dimension $j$ with order $s$  is identified with $\Lambda^j\mathfrak{p} \otimes W^{-s}(M)$.
%Similarly,
We write the norm for form $\omega$ and currents $\DD$ by
$$  \norm{\omega}_{\alpha,s}:= \norm{[\alpha, \omega]}_s, \quad   \norm{\DD}_{\alpha,s}:= \norm{[\alpha, \DD]}_s.$$

\subsubsection{Best Sobolev constant}
The \emph{Sobolev embedding theorem} implies that for any $\alpha \in Sp_{2g}(\R)$ and $s > g+1/2$, there exists a constant $B_s(\alpha)>0$ such that
for any $f \in W^s_\alpha(M)$,
\[ \norm{f}_\infty \leq B_s(\alpha)\norm{f}_{\alpha,s}.
\]
The \emph{best Sobolev constant} is defined as the function on the $Sp_{2g}(\R)$ given by
\begin{equation}\label{def;sobolev}
B_s(\alpha) := \sup_{f \in W^s_\alpha(M) \backslash \{0\}} \frac{\norm{f}_\infty}{\norm{f}_{\alpha,s}}.
\end{equation}
%$$ \norm{f}_\infty \leq B_s(\alpha)\norm{f}_{s,\alpha}. $$
By Lemma 4.4 in \cite{CF15}, the {best Sobolev constant} $B_s$ is a $Sp_{2g}(\Z)$-modular function on $\mathfrak{H}_g$. Thus, we shall write $B_s([[\alpha]])$ or $B_s([\alpha])$ for $B_s(\alpha)$.
By the Sobolev embedding theorem, we have a bound for the Birkhoff integral current. %and the definition of the best Sobolev constant,
\begin{lemma}\label{lem;cf5.5}\cite[Lemma 5.5]{CF15}\label{3.1} For any Jordan region $U \subset \R^d$ with Lebesgue measure $|U|$, for any $s > g + 1/2$ and all $m \in M$,
$$\norm{[\alpha, \PPP_U^{d,\alpha}m]}_{-s} \leq B_s ([[\alpha]])|U|.$$
\end{lemma}

\subsection{Invariant currents}
\subsubsection{Identification}
 The \emph{boundary operators}
 $$\partial : A_j(\mathfrak{p},W^{-s}(M)) \rightarrow A_{j-1}(\mathfrak{p},W^{-s}(M))$$
  are adjoint of the differentials $d$ such that $\langle \partial \D, \omega \rangle  = \langle \D, d\omega \rangle$ for any  $\omega \in \Lambda^{j-1}\mathfrak{p} \otimes W^s(M)$. A current $\D$ is called \emph{closed} if $\partial \D = 0$.

  For $s>0$, we denote $Z_d(\mathfrak{p},W^{-s}(M))$ by the space of closed currents of dimension $d$ and Sobolev order $s$.
$I_d(\pp,W^{-s}(M))$ is denoted by the  $d$-dimensional space of $\mathsf{P}$-invariant currents of Sobolev order $s$. Now we review the relations between these two currents.

\begin{proposition}\cite[Proposition 3.13]{CF15} \label{prop;3.13}
For any $s > d/2 = \dim{\mathsf{P}}/2$, we have $I_d(\pp,\mathscr{S}(\R^g)) \subset W^{-d/2-\epsilon}(\R^g)$ for all $\epsilon>0$. Additionally,
\begin{itemize}
\item $I_d(\pp,\mathscr{S}(\R^g))$ is one dimensional space if $\dim \mathsf{P} = g$,
\item $I_d(\pp,\mathscr{S}(\R^g))$ is an infinite-dimensional space if $\dim \mathsf{P} < g$,
\item  $I_d(\pp,\mathscr{S}(\R^g)) = Z_d(\mathfrak{p},W^{-s}(M))$ for any $1 \leq d \leq g$.
\end{itemize}
\end{proposition}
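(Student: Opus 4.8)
The plan is to reduce everything to a single irreducible component and solve an explicit linear system. Since $L^2(M)=\bigoplus_n H_n$ and, by Stone--von Neumann, each $H_n$ is unitarily equivalent to the Schr\"odinger model on $L^2(\R^g)$ with smooth vectors $\mathscr{S}(\R^g)$, it is enough to describe $I_d(\pp,\mathscr{S}(\R^g))$ in one such component. A current of dimension $d$ lies in $\Lambda^d\pp\otimes\mathscr{S}'(\R^g)$; as $\dim\pp=d$ the factor $\Lambda^d\pp$ is one-dimensional, so the current is identified with a single tempered distribution $\D\in\mathscr{S}'(\R^g)$. First I would write $\mathsf{P}$-invariance infinitesimally: invariance under the flows of $X_1,\dots,X_d$ is the system $d\pi_*(X_k)\D=0$ for $1\le k\le d$, which by \eqref{def;derived} reads $\partial_{x_k}\D=0$. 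Solving it, $\D$ is independent of $x_1,\dots,x_d$, i.e. $\D=1\otimes T$ with $T\in\mathscr{S}'(\R^{g-d})$.

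This description yields the dimension count at once. When $d=g$ there are no transverse variables, $T$ is a scalar, and the invariant space is spanned by the single functional $f\mapsto\int_{\R^g}f$, hence one-dimensional. When $d<g$ the parameter $T$ ranges over the infinite-dimensional space $\mathscr{S}'(\R^{g-d})$, so $I_d(\pp,\mathscr{S}(\R^g))$ is infinite-dimensional.

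For the identity $I_d(\pp,\mathscr{S}(\R^g))=Z_d(\pp,W^{-s}(M))$ I would argue that in top degree closedness and invariance coincide. Since all brackets $[X_i,X_j]$ vanish, the Lie-algebra differential collapses to $d\omega=\sum_k \hat X_k\wedge(X_k\omega)$; dualizing through $\la\partial\D,\omega\ra=\la\D,d\omega\ra$ and testing against the $(d-1)$-forms $f\,\hat X_1\wedge\cdots\widehat{\hat X_k}\cdots\wedge\hat X_d$ shows that $\partial\D=0$ if and only if $X_k\D=0$ for every $k$, which is exactly the invariance condition found above. Thus closed $d$-currents and $\mathsf{P}$-invariant $d$-currents are the same objects.

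The principal obstacle is the sharp Sobolev bound $I_d\subset W^{-d/2-\ep}(\R^g)$. Here I would diagonalize the representation Laplacian $\Delta_n=\sum_k\big(-\partial_{x_k}^2+(2\pi nK)^2x_k^2\big)+(2\pi nK)^2$ in the Hermite eigenbasis $\{h_\alpha\}_{\alpha\in\N^g}$, whose eigenvalues grow like $|\alpha|$, so that $\norm{\D}_{-s}^2\asymp\sum_\alpha|\D(h_\alpha)|^2(1+|\alpha|)^{-s}$. The decisive input is the asymptotics of the one-dimensional integrals $\int_\R h_{2j}\,dx\asymp j^{-1/4}$ (odd indices vanish), which, via the Fourier self-duality of Hermite functions, reduce to the value $h_{2j}(0)$ and to Stirling's formula. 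These control the coefficients $\D(h_\alpha)$ produced by the constant factor in the $d$ flow directions, and a comparison of the resulting multi-index sum with $\int R^{d/2-1-s}\,dR$ shows convergence precisely for $s>d/2$. The delicate points are the sharp Hermite asymptotics and the uniform control of the $g$-fold lattice sum; the threshold $-d/2$ is dictated entirely by the $d$ directions along $\mathsf{P}$, each contributing $-1/2$.
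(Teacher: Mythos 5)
Your argument cannot be compared with a proof in the paper, because the paper gives none: the proposition is quoted verbatim from \cite{CF15}. Judged on its own terms, your structural steps are correct and are the natural ones. In a single Schr\"odinger component a $d$-dimensional current is one tempered distribution (since $\Lambda^d\pp$ is a line), invariance reads $\partial_{x_k}\D=0$ for $k\le d$, hence $\D=1\otimes T$ with $T\in\mathscr{S}'(\R^{g-d})$; this yields both dimension counts, and the top-degree computation $\partial\D=0\iff X_k\D=0$ for all $k$ (valid because $\pp$ is abelian, so the Chevalley--Eilenberg differential has no structure-constant term) correctly identifies closed with invariant currents. The Hermite inputs are also right: $|\int_\R h_{2j}|\asymp j^{-1/4}$ by Fourier self-duality and Stirling, and the shell count $\sum_{|\alpha'|\sim R}\prod_{k\le d}(1+\alpha_k)^{-1/2}\asymp R^{d/2-1}$ shows that the constant distribution in the $d$ flow variables has harmonic-oscillator Sobolev order exactly $d/2$.

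The gap is in the Sobolev containment when $d<g$. Your own description makes the transverse factor $T$ an \emph{arbitrary} tempered distribution on $\R^{g-d}$, whose Hermite coefficients $\langle T,h_{\alpha''}\rangle$ can grow polynomially of any degree (take $T=\delta_0^{(k)}$, for which $|\langle T,h_j\rangle|^2\asymp j^{k-1/2}$). Since the Sobolev norms here come from the full representation Laplacian, summing out the flow directions leaves $\norm{1\otimes T}_{-s}^2\asymp\sum_{\alpha''}|\langle T,h_{\alpha''}\rangle|^2(1+|\alpha''|)^{d/2-s}$, which diverges for $s$ near $d/2$ once $T$ has large order. Thus the two assertions you establish --- that $I_d$ is all of $\{1\otimes T\}$ and that $I_d\subset W^{-d/2-\epsilon}(\R^g)$ --- are mutually inconsistent as written; your estimate only controls the $d$ directions along $\mathsf{P}$ and tacitly assumes the transverse coefficients are bounded. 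The argument is complete only for $d=g$. To close the gap you must either restrict to invariant currents whose transverse part lies in $L^2$ (which is all this paper ever uses, since the basic currents $B_\alpha^H$ come from a single normalized invariant distribution per irreducible component), or interpret the order $-d/2-\epsilon$ as measured only in the $\pp$-directions, and correspondingly read the last bullet as the equality $I_d\cap W^{-s}=Z_d\cap W^{-s}$ rather than a literal identity between a space of tempered currents and a space of Sobolev currents.
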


\subsubsection{Basic currents}
 The current $B_\alpha$ is called \emph{basic} if for all $j \in \{i_1, \cdots,  i_d\}$,
$$\iota_{X_{j}}B_\alpha = L_{X_{j}}B_\alpha = 0.$$
For an irreducible representation $H$, there exists a unique \emph{basic current} $B_\alpha^H$ (of degree $2g+1-d$ and dimension $d$) associated to an invariant distribution $D_\alpha^H$. It is defined by $B_\alpha^H = D_\alpha^H \eta_X$ and
this formula implies that for every $d$-form $\xi$,
\begin{equation}\label{eqn;dual.basic}
B_\alpha^H(\xi) = D_\alpha^H \left(\frac{\eta_X \wedge \xi}{\omega_{vol}}\right)
\end{equation}
where $\eta_X := \iota_{X_{i_1}}\cdots\iota_{X_{i_d}} \omega_{vol}$ and $\omega_{vol}$ is an invariant volume form (with total unit volume).
\medskip

The basic current $B_\alpha^H$ belongs to the Sobolev space of currents and it is $\mathsf{P}$-invariant.
It follows that for all $s > d/ 2$, by Sobolev embedding theorem, $B_\alpha^H \in A_j(\mathfrak{p}, W_\alpha^{-s}(M))$ if and only if $D_\alpha^H \in W^{-s}_\alpha(M)$ for all $s > d/2$.

\begin{remark}The formula (\ref{eqn;dual.basic}) yields an isomorphism between the space of basic (closed) currents and invariant distributions (see also \cite[\S 6.1]{forni2002deviation} and \cite[\S 2.1]{BF14}). % It is not a duality!
 For any $d$-dimensional $\pp$-form $\omega_f = f \hat X_1 \wedge \cdots  \wedge  \hat X_d$ with $f \in C^\infty(M)$, we will identify  currents $\D$ with distribution $D$ by writing
$$ \langle D,f\rangle= \langle \D, \omega_f\rangle.$$
\end{remark}

\subsubsection{Renormalization}\label{sec;renorma}
Let $s > d/2$. Recall the definition of renormalization flow \eqref{def;expmap} in \S \ref{sec;renorm}. For $\omega \in \Lambda^d\mathfrak{p} \otimes W^s(\R^g)$, $\D \in Z_d(\mathfrak{p},W^{-s}(M))$, and  $t \in \R$
$$r_i^t[\alpha,\omega] = [r_i^t\alpha, \omega], \quad  r_i^t[\alpha, \D] = [r_i^t\alpha, \D].$$

By Proposition 5.2 in \cite{CF15}, the sub-bundle $Z_d(\mathfrak{p},W^{-s}(M))$ is invariant under the renormalization flows $r_i^t$. Furthermore, we have
\begin{equation}\label{eqn;interntwine}
\norm{r_1^{t_1}\dotsc r_d^{t_d}[\alpha, \D]}_{-s} = e^{-(t_1+\cdots + t_d)/2}\norm{[\alpha, \D]}_{-s}.
\end{equation}

%Then, for any invariant distribution $D_\alpha^H$,
%$$D^H_{r_{\bm{t}}(\alpha)} = e^{-(t_1+\cdots + t_d)/2}D_\alpha^H.$$
By reparametrization of $\R^d$-action $r_{\bm{t}} = r_1^{t_1}\dotsc r_d^{t_d}$,
\begin{equation}\label{eqn;reno1}
\PP_x^{d, (r_1^{t_1}\dotsc r_d^{t_d}\alpha)} = \PP_{(e^{-t_1}x_1,...,e^{-t_d}x_d)}^{d, \alpha}.
\end{equation}
Then, denoting $(e^{-t_1},...,e^{-t_d})U$ diagonal automorphisms of $\R^d$ applied to $U$, the Birkhoff integral current also satisfies the following identity
\begin{equation}\label{eqn;birkhoff}
\PPP_U^{d, (r_1^{t_1}...r_d^{t_d}\alpha)}m = e^{(t_1+\cdots + t_d)}\PPP_{(e^{-t_1},...,e^{-t_d})U}^{d, \alpha}m.
\end{equation}

Let $U_\textbf{t} : L^2(\R^d) \rightarrow  L^2(\R^d) $ be a unitary operator for $\textbf{t} = (t_1,\cdots, t_d)$,
\begin{equation}\label{eqn;interntwining}
U_\textbf{t}f(x) = e^{(t_1+\cdots + t_d)/2}f(e^{t_1}x_1, \cdots, e^{t_d}x_d)
\end{equation}
for $x \in \R^d$. This map intertwines $\alpha=  (X,Y,Z)$ and $r_\textbf{t}(\alpha) = (r_{\bm{t}}X, r_{-\bm{t}}Y,Z)$. I.e their derived representation \eqref{def;derived} intertwines by $U_\textbf{t}$. Here we note that the domain of operator $U_\textbf{t}$ trivially extends by adding non-scaled coordinates to $L^2(\R^g)$ if $d <g$.

\subsection{Diophantine condition }

\subsubsection{Height function}
\begin{definition}[Height function]
The \emph{height} of a point $Z \in \mathfrak{H}_g$ in Siegel upper half space is the positive number
$$hgt(Z):= \text{det} \Im(Z).$$
The \emph{maximal height function} $Hgt:\Sigma_g \rightarrow \R^+$ is the maximal height of a $Sp_{2g}(\Z)$ orbit of $Z$. That is, for the class of $[Z] \in \Sigma_g$,
$$Hgt([Z]) := \max_{\gamma \in Sp_{2g}(\Z)}hgt(\gamma(Z)). $$
\end{definition}

By Proposition 4.8 of \cite{CF15}, there exists a universal constant $C(s)>0$ such that the best Sobolev constant satisfies the estimate%$B_r[\alpha]$ satisfying
\begin{equation}\label{ineq;sobolev}
B_s([[\alpha]]) \leq C(s)\cdot (Hgt[[\alpha]])^{1/4}.
\end{equation}

We rephrase Lemma 4.9 in \cite{CF15} regarding the bound of renormalized height.
\begin{lemma}
For any $[\alpha] \in \M_g$ and any $\textbf{t} \in \R_+^d$,
\begin{equation}\label{201}
Hgt([[\exp (-\bm{t} \hat\delta(d))\alpha]]) \leq (\text{det}(e^{\bm{t} \hat \delta}))^2Hgt([[\alpha]]).
\end{equation}
\end{lemma}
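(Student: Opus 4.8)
The plan is to transport the statement to the Siegel upper half space $\mathfrak{H}_g$ and reduce it to a pointwise estimate on the automorphy factor. Under the identification $K_g\backslash Sp_{2g}(\R)\approx\mathfrak{H}_g$, I associate to each $\beta\in Sp_{2g}(\R)$ the point $Z(\beta)=\beta^{-1}\cdot iI$, where $Sp_{2g}(\R)$ acts on $\mathfrak{H}_g$ by $\eta\cdot Z=(A_\eta Z+B_\eta)(C_\eta Z+D_\eta)^{-1}$. The only analytic input I need is the transformation law of the height under this action,
\[
\det\Im(\eta\cdot Z)=|\det(C_\eta Z+D_\eta)|^{-2}\,\det\Im(Z),
\]
together with its cocycle consequence. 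Setting $H(\beta):=\det\Im(Z(\beta))$, a direct unwinding of the double-coset definition of the maximal height gives
\[
Hgt([[\alpha]])=\max_{\gamma\in Sp_{2g}(\Z)}hgt(\gamma\cdot Z(\alpha))=\max_{\gamma\in Sp_{2g}(\Z)}H(\alpha\gamma).
\]

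First I would record that the renormalization acts by left multiplication, $\exp(-\bm{t}\hat\delta(d))\alpha=g_0\alpha$ with $g_0=\exp(-\bm{t}\hat\delta)$, and that this commutes with the maximum taken over the right translates $\alpha\gamma$. Consequently it suffices to prove the \emph{pointwise} bound $H(g_0\beta)\le(\det(e^{\bm{t}\hat\delta}))^2\,H(\beta)$ for every $\beta\in Sp_{2g}(\R)$, since applying it with $\beta=\alpha\gamma$ and taking the maximum over $\gamma$ yields \eqref{201}. Next I would compute both sides explicitly. Writing $\Lambda=\sum_j t_j\delta_{i_j}$, the element $g_0$ is block diagonal with blocks $e^{-\Lambda}$ and $e^{\Lambda}$, so $g_0^{-1}\cdot iI=i\,e^{2\Lambda}$ and $(\det(e^{\bm{t}\hat\delta}))^2=e^{2\,\mathrm{tr}\,\Lambda}$. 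Denoting by $(C,D)$ the lower blocks of $\beta^{-1}$, the transformation law gives
\[
H(g_0\beta)=\det\Im(\beta^{-1}\cdot i\,e^{2\Lambda})=e^{2\,\mathrm{tr}\,\Lambda}\,|\det(i\,e^{2\Lambda}C+D)|^{-2},\qquad H(\beta)=|\det(iC+D)|^{-2}.
\]
Thus the pointwise bound is equivalent to the matrix inequality $|\det(iC+D)|^{2}\le|\det(iMC+D)|^{2}$ with $M:=e^{2\Lambda}\succeq I$, where $M$ is the exponential of a nonnegative diagonal matrix precisely because $\bm{t}\in\R_+^d$.

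The main obstacle is this last inequality, since here the noncommutativity between $M$ and the blocks of $\beta^{-1}$ enters. I would expand
\[
|\det(iMC+D)|^2=\det\!\big(MCC^tM+DD^t+i[M,CD^t]\big),
\]
a positive-definite Hermitian form that specializes to $\det(CC^t+DD^t)$ at $M=I$, and use the symplectic relation $CD^t=DC^t$ to control the skew term $i[M,CD^t]$. The cleanest route is a monotonicity argument: diagonalizing $M$ and interpolating along $M_s=e^{2s\Lambda}$, I would show that $s\mapsto|\det(i\,e^{2s\Lambda}C+D)|^2$ is nondecreasing for $s\ge 0$, which gives the claim at $s=1$. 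The rank-one computation $\lambda^2c^2+d^2\ge c^2+d^2$ (the $Sp_2$ case, with $\lambda\ge 1$) is the model for this step, and the example $\beta=\mathrm{Id}$ shows that the resulting bound \eqref{201} is sharp.
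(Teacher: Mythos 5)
First, a remark on the comparison you were asked for: the paper does not actually prove this lemma --- it is quoted verbatim from \cite[Lemma 4.9]{CF15} --- so your argument has to stand on its own. Your route is the natural one, and all of the reductions are correct: identifying $K_g\backslash Sp_{2g}(\R)$ with $\mathfrak{H}_g$ via $Z(\beta)=\beta^{-1}\cdot iI$, unwinding $Hgt([[\alpha]])=\max_{\gamma\in Sp_{2g}(\Z)}H(\alpha\gamma)$, observing that left multiplication by $g_0=\exp(-\bm{t}\hat\delta(d))$ commutes with the maximum over right $Sp_{2g}(\Z)$-translates, computing $g_0^{-1}\cdot iI=ie^{2\Lambda}$ with $\det\Im(ie^{2\Lambda})=e^{2\,\mathrm{tr}\,\Lambda}=(\det e^{\bm{t}\hat\delta})^2$, and thereby reducing the lemma to a single pointwise inequality for the automorphy factor. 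One small slip: with your stated action the factor is $C_\eta Z+D_\eta$ evaluated at $Z=ie^{2\Lambda}$, i.e.\ $iCe^{2\Lambda}+D$ with $e^{2\Lambda}$ to the \emph{right} of $C$, not $ie^{2\Lambda}C+D$; the ordering matters below, because the symplectic relation at your disposal, $CD^t=DC^t$, is adapted to that side.

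The genuine gap is that the resulting inequality $|\det(iCM+D)|\ge|\det(iC+D)|$ for $M=e^{2\Lambda}\succeq I$ --- the only step with real content --- is announced rather than proved: you say you ``would show'' that $s\mapsto|\det(iCe^{2s\Lambda}+D)|^2$ is nondecreasing, and the only computation offered is the scalar model $\lambda^2c^2+d^2\ge c^2+d^2$, which is precisely the commutative case where the difficulty you yourself flagged is absent. The claim is true, and here is one way to close it. Set $E=M-I\succeq 0$ and factor $iCM+D=(iC+D)\left(I+PE\right)$ with $P:=(iC+D)^{-1}iC$; note $iC+D$ is invertible because $(iC+D)(iC+D)^{*}=CC^t+DD^t+i(CD^t-DC^t)=CC^t+DD^t>0$, using $CD^t=DC^t$ and the rank condition on $(C\,|\,D)$. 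The same relation gives $(iC+D)(P+P^{*})(iC+D)^{*}=2CC^t\succeq 0$, so $P$ has positive semidefinite Hermitian part. By Sylvester's identity $\det(I+PE)=\det(I+E^{1/2}PE^{1/2})$, and $E^{1/2}PE^{1/2}$ again has positive semidefinite Hermitian part, hence its spectrum lies in the closed right half-plane; since $|1+\mu|^2=1+2\Re\mu+|\mu|^2\ge 1$ for each eigenvalue $\mu$, we get $|\det(I+PE)|\ge 1$ and the inequality follows. Your interpolation along $M_s=e^{2s\Lambda}$ also works (the identical computation at base point $M_s$ yields $2CM_sC^t\succeq 0$), but some such argument must actually be supplied --- and supplied for the correct ordering of $M$ against $C$ --- before the proof is complete at its crux. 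The rest of your write-up, including the sharpness check at $\beta=\mathrm{Id}$, is fine.
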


\subsubsection{Diophantine condition}\label{sec;dio}
\begin{definition}
An automorphism $\alpha \in Sp_{2g}(\R)$ or a point $[\alpha] \in \M_g$ is \emph{$\hat\delta(d)$-Diophantine of type $\sigma$} if there exists a $\sigma>0$ and a constant $C>0$ such that
\begin{equation}
{Hgt}([[\exp (-\bm{t} \hat\delta(d))\alpha]]) \leq CHgt([[\exp (-\bm{t} \hat\delta(d))]])^{(1-\sigma)}{Hgt}([[\alpha]]), \ \forall \bm{t} \in \R_+^d.
\end{equation}
This states that $\alpha \in Sp_{2g}(\R)$ satisfies $\hat\delta(d)$-Diophantine if the height of the projection of $\exp (-\bm{t} \hat\delta(d))\alpha$ in the Siegel modular variety $\Sigma_g$ is bounded by $e^{2(t_1+\cdots +t_d)(1-\sigma)}$. Furthermore,
\begin{itemize}
\item $[\alpha] \in \M_g$ satisfies a \emph{$\hat\delta(d)$-Roth condition} if for any $\epsilon>0$ there exists a constant $C>0$ such that
\begin{equation}\label{eqn;roth}
{Hgt}([[\exp (-\bm{t} \hat\delta(d))\alpha]]) \leq CHgt([[\exp (-\bm{t} \hat\delta(d))]])^{\epsilon}{Hgt}([[\alpha]]), \ \forall \bm{t} \in \R_+^d.
\end{equation}
That is, $\hat\delta(d)$-Diophantine of {every} type $0<\sigma <1$.

\item $[\alpha]$ is of \emph{bounded type} if there exists a constant $C>0$ such  that
\begin{equation}\label{bdd}
Hgt([[\exp (-\bm{t} \hat\delta(d))\alpha]]) \leq C, \ \forall \bm{t} \in \R_+^d.
\end{equation}
\end{itemize}
\end{definition}

\begin{definition}
Let $X = G/\Lambda$ be a homogeneous space  equipped with the probability
Haar measure $\mu$. A function $\phi : X \rightarrow \R$ is said $k$-DL (distance like)
for some exponent $k > 0$ if it is uniformly continuous and if there exist
constants $C_1, C_2 > 0$ such that
$$C_1e^{-kz} \leq \mu(\{x \in X \mid \phi(x) \geq z \})\leq C_2e^{-kz}, \quad \forall z \in \R.$$
\end{definition}

By the work of Kleinblock and Margulis, a multi-parameter generalization of Khinchin-Sullivan logarithm law for geodesic excursion \cite{Sul82} holds.
\begin{theorem}\cite[Theorem 1.9]{KM99}\label{prop;cartan1}
Let $G$ be a connected semisimple Lie group without compact factors, $\mu$ its normalized Haar measure, $\Lambda \subset G$
an irreducible lattice, $\mathfrak{a}$ a Cartan subalgebra of the Lie algebra of G. Let $\mathfrak{d}_+$ be a non-empty open cone in a $d$-dimensional subalgebra $\mathfrak{d}$ of $\mathfrak{a}$ $(1 \leq d \leq rank_{\R}(G))$.
If $\phi : G/\Lambda \rightarrow \R$ is a $k$-DL function for some $k > 0$,
then for $\mu$-almost all $x \in G/\Lambda$ one has
$$\limsup_{\mathbf{z} \in \mathfrak{d}_+, \mathbf{z}\rightarrow \infty}\frac{\phi(\exp(\mathbf{z}) x)}{\log\norm{\mathbf{z}}} = \frac{d}{k}$$
\end{theorem}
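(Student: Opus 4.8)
The plan is to prove the two inequalities $\limsup \le d/k$ and $\limsup \ge d/k$ separately, the first by a convergence Borel--Cantelli argument and the second by a divergence Borel--Cantelli argument with quasi-independence supplied by mixing. Throughout, the $k$-DL hypothesis enters in the two-sided form $\mu(\{\phi \ge z\}) \asymp e^{-kz}$ for $z$ large. The first reduction replaces the continuous $\limsup$ over the cone $\mathfrak{d}_+$ by a $\limsup$ over a countable net $\{\mathbf{z}_n\} \subset \mathfrak{d}_+$ with $\mathbf{z}_n \to \infty$: since $\phi$ is uniformly continuous and $\exp(\mathbf{z})$ displaces a fixed point by an amount controlled by $\norm{\mathbf{z}}$, for a net of bounded mesh the value $\phi(\exp(\mathbf{z})x)$ varies by $o(\log\norm{\mathbf{z}})$ between a cone point and the nearest net point, so the continuous and discrete limsups agree. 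I would choose the net so that the number of points with $\norm{\mathbf{z}_n} \le R$ grows like $R^d$, reflecting the $d$-dimensionality of $\mathfrak{d}$.

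For the upper bound, fix $\epsilon > 0$ and set $A_n = \{x : \phi(\exp(\mathbf{z}_n)x) \ge (1+\epsilon)\tfrac{d}{k}\log\norm{\mathbf{z}_n}\}$. By the DL estimate and $G$-invariance of $\mu$, $\mu(A_n) \asymp \norm{\mathbf{z}_n}^{-(1+\epsilon)d}$. Summing over the net, with $\asymp R^{d-1}\,dR$ points in the shell of radius $R$, gives $\sum_n \mu(A_n) \asymp \int^\infty R^{d-1} R^{-(1+\epsilon)d}\,dR = \int^\infty R^{-1-\epsilon d}\,dR < \infty$. The convergence half of Borel--Cantelli then yields that for $\mu$-almost every $x$ only finitely many $A_n$ occur, i.e. $\limsup \le (1+\epsilon)d/k$; letting $\epsilon \to 0$ gives the upper bound.

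For the lower bound, fix $\epsilon > 0$ and pass to a sparse subsequence $\{\mathbf{z}_{n_j}\}$ of the net, chosen so that the events $B_j = \{x : \phi(\exp(\mathbf{z}_{n_j})x) \ge (1-\epsilon)\tfrac{d}{k}\log\norm{\mathbf{z}_{n_j}}\}$ satisfy $\sum_j \mu(B_j) = \infty$ (here $\mu(B_j) \asymp \norm{\mathbf{z}_{n_j}}^{-(1-\epsilon)d}$ and the corresponding integral $\int^\infty R^{-1+\epsilon d}\,dR$ diverges) while the increments $\mathbf{z}_{n_{j+1}} - \mathbf{z}_{n_j}$ grow. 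The key is quasi-independence: writing $\mathbf{1}_{B_j} = \chi \circ \exp(\mathbf{z}_{n_j})$ for the indicator $\chi$ of the super-level set, I would smooth $\chi$ at a scale adapted to the DL estimate and apply the exponential decay of matrix coefficients (Howe--Moore together with the spectral gap for a semisimple $G$ without compact factors) to bound $\mu(B_i \cap B_j) \le \mu(B_i)\mu(B_j) + E_{ij}$, where $E_{ij}$ decays exponentially in the separation $\norm{\mathbf{z}_{n_j} - \mathbf{z}_{n_i}}$ up to a smoothing error. Choosing the sparsity of the subsequence so that $\sum_{i \ne j} E_{ij}$ is controlled by $\bigl(\sum_j \mu(B_j)\bigr)^2$, the Kochen--Stone quasi-independence Borel--Cantelli lemma gives $\mu(\limsup_j B_j) > 0$; an ergodicity/zero-one argument for the $\mathfrak{d}_+$-action upgrades this to full measure, yielding $\limsup \ge (1-\epsilon)d/k$, and $\epsilon \to 0$ finishes.

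The main obstacle is the quasi-independence step in the lower bound. The difficulty is that the level sets $B_j$ have measure tending to $0$, so a naive mixing estimate produces an additive error $E_{ij}$ that can dwarf the product $\mu(B_i)\mu(B_j)$; one must balance the smoothing scale, the effective (exponential) mixing rate, and the sparsity of $\{\mathbf{z}_{n_j}\}$ so that the errors stay summable against the square of the divergent series. The multidimensional cone $\mathfrak{d}_+$ adds a bookkeeping layer, since the subsequence must diverge within the cone while keeping all pairwise separations large; this is where semisimplicity and the rank condition $d \le \mathrm{rank}_{\R}(G)$ enter, guaranteeing both the mixing and the room inside $\mathfrak{d}_+$ needed for the construction.
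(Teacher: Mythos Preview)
The paper does not give its own proof of this theorem: it is quoted verbatim as \cite[Theorem 1.9]{KM99} and used as a black box to derive Proposition~\ref{prop;jackma}. So there is no ``paper's own proof'' to compare against.

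That said, your outline is essentially the Kleinbock--Margulis argument. The upper bound is exactly their convergence Borel--Cantelli over a net in the cone, and the lower bound is their ``Borel--Cantelli with quasi-independence'' scheme: smooth the indicator of the superlevel set, use exponential decay of matrix coefficients (semisimple, no compact factors) to control $\mu(B_i\cap B_j)-\mu(B_i)\mu(B_j)$, and apply a Kochen--Stone type lemma. One point to be careful about: in \cite{KM99} the quasi-independence is not obtained by passing to a sparse subsequence of a single ray, but rather by exploiting the full $d$-dimensional lattice of net points in $\mathfrak{d}_+$; the exponential mixing bound in $\norm{\mathbf{z}_{n_i}-\mathbf{z}_{n_j}}$ is summed over all pairs in the cone net, and it is the $d$-dimensional counting (roughly $R^{d-1}\,dR$ points in a shell) that makes the error terms $E_{ij}$ controllable against $(\sum_j\mu(B_j))^2$ without artificial sparsification. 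Your sketch conflates ``sparse subsequence'' with this counting argument, which is the actual mechanism; the balancing of smoothing scale versus mixing rate versus level-set measure is also more delicate than a single sentence suggests, and is the technical heart of \cite{KM99}. But the overall plan is correct and matches the source.
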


By Lemma 4.7 of \cite{CF15}, the logarithm of Height function is DL-function with exponent $k = \frac{g+1}{2}$ on the Siegel variety $\Sigma_g$ (and induces on $\M_g = Sp_{2g}(\R)/Sp_{2g}(\Z)$). Hence, we obtain the following proposition.

\begin{proposition}\label{prop;jackma}Under the assumption $X = \M_g$ of Theorem \ref{prop;cartan1}, for $s> g+ 1/2$, there exists a full measure set $\Omega_g(\hat\delta)$ and for all $[\alpha] \in \Omega_g(\hat\delta) \subset  \M_g$   %and $\epsilon >0$
\begin{equation}\label{jackma}
\limsup_{\bm{t} \in \R^{d}_+, \bm{t} \rightarrow \infty} \frac{\log Hgt ([[\exp (-\bm{t} \hat\delta(d))\alpha]]) }{\log \norm{\bm{t}}} \leq \frac{2d}{g+1}.
\end{equation}
Any such $[\alpha]$ satisfies the $\hat\delta(d)$-Roth condition \eqref{eqn;roth}.
\end{proposition}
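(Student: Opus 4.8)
The plan is to read off \eqref{jackma} as a direct application of the multiparameter logarithm law, Theorem \ref{prop;cartan1}, to the function $\phi = \log Hgt$ on $\Sigma_g$ (pulled back to $\M_g$). First I would record the inputs that make the theorem applicable. The ambient group $G = Sp_{2g}(\R)$ is connected, semisimple and has no compact factors, $\Lambda = Sp_{2g}(\Z)$ is an irreducible lattice, and $\mathrm{rank}_\R(G) = g$, so the structural hypotheses hold. The relevant $d$-dimensional subalgebra is $\mathfrak{d} = \mathrm{span}\{\hat\delta_{i_1}, \dots, \hat\delta_{i_d}\}$, which sits inside the standard diagonal Cartan subalgebra $\mathfrak{a} \subset \mathfrak{sp}_{2g}$ spanned by the $\hat\delta_i$; since $1 \le d \le g$ this is admissible. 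As the open cone I would take $\mathfrak{d}_+ = \{-\sum_j t_j \hat\delta_{i_j} : t_j > 0\}$, so that the renormalization direction $\mathbf{z} = -\bm{t}\hat\delta(d)$ runs through $\mathfrak{d}_+$ precisely as $\bm{t}$ runs through $\R_+^d$. Finally, by the cited Lemma 4.7 of \cite{CF15} the function $\phi = \log Hgt$ is $k$-DL with exponent $k = (g+1)/2$.

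With these in place, Theorem \ref{prop;cartan1} produces a full-measure set, which I name $\Omega_g(\hat\delta) \subset \M_g$, on which
\[
\limsup_{\mathbf{z} \in \mathfrak{d}_+,\, \mathbf{z} \to \infty} \frac{\phi(\exp(\mathbf{z})[\alpha])}{\log\norm{\mathbf{z}}} = \frac{d}{k} = \frac{2d}{g+1}.
\]
Since $\mathbf{z} = -\bm{t}\hat\delta(d)$ with $\hat\delta(d)$ fixed, one has $\norm{\mathbf{z}} \asymp \norm{\bm{t}}$, hence $\log\norm{\mathbf{z}} = \log\norm{\bm{t}} + O(1)$, which leaves the limsup ratio unchanged; and the projection of $\exp(\mathbf{z})\alpha$ to $\Sigma_g$ is exactly $[[\exp(-\bm{t}\hat\delta(d))\alpha]]$. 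Substituting $\phi = \log Hgt$ therefore yields \eqref{jackma} with equality, which in particular gives the asserted inequality $\le 2d/(g+1)$.

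It remains to deduce the $\hat\delta(d)$-Roth condition \eqref{eqn;roth} for every $[\alpha] \in \Omega_g(\hat\delta)$. Fix $\epsilon > 0$. The limsup bound means that for every $\eta > 0$ there is $R$ with
\[
\log Hgt([[\exp(-\bm{t}\hat\delta(d))\alpha]]) \le \Big(\tfrac{2d}{g+1} + \eta\Big)\log\norm{\bm{t}}, \qquad \norm{\bm{t}} > R.
\]
The key observation is that the right-hand side grows only logarithmically in $\bm{t}$, whereas the upper bound $\log Hgt([[\exp(-\bm{t}\hat\delta(d))]]) \le 2(t_1+\cdots+t_d) + O(1)$ from \eqref{201}, together with the matching lower bound coming from the explicit diagonal action on $\mathfrak{H}_g$, gives $\log Hgt([[\exp(-\bm{t}\hat\delta(d))]]) \asymp t_1 + \cdots + t_d$, i.e. linear growth. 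Since $\log\norm{\bm{t}} = o(\norm{\bm{t}})$ and $\norm{\bm{t}} \asymp t_1 + \cdots + t_d$ on $\R_+^d$, the logarithmic term is dominated by $\epsilon\,\log Hgt([[\exp(-\bm{t}\hat\delta(d))]])$ once $\norm{\bm{t}}$ is large. For the remaining bounded range of $\bm{t}$ the quantities are controlled by continuity and compactness and can be absorbed into the constant $C$. Exponentiating then gives \eqref{eqn;roth}, i.e. $\hat\delta(d)$-Diophantine of every type $0 < \sigma < 1$.

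The hard part is not in any single estimate but in correctly transcribing the abstract logarithm law into the renormalization picture: one must verify that $\mathfrak{d}_+$ is a genuine non-empty open cone inside the Cartan subalgebra, that the sign and normalization of the exponents $t_j$ match the scaling in \eqref{eqn;interntwine}, and, most importantly, that the DL exponent $k = (g+1)/2$ of $\log Hgt$ is the correct one, since the final constant $2d/(g+1)$ depends entirely on it. Once the dictionary between $(\bm{t},\hat\delta)$ and $(\mathbf{z},\mathfrak{d}_+)$ is fixed, the analytic content is carried entirely by the cited logarithm law and DL-property, and the passage to the Roth condition is the soft remark that logarithmic growth is negligible against any positive multiple of the linear height growth.
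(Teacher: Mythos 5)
Your proof is correct and follows exactly the route the paper takes: the paper derives the proposition as a direct application of the Kleinbock--Margulis logarithm law (Theorem \ref{prop;cartan1}) to $\phi = \log Hgt$, which is $k$-DL with $k=(g+1)/2$ by Lemma 4.7 of \cite{CF15}, and then notes that the resulting polynomial bound on $Hgt([[\exp(-\bm{t}\hat\delta(d))\alpha]])$ is negligible against the exponential growth $Hgt([[\exp(-\bm{t}\hat\delta(d))]]) \asymp e^{2(t_1+\cdots+t_d)}$, giving the Roth condition. Your write-up simply makes explicit the dictionary between $(\bm{t},\hat\delta(d))$ and $(\mathbf{z},\mathfrak{d}_+)$ that the paper leaves implicit.
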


For any $L>0$ and $1\leq d \leq g$, let $DC(L)$ denote the set of  $[\alpha] \in \M_g$ such that
\begin{equation}\label{cond;newdc}
\int_0^\infty\cdots\int_0^\infty e^{-(t_1+ \cdots+ t_d)/2}\text{Hgt} ([[r_\textbf{-t}(\alpha)]])^{1/4}dt_1\cdots dt_d  \leq L.
\end{equation}
Let $DC$ denote the union of the sets $DC(L)$ over all $L > 0$.
It follows immediately that the set $DC \subset \M_g$ has full Haar volume.

\begin{remark} In the work of Cosentino-Flaminio \cite{CF15}, Diophantine condition is restricted to one-parameter subgroup $\exp (-t \hat\delta)$ for non-negative ($g$-dimensional) diagonal directions. This is based on the easy part of theorem in Kleinblock and Margulis \cite[Theorem 1.7]{KM99}. Our condition is for $d$-dimensional renormalization actions. %This is a key ingredient to construct functionals on rectangles.
We will write the notation $r_\textbf{-t}$ instead of $\exp (-\bm{t} \hat\delta(d))$ for later sections.
\end{remark}

\section{Constructions of the functionals}\label{sec;3}
In this section, we construct Bufetov functionals for higher rank actions on the standard rectanglular domain. %Firstly, we prove the bound of Birkhoff integrals on rectangles. %The proof runs similarly with the Theorem 5.10 in \cite{CF15}, but this type of estimate will be only used to control remainder estimate (Lemma \ref{04}). %to define functionals under Diophantine condition (Lemma \ref{35}). %This condition is used to guarantee the existence of functionals  DC .
Then we verify the properties of functionals $\hat\beta_H$ and cocycles $\beta_H$ stated in Theorem \ref{hi}.

\subsection{Remainder estimates}
For any exponent $s>d/2$, Hilbert bundle induces an orthogonal decomposition
$$A_d(\mathfrak{p}, \M^{-s}) = Z_d(\mathfrak{p}, \M^{-s}) \oplus R_d(\mathfrak{p}, \M^{-s})$$
where $R_d(\mathfrak{p}, \M^{-s}) = Z_d(\mathfrak{p}, \M^{-s})^\perp$.
Denote by $\I^{-s}$ and $\RR^{-s}$ the corresponding orthogonal projection operator and by $\I_\alpha^{-s}$ and $\RR_\alpha^{-s}$ the restrictions to the fiber over $[\alpha] \in \M$ for $\alpha \in Sp_{2g}(\R)$. In particular, for the current (Birkhoff integrals) $\DD = \PPP_{U}^{d,\alpha}m$, we  call $\I_\alpha^{-s}(\DD) = \I^{-s}[\alpha,\DD]$ \emph{boundary} term and $\RR_\alpha^{-s}(\DD) = \RR^{-s}[\alpha,\DD]$ \emph{remainder} term respectively. Consider the orthogonal decomposition of current
\begin{equation}\label{eqn;decom}
\DD = \I_{r_{-\bm{t}}[\alpha]}^{-s}(\DD) + \RR_{r_{-\bm{t}}[\alpha]}^{-s}(\DD).
\end{equation}

We firstly recall the following estimate of boundary term.
\begin{lemma}\label{lem;boundary1} \cite[Lemma 5.7]{CF15} Let $s > d/2+2$. There exists a constant $C = C(s)>0$ such that for all $t_i \geq 0$ for $1\leq i \leq d$, we have
\begin{multline*}
\norm{\I^{-s}[\alpha, \DD]}_{-s} \leq e^{-(t_1+\cdots + t_d)/2}\norm{\I^{-s}[r_1^{-t_1}\cdots r_d^{-t_d}\alpha, \DD]}_{-s}\\
 + C_1|t_1+ \cdots + t_d| \int_{0}^1e^{-u(t_1 + \cdots + t_d)/2}\norm{\RR^{-s}[r_1^{-ut_1}\cdots r_d^{-ut_d}\alpha, \DD}_{-(s-2)}du.
 \end{multline*}
\end{lemma}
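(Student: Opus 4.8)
The plan is to reduce the inequality to an application of the fundamental theorem of calculus in the fiber $W^{-s}_\alpha(M)$ over $[\alpha]$, carried out on the boundary component of $\DD$ transported back to the base point $\alpha$ along the renormalization orbit. Writing $\beta_u := r_1^{-ut_1}\cdots r_d^{-ut_d}\alpha$ for $u \in [0,1]$, so that $\beta_0 = \alpha$ and $\beta_1 = r_{-\bm{t}}\alpha$, I decompose $\DD$ at each base point as $\DD = \I^{-s}_{\beta_u}(\DD) + \RR^{-s}_{\beta_u}(\DD)$ and set
\[
P(u) := r_1^{ut_1}\cdots r_d^{ut_d}\,\I^{-s}_{\beta_u}(\DD),
\]
the boundary component at $\beta_u$ transported to the fiber over $\alpha$. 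Since the sub-bundle $Z_d(\pp, W^{-s}(M))$ of closed currents is invariant under the renormalization flow (Proposition 5.2 of \cite{CF15}), each $P(u)$ is again a closed current over $[\alpha]$, and $u \mapsto P(u)$ is a differentiable path in $Z_d(\pp, W^{-s}_\alpha(M))$.

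With this setup $\norm{P(0)}_{-s} = \norm{\I^{-s}[\alpha,\DD]}_{-s}$ is exactly the left-hand side, while the intertwining identity \eqref{eqn;interntwine} gives $\norm{P(1)}_{-s} = e^{-(t_1+\cdots+t_d)/2}\norm{\I^{-s}[\beta_1,\DD]}_{-s}$, the first term on the right. The fundamental theorem of calculus then yields $\norm{P(0)}_{-s} \le \norm{P(1)}_{-s} + \int_0^1 \norm{P'(u)}_{-s}\,du$, so the whole lemma reduces to the pointwise bound
\[
\norm{P'(u)}_{-s} \le C\,|t_1+\cdots+t_d|\,e^{-u(t_1+\cdots+t_d)/2}\,\norm{\RR^{-s}[\beta_u,\DD]}_{-(s-2)}.
\]

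To obtain this bound I use that $[\alpha,\DD]$ is constant in $u$, so that $P(u) = [\alpha,\DD] - r_1^{ut_1}\cdots r_d^{ut_d}\RR^{-s}_{\beta_u}(\DD)$ and hence $P'(u)$ is the (closed) current $-\frac{d}{du}\bigl(r_1^{ut_1}\cdots r_d^{ut_d}\RR^{-s}_{\beta_u}(\DD)\bigr)$. Differentiating by the product rule splits this into the Lie derivative of the remainder along the renormalization field, which contributes the factor $|t_1+\cdots+t_d|$ measuring the speed of the orbit $u \mapsto \beta_u$, and the derivative of the orthogonal projection $\frac{d}{du}\RR^{-s}_{\beta_u}$ applied to the fixed current $\DD$. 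Because $P'(u)$ is closed, I write it as $r_1^{ut_1}\cdots r_d^{ut_d}$ applied to a closed current over $\beta_u$ and invoke \eqref{eqn;interntwine} once more, which produces the contraction factor $e^{-u(t_1+\cdots+t_d)/2}$. The loss of two Sobolev orders, $s \to s-2$, arises when I estimate the projection-derivative and the Lie-derivative term against the second-order Laplacian $\Delta_\alpha$ defining the norms; this is precisely why the hypothesis $s > d/2 + 2$ is needed, since it keeps the remainder in the admissible range $s-2 > d/2$ where the splitting $A_d(\pp,\M^{-s}) = Z_d \oplus R_d$ remains meaningful.

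The main obstacle I expect is the control of $\frac{d}{du}\RR^{-s}_{\beta_u}$: because the orthogonal splitting depends on the base point through the Sobolev inner product $\langle\cdot,(1+\Delta_{\beta_u})^{-s}\cdot\rangle$, its projections genuinely vary along the orbit, and I must show that this variation is a bounded operator losing exactly two derivatives, with operator norm proportional to the orbit speed. I plan to handle it by differentiating the orthogonality relations that characterize $\I^{-s}_{\beta_u}$, using the flow-invariance of $Z_d$ to confine $\frac{d}{du}\I^{-s}_{\beta_u}(\DD)$ to directions on which the renormalization generator acts by the first-order field $\sum_i t_i\hat\delta_i$, and absorbing the regularity deficit into a factor of $(1+\Delta_\alpha)$. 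This is the only step where the precise second-order structure of the Sobolev norms is essential, and it is what fixes the shift by $2$ and the exponent in the final estimate.
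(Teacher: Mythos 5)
The paper does not prove this lemma at all: it is imported verbatim from \cite[Lemma 5.7]{CF15}, so there is no internal proof to compare against. Your architecture --- transport the boundary component along the renormalization orbit $\beta_u = r_1^{-ut_1}\cdots r_d^{-ut_d}\alpha$, apply the fundamental theorem of calculus to $P(u)$, use the intertwining identity \eqref{eqn;interntwine} at $u=1$ and again on the closed current $P'(u)$ to extract the factor $e^{-u(t_1+\cdots+t_d)/2}$ --- is exactly the Cosentino--Flaminio scheme, and the endpoint identifications $\norm{P(0)}_{-s}$ and $\norm{P(1)}_{-s}$ are correct. One bookkeeping caution: in the paper's convention $r_i^t[\alpha,\D]=[r_i^t\alpha,\D]$ the underlying current never moves, so $P(u)=[\alpha,\I^{-s}_{\beta_u}(\DD)]$ and $P'(u)=[\alpha,\tfrac{d}{du}\I^{-s}_{\beta_u}(\DD)]$; there is no separate Lie-derivative term. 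The factor $|t_1+\cdots+t_d|$ you attribute to it is really the chain-rule speed of $u\mapsto\beta_u$ entering through $\tfrac{d}{du}\Delta_{\beta_u}$, so nothing is lost, but you should not double-count.

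The step you defer --- controlling $\tfrac{d}{du}\RR^{-s}_{\beta_u}$ --- is the entire content of the lemma, and your plan needs one identity made explicit to close. Since the range $Z_d(\pp,W^{-s}(M))$ of $\I^{-s}_{\beta_u}$ is a \emph{fixed} subspace (closedness does not depend on the frame; only the inner product $\langle\cdot,(1+\Delta_{\beta_u})^{-s}\cdot\rangle$ varies), differentiating $\I_u^2=\I_u$ together with $\mathrm{Ran}\,\dot\I_u\subset Z_d$ gives $\dot\I_u=\I_u\,\dot\I_u\,\RR_u$. This is what guarantees that $P'(u)$ is controlled by the \emph{remainder} $\RR^{-s}[\beta_u,\DD]$ alone and not by all of $\DD$; without it the right-hand side of your key pointwise bound is simply wrong. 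Granting that, the quantitative input is $\tfrac{d}{du}\Delta_{\beta_u}=-\sum_i(\pm 2t_{j(i)})W_i^2$ with $W_i=(\beta_u^{-1})_*V_i$, a second-order operator dominated by $C\,|t_1+\cdots+t_d|\,(1+\Delta_{\beta_u})$, which bounds $\dot\I_u$ as an operator from $W^{-(s-2)}_{\beta_u}$ to $W^{-s}_{\beta_u}$ with norm $O(|t_1+\cdots+t_d|)$ and explains both the two-derivative loss and the hypothesis $s>d/2+2$. With these two points written out, your argument is a complete and faithful reconstruction of the cited proof.
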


 By Stokes' theorem, we have the following remainder estimate. %We denote by $\partial D$ the boundary of the current $D$ given by $\langle \partial D, \omega \rangle = \langle D, \partial\omega \rangle$.
\begin{lemma}\cite[Lemma 5.6]{CF15}\label{eqn;stokes} Let $s > g+d/2+1$. For any non-negative $s' < s- (d+1)/2$ and Jordan region $U \subset \R^d$, there exists $C = C(d,g,s,s')>0$ such that
$$\norm{\RR^{-s}[\alpha, (\PPP_{U}^{d,\alpha}m)]}_{-s} \leq C \norm{[\alpha, \partial(\PPP_{U}^{d,\alpha}m)]}_{-s'}.$$
\end{lemma}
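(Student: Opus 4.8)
The plan is to reduce the statement to a subelliptic Poincar\'e inequality on the fiber of $R_d$ and then to establish that inequality through the Schr\"odinger model. First I would observe that the closed part $\I^{-s}[\alpha,\DD]$ lies in $Z_d = \ker\partial$, so $\partial\,\I^{-s}[\alpha,\DD] = 0$ and hence $\partial[\alpha,\RR^{-s}\DD] = \partial[\alpha,\DD]$. Writing $w := \RR^{-s}[\alpha,\DD]$, which lies in the fiber of $R_d(\pp,\M^{-s})$ over $[\alpha]$, it therefore suffices to prove a bound of the form $\norm{w}_{-s} \leq C\norm{\partial w}_{-s'}$ valid for every such $w$, with $s - s' > (d+1)/2$. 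This is the real content: on the orthogonal complement of the closed (equivalently, by Proposition \ref{prop;3.13}, the invariant) currents, the boundary operator recovers \emph{more} regularity of $w$ than the trivial first-order bound, and the gain is exactly what must be quantified.

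Next I would set up the operator-theoretic mechanism. Since $d = \partial^*$, the self-adjoint nonnegative operator $d\partial = \partial^*\partial$ on $A_d(\pp,\M^{-s})$ satisfies $\langle d\partial w, w\rangle = \norm{\partial w}^2$, so its kernel is exactly $Z_d$ and on $R_d = Z_d^\perp$ it is injective. A spectral lower bound $\langle d\partial w, w\rangle \geq c\norm{w}^2$ on $R_d$ would already give the estimate with no derivative loss; the loss appears because this gap degrades with the representation parameter and must be traded against the Sobolev scale. To make this quantitative I would pass to the Hilbert decomposition $L^2(M) = \bigoplus_n H_n$ and work in each nonzero component via the Schr\"odinger realization \eqref{def;derived}, where the generators $X_k^\alpha$ become explicit first-order operators and $d\partial$ restricts to a shifted harmonic-oscillator-type operator. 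The oscillator spectrum furnishes a uniform gap on the orthogonal complement of the distributional kernel in each $H_n$, which is precisely the fiberwise model for $R_d$.

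Finally I would convert these fiberwise $L^2$ lower bounds into the claimed Sobolev inequality. The key structural input is the H\"ormander (step two, bracket-generating) condition encoded in $[X_i,Y_j] = \delta_{ij}Z$ with $Z$ acting as the nonzero scalar $2\pi\iota n K$ on $H_n$: this yields subelliptic regularity, so that the partial inverse of $\partial$ on $R_d$ is smoothing, and summing the fiberwise estimates against the weights $(1+\Delta_\alpha)^{-s}$ produces the bound with the stated loss $(d+1)/2$, the precise figure reflecting the degeneracy order of the sub-Laplacian together with the combinatorics of degree-$d$ $\pp$-forms. \textbf{The main obstacle} is exactly this last bookkeeping: making the constant $C = C(d,g,s,s')$ uniform in $\alpha$ (so that it descends to the moduli space and does not blow up as $[\alpha]$ approaches the cusp) while pinning the derivative loss down to $(d+1)/2$ rather than a cruder value. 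Here the Sobolev-bundle formalism and the invariance of the decomposition $A_d = Z_d \oplus R_d$ under the $Sp_{2g}(\Z)$-action are what guarantee a uniform constant, and the role of Stokes' theorem is to realize $\partial[\alpha,\PPP_U^{d,\alpha}m]$ concretely as the sum of the $(d-1)$-dimensional Birkhoff currents over the faces of $U$, against which the right-hand side is ultimately measured.
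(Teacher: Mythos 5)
The paper does not actually prove this lemma: it is imported verbatim from \cite[Lemma 5.6]{CF15}, so your proposal can only be measured against the argument in that reference. Your opening reduction is correct and is the right first move: since $\I^{-s}[\alpha,\DD]\in Z_d=\ker\partial$, one has $\partial\,\RR^{-s}[\alpha,\DD]=\partial[\alpha,\DD]$, and the lemma becomes the Poincar\'e-type inequality $\norm{w}_{-s}\le C\norm{\partial w}_{-s'}$ for $w\in R_d=Z_d^{\perp}$. The plan of working representation by representation in the Schr\"odinger model and using the Sobolev-bundle formalism for uniformity in $\alpha$ is also consistent with how \cite{CF15} proceeds.

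The genuine gap is in your central mechanism. The operator $\partial^{*}\partial$ on degree-$d$ $\pp$-currents involves only the generators $X_k^{\alpha}$ of $\pp$, which in the derived representation \eqref{def;derived} are the flat derivatives $\partial/\partial x_k$; hence on each $H_n$ the relevant operator is the flat Laplacian $-\sum_{k=1}^{d}\partial_{x_k}^{2}$ on $L^2(\R^g)$, whose spectrum is $[0,\infty)$, purely continuous, with no eigenvalue at $0$ and no gap above it. The harmonic oscillator you invoke comes from the \emph{full} Laplacian $\Delta_\alpha$ (which includes the $Y_k^{\alpha}$, acting as multiplication by $2\pi\iota nKx_k$) and only enters through the definition of the Sobolev norms, not through $\partial^{*}\partial$. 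Correspondingly, the invariant currents of Proposition \ref{prop;3.13} are genuinely distributional (of order $-d/2-\epsilon$), not $L^2$ eigenvectors, so there is no ``uniform gap on the orthogonal complement of the distributional kernel in each $H_n$'' to degrade and trade against the Sobolev scale; and since $[X_i,X_j]=0$, the sub-Laplacian in the $X$-directions alone is not bracket-generating, so the subellipticity you appeal to is not available in the form you use it. The estimate actually rests on a different mechanism: by duality one must split an arbitrary test form $\omega$ with $\norm{\omega}_{s}\le 1$ as $\omega=d\eta+\omega_0$ with $\omega_0$ dual to a closed (equivalently invariant) current and $\norm{\eta}_{s'}\le C\norm{\omega}_{s}$, so that $\langle w,\omega\rangle=\langle\partial w,\eta\rangle$; the loss $(d+1)/2$ is the tame loss in solving this cohomological equation in the Schr\"odinger model, where the control of $\hat f$ near the characteristic set $\{\xi_X=0\}$ is supplied by the transverse ($Y$-direction) regularity encoded in the oscillator Sobolev norm. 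That division/trace estimate is the real content of \cite[\S 3 and Lemma 5.6]{CF15}, and your proposal does not supply it.
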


A quantitative bound of Birkhoff integrals on the square domain was obtained in \cite{CF15}, but we need to extend the result to the rectangular shapes for analyticity of functionals in the section \S \ref{sec;6}.
%Here we prove quantitative bound of Birkhoff averages of higher rank actions on rectangle. (Cf. \cite[theorem 5.10]{CF15}).

%\emph{Notation.}
From now on, assume that  $s_{d,g}:= d(d+11)/4+g+1/2$.

\begin{theorem}\label{thm;3.3} For $s > s_{d,g}$, there exists a constant $C = C(s,d) >0$ such that the following holds.
 For any $t_i >0$, $m \in M$ and ${U_d(t)} = [0,e^{t_1}]\times \cdots \times [0,e^{t_d}]$, we have
\begin{align}\label{eqn;main2}
\begin{split}
\norm{[\alpha, (\PPP_{U_d(t)}^{d,\alpha}m)]}_{-s} &\leq C\sum_{k=0}^d \sum_{1 \leq i_1 < \cdots < i_k \leq d} \int_{0}^{t_{i_k}}\cdots\int_{0}^{t_{i_1}} \exp({\frac{1}{2}\sum_{l=1}^d t_l - \frac{1}{2}\sum_{l=1}^k u_{i_l} })\\
& \times  Hgt ( [[\prod_{1\leq j \leq d}r_j^{-t_{j}}\prod_{l=1}^k r_{i_l}^{u_{i_l}}\alpha]] )^{1/4}du_{i_1}\cdots du_{i_k}.
 \end{split}
 \end{align}
\end{theorem}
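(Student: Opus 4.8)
The plan is to argue by induction on the dimension $d$, using the orthogonal splitting \eqref{eqn;decom} together with the three imported estimates: the boundary estimate (Lemma~\ref{lem;boundary1}), the Stokes/remainder estimate (Lemma~\ref{eqn;stokes}), and the Birkhoff bound (Lemma~\ref{lem;cf5.5}) combined with the height bound \eqref{ineq;sobolev}. The base case $d=0$ is pure Sobolev embedding: a $0$-dimensional Birkhoff current is the evaluation functional $f\mapsto f(m)$, whose dual Sobolev norm is at most $B_s([[\alpha]])\le C(s)\,Hgt([[\alpha]])^{1/4}$ by \eqref{ineq;sobolev} whenever $s>g+1/2$, which is exactly the (empty) $k=0$ term of \eqref{eqn;main2}.

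For the inductive step, write $\DD=\PPP_{U_d(t)}^{d,\alpha}m$ and split $\norm{[\alpha,\DD]}_{-s}\le \norm{\I^{-s}[\alpha,\DD]}_{-s}+\norm{\RR^{-s}[\alpha,\DD]}_{-s}$. For the invariant part I apply Lemma~\ref{lem;boundary1}. Its first, fully renormalized, summand is estimated by rewriting $\DD=e^{t_1+\cdots+t_d}\PPP_{[0,1]^d}^{d,r_{-\bm t}\alpha}m$ via the scaling identity \eqref{eqn;birkhoff}, then using that orthogonal projection is norm-decreasing together with the Birkhoff bound followed by \eqref{ineq;sobolev}; the prefactor $e^{-(t_1+\cdots+t_d)/2}$ in Lemma~\ref{lem;boundary1} then produces precisely the $k=0$ term $e^{\frac12\sum_l t_l}Hgt([[r_{-\bm t}\alpha]])^{1/4}$. (The genuine error term in Lemma~\ref{lem;boundary1} appears because the orthogonal projections are taken with respect to the frame-dependent Sobolev inner product, so they fail to commute with renormalization even though the subspace $Z_d$ is renormalization-invariant.)

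All remaining contributions are remainder terms: the direct term $\norm{\RR^{-s}[\alpha,\DD]}_{-s}$ and the integral $\int_0^1 e^{-u(t_1+\cdots+t_d)/2}\norm{\RR^{-s}[r_{-u\bm t}\alpha,\DD]}_{-(s-2)}\,du$ coming from Lemma~\ref{lem;boundary1}. Each is controlled by Lemma~\ref{eqn;stokes}, which bounds a remainder by the norm of the boundary $\partial\DD$. Since $\partial$ of a rectangular Birkhoff current is the signed sum of its $2d$ faces, and each face is a $(d-1)$-dimensional Birkhoff current over the rectangle obtained by deleting one coordinate direction $j$, based either at $m$ or at its translate by the $j$-th generator of the action, the inductive hypothesis in dimension $d-1$ applies to each face. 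The $u$-integration furnished by Lemma~\ref{lem;boundary1} supplies, after the substitution $u_j=ut_j$ and the reflection $u_j\mapsto t_j-u_j$, exactly the extra integration variable that enlarges a $(d-1)$-subset $S\subseteq\{1,\dots,d\}\setminus\{j\}$ to $S\cup\{j\}$; composing the renormalizations yields the combined height argument $\prod_{l}r_l^{-t_l}\prod_{i_m\in S\cup\{j\}}r_{i_m}^{u_{i_m}}\alpha$ of \eqref{eqn;main2}, while positivity of the integrand lets one enlarge the ranges $[0,(1-u)t_{i_m}]$ to $[0,t_{i_m}]$ and replace $e^{\frac12(1-u)\sum_l t_l}$ by $e^{\frac12\sum_l t_l}$. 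Summing over the face direction $j$ and over $k$ reassembles the full right-hand side of \eqref{eqn;main2}.

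Finally, the Sobolev-order budget has to close, and this is what forces the exponent $s_{d,g}$ through the recursion $s_{d,g}=s_{d-1,g}+\tfrac{d+5}{2}$. Indeed, the integral error term in Lemma~\ref{lem;boundary1} is measured at order $s-2$, and Lemma~\ref{eqn;stokes} costs a further $\tfrac{d+1}{2}$ derivatives, so the face currents are estimated at order $s-2-\tfrac{d+1}{2}=s-\tfrac{d+5}{2}$; demanding that this exceed $s_{d-1,g}$ gives exactly $s>s_{d,g}$, and the direct remainder term, which costs only $\tfrac{d+1}{2}$, is strictly less demanding. I expect the main obstacle to be precisely this combinatorial bookkeeping in the inductive step: verifying that the integration variables, the subsets $\{i_1<\cdots<i_k\}$, and the composed height arguments assemble into the sharp form \eqref{eqn;main2} rather than a coarser cube bound, keeping every side length $e^{t_j}$ tracked separately, while simultaneously confirming that the derivative loss at each stage is absorbed with no slack.
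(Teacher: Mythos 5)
Your proposal is correct and follows essentially the same route as the paper's proof: induction on $d$, the orthogonal splitting into invariant and remainder parts, Lemma~\ref{lem;boundary1} for the invariant part (whose fully renormalized summand gives the $k=0$ term via \eqref{eqn;birkhoff} and Lemma~\ref{lem;cf5.5}), Lemma~\ref{eqn;stokes} to convert the remaining terms into face currents of dimension $d-1$ handled by the inductive hypothesis, and the change of variables that turns the $u$-integration into the extra integration variable $u_j$. The only cosmetic differences are that the paper starts the induction at $d=1$ (quoting Theorem 5.8 of \cite{CF15}) and unwinds the direct remainder term all the way down to dimension one, while you take $d=0$ as the base case and absorb the direct remainder in a single application of Stokes plus the inductive hypothesis; your Sobolev-order recursion $s_{d,g}=s_{d-1,g}+\tfrac{d+5}{2}$ matches the exponent in the statement.
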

\begin{proof} We proceed by induction. For $d = 1$, it follows from the Theorem 5.8 in \cite{CF15}. We assume that the result holds for $d-1$.
Decompose the current as a sum of boundary and remainder term as in \eqref{eqn;decom}.

\vspace{4pt}\noindent
\textbf{Step 1.} We firstly estimate the boundary term. By Lemma \ref{lem;boundary1}, renormalize the terms with $r^u = r_1^u\cdots r_d^u$. Then, we have %Setting $t = t_1 + \cdots t_d$,
\begin{align}\label{ine;step1}
\begin{split}
\norm{\I^{-s}[\alpha, (\PPP_{U_d(t)}^{d,\alpha}m)]}_{-s} &\leq e^{-(t_1+\cdots + t_d)/2}\norm{\I^{-s}[r_1^{-t_1}\cdots r_d^{-t_d}\alpha, (\PPP_{U_d(t)}^{d,\alpha}m)]}_{-s}\\
& + C_1(s) \int_{0}^{t_1+\cdots + t_d}e^{-ud/2}\norm{\RR^{-s}[r^{-u}\alpha, (\PPP_{U_d(t)}^{d,\alpha}m)]}_{-(s-2)}du\\
& := (I) + (II).
\end{split}
 \end{align}

By renormalization \eqref{eqn;birkhoff} and Lemma \ref{lem;cf5.5} for unit volume,
\begin{align*}
& \norm{\I^{-s}[r_1^{-t_1}\cdots r_d^{-t_d}\alpha, (\PPP_{U_d(t)}^{d,\alpha}m)]}_{-s} \\
 &= e^{t_1+\cdots + t_d}\norm{\I^{-s}[r_1^{-t_1}\cdots r_d^{-t_d}\alpha, (\PPP_{U_d(0)}^{d,r_1^{-t_1}\cdots r_d^{-t_d}\alpha}m)]}_{-s}\\
& \leq C_2e^{t_1+\cdots + t_d}Hgt ([[r_1^{-t_1}\cdots r_d^{-t_d}\alpha]] )^{1/4}.
 \end{align*}

Hence
$$ (I) \leq C_2e^{(t_1+\cdots t_d)/2}Hgt ([[r_1^{-t_1}\cdots r_d^{-t_d}\alpha]] )^{1/4},$$
where the sum corresponds to the first term $(k=0)$ in the statement.
%\medskip

\vspace{4pt}\noindent
\textbf{Step 2.} To estimate $(II)$, in view of \eqref{eqn;birkhoff} and Lemma \ref{eqn;stokes} provided $s' < (s-2) - (d+1)/2$, we have
\begin{align}
\label{ine;step2}
\begin{split}
\norm{\RR^{-s}[r^{-u}\alpha, (\PPP_{U_d(t)}^{d,\alpha}m)]}_{-(s-2)} &= \norm{e^{ud}\RR^{-s}[r^{-u}\alpha, (\PPP_{U_d(t-u)}^{d,r^{-u}\alpha}m)]}_{-(s-2)}\\
& \leq C_3(s,s')e^{ud}\norm{[r^{-u}\alpha, \partial(\PPP_{U_d(t-u)}^{d,r^{-u}\alpha}m)]}_{-s'}.
\end{split}
 \end{align}

 The boundary  $\partial (\PP_{U_d}^{d, r^{-t}\alpha})$ is the sum of $2d$ currents of dimension $d-1$. These currents are Birkhoff sums of $d$ face subgroups obtained from $\PP_{U_d}^{d, r^{-t}\alpha}$ by omitting one of the base vector fields $X_i$. It is reduced to $(d-1)$ dimensional shape obtained from $U_d(t-u):= [0,e^{t_1-u}]\times \cdots \times[0,e^{t_d-u}]$. For each $1\leq j \leq d$, there are Birkhoff sums along $d-1$ dimensional cubes.
 %We estimate the norm of each boundary term. Set $\Gamma = U_d(t)$ and denote $\PP_{U_{d-1}(t)}^{d-1, r^{-t}\alpha}$ by generic summand of $\partial(\PP_{U_d(t)}^{d,r^{-t}\alpha})$.
 By induction hypothesis, we add all the $d-1$ dimensional cubes by adding all the terms along $j$:%we recursively decompose remainder terms: for $r^u = r_1^u\cdots r_d^u$,
\begin{align}\label{ine;norm}
\begin{split}
& \norm{[r^{-u}\alpha, (\PPP_{U_{d-1}(t-u)}^{d-1,r^{-u}\alpha}m)]}_{-s'}   \leq  C_4(s',d-1)\sum_{j=1}^{d}\sum_{k=0}^{d-1} \sum_{\substack{1 \leq i_1 < \cdots < i_k \leq d\\ i_l \neq j, \forall l} }\int_{0}^{t_{i_k-u}}\cdots\int_{0}^{t_{i_1-u}}\\
& \exp({\frac{1}{2}\sum_{\substack{l=1\\ l\neq j }}^d (t_l - u)   - \frac{1}{2}\sum_{l=1}^k u_{i_l} }) Hgt ( [[\prod_{\substack{1\leq l \leq d \\  l \neq j }}r_l^{-(t_{l}-u)}\prod_{l=1}^k r_{i_l}^{u_{i_l}}(r^{-u}\alpha)]] )^{1/4}du_{i_1}\cdots du_{i_k}.
\end{split}
\end{align}

Combining \eqref{ine;step1} and \eqref{ine;step2}, we obtain the estimate for $(II)$.
\begin{align*}
(II) & \leq C_5(s',d-1)\sum_{j=1}^{d}\sum_{k=0}^{d-1} \sum_{\substack{1 \leq i_1 < \cdots < i_k \leq d\\ i_l \neq j} } \int_{0}^{t_1 + \cdots t_d}\int_{0}^{t_{i_k-u}}\cdots\int_{0}^{t_{i_1-u}}du_{i_1}\cdots du_{i_k}du\\
&   \times \exp({\frac{1}{2}\sum_{\substack{l=1\\ l\neq j }}^d t_l  - \frac{1}{2}u  - \frac{1}{2}\sum_{l=1}^k u_{i_l} })Hgt ( [[\prod_{1\leq l \leq d}r_l^{-t_{l}}\prod_{l=1}^k r_{i_l}^{u_{i_l}}(r_j^{-u+t_j}\alpha)]] )^{1/4}.
 \end{align*}
Applying the change of variable $u_{j} = t_{j} - u$, we obtain
\begin{align*}
&(II)  \leq C_6(s',d-1)\sum_{j=1}^{d}\sum_{k=1}^{d-1} \sum_{\substack{1 \leq i_1 < \cdots < i_k \leq d\\ i_l \neq j} } \\
& \times \Big(\int_{-(t_1 + \cdots t_d) + t_j}^{t_j}\int_{0}^{t_{i_k-u}}\cdots\int_{0}^{t_{i_1-u}}du_{i_1}\cdots du_{i_k}du_j\\
&   \times \exp\big({\frac{1}{2}(t_1 + \cdots  + t_d) - \frac{1}{2}u_j  - \frac{1}{2}\sum_{l=1}^k u_{i_l} }\big)Hgt ( [[\prod_{1\leq l \leq d}r_l^{-t_{l}}\prod_{l=1}^k r_{i_l}^{u_{i_l}}(r_j^{-u_{j}}\alpha)]] )^{1/4}\Big).
 \end{align*}
Simplifying multi-summation above, (with $-(t_1 + \cdots t_d) + t_j \leq 0$)
\begin{align*}
(II) & \leq C_7(s',d)\sum_{k=1}^{d} \sum_{\substack{1 \leq i_1 < \cdots < i_k \leq d} } \int_{0}^{t_{i_k}}\cdots\int_{0}^{t_{i_1}}du_{i_1}\cdots du_{i_k}\\
&   \times \exp\big({\frac{1}{2}(t_1 + \cdots  t_d)   - \frac{1}{2}\sum_{l=1}^k u_{i_l} }\big)Hgt ( [[\prod_{1\leq l \leq d}r_l^{-t_{l}}\prod_{l=1}^k r_{i_l}^{u_{i_l}}\alpha]] )^{1/4}.
 \end{align*}

\noindent
\textbf{Step 3.}  Now we turn to estimate of remainder terms. By Lemma \ref{eqn;stokes}, the remainder term of $d$-dimensional rectangles (cube) decomposes to lower dimensional boundary and remainder terms (up to dimension 1). Combining with the estimate from step 1 and 2, we have the following
\begin{equation}\label{eqn;remain6}
\norm{\RR^{-s}[\alpha, \partial(\PPP_{U_d}^{d,\alpha}m)]}_{-s} \leq C(s)\sum_{i=1}^{d-1} \norm{\I^{-s}[\alpha, (\PPP_{U_i}^{i,\alpha}m)]}_{-s} + \norm{\RR^{-s}[\alpha, (\PPP_{U_1}^{1,\alpha}m)]}_{-s}
\end{equation}
where $U_i$ is $i$-dimensional rectangle. The sum of the boundary terms is absorbed in the bound of $(I) + (II)$. For 1-dimensional remainder with interval $\Gamma_T$, the boundary is a 0-dimensional current. Then,
$$\langle \partial(\PPP_{[0,T]}^{1,\alpha}m), f \rangle = f(\PP_{T}^{1,\alpha}m) - f(m).$$
Hence, by Sobolev embedding theorem and by definition of Sobolev constant \eqref{def;sobolev} and  \eqref{ineq;sobolev},
$$\norm{\RR^{-s}[\alpha, \partial(\PPP_{[0,T]}^{1,\alpha}m)]}_{-s} \leq 2B_{s}([[\alpha]] ) \leq C(s)Hgt ([[\alpha]] )^{1/4}.$$
Then, by inequality (\ref{201})
\begin{align*}
C(s)Hgt ([[\alpha]] )^{1/4} &= C(s)Hgt ([[r_{\bm{t}}r_{-\bm{t}}\alpha]] )^{1/4}\\
&\leq C(s)e^{(t_1+\cdots +t_d)/2}Hgt ([[r_1^{-t_1}\cdots r_d^{-t_d}\alpha]] )^{1/4}.
 \end{align*}
This implies that 1-dimensional remainder has the same type of bound $(I)$.
% \eqref{eqn;remain6}
Therefore, the theorem follows from combining all the terms $(I), (II)$, and $d$-dimensional remainder \eqref{eqn;remain6}.
\end{proof}

\begin{remark}
It is also reasonable to decompose the rectangle $U_d(t)$ as a union of several squares and renormalize their faces. However, it may involve computational difficulties. Plus, an approach of simply summing up squares may provide a weaker upper bound of ergodic integrals. To obtain a necessary bound for the Lemma \ref{04}, we rather simply generalized the strategy of Theorem 5.10 in \cite{CF15}.
\end{remark}

Let us set
\begin{equation}\label{def;kts}
\K_{\alpha,\textbf{t},s}(\Gamma)  := \norm{\RR^{-s}[r_{-\bm{t}}(\alpha), (\PPP_{U_\Gamma}^{d,r_{-\bm{t}}(\alpha)}m)]}_{-(s+1)}.
\end{equation}
Now we prove the remainder estimate that will be used in \eqref{eqn;remainders}.
\begin{lemma}\label{04} Let $s > s_{d,g}$. There exists a constant $C(s,\Gamma)>0$ such that for any rectangle $U_\Gamma =  [0,e^{\Gamma_1}]\times \cdots \times [0,e^{\Gamma_d}]$,
%and all $m \in M$,
$$\K_{\alpha,\textbf{t},s}(\Gamma) \leq  C(s,\Gamma)Hgt ([[r_{-\bm{t}}(\alpha)]] )^{1/4}.$$
\end{lemma}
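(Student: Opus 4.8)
The plan is to estimate $\K_{\alpha,\bm{t},s}(\Gamma)$ by discarding the orthogonal projection and then feeding the \emph{fixed} rectangle $U_\Gamma$ directly into the rectangular bound of Theorem \ref{thm;3.3}, evaluated at the renormalized base point $\beta := r_{-\bm{t}}(\alpha)$. Set $X := [\beta, \PPP_{U_\Gamma}^{d,\beta}m]$. Since $\RR^{-s}$ is the orthogonal projection for the $\norm{\cdot}_{-s}$ inner product it is a contraction, and since the negative Sobolev norms are monotone ($\norm{\D}_{-(s+1)} \le \norm{\D}_{-s}$ for every current $\D$, because $\norm{\omega}_s \le \norm{\omega}_{s+1}$ on forms), we obtain
$$\K_{\alpha,\bm{t},s}(\Gamma) = \norm{\RR^{-s}X}_{-(s+1)} \le \norm{\RR^{-s}X}_{-s} \le \norm{X}_{-s}.$$
Thus it suffices to bound the full current $\norm{X}_{-s}$, and no further use of the renormalization identity \eqref{eqn;birkhoff} is needed since $U_\Gamma$ does not depend on $\bm{t}$.

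Next I would apply Theorem \ref{thm;3.3} with base automorphism $\beta$ and rectangle $U_\Gamma = [0,e^{\Gamma_1}]\times\cdots\times[0,e^{\Gamma_d}]$, i.e. with the roles of the $t_i$ played by the fixed numbers $\Gamma_i$ (the threshold $s > s_{d,g}$ is inherited verbatim). This produces a finite sum, indexed by $0\le k\le d$ and $1\le i_1<\cdots<i_k\le d$, of integrals over the bounded boxes $[0,\Gamma_{i_1}]\times\cdots\times[0,\Gamma_{i_k}]$, with integrand
$$\exp\!\Big(\tfrac12\sum_{l=1}^d\Gamma_l-\tfrac12\sum_{l=1}^k u_{i_l}\Big)\, Hgt\big([[\textstyle\prod_{1\le j\le d}r_j^{-\Gamma_j}\prod_{l=1}^k r_{i_l}^{u_{i_l}}\beta]]\big)^{1/4}.$$

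The decisive point is to control each height factor. The composed renormalization $\prod_{1\le j\le d}r_j^{-\Gamma_j}\prod_{l=1}^k r_{i_l}^{u_{i_l}}$ equals $\exp(-\bm{a}\hat\delta(d))$ with $a_j=\Gamma_j-u_{i_l}$ when $j=i_l$ and $a_j=\Gamma_j$ otherwise; since $0\le u_{i_l}\le \Gamma_{i_l}$ throughout the domain of integration, every $a_j$ lies in $[0,\Gamma_j]\subset\R_+$, so $\bm{a}\in\R_+^d$. Hence the height bound \eqref{201} applies and gives
$$Hgt\big([[\exp(-\bm{a}\hat\delta(d))\beta]]\big) \le \big(\det(e^{\bm{a}\hat\delta})\big)^2 Hgt([[\beta]]) \le C(\Gamma)\,Hgt([[\beta]]),$$
the distortion factor $(\det(e^{\bm{a}\hat\delta}))^2$ depending only on $\Gamma$ (and not on $\bm{t}$) because $0\le a_j\le\Gamma_j$. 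The remaining ingredients are harmless: the exponential prefactor is at most $e^{\frac12\sum_l\Gamma_l}$, each box has volume at most $\prod_l\Gamma_{i_l}$, and the number of index sets is at most $2^d$. Collecting these $\Gamma$-dependent constants into a single $C(s,\Gamma)>0$ yields
$$\K_{\alpha,\bm{t},s}(\Gamma)\le \norm{X}_{-s}\le C(s,\Gamma)\,Hgt([[r_{-\bm{t}}(\alpha)]])^{1/4},$$
which is the assertion.

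The only step requiring genuine care is the one just highlighted: one must verify that the auxiliary renormalization parameters $\bm{a}$ remain in the positive cone $\R_+^d$, so that \eqref{201} is applicable with a $\Gamma$-dependent, $\bm{t}$-independent distortion; everything else is routine bookkeeping. As an alternative that avoids Theorem \ref{thm;3.3} altogether, one could instead bound the remainder directly by Stokes' theorem, Lemma \ref{eqn;stokes}, choosing $g+\tfrac12 < s' < s-\tfrac{d+1}{2}$ (the interval being nonempty since $s>s_{d,g}$), and then estimate each of the $2d$ lower-dimensional boundary faces of the fixed rectangle by the $(d-1)$-dimensional form of Lemma \ref{lem;cf5.5} together with \eqref{ineq;sobolev}; this route likewise produces the optimal height exponent $1/4$.
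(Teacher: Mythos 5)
Your argument is correct, but it takes a genuinely different route from the paper's. The paper keeps the remainder structure: it first invokes Stokes' theorem (Lemma \ref{eqn;stokes}) to replace the projection $\RR^{-s}$ of the $d$-dimensional current by boundary currents of dimension $d-1$, then applies Theorem \ref{thm;3.3} in dimension $d-1$, and finally uses \eqref{201} to freeze the height at $r_{-\bm{t}}(\alpha)$; this yields the constant in \eqref{eqn;remainder1}, which grows only polynomially in the $\Gamma_i$. You instead discard the projection and the norm shift at the outset --- both inequalities $\norm{\RR^{-s}X}_{-(s+1)}\le\norm{\RR^{-s}X}_{-s}\le\norm{X}_{-s}$ are valid, the first by monotonicity of the dual norms and the second because $\RR^{-s}$ is an orthogonal projection in $A_d(\mathfrak{p},\M^{-s})$ --- and then run Theorem \ref{thm;3.3} in dimension $d$ at the base point $r_{-\bm{t}}(\alpha)$. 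Your verification that the auxiliary renormalization parameters $\bm{a}$ stay in $\R_+^d$, so that \eqref{201} applies with a $\bm{t}$-independent distortion, is exactly the point that needs checking and is done correctly. Two remarks. First, once you have reduced to $\norm{X}_{-s}$ for the full Birkhoff current over the fixed rectangle, Theorem \ref{thm;3.3} is overkill: Lemma \ref{lem;cf5.5} together with \eqref{ineq;sobolev} already gives $\norm{X}_{-s}\le C(s)\,|U_\Gamma|\,Hgt([[r_{-\bm{t}}(\alpha)]])^{1/4}$, which is the assertion with $C(s,\Gamma)=C(s)e^{\Gamma_1+\cdots+\Gamma_d}$. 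What the Stokes-first route buys is a constant polynomial in the $\Gamma_i$ rather than of the order of the volume; since the lemma only asserts existence of some $C(s,\Gamma)$ and its use in Lemma \ref{35} integrates over $\bm{t}$ at fixed $\Gamma$, that sharpening is not needed. Second, the alternative you sketch at the end (Lemma \ref{eqn;stokes} with $g+\tfrac12<s'<s-\tfrac{d+1}{2}$, then the $(d-1)$-dimensional form of Lemma \ref{lem;cf5.5}) is essentially the paper's actual proof with Theorem \ref{thm;3.3} replaced by its $k=0$ term, and the stated interval for $s'$ is indeed nonempty for $s>s_{d,g}$.
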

\begin{proof}
By Lemma \ref{eqn;stokes}, we estimate the bound of $d-1$ (renormalized) currents instead of $U_\Gamma$. Then, by Theorem \ref{thm;3.3},
we obtain %the remainder estimate %\eqref{eqn;main2}
\begin{align*}
\K_{\alpha,t}(\Gamma) & \leq C\sum_{k=0}^{d-1} \sum_{1 \leq i_1 < \cdots < i_k \leq d-1} \int_{0}^{\Gamma_{i_k}}\cdots\int_{0}^{\Gamma_{i_1}} \exp({\frac{1}{2}\sum_{l=1}^{d-1} \Gamma_l - \frac{1}{2}\sum_{l=1}^k u_{i_l} })\\
& \times  Hgt ( [[\prod_{1\leq j \leq d-1}r_j^{-\Gamma_{j}}\prod_{l=1}^k r_{i_l}^{u_{i_l}}(r_{-\bm{t}}(\alpha))]] )^{1/4}du_{i_1}\cdots du_{i_k}.
\end{align*}
In view of (\ref{201}) for $0 \leq k \leq d-1$,
$$ Hgt ( [[\prod_{1\leq j \leq d-1}r_j^{-\Gamma_{j}}\prod_{l=1}^k r_{i_l}^{u_{i_l}}(r_{-\bm{t}}(\alpha))]] )^{1/4} \leq e^{\frac{1}{2}(\sum_{l=1}^k{u_{i_l}}  - \sum_{l=1}^d\Gamma_l)}Hgt ([[r_{-\bm{t}}(\alpha)]] )^{1/4}. $$

Then, we obtain %setting $C(s,\Gamma)$ all the terms of $t_1, \cdots t_d$, we obtain
\begin{align}\label{eqn;remainder1}
\begin{split}
&\norm{\RR^{-s}[r_{-\bm{t}}(\alpha), (\PPP_{U_\Gamma}^{d,r_{-\bm{t}}\alpha}m)]}_{-s} \\
&\leq  C(s) \Big(\sum_{k=0}^{d-1} \sum_{1 \leq i_1 < \cdots < i_k \leq d-1} \prod_{l=1}^k \Gamma_{i_l} \Big) Hgt ([[r_{-\bm{t}}(\alpha)]] )^{1/4}\\
& \leq C(s,\Gamma)Hgt ([[r_{-\bm{t}}(\alpha)]] )^{1/4}.
\end{split}
\end{align}
Therefore, we obtain the conclusion.
\end{proof}

\subsection{Constructions of the functionals}
For fixed $\alpha \in Aut_0(\mathsf{H}^g)$, let $\Pi^{-s}_H : A_d(\mathfrak{p}, W_\alpha^{-s}(M)) \rightarrow A_d(\mathfrak{p}, W_\alpha^{-s}(H))$ denote the orthogonal projection on a single irreducible unitary representation $H$. We further decompose this projection operator with the basic current $B^{-s,H}_{\alpha}$ and its remainder $R^{-s,H}_{\alpha}$ given by
$$\Pi^{-s}_H =  \B^{-s}_{H,\alpha}(\Gamma)B^{-s,H}_{\alpha} + R_\alpha^{-s,H}.$$
The map $\B^{-s}_{H,\alpha} : A_d(\mathfrak{p}, W_\alpha^{-s}(M)) \rightarrow \C$ denotes the orthogonal component in the direction of basic current, supported on a single irreducible unitary representation.
%\medskip

The {Bufetov functionals} on the standard rectangle $\Gamma$ are defined for all $[\alpha] \in DC$ as follows.

\begin{lemma}\label{35} Let $[\alpha] \in DC(L)$. For $s> s_{d,g}$,  the limit
$$\hat\beta_H(\alpha,\Gamma) = \lim_{t_1, \dotsc, t_d \rightarrow \infty} e^{-(t_1+ \cdots + t_d)/2}\B^{-s}_{H,r_\textbf{-t}(\alpha)}(\Gamma)$$
exists and it defines a finitely-additive measure on the set of standard rectangles.
Moreover, there exists a constant $C(s, \Gamma)>0$ such that the following estimate holds:
\begin{equation}\label{asym}
\norm{\Pi_{H,\alpha}^{-s}(\Gamma) - \hat\beta_H(\alpha,\Gamma)B_\alpha^H}_{\alpha,-s} \leq  C(s,\Gamma)(1+L).
\end{equation}
\end{lemma}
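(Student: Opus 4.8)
The plan is to realize $\hat\beta_H(\alpha,\Gamma)$ as the limit of the basic-current component of the renormalized Birkhoff current and to prove convergence by a Cauchy argument whose increments are controlled by the remainder bound of Lemma \ref{04}. Write $|\bm t| = t_1 + \cdots + t_d$, set $\DD_{\bm t} := \PPP^{d, r_{-\bm t}(\alpha)}_{U_\Gamma}m$, and define $\beta(\bm t) := e^{-|\bm t|/2}\B^{-s}_{H, r_{-\bm t}(\alpha)}(\Gamma)$. Since the basic current spans a one-dimensional $\mathsf{P}$-invariant line in $Z_d(\mathfrak{p}, W^{-s}(M))$ on which the renormalization flow acts by the scalar $e^{-|\bm t|/2}$ by \eqref{eqn;interntwine}, the prefactor $e^{-|\bm t|/2}$ is exactly the one that would render $\beta$ constant if $\DD_{\bm t}$ were closed. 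Hence the entire variation of $\beta$ is driven by the non-closed part $\RR^{-s}_{r_{-\bm t}(\alpha)}(\DD_{\bm t})$, and it suffices to show this variation is integrable in $\bm t$.

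First I would establish the increment estimate. Because $Z_d$ is intrinsic (closedness is metric-independent) while the orthogonal splitting $\I^{-s}_{r_{-\bm t}\alpha}\oplus\RR^{-s}_{r_{-\bm t}\alpha}$ rotates with the frame, the change of $\beta$ along the renormalization orbit equals the rate at which the remainder leaks into the basic line. Quantitatively, refining the mechanism behind Lemma \ref{lem;boundary1} into a scalar statement for the single basic component, iterating it coordinatewise through the factorization $r_{\bm t} = r_1^{t_1}\cdots r_d^{t_d}$, and reducing dimension at each step by Stokes (Lemma \ref{eqn;stokes}) exactly as in Theorem \ref{thm;3.3}, one expects a nested bound of the form
\[
|\beta(\bm t') - \beta(\bm t)| \le C(s)\int_{[\bm t,\bm t']} e^{-|\bm\tau|/2}\,\norm{\RR^{-s}[r_{-\bm\tau}(\alpha), \DD_{\bm\tau}]}_{-(s-2)}\, d\bm\tau
\]
for $\bm t' \ge \bm t$ coordinatewise. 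This step is the main obstacle: one must pass from the norm inequality of Lemma \ref{lem;boundary1} to a genuine scalar increment of $\B^{-s}_H$, handle the frame dependence of the projection and the exponential stretching of the rectangle produced by \eqref{eqn;birkhoff}, and assemble the coordinatewise one-parameter increments into the honest $d$-dimensional integral rather than a mere ray integral.

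Next I would insert the remainder bound. By Lemma \ref{04}, $\norm{\RR^{-s}[r_{-\bm\tau}(\alpha), \DD_{\bm\tau}]}_{-(s-2)} \le C(s,\Gamma)\,\mathrm{Hgt}([[r_{-\bm\tau}(\alpha)]])^{1/4}$, so the integrand is dominated by $e^{-|\bm\tau|/2}\mathrm{Hgt}([[r_{-\bm\tau}(\alpha)]])^{1/4}$. For $[\alpha] \in DC(L)$ the definition \eqref{cond;newdc} states precisely that this function is integrable on $\R_+^d$ with integral at most $L$; consequently the tails $\int_{\{\exists i:\ \tau_i \ge T\}}(\cdots)\,d\bm\tau \to 0$ as $T\to\infty$, which gives the Cauchy property for the full $d$-parameter limit and the existence of $\hat\beta_H(\alpha,\Gamma) = \lim_{\bm t\to\infty}\beta(\bm t)$. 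Taking $\bm t = 0$ and $\bm t'\to\infty$ bounds $|\B^{-s}_{H,\alpha}(\Gamma) - \hat\beta_H(\alpha,\Gamma)|$ by $C(s,\Gamma)L$; combining this with the decomposition $\Pi^{-s}_{H,\alpha}(\Gamma) = \B^{-s}_{H,\alpha}(\Gamma)B^{-s,H}_\alpha + R^{-s,H}_\alpha$ and with $\norm{R^{-s,H}_\alpha}_{-s} \le C(s,\Gamma)\mathrm{Hgt}([[\alpha]])^{1/4}$ from Lemma \ref{04} yields \eqref{asym}. To absorb the base term into $(1+L)$ I would use \eqref{201} in the form $\mathrm{Hgt}([[r_{-\bm\tau}(\alpha)]])^{1/4} \ge e^{-|\bm\tau|/2}\mathrm{Hgt}([[\alpha]])^{1/4}$, so that integrating the $DC(L)$ integrand over $[0,1]^d$ forces $\mathrm{Hgt}([[\alpha]])^{1/4} \le C_d L$.

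Finally, finite additivity is inherited from the Birkhoff integral. For a partition $\Gamma = \bigcup_i \Gamma_i$ into standard rectangles with pairwise disjoint interiors, additivity of the Lebesgue integral gives $\PPP^{d,\alpha}_{U_\Gamma}m = \sum_i \PPP^{d,\alpha}_{U_{\Gamma_i}}(m_i)$ for the appropriate base points $m_i$; since $\Pi^{-s}_H$, the projection onto the basic line, and the limit defining $\hat\beta_H$ are all linear, the identity $\hat\beta_H(\alpha,\Gamma) = \sum_i \hat\beta_H(\alpha,\Gamma_i)$ follows, so $\hat\beta_H(\alpha,\cdot)$ is a finitely-additive measure on the standard rectangles.
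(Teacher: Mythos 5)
Your overall architecture matches the paper's: the variation of $e^{-|\bm t|/2}\B^{-s}_{H,r_{-\bm t}(\alpha)}(\Gamma)$ is driven by the remainder, the remainder is bounded by $\mathrm{Hgt}([[r_{-\bm\tau}(\alpha)]])^{1/4}$ via Lemma \ref{04}, integrability is exactly the $DC(L)$ condition \eqref{cond;newdc}, the estimate \eqref{asym} follows from the $\bm t=0$ comparison, and finite additivity is linearity. Your trick for absorbing $\mathrm{Hgt}([[\alpha]])^{1/4}$ into $C(1+L)$ by integrating the $DC(L)$ integrand over $[0,1]^d$ is sound and in fact more explicit than what the paper writes.

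However, the step you yourself flag as ``the main obstacle'' --- the increment estimate for the scalar coefficient $\B^{-s}_H$ --- is a genuine gap, and the route you propose for it (refining Lemma \ref{lem;boundary1} into a scalar statement and iterating the Stokes induction of Theorem \ref{thm;3.3}) is not the mechanism that works: Lemma \ref{lem;boundary1} is a norm inequality for the projection onto all closed currents, and there is no evident way to extract from it the signed increment of the component along the one basic line inside a fixed irreducible $H$. The paper instead obtains an \emph{exact identity}, not an inequality, by comparing the two orthogonal splittings of the \emph{same} fixed current $\Pi^{-s}_{H,\alpha}(\Gamma)=\B^{-s}_{\alpha,\bm t}(\Gamma)B_{\alpha,\bm t}+R_{\alpha,\bm t}=\B^{-s}_{\alpha,\bm{t+h}}(\Gamma)B_{\alpha,\bm{t+h}}+R_{\alpha,\bm{t+h}}$ and using the scaling $B_{\alpha,\bm{t+h}}=e^{-(h_1+\cdots+h_d)/2}B_{\alpha,\bm t}$ from \eqref{eqn;interntwine}; this gives
\begin{equation*}
\B^{-s}_{\alpha,\bm{t+h}}(\Gamma)=e^{(h_1+\cdots+h_d)/2}\B^{-s}_{\alpha,\bm t}(\Gamma)+\B^{-s}_{\alpha,\bm{t+h}}(R_{\alpha,\bm t}),
\end{equation*}
where the error is the component of the \emph{old} remainder along the \emph{new} basic line. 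Differentiating at $h_i=0$ one coordinate at a time produces a chain of first-order linear ODEs whose explicit solution is the $d$-fold integral formula \eqref{eqn;odefinal}; the source term is then identified through the intertwining operator $U_{-\bm h}$ as $-\B^{-s}_{\alpha,\bm t}\big((\sum_i X_i(t)-\tfrac d2)R_{\alpha,\bm t}\big)$, which is what Lemma \ref{04} bounds (note the shift to the $-(s+1)$ norm in \eqref{def;kts}, needed because of the extra derivative). So the integral representation you ``expect'' is correct in substance, but to complete the proof you must replace the appeal to Lemma \ref{lem;boundary1} by this two-splittings/ODE argument.
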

\begin{proof}
For simplicity, we omit dependence of $H$. For every $\textbf{t} \in \R^d$, we have the following orthogonal splitting:
%$$C_1 e^{(t_1+\cdots t_d)/2}Hgt ([[\prod_{1\leq j \leq d}r_j^{-t_{j}}\alpha]] )^{1/4}$$
$$\Pi_{H,\alpha}^{-s}(\Gamma) = \B^{-s}_{\alpha,\textbf{t}}(\Gamma)B_{\alpha,\textbf{t}} + R_{\alpha,\textbf{t}}, $$
where
$$\quad \B^{-s}_{\alpha,\textbf{t}}:= \B^{-s}_{H,r_\textbf{-t}(\alpha)},  \ B_{\alpha,\textbf{t}}:=B_{r_\textbf{-t}(\alpha)}^{-s,H}, \ R_{\alpha,\textbf{t}}:=R_{r_\textbf{-t}(\alpha)}^{-s,H}.$$
%\quad \text{and}
%$$\Pi^{-s}(\Gamma) = \B^{-s}_{\alpha,t}(\Gamma)B_{\alpha,t} + R_{\alpha,t}$$
For any $\textbf{h} \in \R^d$, we have
$$\B^{-s}_{\alpha,\textbf{t+h}}(\Gamma)B_{\alpha,\textbf{t+h}} + R_{\alpha,\textbf{t+h}} = \B^{-s}_{\alpha,\textbf{t}}(\Gamma)B_{\alpha,\textbf{t}} + R_{\alpha,\textbf{t}}.$$

By reparametrization \eqref{eqn;interntwine}, we have $B_{\textbf{t+h}} = e^{-(h_1+\cdots +h_d)/2}B_\textbf{t}$ and
\begin{equation}\label{eqn;rel}
\B^{-s}_{\alpha,\textbf{t+h}}(\Gamma) = e^{(h_1+\cdots +h_d)/2}\B^{-s}_{\alpha,\textbf{t}}(\Gamma) + \B^{-s}_{\alpha,\textbf{t+h}}(R_{\alpha,\textbf{t}})
\end{equation}
and it follows that
\begin{equation*}
\B^{-s}_{\alpha,\textbf{t+h}}(\Gamma) = e^{h_1/2}\B^{-s}_{\alpha,t_1, t_2+h_2,\cdots, t_d+h_d }(\Gamma) + \B^{-s}_{\alpha,\textbf{t+h}}(R_{\alpha,\textbf{t}}).
\end{equation*}

By differentiating at $h_1 =0$,
\begin{equation}\label{eq1}
\frac{d}{dt_1}\B^{-s}_{\alpha,t_1, t_2+h_2,\cdots, t_d+h_d}(\Gamma) = \frac{1}{2}\B^{-s}_{\alpha,t_1, t_2+h_2,\cdots, t_d+h_d}(\Gamma) + [\frac{d}{dh_1}\B^{-s}_{\alpha,\textbf{t+h}}(R_{\alpha,\textbf{t}})]_{h_1=0}.
\end{equation}

Therefore, we solve the following first order ODE
$$\frac{d}{dt_1}\B^{-s}_{\alpha,t_1, t_2+h_2,\cdots , t_d+h_d}(\Gamma) = \frac{1}{2}\B^{-s}_{\alpha,t_1, t_2+h_2,\cdots, t_d+h_d}(\Gamma) + \K^{(1)}_{\alpha,\bm{t},s}(\Gamma) $$
where
$$\K^{(1)}_{\alpha,\bm{t},s}(\Gamma) := [\frac{d}{dh_1}\B^{-s}_{\alpha,\textbf{t+h}}(R_{\alpha,\textbf{t}})]_{h_1=0}.$$

Then, the solution of the differential equation is
\begin{align*}
\B^{-s}_{\alpha,t_1, t_2+h_2,\cdots, t_d+h_d}(\Gamma) &= e^{t_1/2}\left(\B^{-s}_{\alpha,0, t_2+h_2,\cdots, t_d+h_d}(\Gamma)+\int_0^{t_1} e^{-\tau_1 /2}\K^{(1)}_{\alpha,\tau,s}(\Gamma)d\tau_1 \right)\\
&= e^{t_1/2}\B^{-s}_{\alpha,0, t_2+h_2,\cdots, t_d+h_d}(\Gamma)+\int_0^{t_1} e^{(t_1-\tau_1) /2}\K^{(1)}_{\alpha,\tau,s}(\Gamma)d\tau_1.
\end{align*}
Note by reparametrization
$$e^{t_1/2}\B^{-s}_{\alpha,0, t_2+h_2,\cdots, t_d+h_d}(\Gamma) = e^{h_2/2}\B^{-s}_{\alpha,t_1, t_2, t_3+h_3,\cdots, t_d+h_d}(\Gamma)$$
%By (\ref{eqn;reno1})
and it is possible to differentiate the previous equation at $h_2 = 0$ again. Then
\begin{equation}\label{eqn;ode2}
\frac{d}{dt_2}\B^{-s}_{\alpha,t_1, t_2,\cdots, t_d+h_d}(\Gamma) = \frac{1}{2}\B^{-s}_{\alpha,t_1, t_2, t_3+h_3, \cdots, t_d+h_d}+\int_0^{t_1} e^{(t_1-\tau_1) /2} \K^{(2)}_{\alpha,\tau,s}(\Gamma)d\tau_1
\end{equation}
where $\K^{(2)}_{\alpha,\tau,s}(\Gamma) = [\frac{d}{dh_2}\K^{(1)}_{\alpha,\tau,s}(\Gamma)]_{h_2 = 0}$.

Then, the solution of equation \eqref{eqn;ode2} is obtained by reparametrization
\begin{align*}
&\B^{-s}_{\alpha,t_1, t_2,\cdots, t_d+h_d}(\Gamma)   \\
&  = e^{t_2/2}\Big(\B^{-s}_{\alpha,t_1, 0,t_3+h_3,\cdots, t_d+h_d}+\int_0^{t_2} e^{-\tau_2 /2}\int_0^{t_1} e^{(t_1-\tau_1) /2} \K^{(2)}_{\alpha,\tau,s}(\Gamma)d\tau_1d\tau_2\Big)\\
&  =  e^{h_3/2}\B^{-s}_{\alpha,t_1, t_2,t_3,\cdots, t_d+h_d}+\int_0^{t_2} e^{(t_2-\tau_2) /2}\int_0^{t_1} e^{(t_1-\tau_1) /2} \K^{(2)}_{\alpha,\tau,s}(\Gamma)d\tau_1d\tau_2.
\end{align*}

Inductively, we solve the first order ODE repeatedly and obtain the following  solution
\begin{equation}\label{eqn;odefinal}
\B^{-s}_{\alpha,\bm{t}}(\Gamma) = e^{(t_1+\cdots + t_d)/2}\Big(\B^{-s}_{\alpha,0}+\int_0^{t_d}\cdots \int_0^{t_1} e^{-(\tau_1+\cdots \tau_d)/2}\K^{(d)}_{\alpha,\tau,s}(\Gamma)d\tau_1\cdots d\tau_d \Big)
\end{equation}
where
$$\K^{(d)}_{\alpha,\textbf{t},s}(\Gamma) = [\frac{d}{dh_d}\cdots \frac{d}{dh_1}\B^{-s}_{\alpha,\textbf{t+h}}(R_{\alpha,\textbf{t}})]_{h_d,\cdots, h_1=0}.$$

Let $\langle \cdot, \cdot \rangle_{\alpha, \bm{t}}$ denote the inner product in the space of Hilbert current $A_d(\mathfrak{p}, W_{r_\textbf{-t}(\alpha)}^{-s}\\(H))$. %$\Omega^{-s}_{r_\textbf{-t}[\alpha]}$
By the intertwining formula \eqref{eqn;interntwining},
\begin{align*}
\B^{-s}_{\alpha,\bm{t+h}}(R_{\alpha,\bm{t}}) & = \langle R_{\alpha,\bm{t}} , \frac{B_{\alpha,\bm{t+h}}}{|B_{\alpha,\bm{t+h}}|_{\bm{t+h}}^2}\rangle_{\alpha,\bm{t+h}}\\
& =  \langle R_{\alpha,\bm{t}}\circ U_{\bm{-h}} ,\frac{B_{\alpha,\bm{t+h}}\circ U_{\bm{-h}}}{|B_{\alpha,\bm{t+h}}|_{\bm{t+h}}^2} \rangle_{\alpha, \bm{t}}\\
& =  \langle R_{\alpha,\bm{t}}\circ U_{\bm{-h}} , \frac{B_{\alpha,\bm{t}}}{|B_{\alpha,\bm{t}}|_{\bm{t}}^2} \rangle_{\alpha, \bm{t}} = \B^{-s}_{\alpha,\bm{t}}(R_{\alpha,\bm{t}}\circ U_{\bm{-h}}).
\end{align*}

In the sense of distributions,
\begin{align*}
\frac{d}{dh_d}\cdots \frac{d}{dh_1}(R_{\alpha,\bm{t}}\circ U_{\bm{-h}}) &= -R_{\alpha,\bm{t}}\circ(\frac{d}{2} + \sum_{i=1}^{d}{X_i}(t))\circ U_{\bm{-h}} \\
& = [(\sum_{i=1}^{d}{X_i}(t) - \frac{d}{2})R_{\alpha,\bm{t}}]\circ U_{\bm{-h}}.
\end{align*}
Then we compute %derivative term of (\ref{eq1}) in representation,
$$[\frac{d}{dh_d}\cdots \frac{d}{dh_1}(\B^{-s}_{\alpha,\bm{t+h}}(R_{\alpha,\bm{t}}))]_{\bm{h}=0} = -\B^{-s}_{\alpha,\bm{t}}((\sum_{i=1}^{d}{X_i}(t) - \frac{d}{2})R_{\alpha,\bm{t}}).$$

Recall by \eqref{def;kts} that
$\K_{\alpha,\textbf{t},s}(\Gamma) = \norm{\RR^{-s}_{\alpha,\bm{t}}}_{r_{-\bm{t}}(\alpha),-(s+1)}$. By Proposition \ref{prop;jackma} and Lemma \ref{04}, %for any $\epsilon >0$
\begin{equation}\label{eqn;remainders}
 |\B^{-s}_{\alpha,\bm{t}}((\sum_{i=1}^{d}{X_i}(t) - \frac{d}{2})R_{\alpha,\bm{t}})| \leq  \K_{\alpha,\textbf{t},s}(\Gamma)
 \leq  C(s,\Gamma) Hgt ([[r_{-\bm{t}}(\alpha)]] )^{1/4}.
 %\leq C(s,\Gamma)(t_1\cdots t_d)^{\frac{d}{2(g+1)}+\epsilon}.
\end{equation}

Therefore, the solution of equation \eqref{eqn;odefinal} exists under Diophantine condition (\ref{cond;newdc}) and  the following holds:
$$\lim_{t_1, \dotsc, t_d \rightarrow \infty} e^{-(t_1+\cdots + t_d)/2}\B^{-s}_{\alpha,t}(\Gamma) = \hat\beta_H(\alpha,\Gamma).$$%\lim_{t_d \rightarrow \infty}\cdots \lim_{t_1 \rightarrow \infty}
Moreover, the complex number
$$\hat\beta_H(\alpha,\Gamma) = \B^{-s}_{\alpha,0}+\int_0^\infty\cdots \int_0^{\infty} e^{-(\tau_1+\cdots + \tau_d)/2}\K_{\alpha,\tau,s}(\Gamma)d\tau_1\cdots d\tau_d  $$
depends continuously on $\alpha \in DC(L)$. Since we have
$$\Pi_{H,\alpha}^{-s}(\Gamma) - \hat\beta(\alpha,\Gamma)B_\alpha^H = R_0 - \left(\int_0^\infty\cdots\int_0^\infty e^{-(\tau_1+ \cdots + \tau_d) /2}\K_{\alpha,\tau,s}(\Gamma)d\tau_1\cdots d\tau_d\right)B_\alpha^H,$$
by the Diophantine condition again,
$$\norm{\Pi_{H,\alpha}^{-s}(\Gamma) - \hat\beta_H(\alpha,\Gamma)B_\alpha^H}_{\alpha,-s} \leq  C(s,\Gamma)(1+L).$$
\end{proof}

\subsection{Proof of Theorem \ref{hi}}
The proof of Theorem \ref{hi} follows immediately from the refinement to the constructions of Bufetov functionals (see also \cite[\S 2.5]{BF14} for horocycle flows).

\emph{Notation.} The action of flow $\{r_t\}_{t \in \R}$ on a current $\mathcal{C}$ is defined by pull-back as follows:
\[
(r_t^*\mathcal{C})(\omega) = \mathcal{C}(r_{-t}^*\omega), \quad \text{for any smooth form } \omega.
\]

\begin{lemma}[Invariance]\label{lem;invar}
Let $1 \leq d \leq g$. The functional $\hat\beta_H$ defined on $d$-standard rectangle $\Gamma = \Gamma^X_\textbf{T}$ is invariant under the action of $(\QQ_y^{j,Y})$ for any $y \in \R_+^j$ and $1\leq j\leq d$. That is,
$$\hat\beta_H(\alpha,(\QQ^{j,Y}_y)_*\Gamma) = \hat\beta_H(\alpha,\Gamma).$$
\end{lemma}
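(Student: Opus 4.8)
\emph{The plan is to reduce the invariance to a statement about the basic current and then control it with the renormalization estimates already in hand.} First I would use that $\QQ^{j,Y}_y$ commutes with the centre $Z(\mathsf H^g)$, hence preserves every irreducible component $H$ and commutes with the projection $\Pi^{-s}_H$, and that $(\QQ^{j,Y}_y)_*$ is unitary on the space of Sobolev currents (left translations preserve the invariant volume). Writing $\gamma=r_{-\bm t}(\alpha)$ and $\widehat{B}^H_\gamma=B^H_\gamma/\norm{B^H_\gamma}_{\gamma,-s}^2$ for the normalized basic current, this lets me move the translation onto the basic current:
\[
\B^{-s}_{H,\gamma}\big((\QQ^{j,Y}_y)_*\Gamma\big)-\B^{-s}_{H,\gamma}(\Gamma)=\big\langle \Pi^{-s}_H(\PPP^{d,\alpha}_{U(\textbf T)}m),\,[(\QQ^{j,Y}_{-y})_*-\mathrm{Id}]\,\widehat{B}^H_\gamma\big\rangle_{\gamma,-s}.
\]
By the limiting definition of $\hat\beta_H$ in Lemma \ref{35}, the lemma follows once I show that the right-hand side is $o\big(e^{(t_1+\cdots+t_d)/2}\big)$ as $\bm t\to\infty$.

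Next I would expand the increment of the basic current along the flow, $[(\QQ^{j,Y}_{-y})_*-\mathrm{Id}]B^H_\gamma=-\int_0^1(\QQ^{j,Y}_{-sy})_*\,L_{Y_y}B^H_\gamma\,ds$, and compute $L_{Y_y}B^H_\gamma$ from the commutation relations \eqref{commute}. Since $[Y_i,X_k]=-\delta_{ik}Z$, the Lie derivative of the basic current along $Y_i$ produces only central and lower-order contributions, namely terms obtained from $B^H_\gamma$ by replacing one of the $X_k$-contractions by a contraction in the central direction $Z$; these are governed by the central character of $H$. The point is that they are transverse to the basic current and, paired against the Birkhoff current $\PPP^{d,\alpha}_{U(\textbf T)}m$, are controlled by a remainder-type quantity.

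To make the bound effective I would pass to the renormalized frame and use that, under $r_{-\bm t}$, the basic current scales by $e^{-(t_1+\cdots+t_d)/2}$ by \eqref{eqn;interntwine}, while the central and lower-order pieces of $L_{Y_y}B^H_\gamma$ are estimated by $\K_{\alpha,\bm t,s}(\Gamma)$. Lemma \ref{04}, Proposition \ref{prop;jackma}, and the Diophantine condition \eqref{cond;newdc} then give a bound by $C(s,\Gamma)\,Hgt([[r_{-\bm t}(\alpha)]])^{1/4}$, which is integrable against $e^{-(t_1+\cdots+t_d)/2}$; hence the normalized difference tends to $0$ and $\hat\beta_H(\alpha,(\QQ^{j,Y}_y)_*\Gamma)=\hat\beta_H(\alpha,\Gamma)$.

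The main obstacle is exactly the estimate of $L_{Y_y}B^H_\gamma$ in the renormalized limit. Because $Y_i$ carries the renormalization weight opposite to that of $X_i$ under $r_{-\bm t}$, the transverse direction is expanded rather than contracted, so no termwise decay is available; the invariance is a genuine cancellation, extracted from the $\mathsf P$-invariance of $B^H_\gamma$ (which annihilates the stable $X$-directional part of the derivative) together with the central character controlling the surviving oscillatory phase. Turning this cancellation into the clean integrable bound above—rather than the routine unitarity and projection bookkeeping—is where the real work of the lemma lies.
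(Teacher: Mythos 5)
Your reduction to the basic current is a genuinely different route from the paper's, but it breaks down at exactly the step you yourself flag as ``the real work.'' The quantity you must control is $\bigl\langle \PPP^{d,\alpha}_{U(\textbf{T})}m,\,[(\QQ^{j,Y}_{-y})_*-\mathrm{Id}]\,\widehat{B}^H_\gamma\bigr\rangle$, and your proposed bound runs through the remainder quantity $\K_{\alpha,\bm{t},s}(\Gamma)$ of Lemma \ref{04}. That cannot work as stated: the pullback of a closed current under the diffeomorphism $\QQ^{j,Y}_{y}$ is again closed, so $[(\QQ^{j,Y}_{-y})_*-\mathrm{Id}]\,\widehat{B}^H_\gamma$ lies in (or at least has a dominant component in) the subspace $Z_d(\mathfrak{p},\M^{-s})$ of closed currents --- precisely the subspace on which the Birkhoff current has its large, $e^{(t_1+\cdots+t_d)/2}$--sized boundary component $\I^{-s}$, and \emph{not} in the orthogonal complement $R_d$ that the remainder estimates control. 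Your intermediate claim about $L_{Y_i}B^H_\gamma$ being only ``central and lower order'' is also not right: writing $B^H=D^H\eta_X$, one computes $X_k(Y_iD^H)=\delta_{ik}(2\pi\iota nK)D^H\neq 0$, so the derivative is neither basic nor orthogonal to the closed currents, and, as you observe, the $Y$-direction carries the expanding weight under $r_{-\bm{t}}$, so no termwise decay is available. The cancellation you would need is therefore left entirely unproved --- your closing paragraph concedes as much --- and the one mechanism you do invoke (the remainder bound) points at the wrong subspace. As written the argument has a genuine gap at its only nontrivial step.

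The paper proves the lemma by a geometric homotopy argument that avoids the basic current altogether. Reducing to a rank-one translation $\QQ^{1,Y}_\tau$, it forms the $(d+1)$-dimensional current $D(\Gamma,\Gamma_\QQ)$ swept out by the $\QQ^{1,Y}_\tau$-orbits joining $\Gamma$ to $\Gamma_\QQ=(\QQ^{1,Y}_\tau)_*\Gamma$, notes that $\Gamma_\QQ-\Gamma$ differs from $\partial D(\Gamma,\Gamma_\QQ)$ by orbit pieces that vanish under renormalization, and then exploits the volume cancellation $vol_{d+1}(r_{-t}(D_Q))\le \tau_\Gamma\, vol_d(\Gamma)$: the expansion of the $d$-dimensional base under $r_{-t}$ is exactly compensated by the contraction of the $Y$-arcs. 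Combined with Lemma \ref{3.1} and \eqref{ineq;sobolev} this yields $\norm{(\QQ^{1,Y}_\tau)_*\Gamma-\Gamma}_{r_{-t}(\alpha),-s}\le C_s\tau\,\mathrm{Hgt}\,[[r_{-t}\alpha]]^{1/4}$, which disappears after multiplication by $e^{-(t_1+\cdots+t_d)/2}$ in the limit defining $\hat\beta_H$ in Lemma \ref{35}. If you wish to keep your dual formulation, you would have to reproduce this same expansion--contraction cancellation on the basic-current side; the geometric version is where that cancellation is actually visible.
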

\begin{proof}
We will prove the functional $\hat\beta_H$ on the rectangle $(\QQ^{j,Y}_y)_*\Gamma$ exists and prove its invariance property under the action $\QQ_y^{j,Y}$. It suffices to verify the invariance under the rank 1 action $\QQ^{1,Y}_\tau$ for $\tau \in \R$ since we can apply the statement for $d$-rank actions repeatedly.
  %Let $Y_i$ be generator of $\QQ^{1,Y}_\tau$ and pick $X_i$ with $[X_i,Y_i] = Z$.

Given a standard $d$-dimensional rectangle $\Gamma$, set $\Gamma_\QQ := (\QQ^{1,Y}_\tau)_*\Gamma$. Let $D(\Gamma, \Gamma_\QQ)$ be the $(d+1)$ dimensional space spanned by trajectories of the action of $\QQ^{1,Y}_\tau$ projecting $\Gamma$ onto $\Gamma_\QQ$.  $D(\Gamma, \Gamma_\QQ)$ is a union of all orbits I of action $\QQ_\tau^{1,Y}$ such that the boundary of $I$ ($d$-dimensional faces) is contained in $\Gamma \cup \Gamma_\QQ$. Then the interior of $I$ is disjoint from $\Gamma \cup \Gamma_\QQ$ and $D(\Gamma, \Gamma_\QQ)$ is defined by integration, i.e it is a $(d+1)$-current.
%\eqref{notation;r} and

%Given action $\QQ^{1,Y}_\tau$ with generator $Y_i$, choose pick $X_i$ with $[X_i,Y_i] = Z$. Then we denote $r_t := r_{i}^{t}$.
For convenience, let us denote renormalization flow by $r_t := r_{i}^{t}$ for some $i$-th coordinates. Then, $r_{-t}(\Gamma)$ and $r_{-t}(\Gamma_\QQ)$ are respectively the support of the currents $r_{t}^*\Gamma$ and $r_{t}^*\Gamma_\QQ$. Thus, we have the following identity
$$r_{t}^* D(\Gamma, \Gamma_\QQ) =  D({r_{-t}}(\Gamma), {r_{-t}}(\Gamma_\QQ)).$$
Since the current $\partial D(\Gamma, \Gamma_\QQ) - (\Gamma - \Gamma_\QQ)$ is composed of orbits for the action $\QQ^{1,Y}_\tau$,  it follows that
\begin{equation}\label{area0}
\partial[{r^*_t}D(\Gamma, \Gamma_\QQ) ] - ({r^*_t}\Gamma - {r^*_t}\Gamma_\QQ)= {r^*_t}[\partial D(\Gamma, \Gamma_\QQ) - (\Gamma - \Gamma_\QQ)] \rightarrow 0 \text{ as } t \rightarrow \infty.
\end{equation}

Now, we turn to prove the volume of $D({r_{-t}}(\Gamma), {r_{-t}}(\Gamma_\QQ))$ is uniformly bounded for all $t>0$. For any $p \in \Gamma$, set $\tau(p)$ be the length of arc on $ D_Q:= D(\Gamma, \Gamma_\QQ)$ and $\tau_\Gamma := \sup\{\tau(p) \mid p \in \Gamma\} < \infty$. We  write
$$vol_{d+1}( D_Q) = \int_{\Gamma} \tau dvol_d.$$
Since $vol_{d}(r_{-t}(\Gamma)) \leq e^tvol_{d}(\Gamma)$,
\begin{equation}\label{ineq;area2}
vol_{d+1}(r_{-t} (D_Q)) = \int_{r_{-t}(\Gamma)} \tau dvol_d \leq \tau_\Gamma e^{-t} vol_{d}(r_{-t}(\Gamma)) \leq \tau_\Gamma vol_{d}(\Gamma) < \infty.
\end{equation}

Note that $d$-dimensional current $(\QQ^{1,Y}_\tau)_*\Gamma - \Gamma$ is equal to the boundary of the $(d+1)$ dimensional current $D_Q$. By the same argument in remainder estimate (see Lemma \ref{3.1} and \eqref{ineq;sobolev}),
\begin{equation}\label{area1}
\norm{(\QQ^{1,Y}_\tau)_*\Gamma - \Gamma}_{r^{-t}(\alpha), -s} \leq C_s\tau B_s([[r_{-t} \alpha]])  \leq C_s\tau \text{Hgt} [[r_{-t}\alpha]]^{1/4}
\end{equation}
is finite for all $t >0$.

Then, by \eqref{area0}, \eqref{ineq;area2} and existence of Bufetov functional $\hat\beta_H(\alpha,\Gamma)$, the last inequality holds:
 $$ \norm{\B^{-s}_{\alpha,t}((\QQ^{1,Y}_\tau)_*\Gamma ) - \B^{-s}_{\alpha,t}(\Gamma )}_{\alpha,-s} < \infty.$$
Therefore, by the definition of Bufetov functional in the Lemma \ref{35}, $\hat\beta_H(\alpha,(\QQ^{1,Y}_\tau)_*\Gamma)$ exists and $\hat\beta_H(\alpha,\Gamma)$ is invariant under the action of $\QQ^{1,Y}_\tau$.
%$\sup |(\QQ^Y_\tau)_*\Gamma - \Gamma|_{r_{-t}(\alpha), -s} < +\infty$
\end{proof}

\begin{figure}\label{fig1}\centering
\begin{tikzpicture}[scale=0.70]
 \begin{scope}[x={(4cm,0cm)},y={({cos(30)*1.5cm},{sin(30)*1.5cm})},
    z={({cos(70)*2cm},{sin(70)*2cm})},line join=round,fill opacity=0.55,thick]
  \draw[blue, dashed] (0,0,0) -- (0,0,1) --  (0,1,1)  -- (0,1,0) -- cycle  ;
  \draw[fill=gray] (0,0,0) -- (1,0,0) -- (1,1,0) -- (0,1,0) -- cycle;
   \draw[fill=orange] (0,0,1) -- (1,0,1) -- (1,1,1) -- (0,1,1) -- cycle;
%  \draw (0,1,0) -- (1,1,0) -- (1,1,1) -- (0,1,1) -- cycle;
  \draw[blue,dashed]  (1,1,1) -- (1,1,0) ;

  \draw[blue, dashed]  (1,0,0) -- (1,0,1) ;
%    \draw[->]   (11/10,0,0)  -- (11/10,8/10,0)  node[below,right] {$X_i$};
    \draw[->]   (11/10,1,2/10) -- (11/10,1,8/10) node[right] {$\QQ_\tau^{d,Y}$} ; %Y_i
% \end{scope}
%  \begin{scope}
 \node at (1/2,1/2, 0)   {$\Gamma$};
\node at (1/2,1/2, 1)   {$\Gamma_Q$};
\node at (1/2,1/2, 1/2)   {$D(\Gamma, \Gamma_Q)$};
\node at (1,2/8, 3/5)   {$I$};

  \draw[fill=gray] (2,0,0) -- (3.5,0,0) -- (3.5,1.5,0) -- (2,1.5,0) -- cycle;
   \draw[fill=orange] (2,0,0.3) -- (3.5,0,0.3) -- (3.5,1.5,0.3) -- (2,1.5,0.3) -- cycle;
  \draw[blue, dashed] (2,0,0) -- (2,0,0.3) --  (2,1.5,0.3)  -- (2,1.5,0) -- cycle  ;

%  \draw (0,1,0) -- (1,1,0) -- (1,1,1) -- (0,1,1) -- cycle;
  \draw[blue,dashed]  (3.5,1.5,0.3) -- (3.5,1.5,0) ;
  \draw[blue, dashed]  (3.5,0,0) -- (3.5,0,0.3) ;
    \draw[red,dotted]  (2,0,0.3) -- (2.05,0,0) ;
        \draw[red,dotted]  (3.45,0,0.3) -- (3.5,0,0) ;
                \draw[red,dotted]  (3.45,1.5,0.3) -- (3.5,1.5,0) ;
%    \draw[->]   (11/10,0,0)  -- (11/10,8/10,0)  node[below,right] {$X_i$};
% \end{scope}
%  \begin{scope}
 \node at (1/2,1/2, 0)   {$\Gamma$};
\node at (1/2,1/2, 1)   {$\Gamma_Q$};
\node at (1/2,1/2, 1/2)   {$D(\Gamma, \Gamma_Q)$};
\node at (1,2/8, 3/5)   {$I$};
    \draw[->]   (11/10,1,2/10) -- (11/10,1,8/10) node[right] {$\QQ_\tau^{d,Y}$} ; %Y_i

    \node at (2.8,1/2, 0.4)   {$r_{-t}(\Gamma_Q)$};
        \node at (2.8,1/2, 0.4)   {$r_{-t}(\Gamma_Q)$};
 \end{scope}

\end{tikzpicture}
    \caption{Illustration of the standard $d$-rectangles $\Gamma$, $\Gamma_Q$, $d+1$ dimensional current $D(\Gamma, \Gamma_\QQ)$ and supports of $r_{-t}(\Gamma)$ and $r_{-t}(\Gamma_\QQ) $.}% for $t \rightarrow \infty$.}
\end{figure}
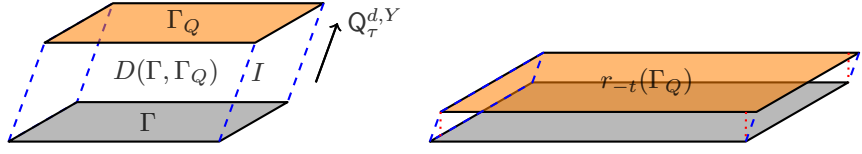

\begin{proof}[Proof of Theorem \ref{hi}]
\emph{Additive property.} It follows from the linearity of projections and limit.

\emph{Scaling property.} It is immediate from the definition.

\emph{Bounded property}. By the scaling property, for $r_t = r_{1}^{t}\cdots r_{d}^{t}$ with $t>0$,
$$\hat\beta_H(\alpha,\Gamma) = e^{dt/2}\hat\beta_H(r_t(\alpha),\Gamma).$$
Choose $t = \log (\int_{\Gamma}|\hat X| )$ and $\hat X = \hat X_1\wedge \cdots \wedge \hat X_d$, then Bufetov functional on the rectangle $\Gamma$ is bounded:
$$|\hat\beta_H(\alpha,\Gamma)| \leq C(\Gamma) (\int_{\Gamma}|\hat X| )^{d/2}.$$
\emph{Invariance property}. Now it follows directly from the Lemma \ref{lem;invar}.
\end{proof}

\noindent \emph{Notation.} We write for $\bm{T} = (T^{(i)}) \in \R^d$ and $\bm{t} = (t_1,\cdots, t_d) \in \R^d$,
\begin{equation}\label{not;vol}
vol(U(\bm{T})) := \prod_{i=1}^d{T^{(i)}}, \quad vol(U(\bm{t})) = \prod_{i=1}^d{t_i}.
\end{equation}

Now we extend the properties of functional $\hat\beta_H$ to cocycle $\beta_H$.
\begin{proof}[Proof of Corollary \ref{hi2}]
Cocycle property of $\beta_H$ follows from the additive property of $\hat\beta_H$. Scaling and bounded properties are immediate.

Denote $\bm{tT} := (t_1T^{(1)},\cdots, t_dT^{(d)}) \in \R_+^d$, and  we obtain
\begin{equation}\label{eqn;excursion tT}
\beta_H(\alpha,m,\bm{tT}) = vol(U(\bm{T}))^{1/2}\beta_H(r_{\log \textbf{T}}(\alpha),m,\textbf{t}).
\end{equation}

By Lemma \ref{35} and scaling property, we have
\begin{align*}
D_\alpha^H(f)\beta_{H}(\alpha,m,\textbf{t}) &= \lim_{|U(\bm{T})| \rightarrow \infty}\frac{1}{vol(U(\bm{T}))^{1/2}}\beta_{H}(r_{-\log\textbf{t}}(\alpha),m,\bm{tT})\\
& = \lim_{|U(\bm{T})| \rightarrow \infty}\frac{1}{vol(U(\bm{T}))^{1/2}}\left\langle \PPP^{d,r_{-\log\textbf{t}}(\alpha)}_{U(\bm{tT})}m , \omega_{f_H} \right\rangle.
\end{align*}
It follows that $\beta_H(\alpha,\cdot,\bm{t}) \in H$ as a point-wise limit of Birkhoff integrals.
This implies {orthogonal property}.
\end{proof}

For $\gamma \in Sp_{2g}(\Z)$, we have
\[
\beta_H(\gamma\alpha, \gamma(m),\bm{T}) = \beta_H(\alpha, m,\bm{T}).
\]
It means that the function $\beta_H(\cdot,m,\bm{T})$ is well-defined on the moduli space $\mathfrak{M}_g$.

\subsection{Proof of Theorem \ref{6.2type}}
We define the excursion function
\begin{align*}
&E_\M(\alpha,\bf{T})\\ & := \int_0^{\log T^{(d)}}\cdots \int_0^{\log T^{(1)}} e^{-(t_1 + \cdots + t_d)/2}\text{Hgt} ([[r_{\bf{t-\log T}}(\alpha)]])^{1/4} dt_1\cdots dt_d \\
& = {vol(U(\bm{T}))}^{1/2} \int_0^{\log T^{(d)}}\cdots \int_0^{\log T^{(1)}} e^{(t_1 + \cdots + t_d)/2}\text{Hgt} ([[r_{\bf{t}}(\alpha)]])^{1/4} dt_1\cdots dt_d.
\end{align*}

We prove that the cocycle $\beta^f$ in form of \eqref{BBB} is defined by a uniformly convergent series.
\begin{lemma}\label{6.1type} For any Diophantine  $[\alpha] \in DC(L)$,
$$|\beta^f(\alpha,m,\bm{tT})| \leq C_s\left(L + vol(U(\bm{T}))^{1/2}(1+vol(U(\bm{t}))+E_\M(\alpha,\bm{T}))\right)\norm{f}_{\alpha,s}$$
for any $f \in W^s(M)$ and $s > s_{d,g}+1/2$.
\end{lemma}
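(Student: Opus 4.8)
The plan is to express $\beta^f$ as a single pairing of a current against the form $\omega_f$, which collapses the a priori infinite sum over the irreducible components $H$ into one Sobolev current norm, and then to bound that norm after renormalizing to unit scale. First I would use the identification $D_\alpha^H(f) = \langle B_\alpha^H,\omega_f\rangle$ coming from \eqref{eqn;dual.basic} (for $\omega_f = f\hat X_1^\alpha\wedge\cdots\wedge\hat X_d^\alpha$ one has $\eta_X\wedge\omega_f/\omega_{vol}=f$), so that
\[
\beta^f(\alpha,m,\bm{tT}) = \sum_H \hat\beta_H(\alpha,\Gamma^X_{\bm{tT}}(m))\,\langle B_\alpha^H,\omega_f\rangle = \big\langle \mathcal{B}_\alpha(\Gamma^X_{\bm{tT}}(m)),\,\omega_f\big\rangle,
\]
where $\mathcal{B}_\alpha(\Gamma):=\sum_H \hat\beta_H(\alpha,\Gamma)B_\alpha^H$ is the associated Bufetov current. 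By the Sobolev duality between $A_d(\pp,W^{s}(M))$ and $A_d(\pp,W^{-s}(M))$ this gives
\[
|\beta^f(\alpha,m,\bm{tT})| \le \norm{\mathcal{B}_\alpha(\Gamma^X_{\bm{tT}}(m))}_{\alpha,-s}\,\norm{\omega_f}_{\alpha,s},
\]
with $\norm{\omega_f}_{\alpha,s}$ comparable to $\norm{f}_{\alpha,s}$; the extra half-derivative in the hypothesis $s>s_{d,g}+1/2$ is what lets me pass between $f$ and $\omega_f$ and pair currents at the order $s+1$ used in Lemma \ref{04}. The point of this step is that I never have to sum the component-wise bounds of Lemma \ref{35}, which would diverge: the orthogonality of the $H$-components is absorbed into a single current norm.

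It remains to bound $\norm{\mathcal{B}_\alpha(\Gamma^X_{\bm{tT}}(m))}_{\alpha,-s}$. Here I would renormalize to unit scale using the cocycle identity \eqref{eqn;excursion tT}, namely $\hat\beta_H(\alpha,\Gamma^X_{\bm{tT}}(m)) = vol(U(\bm{T}))^{1/2}\hat\beta_H(r_{\log\bm{T}}(\alpha),\Gamma^X_{\bm{t}}(m))$, together with the intertwining norm relation \eqref{eqn;interntwine}; this produces the prefactor $vol(U(\bm{T}))^{1/2}$ and reduces the problem to the current $\PPP^{d,\alpha'}_{U(\bm{t})}m$ over the unit-scale cube $U(\bm{t})$ at the renormalized frame $\alpha'=r_{\log\bm{T}}(\alpha)$. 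At that scale I would invoke the current-level version of the integral representation established in the proof of Lemma \ref{35},
\[
\mathcal{B}_{\alpha'}(\Gamma^X_{\bm{t}}(m)) = \mathcal{P}_B\,\PPP^{d,\alpha'}_{U(\bm{t})}m + \int_{\R_+^d} e^{-(\tau_1+\cdots+\tau_d)/2}\,\widetilde{\mathcal{K}}_{\alpha',\bm{\tau}}\,d\tau_1\cdots d\tau_d,
\]
where $\mathcal{P}_B$ is the orthogonal projection onto the basic-current subspace and the current-valued remainder $\widetilde{\mathcal{K}}_{\alpha',\bm{\tau}}$ has norm dominated by the scalar $\K_{\alpha',\bm{\tau},s}$ of \eqref{def;kts}.

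The two pieces are then estimated by the results already in hand. For the initial term, $\norm{\mathcal{P}_B\,\PPP^{d,\alpha'}_{U(\bm{t})}m}_{\alpha',-s}\le \norm{\PPP^{d,\alpha'}_{U(\bm{t})}m}_{\alpha',-s}\le B_s([[\alpha']])\,vol(U(\bm{t}))$ by Lemma \ref{3.1}, which yields the $vol(U(\bm{t}))$ contribution. For the integral term, Lemma \ref{04} gives $\norm{\widetilde{\mathcal{K}}_{\alpha',\bm{\tau}}}_{-s}\le C(s,\Gamma)\,\text{Hgt}([[r_{-\bm{\tau}}(\alpha')]])^{1/4}$, so that the term is controlled by
\[
\int_{\R_+^d} e^{-(\tau_1+\cdots+\tau_d)/2}\,\text{Hgt}([[r_{-\bm{\tau}}r_{\log\bm{T}}(\alpha)]])^{1/4}\,d\tau_1\cdots d\tau_d.
\]
The change of variables $\bm{\sigma}=\bm{\tau}-\log\bm{T}$ splits this integral into the region $\bm{\sigma}\ge 0$, bounded by the Diophantine constant $L$ through \eqref{cond;newdc}, and the complementary finite region over $[-\log\bm{T},0]$, which reproduces the excursion function $E_\M(\alpha,\bm{T})$. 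Reinstating the prefactor $vol(U(\bm{T}))^{1/2}$ and collecting gives the asserted bound $C_s\big(L + vol(U(\bm{T}))^{1/2}(1+vol(U(\bm{t}))+E_\M(\alpha,\bm{T}))\big)\norm{f}_{\alpha,s}$.

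I expect the main obstacle to be twofold. The conceptual difficulty is the interchange of the sum over the infinitely many representations $H$ with both the current pairing and the renormalization integral: this is precisely what the current-norm formulation of the first step resolves, since Lemma \ref{04} bounds the \emph{total} remainder current, already summed over $H$ via orthogonality, so no divergent component-wise summation ever appears. The technical difficulty is the change-of-variables bookkeeping that converts the renormalized Diophantine integral at the frame $r_{\log\bm{T}}(\alpha)$ into the $L$-tail plus the finite-range excursion $E_\M(\alpha,\bm{T})$, while correctly tracking the $vol(U(\bm{T}))^{1/2}$ factors and the non-cubical (rectangular) geometry of $U(\bm{tT})$ through the scaling relations \eqref{eqn;interntwine} and \eqref{eqn;excursion tT}.
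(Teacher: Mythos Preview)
Your route is genuinely different from the paper's. The paper never forms the ``Bufetov current'' $\mathcal{B}_\alpha(\Gamma)=\sum_H\hat\beta_H B_\alpha^H$; instead it works component-wise. From Lemma~\ref{35} it extracts the \emph{uniform} bound $|\beta_H(\alpha,m,\bm{t})|\le C(1+L+vol(U(\bm{t})))$ for every $H$, scales via \eqref{eqn;excursion tT} to get $[r_{\log\bm{T}}(\alpha)]\in DC(L_{\bm T})$ with $L_{\bm T}\le L\,vol(U(\bm{T}))^{-1/2}+E_\M(\alpha,\bm T)$, and only then confronts the infinite sum: it bounds $\sum_H|D_\alpha^H(f)|\,|\beta_H|$ by the uniform factor times $\sum_n\|f_n\|_{\alpha,s}$, and controls the latter via Cauchy--Schwarz with weights $(1+n^2)^{\pm q}$, $q>1/2$, against $\|(1-Z^2)^{q/2}f\|_{\alpha,s}$. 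That Cauchy--Schwarz step is precisely where the extra half-derivative $s>s_{d,g}+1/2$ is spent in the paper --- not, as you suggest, in passing between $f$ and $\omega_f$ or in matching the order $s+1$ of Lemma~\ref{04}.

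Your current-norm approach is attractive and morally correct, but it hides a real gap. Lemma~\ref{35} is proved \emph{inside a fixed} $H$: the splitting $\Pi_H^{-s}(\Gamma)=\B^{-s}_{\alpha,\bm t}B_{\alpha,\bm t}+R_{\alpha,\bm t}$, the ODE in $\bm t$, and the bound \eqref{eqn;remainders} are all for a single component, with the orthogonal projection taken in the $r_{-\bm t}(\alpha)$-norm (which varies with $\bm t$). To justify your formula $\mathcal{B}_{\alpha'}(\Gamma)=\mathcal{P}_B\PPP^{d,\alpha'}_{U(\bm t)}m+\int e^{-|\bm\tau|/2}\widetilde{\mathcal K}_{\alpha',\bm\tau}\,d\bm\tau$ and the estimate $\|\widetilde{\mathcal K}_{\alpha',\bm\tau}\|_{-s}\le\K_{\alpha',\bm\tau,s}$, you must sum $\sum_H\K^{(d)}_{H,\alpha',\bm\tau}B_\alpha^H$ and show the result has norm controlled by the \emph{global} remainder $\|\RR^{-s}[r_{-\bm\tau}(\alpha'),\PPP_{U_\Gamma}m]\|_{-(s+1)}$. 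This does follow from orthogonality once you verify that the component-wise bound is really $|\K^{(d)}_{H,\alpha',\bm\tau}|\le\|R_{H,\alpha',\bm\tau}\|_{-(s+1)}$ (the $H$-piece of the remainder, not the global one) and that the norms $\|B_\alpha^H\|_{\alpha,-s}$ are uniformly comparable across $H$; neither is stated in the paper, and the second requires a short argument about the normalization of the invariant distributions in each Schr\"odinger model. If you fill these in, your argument is cleaner than the paper's and in fact shows the extra $+1/2$ is not needed for this lemma; if you do not, the step ``Lemma~\ref{04} bounds the total remainder current, already summed over $H$'' is an assertion rather than a proof.
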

\begin{proof}
%Recall that by ,
%$$
%\beta^f(\alpha,m,\bm{T})  = \sum_H D_\alpha^H(f)\beta_{H}(\alpha,m,\bm{T}).
%$$
In view of Lemma \ref{35}, there exists a constant $C >0$ such that whenever $[\alpha] \in DC(L)$,
\begin{equation}\label{eqn;excursion}
|\beta_H(\alpha,m,\textbf{t})| \leq C(1+L+vol(U(\bm{t}))), \quad (m,\textbf{t}) \in M \times \R_+^d.
\end{equation}

By Diophantine condition (\ref{cond;newdc}),  $[r_{\log \textbf{T}}(\alpha)] \in DC(L_{\textbf T}) $ and
\begin{equation}\label{eqn;LT}
L_{\textbf T} \leq Lvol(U(\bm{T}))^{-1/2} + E_\M(\alpha,\bf{T}).
\end{equation}
Thus by \eqref{eqn;excursion}, for all $(m,\textbf{t}) \in M \times \R^d$, we obtain 
$$|\beta_H(r_{\log \textbf{T}}(\alpha),m,\textbf{t})| \leq C(1+L_{\textbf T} + vol(U(\bm{t}))).$$
By the scaling property \eqref{eqn;excursion tT}, it follows that for all $s > s_{d,g}$ and $q>1/2$
\begin{align*}
|\beta^f(\alpha,m,\bm{tT})| &\leq C_s vol(U(\bm{T}))^{1/2}(1+L+vol(U(\bm{t})))\sum_{n\in \Z}\norm{f_{n}}_{\alpha,s}  \\
& \leq C_s vol(U(\bm{T}))^{1/2}(1+L_{\textbf T}+vol(U(\bm{t})))\big(\sum_{n\in \Z}(1+n^2)^{q}\big)^{-1/2}\\
& \times (\sum_{n\in \Z}\norm{(1-Z^2)^{q/2}f_n}^2_{\alpha,s})^{1/2}.
\end{align*}
Therefore, for all $s' = s_{d,g} + q$, there exists a constant $C_{s'}>0$ such that
$$|\beta^f(\alpha,m,\textbf{tT})| \leq C_{s'}vol(U(\bm{T}))^{1/2}\big(1+L_{\textbf T}+vol(U(\bm{t}))\big)\norm{f}_{\alpha,s'}. $$
By combining it with \eqref{eqn;LT}, we obtain the statement.
\end{proof}

By Lemma \ref{35}, \ref{6.1type} and identification of the norm for the form $\omega_f$, asymptotic formula on each irreducible component provides the following corollary.
\begin{corollary}\label{cor;dev}
For all $s > s_{d,g}+1/2$, there exists a constant $C_s >0$ such that
for all $[\alpha] \in DC(L)$, for all $f \in W_\alpha^s(M)$  and for all $(m,\bm{T}) \in M \times \R^d$, we have
\begin{equation}\label{eqn:asymp}
|\left\langle \PPP^{d,\alpha}_{U(\bm{T})}m , \omega_f \right\rangle - \beta^f(\alpha,m,\bm{T}) | \leq C_s(1+L)\norm{\omega_f}_{\alpha,s}
\end{equation}
for $U(\bm{T}) = [0,T^{(1)}]\times \cdots \times [0,T^{(d)}]$ and $\omega_f = f\omega^{d,\alpha} \in  \Lambda^d\mathfrak{p} \otimes W_\alpha^s(M)$.
\end{corollary}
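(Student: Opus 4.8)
The plan is to prove the estimate representation by representation, using the decomposition $L^2(M) = \bigoplus_n H_n$ from \S\ref{sec;repr}, then to recognize the resulting main term as the Bufetov cocycle $\beta^f$ and to control the component-wise remainders after a small gain of Sobolev regularity. First I would fix $[\alpha]\in DC(L)$, write $f = \sum_H f_H$ and $\omega_f = \sum_H \omega_{f_H}$, and use that the Birkhoff current pairs with $\omega_f$ one representation at a time, so that for the standard rectangle $\Gamma = \Gamma^X_{\bm T}(m)$
$$\left\langle \PPP^{d,\alpha}_{U(\bm T)}m , \omega_f \right\rangle = \sum_H \left\langle \Pi^{-s}_{H,\alpha}(\Gamma), \omega_{f_H}\right\rangle.$$

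On each component I would invoke Lemma \ref{35} in the form of the orthogonal splitting $\Pi^{-s}_{H,\alpha}(\Gamma) = \hat\beta_H(\alpha,\Gamma)B_\alpha^H + R^{-s,H}_\alpha$, with $\norm{R^{-s,H}_\alpha}_{\alpha,-s}\leq C(s,\Gamma)(1+L)$. Pairing with $\omega_{f_H}$ and summing separates the expression into a main term $\sum_H \hat\beta_H(\alpha,\Gamma)\langle B_\alpha^H, \omega_{f_H}\rangle$ and an error $\sum_H \langle R^{-s,H}_\alpha, \omega_{f_H}\rangle$. For the main term I would use the duality \eqref{eqn;dual.basic} between the basic current $B_\alpha^H$ and the invariant distribution $D_\alpha^H$, together with the identification $\langle \D, \omega_f\rangle = \langle D, f\rangle$, to rewrite $\langle B_\alpha^H, \omega_{f_H}\rangle = D_\alpha^H(f)$; since $\beta_H(\alpha,m,\bm T) = \hat\beta_H(\alpha,\Gamma)$ by \eqref{def;cocycle}, the main term is exactly $\sum_H D_\alpha^H(f)\beta_H(\alpha,m,\bm T) = \beta^f(\alpha,m,\bm T)$ from \eqref{BBB}.

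It then remains to estimate the error. By duality of the Sobolev norms and Lemma \ref{35},
$$\Big|\sum_H \langle R^{-s,H}_\alpha, \omega_{f_H}\rangle\Big| \leq \sum_H \norm{R^{-s,H}_\alpha}_{\alpha,-s}\,\norm{\omega_{f_H}}_{\alpha,s} \leq C(s,\Gamma)(1+L)\sum_H \norm{\omega_{f_H}}_{\alpha,s}.$$
The sum over $H$, indexed by the central parameter $n\in\Z$, is exactly where the threshold $s > s_{d,g}+1/2$ enters: following the device in the proof of Lemma \ref{6.1type}, I would insert $1 = (1+n^2)^{q/2}(1+n^2)^{-q/2}$ and apply Cauchy--Schwarz in $n$ with $q>1/2$, obtaining $\sum_H \norm{\omega_{f_H}}_{\alpha,s} \leq \big(\sum_n (1+n^2)^{-q}\big)^{1/2}\norm{\omega_f}_{\alpha,s+q}$, a convergent series precisely because $q>1/2$. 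Writing $s' = s+q$ and absorbing constants then yields the stated bound $C_s(1+L)\norm{\omega_f}_{\alpha,s}$.

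The step I expect to be the main obstacle is the uniformity of the constant in $\bm T$, rather than any single displayed estimate. A priori the constant $C(s,\Gamma)$ supplied by Lemma \ref{35} carries the $\bm T$-dependence of $\Gamma = \Gamma^X_{\bm T}(m)$ through the remainder estimate $\K_{\alpha,\bm t,s}(\Gamma)$ of Lemma \ref{04}. The point to verify is that all the genuine growth in $\bm T$ is carried by the main term $\beta^f$, which scales like $vol(U(\bm T))^{1/2}$ by \eqref{eqn;excursion tT}, while the deviation stays controlled; making this precise requires renormalizing the large rectangle to a bounded reference rectangle via the scaling property and then controlling the renormalized excursion exactly as in the passage from \eqref{eqn;excursion} to Lemma \ref{6.1type}, so that the dependence on $\bm T$ collapses into the single factor $(1+L)$.
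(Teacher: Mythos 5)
Your proposal is correct and follows essentially the same route as the paper, which derives the corollary in one line from Lemma \ref{35} (the per-representation splitting into $\hat\beta_H(\alpha,\Gamma)B_\alpha^H$ plus a remainder of norm $\leq C(s,\Gamma)(1+L)$), the summation device of Lemma \ref{6.1type} (Cauchy--Schwarz in the central parameter with $q>1/2$, accounting for the threshold $s>s_{d,g}+1/2$), and the identification $\langle B_\alpha^H,\omega_{f_H}\rangle = D_\alpha^H(f)$. Your closing remark on the $\bm{T}$-uniformity of the constant --- renormalizing to a fixed reference rectangle via the scaling property so that the $\Gamma$-dependence of $C(s,\Gamma)$ collapses --- is exactly the point the paper leaves implicit, and you resolve it the intended way.
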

%\begin{proof}
%The Lemma \ref{6.1type} implies that we can obtain bounded property for the cocycle $\beta(\alpha,m, \textbf{T})$.
%Also, all the properties of Bufetov functionals $\beta_H$ associated to a single irreducible component (Theorem \ref{hi}) are extended to $\beta^f$ for any $f \in W^s(M), \forall s>s_d+1/2$ in the similar way (it follows from the convergence of series on irreducibles).
%\end{proof}

\begin{proof}[Proof of Theorem \ref{6.2type}]

The theorem follows from Corollary \ref{cor;dev} for $\alpha \in \bigcup_{L>0}\break DC(L)$. % of Corollary \ref{6.2type}.
\end{proof}

\section{Limit distributions}\label{sec;4}
In this section, we prove Theorem \ref{limit}, limit distribution of normalized ergodic integrals of higher rank actions on the standard rectangles.

\subsection{Proof of Theorem \ref{limit}}

\begin{lemma}\label{7.1} There exists a continuous modular function $\theta_H : Aut_0(\mathsf{H}^g) \rightarrow H \subset L^2(M)$ such that for any $\omega_f = f\omega^{d,\alpha} \in  \Lambda^d\mathfrak{p} \otimes W_\alpha^s(H)$ with $s>d/2$,
\begin{equation}\label{eqn;7.1}
\lim_{|U(\bm{T})| \rightarrow \infty}\norm{\frac{1}{{vol(U(\bm{T}))}^{1/2}} \left\langle \PPP^{d,\alpha}_{U(\bm{T})}(\cdot) , \omega_f \right\rangle - \theta_H(r_{\log \bm{T}}(\alpha))D^H_\alpha(f)}_{L^2(M)} = 0.
\end{equation}
The family $\{ \theta_H(\alpha) \mid \alpha \in Aut_0(\mathsf{H}^g) \}$ has a constant norm in $L^2(M)$.
\end{lemma}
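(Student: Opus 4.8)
The plan is to construct the modular function $\theta_H$ directly from the normalized Bufetov cocycle and to use the asymptotic formula from Corollary~\ref{cor;dev} together with the scaling identity \eqref{eqn;excursion tT} to control the error. First I would define
\[
\theta_H(\alpha) := \frac{\beta_H(\alpha,\cdot,\mathbf{1})}{\|\beta_H(\alpha,\cdot,\mathbf{1})\|_{L^2(M)}} \cdot c_H,
\]
or more directly set $\theta_H(r_{\log\mathbf{T}}(\alpha))D^H_\alpha(f) := \lim_{|U(\mathbf{T})|\to\infty} vol(U(\mathbf{T}))^{-1/2}\langle \PPP^{d,\alpha}_{U(\mathbf{T})}(\cdot),\omega_f\rangle$ and then verify that the limit factors through $r_{\log\mathbf{T}}(\alpha)$ as claimed. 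The Orthogonality property (item (4) of Corollary~\ref{hi2}) guarantees that $\beta_H(\alpha,\cdot,\mathbf{T})$ lies in the single irreducible component $H$, which is exactly where $\theta_H$ must take values, so the target space is correct by construction.

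The key computation is the scaling relation \eqref{eqn;excursion tT}, which gives $\beta_H(\alpha,m,\mathbf{T}) = vol(U(\mathbf{T}))^{1/2}\beta_H(r_{\log\mathbf{T}}(\alpha),m,\mathbf{1})$ after setting $\mathbf{t}=\mathbf{1}$ (the unit vector). Thus, defining $\theta_H(\alpha) := \beta_H(\alpha,\cdot,\mathbf{1})$ as an element of $H\subset L^2(M)$, the normalized cocycle becomes $vol(U(\mathbf{T}))^{-1/2}\beta_H(\alpha,\cdot,\mathbf{T}) = \theta_H(r_{\log\mathbf{T}}(\alpha))$. I would then invoke Corollary~\ref{cor;dev} applied on the single irreducible component $H$: since $\beta^f = \sum_H D_\alpha^H(f)\beta_H$ by \eqref{BBB}, the asymptotic formula gives
\[
\Big\| vol(U(\mathbf{T}))^{-1/2}\langle \PPP^{d,\alpha}_{U(\mathbf{T})}(\cdot),\omega_f\rangle - vol(U(\mathbf{T}))^{-1/2}\beta^f(\alpha,\cdot,\mathbf{T})\Big\|_{L^2} \leq C_s\, vol(U(\mathbf{T}))^{-1/2}\|\omega_f\|_{\alpha,s},
\]
and the right-hand side tends to $0$ as $|U(\mathbf{T})|\to\infty$. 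Combining with the factorization of $\beta^f$ through $\theta_H(r_{\log\mathbf{T}}(\alpha))D^H_\alpha(f)$ yields \eqref{eqn;7.1}.

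For the constant-norm assertion, I would use the Scaling property (item (2) of Corollary~\ref{hi2}) together with the modularity under $Sp_{2g}(\Z)$ noted at the end of \S\ref{sec;3}, namely $\beta_H(\gamma\alpha,\gamma(m),\mathbf{T}) = \beta_H(\alpha,m,\mathbf{T})$. The scaling property shows $\|\theta_H(r_{\mathbf{t}}(\alpha))\|_{L^2} = e^{-(t_1+\cdots+t_d)/2}\|\theta_H(\alpha)\|_{L^2}\cdot e^{(t_1+\cdots+t_d)/2}$ once the normalization by $vol(U(\mathbf{T}))^{1/2}$ is accounted for, so that the normalized object has a scaling-invariant $L^2$ norm; then $Sp_{2g}(\Z)$-invariance descends $\theta_H$ to the moduli space $\M_g$, which is where the genuinely ``modular'' and hence constant-norm statement lives. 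The main obstacle I anticipate is verifying continuity of $\theta_H$ as a map into $L^2(M)$ and checking that the limit in \eqref{eqn;7.1} is genuinely uniform in the base point (the $L^2(M)$-norm is over $m$), rather than merely pointwise; this requires the $L^2$-version of the remainder estimate rather than the pointwise Sobolev embedding bound, so I would make sure Corollary~\ref{cor;dev} is applied in its $L^2(M)$-norm form over the distribution coefficient, and invoke the continuous dependence of $\hat\beta_H(\alpha,\Gamma)$ on $\alpha\in DC(L)$ established in Lemma~\ref{35} to obtain continuity of $\theta_H$.
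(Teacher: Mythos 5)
Your approach runs in the opposite direction from the paper's, and this creates genuine gaps. The paper proves Lemma \ref{7.1} by a direct computation in the Schr\"odinger model that never touches the Bufetov functionals: in an irreducible component $H$ the action $\PP^{d,\alpha}$ acts by translations on $L^2(\R^d)$, so $\langle \PPP^{d,\alpha}_{U(\bm{T})}(\cdot),\omega_f\rangle$ becomes $\int_{U(\bm{T})}f(\bm{u}+\bm{t})\,d\bm{t}$, and after Fourier transform one reads off $\hat\theta_H(\alpha)(\bm{\hat u})=\chi(\bm{\hat u})=\prod_j\frac{e^{i\hat u_j}-1}{i\hat u_j}$ explicitly (the Fourier transform of the indicator of the unit cube), with $\hat\theta_H(r_{\log\bm{T}}\alpha)=U_{\bm{T}}\chi$ and the error killed by dominated convergence. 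This gives $\theta_H$ on \emph{all} of $Aut_0(\mathsf{H}^g)$, continuity for free, the constant norm $\|\theta_H(\alpha)\|=\|\chi\|_{L^2}$ for every $\alpha$ (since $U_{\bm{T}}$ is unitary), and works for every $f$ with $s>d/2$. Your route, by contrast, defines $\theta_H(\alpha):=\beta_H(\alpha,\cdot,\mathbf{1})$, which is only defined for $[\alpha]\in DC$; Lemma \ref{35} gives continuity on $DC(L)$ but not on $Aut_0(\mathsf{H}^g)$, and the statement of the lemma (and its later use in Lemma \ref{lll} and Theorem \ref{thm;limit2}, where one passes to an arbitrary limit point $\alpha_\infty\in\M_g$) genuinely needs $\theta_H$ everywhere. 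Note also that the identification $\beta_H(\alpha,\cdot,1)=\theta_H(\alpha)$ is Corollary \ref{4.4}, which the paper \emph{deduces from} Lemma \ref{7.1} and only $\mu$-almost everywhere; taking it as the definition inverts the logical order.

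Two further concrete problems. First, Corollary \ref{cor;dev} requires $s>s_{d,g}+1/2=d(d+11)/4+g+1$, so your error estimate only covers forms far smoother than the $s>d/2$ threshold in the statement; the paper's proof needs only that $\hat f$ is continuous and bounded. Second, your argument for the constant-norm assertion shows at best that $\|\theta_H(\cdot)\|_{L^2}$ is invariant along orbits of the renormalization group and of $Sp_{2g}(\Z)$; these orbits do not exhaust $Aut_0(\mathsf{H}^g)$, so constancy of the norm over the whole family does not follow. The explicit formula $\|\theta_H(\alpha)\|_{L^2}=\|\chi\|_{L^2(\R^d)}$, independent of $\alpha$, is exactly what the representation-theoretic computation supplies and what your approach cannot reach.
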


\begin{proof}
By the Fourier transform, the space of smooth vectors and Sobolev space $W^s(H)$ is represented as the Schwartz space $\mathscr{S}^s(\R^d) \subset L^2(\R^d)$ such that
$$\int_{\R^d}|(1+ \sum_{i=1}^d(\frac{\partial^2}{\partial u_i^2}+u_i^2)  )^{s/2} \hat f(u)|^2du < \infty. $$
%$$\int_{\R^d}|(1+ \sum_{i}\frac{\partial^2}{\partial u_i^2}+\sum_{i}u_i^2  )^{s/2} \hat f(u)|^2du < \infty. $$

Let $\bm{t, u} \in \R^d$. Then we claim for any $f \in \mathscr{S}^s(\R^d)$,  there exists a function $\theta(\alpha)(\cdot) \in L^2(\R^d)$ such that
$$\lim_{ |U(\bm{T})| \rightarrow \infty}\norm{\frac{1}{{vol(U(\bm{T}))}^{1/2}} \int_{U(\bm{T})} f(\bm{u+t})d\bm{t} - \theta_H(r_{\log \bm{T}}(\alpha))(\bm{u})Leb(f)}_{L^2(\R^d, d\bm{u})} = 0.  $$
This is equivalent to the statement \eqref{eqn;7.1}. By the standard Fourier transform on $\R^d$, equivalently
$$\lim_{|U(\bm{T})| \rightarrow \infty}\norm{\frac{1}{{vol(U(\bm{T}))}^{1/2}}  \int_{U(\bm{T})}e^{i \bm{t\cdot \hat u}}\hat f(\bm{\hat u}) dt - \hat\theta_H(r_{\log \bm{T}}(\alpha))(\bm{\hat u})\hat f(0) }_{L^2(\R^d, d\bm{\hat u})} = 0.     $$

For $\chi \in L^2(\R^d, d\bm{\hat u})$ and $\bm{\hat u} = (\hat u_j)_{1 \leq j \leq d}$, we denote
$$\chi_j(\bm{\hat u}) = \frac{e^{i \hat u_j} -1 }{i\hat u_j}, \ \chi(\bm{\hat u}) = \prod_{j=1}^d\chi_j(\bm{\hat u}).$$

Let $\hat\theta(\alpha)(\bm{\hat u}) := \chi(\bm{\hat u})$ for all $\bm{\hat u} \in \R^d$. Now we will compute $\theta(r_{\log \bm{T}} (\alpha))$.
By intertwining formula \eqref{eqn;interntwining} for $\textbf{T} \in \R^d$, %and $\textbf{u} \in \R^d$ we have
$$U_\textbf{T}(f)(\bm{\hat u}) = {\prod_{i=1}^d (T^{(i)})^{1/2}}f(\bm{T\hat u}), \text{ for } \bm{T\hat u} = (T^{(1)}\hat u_1,\cdots, T^{(d)}\hat u_d).$$
Then, for all $\alpha \in Aut_0(\mathsf{H}^g)$,
$$\hat\theta(r_{\log \bm{T}} (\alpha))(\bm{\hat u}) = U_\textbf{T}(\chi)(\bm{\hat u}) = vol(U(\bm{T}))^{1/2}\chi(\bm{T\hat u}).$$
The function $\theta(\alpha)$ is defined by inverse Fourier transform of $\hat\theta(\alpha)$ and
$$\norm{\theta_H(\alpha)}_H = \norm{\theta(\alpha)}_{L^2(\R^d)} = \norm{\hat\theta(\alpha)}_{L^2(\R^d)} = \norm{\chi(\bm{\hat u})}_{{L^2(\R^d, d\hat u)}} = C >0.$$

By integration,
\begin{multline}
\int_{0}^{T^{(d)}}\cdots\int_{0}^{T^{(1)}}e^{i \bm{t\cdot \hat u}}\hat f(\bm{\hat u}) dt = vol(U(\bm{T}))\chi(\bm{T\hat u}) \hat f (\bm{\hat u})\\
 = vol(U(\bm{T}))\chi(\bm{T\hat u}) (\hat f (\bm{\hat u}) - \hat f (0)) + vol(U(\bm{T}))^{1/2}\hat\theta(r_{\log \bm{T}} (\alpha))(\bm{\hat u}) \hat f (0).
\end{multline}

Then the claim reduces to the following:
$$\limsup_{|U(\bm{T})| \rightarrow \infty} \norm{vol(U(\bm{T}))^{1/2}\chi(\bm{T\hat u})(\hat f(\bm{\hat u}) - \hat f( 0)) }_{L^2(\R^d)} = 0. $$
If $f \in \mathscr{S}^s(\R^g)$ with $s > d/2$, function $\hat f \in C^0(\R^d)$ and bounded. Thus, by Dominated convergence theorem and change of variables,%for $\hat u = \frac{\nu}{\bm{T}}$,
$$  \norm{vol(U(\bm{T}))^{1/2}\chi(\bm{T\hat u}) (\hat f (\bm{\hat u}) - \hat f (0))}_{L^2(\R^d, d\bm{\hat u})} =  \norm{\chi(\nu)(\hat{f}(\frac{\bm{\nu}}{\bm{T}}) - \hat f (0))  }_{L^2(\R^d,  d\bm{\nu})} \rightarrow 0 . $$
\end{proof}

\begin{corollary}\label{522} For any $s > d/2$, $\alpha \in Aut_0(\mathsf{H}^g)$ and $f \in W_\alpha^s(H)$, there exists a constant $C >0$ such that
%all irreducible components $H \subset L^2(M)$ and $\alpha \in DC$, we have
$$\lim_{|U(\bm{T})|\rightarrow \infty} \frac{1}{{vol(U(\bm{T}))}^{1/2}}\norm{ \left\langle \PPP^{d,\alpha}_{U(\bm{T})}m , \omega_f \right\rangle }_{L^2(M)} = C|D_\alpha^H(f)|. $$
\end{corollary}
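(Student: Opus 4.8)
The plan is to read this off directly from Lemma \ref{7.1}, using only the continuity of the $L^2$-norm and the constant-norm property of the family $\theta_H$. The heavy analytic work---the oscillatory-integral computation producing the limiting profile $\chi$ and the dominated-convergence argument---has already been carried out there, so this corollary is essentially a soft corollary of that convergence statement.

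First I would invoke Lemma \ref{7.1}, which gives, for $\omega_f = f\omega^{d,\alpha} \in \Lambda^d\mathfrak{p}\otimes W_\alpha^s(H)$ with $s>d/2$,
\[
\lim_{|U(\bm{T})|\rightarrow\infty}\norm{\frac{1}{vol(U(\bm{T}))^{1/2}}\left\langle \PPP^{d,\alpha}_{U(\bm{T})}(\cdot),\omega_f\right\rangle - \theta_H(r_{\log\bm{T}}(\alpha))D^H_\alpha(f)}_{L^2(M)} = 0 .
\]
Since $D^H_\alpha(f)\in\C$ is a scalar, the approximating vector factors as $D^H_\alpha(f)\,\theta_H(r_{\log\bm{T}}(\alpha))$, whose $L^2(M)$-norm is $|D^H_\alpha(f)|\cdot\norm{\theta_H(r_{\log\bm{T}}(\alpha))}_{L^2(M)}$.

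Next I would apply the reverse triangle inequality for $\norm{\cdot}_{L^2(M)}$: the absolute difference between $\norm{vol(U(\bm{T}))^{-1/2}\langle\PPP^{d,\alpha}_{U(\bm{T})}(\cdot),\omega_f\rangle}_{L^2(M)}$ and $|D^H_\alpha(f)|\,\norm{\theta_H(r_{\log\bm{T}}(\alpha))}_{L^2(M)}$ is dominated by the norm of the difference displayed above, which tends to $0$ as $|U(\bm{T})|\rightarrow\infty$. Hence the two quantities have the same limiting behavior. Finally I would invoke the last assertion of Lemma \ref{7.1}, that the family $\{\theta_H(\beta)\mid\beta\in Aut_0(\mathsf{H}^g)\}$ has constant $L^2(M)$-norm, say $\norm{\theta_H(\beta)}_{L^2(M)}=C>0$ for all $\beta$; in particular $\norm{\theta_H(r_{\log\bm{T}}(\alpha))}_{L^2(M)}=C$ is independent of $\bm{T}$. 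Therefore the approximating quantity is the constant $C|D^H_\alpha(f)|$, and the genuine limit (not merely a $\limsup$) exists and equals $C|D^H_\alpha(f)|$, as claimed.

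The proof presents no real obstacle; the only point requiring care is that one obtains an honest limit rather than a one-sided bound, which is secured precisely because the constant-norm property makes $|D^H_\alpha(f)|\,\norm{\theta_H(r_{\log\bm{T}}(\alpha))}_{L^2(M)}$ exactly constant in $\bm{T}$, so the reverse triangle inequality forces convergence to that single value. One should also note that the normalization is by $vol(U(\bm{T}))^{1/2}$, matching the Følner scaling under which Lemma \ref{7.1} is stated, so no further rescaling is needed.
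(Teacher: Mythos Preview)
Your argument is correct and is exactly the intended one: the paper states this result as an immediate corollary of Lemma \ref{7.1} without giving a separate proof, and your reverse-triangle-inequality step combined with the constant-norm property of $\theta_H$ is precisely how one reads it off.
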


By Corollary \ref{522}, we derive the following limit result for the $L^2$-norm of Bufetov functionals.

\begin{corollary}\label{7.3} For every irreducible component $H$ and $[\alpha] \in DC$, there exists $C>0$ such that
$$\lim_{|U(\bm{T})|\rightarrow \infty} \frac{1}{{vol(U(\bm{T}))}^{1/2}}\norm{\beta_H(\alpha,\cdot,\bm{T})}_{L^2(M)} = C.$$
\end{corollary}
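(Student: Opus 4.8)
The goal is to prove Corollary \ref{7.3}: for every irreducible component $H$ and $[\alpha] \in DC$, the normalized $L^2$-norm of the Bufetov cocycle $\beta_H(\alpha,\cdot,\bm{T})$ converges to a positive constant as $|U(\bm{T})| \to \infty$.

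The plan is to deduce this directly from the asymptotic formula of Corollary \ref{cor;dev} together with the limit computation of Corollary \ref{522}. First I would isolate a single irreducible component: take $f \in W^s_\alpha(H)$ so that $\omega_f = f\omega^{d,\alpha}$ lives in $\Lambda^d\mathfrak{p}\otimes W^s_\alpha(H)$, and recall from \eqref{BBB} that on this component $\beta^f(\alpha,m,\bm{T}) = D_\alpha^H(f)\beta_H(\alpha,m,\bm{T})$. The key estimate is Corollary \ref{cor;dev}, which gives for $[\alpha]\in DC(L)$ the uniform bound
\begin{equation*}
\norm{\left\langle \PPP^{d,\alpha}_{U(\bm{T})}m,\omega_f\right\rangle - \beta^f(\alpha,m,\bm{T})}_{L^2(M)} \leq C_s(1+L)\norm{\omega_f}_{\alpha,s},
\end{equation*}
a bound that is \emph{independent} of $\bm{T}$. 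Dividing by $vol(U(\bm{T}))^{1/2}$ and letting $|U(\bm{T})|\to\infty$, the right-hand side vanishes, so the normalized $L^2$-norms of the Birkhoff integral and of $\beta^f(\alpha,\cdot,\bm{T})$ have the same limit.

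Next I would invoke Corollary \ref{522}, which states that the normalized $L^2$-norm of $\langle \PPP^{d,\alpha}_{U(\bm{T})}m,\omega_f\rangle$ converges to $C|D_\alpha^H(f)|$ for some constant $C>0$. Combining this with the previous step and substituting $\beta^f = D_\alpha^H(f)\beta_H$ gives
\begin{equation*}
\lim_{|U(\bm{T})|\to\infty}\frac{|D_\alpha^H(f)|}{vol(U(\bm{T}))^{1/2}}\norm{\beta_H(\alpha,\cdot,\bm{T})}_{L^2(M)} = C|D_\alpha^H(f)|.
\end{equation*}
Choosing $f$ with $D_\alpha^H(f)\neq 0$ (possible since $D_\alpha^H$ is a nonzero invariant distribution on $H$) lets me cancel the common factor $|D_\alpha^H(f)|$ and conclude that the limit of the normalized norm of $\beta_H$ equals the constant $C>0$. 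One should also note that $\beta_H(\alpha,\cdot,\bm{T})\in H$ by the orthogonality property of Corollary \ref{hi2}(4), so the $L^2(M)$-norm is genuinely the norm on the single component and the identification is consistent.

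The main subtlety, and what I would check carefully, is the $\bm{T}$-independence of the error bound in Corollary \ref{cor;dev} together with the well-definedness of the limiting constant. Since the Birkhoff-integral limit $C|D_\alpha^H(f)|$ is linear in $|D_\alpha^H(f)|$ and the Bufetov term $\beta_H$ does not itself depend on the choice of $f$, consistency forces the ratio $\norm{\beta_H(\alpha,\cdot,\bm{T})}_{L^2}/vol(U(\bm{T}))^{1/2}$ to converge to a constant independent of $f$; the only genuine work is to verify that the same constant $C$ arising from $\theta_H$ in Lemma \ref{7.1} governs both limits, which follows because $\theta_H(r_{\log\bm{T}}(\alpha))$ has constant $L^2$-norm. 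I expect no serious obstacle beyond bookkeeping, since both ingredients are already established.
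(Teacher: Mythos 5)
Your proposal is correct and follows essentially the same route as the paper: the paper likewise picks a normalized $f_\alpha^H \in W_\alpha^s(H)$ with $D_\alpha(f_\alpha^H)=1$, applies the asymptotic formula \eqref{eqn:asymp} to get a $\bm{T}$-independent error bound, and concludes via Corollary \ref{522}. The only cosmetic difference is that the paper normalizes $D_\alpha^H(f)=1$ up front rather than cancelling the factor $|D_\alpha^H(f)|$ at the end.
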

\begin{proof}
By the normalization of invariant distribution in Sobolev space, for any $\alpha \in Aut_0(\mathsf{H}^g)$,
there exists a function $f_\alpha^H \in W_\alpha^s(H)$ such that $D_\alpha(f_\alpha^H) = \norm{f_\alpha^H}_s = 1.$ For all $[\alpha] \in DC(L)$, %and for all $s > d+1/2$
by asymptotic formula \eqref{eqn:asymp} for $f = f_H$,
$$\big|\left\langle \PPP^{d,\alpha}_{U(\bm{T})}m , \omega_f \right\rangle - \beta^f(\alpha,m,\bm{T}) \big| \leq C_s(1+L).$$
Therefore, $L^2$-estimate follows from Corollary \ref{522}.
\end{proof}

A relation between the functional $\beta_H$ and the modular function $\theta_H$ is established below.

\begin{corollary}\label{4.4} For every irreducible component $H \subset L^2(M)$, the following holds.
For any $L >0$ and any $r_{\bf{t}}$-invariant probability measure $\mu$ supported on $DC(L) \subset \M_g$,
$$\beta_H(\alpha,\cdot, 1) = \theta_H(\alpha)(\cdot), \quad \text{for } \mu \text{-almost all } [\alpha] \in \M_g. $$
\end{corollary}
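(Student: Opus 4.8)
The plan is to prove that the normalized functional $\beta_H(\cdot,\cdot,1)$ converges to $\theta_H$ along the renormalization orbit, and then to promote this orbitwise convergence to an almost-everywhere identity by invoking Poincar\'e recurrence for the invariant measure $\mu$. Throughout I set $g(\alpha) := \beta_H(\alpha,\cdot,1) - \theta_H(\alpha) \in H \subset L^2(M)$, so that the claim is exactly $g(\alpha)=0$ for $\mu$-almost every $[\alpha]$.

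First I would establish that $\norm{g(r_{\bm{s}}(\alpha))}_{L^2(M)} \to 0$ as $\bm{s}\to\infty$ in the positive cone, for every $[\alpha]\in DC(L)$. Fix a normalized invariant distribution, i.e.\ choose $f\in W^s_\alpha(H)$ with $D_\alpha^H(f)=1$ as in the proof of Corollary \ref{7.3}. By Corollary \ref{cor;dev}, on the component $H$ one has $\left\langle \PPP^{d,\alpha}_{U(\bm{T})}(\cdot),\omega_f \right\rangle = \beta_H(\alpha,\cdot,\bm{T}) + O(1)$ in $L^2(M)$, with the error controlled by $C_s(1+L)\norm{\omega_f}_{\alpha,s}$ uniformly in $\bm{T}$; dividing by $vol(U(\bm{T}))^{1/2}$ and letting $|U(\bm{T})|\to\infty$ annihilates this bounded error. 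On the other hand, Lemma \ref{7.1} identifies the same normalized ergodic integral with $\theta_H(r_{\log\bm{T}}(\alpha))$ in the $L^2$-limit. Comparing the two descriptions and using the reparametrization $\beta_H(\alpha,\cdot,\bm{T}) = vol(U(\bm{T}))^{1/2}\,\beta_H(r_{\log\bm{T}}(\alpha),\cdot,1)$, I obtain
\[
\lim_{|U(\bm{T})|\to\infty} \norm{\,\beta_H(r_{\log\bm{T}}(\alpha),\cdot,1) - \theta_H(r_{\log\bm{T}}(\alpha))\,}_{L^2(M)} = 0,
\]
which is precisely $\norm{g(r_{\bm{s}}(\alpha))}_{L^2}\to 0$ with $\bm{s}=\log\bm{T}$.

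Second, I would convert this into the pointwise statement using the $r_{\bm{t}}$-invariance of $\mu$. Since $\mu$ is $r_{\bm{t}}$-invariant and supported on $DC(L)$, Poincar\'e recurrence applied to the measure-preserving time-one map $r_{\bm{e}}$, $\bm{e}=(1,\dots,1)$, gives, for $\mu$-almost every $[\alpha]$, a sequence $n_k\to\infty$ with $r_{n_k\bm{e}}([\alpha])\to[\alpha]$ in $\M_g$; invariance of the support keeps the whole orbit inside $DC(L)$, so every term is well defined. On $DC(L)$ the functional $\alpha\mapsto\beta_H(\alpha,\cdot,1)$ varies continuously (Lemma \ref{35}) and $\theta_H$ is continuous (Lemma \ref{7.1}), so $g$ is $L^2$-continuous there. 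Combining continuity with Step 1 along the recurrence sequence $\bm{s}_k = n_k\bm{e}\to\infty$,
\[
g([\alpha]) = \lim_{k\to\infty} g(r_{n_k\bm{e}}([\alpha])) = 0 \quad\text{in } L^2(M),
\]
which is the desired identity $\beta_H(\alpha,\cdot,1)=\theta_H(\alpha)$ $\mu$-almost everywhere.

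The main obstacle is Step 1: one must reconcile the two asymptotic descriptions of the normalized ergodic integral — the deviation bound of Corollary \ref{cor;dev}, which controls it by $\beta_H$, and the explicit $L^2$-limit of Lemma \ref{7.1}, which controls it by $\theta_H$ — and apply the scaling/reparametrization with the correct normalization so that the uniformly bounded error is genuinely negligible after dividing by $vol(U(\bm{T}))^{1/2}$. A secondary point needing care is that the continuity supplied by Lemma \ref{35} must hold in a topology strong enough (here $L^2(M)$) to pass to the limit along the recurrence sequence; this is exactly where restricting to a single irreducible component $H$ and working on the level set $DC(L)$, with its uniform constants, is essential.
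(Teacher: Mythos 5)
Your Step 1 is essentially the paper's: both compare the deviation estimate (Corollary \ref{cor;dev} / Theorem \ref{6.2type}) with the $L^2$-limit of Lemma \ref{7.1}, use the scaling identity $\beta_H(\alpha,\cdot,\bm{T}) = vol(U(\bm{T}))^{1/2}\beta_H(r_{\log\bm{T}}(\alpha),\cdot,1)$, and conclude that $\norm{\beta_H(r_{\log\bm{T}}(\alpha),\cdot,1)-\theta_H(r_{\log\bm{T}}(\alpha))}_{L^2(M)} = O(vol(U(\bm{T}))^{-1/2})\to 0$. That part is fine.

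The gap is in Step 2, in the sentence ``On $DC(L)$ the functional $\alpha\mapsto\beta_H(\alpha,\cdot,1)$ varies continuously (Lemma \ref{35}).'' Lemma \ref{35} gives continuity in $\alpha$ of the \emph{scalar} $\hat\beta_H(\alpha,\Gamma)$ for a \emph{fixed} rectangle $\Gamma$; it does not give continuity of the $L^2(M)$-valued map $\alpha\mapsto\beta_H(\alpha,\cdot,1)$, which is what your final limit $g([\alpha])=\lim_k g(r_{n_k\bm{e}}([\alpha]))$ requires. Note moreover that $\beta_H(\alpha,m,1)=\hat\beta_H(\alpha,\Gamma^X_{1}(m))$ where the rectangle $\Gamma^X_{1}(m)=\{\PP^{d,\alpha}_{\bm{t}}(m)\}$ itself moves with $\alpha$, so even pointwise-in-$m$ continuity is not a direct quote of Lemma \ref{35}. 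The paper sidesteps exactly this issue: it only needs \emph{measurability} of $\alpha\mapsto\beta_H(\alpha,\cdot,1)\in L^2(M)$, invokes Luzin's theorem to produce a compact set $E(\delta)$ with $\mu(\M\setminus E(\delta))<\delta$ on which the map \emph{is} continuous, and then applies Poincar\'e recurrence \emph{relative to $E(\delta)$} so that the recurrence times $r_{\bm{t}_n}(\alpha_0)$ stay inside $E(\delta)$, where continuity is available; letting $\delta\to 0$ gives the a.e.\ identity. Your argument would be repaired by inserting this Luzin step (or by actually proving $L^2(M)$-continuity of $\alpha\mapsto\beta_H(\alpha,\cdot,1)$ on $DC(L)$, e.g.\ via the uniform bound \eqref{eqn;excursion} plus a dominated-convergence argument handling the $\alpha$-dependence of the rectangle — but that is additional work not supplied by the cited lemma). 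As written, the crucial passage to the limit at $[\alpha]$ is unjustified.
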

\begin{proof}
By Theorem \ref{6.2type} and Lemma \ref{7.1}, there exists a constant $C_\mu>0$ such that for all $[\alpha] \in supp(\mu) \subset DC(L)$ and $\textbf{T}\in \R_{+}^d$, we have
\begin{equation}
\lim_{|U(\bm{T})| \rightarrow \infty}\norm{ \beta_H(r_{\log \bm{T}}(\alpha),\cdot, 1) - \theta_H(r_{\log \bm{T}}(\alpha))}_{L^2(M)} \leq \frac{C_\mu}{{vol(U(\bm{T}))}^{1/2}}.
\end{equation}
By Luzin's theorem, for any $\delta >0$ there exists a compact subset $E(\delta) \subset \M $ such that we have the measure bound $\mu(\M \backslash E(\delta)) < \delta$ and the function $\beta_H(\alpha,\cdot,1) \in L^2(M)$ depends continuously on $[\alpha] \in E(\delta)$. By Poincaré recurrence, there is a full measure set $F \subset \M $ for $\R^d$-action. Denote by a full measure set $E'(\delta) = E(\delta) \cap F \subset E(\delta)$.

For every $\alpha_0 \in E'(\delta)$, there is a divergent sequence $(\bm{t}_n)$ such that $\{r_{\bm{t_n}}(\alpha_0) \} \subset E(\delta)$ and $\lim_{n \rightarrow \infty}r_{\bm{t}_n}(\alpha_0) = \alpha_0$. By continuity of $\theta_H$ and  $\beta_H$ at $\alpha_0$, we have
\begin{multline}\label{eqn;equality}
\norm{ \beta_H(\alpha_0,\cdot, 1) - \theta_H(\alpha_0)}_{L^2(M)} \\
= \lim_{n\rightarrow \infty}\norm{ \beta_H(r_{\bm{t}_n}(\alpha_0),\cdot, 1) - \theta_H(r_{\bm{t}_n}(\alpha_0))}_{L^2(M)} = 0.
\end{multline}
Then $\beta_H(\alpha,\cdot, 1) = \theta_H(\alpha) \in L^2(M)$ for all $\alpha \in E'(\delta)$. It follows that the set where the equality \eqref{eqn;equality} fails has a measure less than any $\delta>0$, thus the identity holds for $\mu$-almost all $\alpha \in Aut_0(\mathsf{H}^g)$.
\end{proof}

For all $\alpha \in Aut_0(\mathsf{H}^g)$, smooth function $f \in W^s(M)$ for $s > s_{d,g}+1/2$ decompose as an infinite sum, and the functional $\theta^f$ is defined by a convergent series
\begin{equation}
\theta^f(\alpha)  := \sum_H D_\alpha^H(f)\theta_{H}(\alpha).
\end{equation}
Hence, the modular function $\theta^f : Aut_0(\mathsf{H}^g) \rightarrow L^2(M)$ is continuous.

The following result is an extension of Lemma \ref{7.1} to an asymptotic formula.

\begin{lemma}\label{lll}
For all $\alpha \in Aut_0(\mathsf{H}^g)$, $f \in W^s(M)$ and $s > s_{d,g}+1/2$,
$$\lim_{n\rightarrow \infty}\norm{\frac{1}{vol(U(\bm{T_n}))^{1/2}}\left\langle \PPP^{d,\alpha}_{U(\bm{T_n})}m , \omega_f \right\rangle- \theta^f(r_{\log \bm{T_n}}(\alpha))}_{L^2(M)} = 0.$$
\end{lemma}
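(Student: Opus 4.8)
The plan is to reduce to Lemma \ref{7.1} componentwise and then interchange the limit with the sum over irreducible components, the interchange being justified by a uniform, summable bound together with orthogonality. First I would decompose $f=\sum_H f_H$ into its irreducible components, so that $\omega_f=\sum_H\omega_{f_H}$. Both objects inside the norm respect this decomposition: the Birkhoff integral $\langle\PPP^{d,\alpha}_{U(\bm{T_n})}m,\omega_{f_H}\rangle$ lies in $H$ by the orthogonality property of Corollary \ref{hi2}, while $\theta_H(\alpha)\in H$ by Lemma \ref{7.1} and $\theta^f=\sum_H D_\alpha^H(f)\theta_H(\alpha)$ by definition. Since the components $H$ are mutually orthogonal in $L^2(M)$, Pythagoras gives
\begin{equation*}
\norm{\frac{1}{vol(U(\bm{T_n}))^{1/2}}\langle\PPP^{d,\alpha}_{U(\bm{T_n})}m,\omega_f\rangle-\theta^f(r_{\log\bm{T_n}}(\alpha))}_{L^2(M)}^2=\sum_H a_H(\bm{T_n})^2,
\end{equation*}
where $a_H(\bm T):=\norm{\frac{1}{vol(U(\bm T))^{1/2}}\langle\PPP^{d,\alpha}_{U(\bm T)}m,\omega_{f_H}\rangle-\theta_H(r_{\log\bm T}(\alpha))D_\alpha^H(f)}_{L^2(M)}$. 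By Lemma \ref{7.1}, each $a_H(\bm{T_n})\to0$ as $n\to\infty$, since $|U(\bm{T_n})|\to\infty$.

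The crux is a uniform bound $a_H(\bm T)\le C\norm{f_H}_{\alpha,s}$ with $C$ independent of both $\bm T$ and $H$. I would first bound the Birkhoff term: using the Schr\"odinger/Fourier model from the proof of Lemma \ref{7.1}, the normalized integral is unitarily equivalent to $vol(U(\bm T))^{1/2}\chi(\bm{T\hat u})\hat f_H(\bm{\hat u})$, and since $vol(U(\bm T))\,|\chi(\bm{T\hat u})|^2$ is an approximate identity of constant total mass $\norm{\chi}_{L^2}^2$, one gets $\norm{\frac{1}{vol(U(\bm T))^{1/2}}\langle\PPP^{d,\alpha}_{U(\bm T)}m,\omega_{f_H}\rangle}_{L^2}\le C\norm{\hat f_H}_\infty\le C\norm{f_H}_{\alpha,s}$, uniformly in $\bm T$. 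Because the $\PP$-action is pure translation in this model, the $\chi$-computation is identical for every component, so the bound is uniform in the central parameter $n$ of $H$ as well (when $d<g$ the final step is a mixed $\sup$/$L^2$ Sobolev embedding, again $n$-independent). For the $\theta$-term, rather than invoke a normalization of $D_\alpha^H$, I would combine this uniform Birkhoff bound with Corollary \ref{522}, which identifies $\lim_{|U(\bm T)|\to\infty}vol(U(\bm T))^{-1/2}\norm{\langle\PPP^{d,\alpha}_{U(\bm T)}m,\omega_{f_H}\rangle}_{L^2}$ as $C_0|D_\alpha^H(f)|$ for a positive constant $C_0=\norm{\chi}_{L^2}$; passing to the limit yields $|D_\alpha^H(f)|\le C\norm{f_H}_{\alpha,s}$, whence $\norm{\theta_H(r_{\log\bm T}(\alpha))D_\alpha^H(f)}_{L^2}=C_0|D_\alpha^H(f)|\le C\norm{f_H}_{\alpha,s}$, again uniformly. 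The triangle inequality then delivers the claimed bound on $a_H$.

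With the uniform bound in hand I would conclude by dominated convergence for series. Since $\sum_H\norm{f_H}_{\alpha,s}^2=\norm{f}_{\alpha,s}^2<\infty$, for any $\epsilon>0$ choose a finite set $S$ of components with $\sum_{H\notin S}\norm{f_H}_{\alpha,s}^2<\epsilon$; then the tail $\sum_{H\notin S}a_H(\bm{T_n})^2\le C^2\epsilon$ uniformly in $n$, while the finite head $\sum_{H\in S}a_H(\bm{T_n})^2\to0$ by the finitely many instances of Lemma \ref{7.1}. Hence $\sum_H a_H(\bm{T_n})^2\to0$, which is the assertion. I note that no Diophantine hypothesis enters anywhere, consistent with the statement holding for all $\alpha\in Aut_0(\mathsf{H}^g)$.

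The main obstacle is precisely establishing the $\bm T$- and $H$-uniform summable bound, and in particular its independence of the central parameter $n$: the crude current estimate of Lemma \ref{3.1} degrades like $|U|^{1/2}$ after normalization and is useless here, so one genuinely needs the oscillatory cancellation encoded by $\chi$ (equivalently, the approximate-identity mass computation) to control each component uniformly. Once that is secured, orthogonality turns the problem into a scalar dominated-convergence argument and the remaining steps are routine.
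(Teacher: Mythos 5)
Your proof is correct and follows the route the paper intends: the paper states Lemma \ref{lll} without proof, presenting it only as an ``extension of Lemma \ref{7.1}'', and your argument --- componentwise application of Lemma \ref{7.1}, the Pythagorean identity coming from orthogonality of the irreducible components, a bound $a_H(\bm{T})\leq C\norm{f_H}_{\alpha,s}$ uniform in both $\bm{T}$ and $H$, and dominated convergence for the resulting series --- is precisely the natural completion of that sketch. You also correctly isolate the one genuinely nontrivial point, namely the $H$-uniformity (in particular the independence of the central parameter $n$) of the summable bound, which the paper leaves entirely implicit.
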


We summarize our results on limit distributions for higher rank actions.

\begin{theorem}[Theorem \ref{limit}]\label{thm;limit2}
Let $(\bm{T_n})$ be any sequence such that
$$\lim_{n\rightarrow \infty} r_{\log \bm{T_n}} [\alpha] = \alpha_\infty \in \M_g.$$
For every closed form $\omega_f \in \Lambda^d\mathfrak{p} \otimes W^s(M)$ with $s> s_{d,g}+1/2$, which is not a coboundary, the limit distribution of the family of random variables $E_{\bm{T_n}}(f)$
exists along a subsequence of $\{\bm{T_n}\}$ and is equal to the distribution of the function $\theta^f(\alpha_\infty) = \beta(\alpha_\infty, \cdot, 1) \in L^2(M)$.

If $\alpha_\infty \in DC$, then $\theta^f(\alpha_\infty)$ is a bounded function on $M$, and the limit distribution has compact support.
\end{theorem}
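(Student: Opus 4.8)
The plan is to reduce the limit distribution of $E_{\bm{T_n}}(f)$ to that of the modular function $\theta^f$ evaluated at the limit point $\alpha_\infty$, by combining the $L^2$-asymptotic already proved in Lemma \ref{lll} with the continuity of $\theta^f$ and the $Sp_{2g}(\Z)$-invariance of the volume measure $vol$ on $M$. First I would invoke Lemma \ref{lll}, which gives
$$\norm{E_{\bm{T_n}}(f) - \theta^f(r_{\log \bm{T_n}}(\alpha))}_{L^2(M)} \to 0.$$
Since convergence in $L^2(M)$ implies convergence in probability on the probability space $(M, vol)$, the random variables $E_{\bm{T_n}}(f)$ and $\theta^f(r_{\log \bm{T_n}}(\alpha))$ must share the same limiting distribution whenever one of them converges; by a Slutsky-type argument it therefore suffices to analyze the laws of $\theta^f(r_{\log \bm{T_n}}(\alpha))$.

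Next I would exploit the hypothesis $r_{\log \bm{T_n}}[\alpha] \to \alpha_\infty$ in the locally compact Hausdorff space $\M_g = Sp_{2g}(\R)/Sp_{2g}(\Z)$: passing to a subsequence, I can choose lattice elements $\gamma_n \in Sp_{2g}(\Z)$ so that the representatives $r_{\log \bm{T_n}}(\alpha)\gamma_n \to \tilde\alpha_\infty$ converge in $Sp_{2g}(\R)$ with $[\tilde\alpha_\infty] = \alpha_\infty$. Continuity of $\theta^f : Aut_0(\mathsf{H}^g) \to L^2(M)$, established right after Corollary \ref{4.4}, then yields $\theta^f(r_{\log \bm{T_n}}(\alpha)\gamma_n) \to \theta^f(\tilde\alpha_\infty)$ strongly in $L^2(M)$, hence in distribution. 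The decisive bookkeeping point is that the \emph{law} of the random variable $\theta^f(\cdot)$ on $(M, vol)$ depends only on the class in $\M_g$: from the equivariance $\beta_H(\gamma\alpha,\gamma m,\bm{T}) = \beta_H(\alpha,m,\bm{T})$ and the identification $\beta_H(\alpha,\cdot,1) = \theta_H(\alpha)$ of Corollary \ref{4.4}, together with the fact that each $\gamma$ acts on $M$ preserving $vol$, the distributions of $\theta^f(r_{\log \bm{T_n}}(\alpha))$ and $\theta^f(r_{\log \bm{T_n}}(\alpha)\gamma_n)$ coincide. Chaining these facts, the law of $\theta^f(r_{\log \bm{T_n}}(\alpha))$ converges along the subsequence to the law of $\theta^f(\tilde\alpha_\infty)$, which is precisely the law of $\theta^f(\alpha_\infty) = \beta(\alpha_\infty,\cdot,1)$ as claimed. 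Non-degeneracy comes from the non-coboundary hypothesis, which forces $D_{\alpha_\infty}^H(f) \neq 0$ for some $H$; since $\norm{\theta_H(\alpha_\infty)}_{L^2(M)} = C > 0$ by Lemma \ref{7.1}, the limiting function $\theta^f(\alpha_\infty)$ is not identically zero.

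Finally, for the compact support claim under $\alpha_\infty \in DC$, I would combine the bounded property of Corollary \ref{hi2}(3) with the summability over $H$ of the coefficients $D_{\alpha_\infty}^H(f)$ for $f \in W^s(M)$, $s > s_{d,g}+1/2$ (gained from the spectral gap of the central element $Z$, exactly as in the proof of Lemma \ref{6.1type}), to conclude that $\theta^f(\alpha_\infty) = \beta^f(\alpha_\infty,\cdot,1)$ is a genuinely bounded function on $M$; a bounded random variable necessarily has compactly supported law. I expect the principal obstacle to be the interface between the group $Sp_{2g}(\R)$ — on which $\theta^f$ and its continuity are defined — and the moduli space $\M_g$ in which the hypothesis only provides convergence. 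The functions $\theta^f(r_{\log \bm{T_n}}(\alpha))$ themselves need not converge in $L^2$ on the nose, so one must introduce and track the compensating elements $\gamma_n$, and it is the $Sp_{2g}(\Z)$-invariance of $vol$ that guarantees the distributions are unaffected by this adjustment.
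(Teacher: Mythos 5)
Your proposal is correct and follows essentially the same route as the paper: the paper's own argument consists of exactly the ingredients you assemble (Lemma \ref{lll} to replace $E_{\bm{T_n}}(f)$ by $\theta^f(r_{\log \bm{T_n}}(\alpha))$ in $L^2$, continuity of the modular function $\theta^f$ together with convergence of $r_{\log \bm{T_n}}[\alpha]$ in $\M_g$, the identification of Corollary \ref{4.4}, and the Diophantine bound on $\beta^f(\alpha_\infty,\cdot,1)$ for compact support). In fact your write-up makes explicit the probabilistic bookkeeping (the Slutsky reduction, the subsequence extraction with compensating lattice elements $\gamma_n$, and the $vol$-invariance under $Sp_{2g}(\Z)$) that the paper's one-line proof leaves implicit.
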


\begin{proof}[Proof of Theorem \ref{limit}] %{limit}
Since $\alpha_\infty \in \M_g$, the existence of limit follows from the Lemma \ref{lll} and Theorem \ref{thm;limit2}.
\end{proof}

A relation with Birkhoff integrals and theta sum was introduced in \cite[\S 5.3]{CF15}, and as an application, we derive the limit theorem of theta sums.

\begin{corollary} Let $\mathscr{Q}[x] = x^\top \mathscr{Q}x$ be the quadratic forms defined by $g \times g$ real matrix $\mathscr{Q}$, where $\alpha = \begin{pmatrix}{I}&{0}\\{\mathscr{Q}}&{I}\end{pmatrix} \in Sp_{2g}(\R)$ and $\ell(x) = \ell^\top x$ is the linear form defined by $\ell \in \R^g$. Then the theta sum
$$\Theta(\mathscr{Q},\ell;N)= N^{-g/2}\sum_{n \in \Z^g \cap [0,N]} \exp (2\pi \iota(\mathscr{Q}[n] + \ell(n))) $$
 has a limit distribution and it has compact support.
\end{corollary}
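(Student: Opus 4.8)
The plan is to realize the theta sum as a normalized ergodic integral of the rank $g$ abelian action $\PP^{g,\alpha}$ on the Heisenberg nilmanifold $M$ and then to invoke Theorem \ref{limit}. The bridge is the correspondence of \cite[\S 5.3]{CF15}: for the lower-triangular unipotent symplectic matrix $\alpha = \begin{pmatrix} I & 0 \\ \mathscr{Q} & I \end{pmatrix}$, the generators $X_1^\alpha, \dots, X_g^\alpha = \alpha^{-1}(X_1,\dots,X_g)$ are tilted by $\mathscr{Q}$, so that the first-return map of the $\PP^{g,\alpha}$-orbit to a transverse torus is a skew-translation whose central coordinate accumulates the quadratic phase $\mathscr{Q}[n]$. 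First I would fix $d = g$ (the full-rank case, in which Proposition \ref{prop;3.13} guarantees that $I_d(\pp,\mathscr{S}(\R^g))$ is one-dimensional in each irreducible component $H$) and choose a Schwartz vector $f_\Theta \in W^s(M)$ together with a base point $m_\ell \in M$ encoding the linear form $\ell$, so that evaluating $\langle \PPP^{g,\alpha}_{U(\bm N)} m_\ell, \omega_{f_\Theta}\rangle$ over the box $U(\bm N) = [0,N]^g$ reproduces exactly $\sum_{n \in \Z^g \cap [0,N]} \exp(2\pi\iota(\mathscr{Q}[n] + \ell(n)))$. Since $vol(U(\bm N))^{1/2} = N^{g/2}$, this identifies $\Theta(\mathscr{Q},\ell;N) = E_{\bm N}(f_\Theta)$ for $\bm N = (N,\dots,N)$.

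Next I would check the hypotheses of Theorem \ref{limit} for $\omega_{f_\Theta}$. Because the one-dimensional space of invariant distributions produces a nonzero $D_\alpha^H(f_\Theta)$, the form $\omega_{f_\Theta}$ is closed but not a coboundary, so the limiting random variable is nondegenerate. Taking $\bm{T_n} = (N_n,\dots,N_n)$ with $N_n \to \infty$, I would pass to a subsequence along which the renormalization orbit converges, $r_{\log \bm{T_n}}[\alpha] \to \alpha_\infty \in \M_g$; such a subsequence exists because $\{r_{\bm t}\}$ is a diagonal (geodesic-type) flow on $\M_g$ and, for the relevant $\alpha$ attached to $\mathscr{Q}$, its orbit recurs to a compact part of the moduli space. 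Theorem \ref{thm;limit2} then gives that the distribution of $E_{\bm{T_n}}(f_\Theta) = \Theta(\mathscr{Q},\ell;N_n)$ converges along this subsequence to the law of $\theta^{f_\Theta}(\alpha_\infty) = \beta(\alpha_\infty,\cdot,1) \in L^2(M)$.

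For the compact support I would argue that $\alpha_\infty \in DC$. By the logarithm law and the $\hat\delta(g)$-Roth condition of \S\ref{sec;dio} (Proposition \ref{prop;jackma}), the set $DC$ has full Haar measure and the renormalized height of $r_{\log \bm{T_n}}[\alpha]$ stays bounded along the recurring subsequence, so the limit point $\alpha_\infty$ lies in $DC$; by the final assertion of Theorem \ref{thm;limit2}, $\theta^{f_\Theta}(\alpha_\infty)$ is then a bounded function on $M$, whence its law under the probability measure $vol$ is compactly supported. This transfers back to the theta sum and yields the claim (for almost every $\mathscr{Q}$, which is the precise scope inherited from the ``almost all $\alpha$'' in Theorem \ref{limit}).

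I expect the main obstacle to be the first step: making the discretization rigorous in the higher-rank ($d=g$) setting, that is, verifying that the transverse-torus first-return map of $\PP^{g,\alpha}$ is genuinely the skew-translation producing $\mathscr{Q}[n]$, and pinning down the correct Schwartz vector $f_\Theta$ and base point $m_\ell$ so that the continuous ergodic integral collapses to the discrete theta sum with the exact normalization. A secondary subtlety is guaranteeing $\alpha_\infty \in DC$ for the specific, non-random $\alpha$ attached to $\mathscr{Q}$, which is precisely where the full-measure recurrence of \S\ref{sec;dio} (or a Diophantine hypothesis on $\mathscr{Q}$) must be invoked.
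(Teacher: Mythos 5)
Your proposal is correct and takes essentially the same route as the paper, which offers no explicit proof beyond citing the Birkhoff-integral/theta-sum correspondence of \cite[\S 5.3]{CF15} and applying Theorem \ref{limit} (via Theorem \ref{thm;limit2}); your identification of $\Theta(\mathscr{Q},\ell;N)$ with $E_{\bm N}(f_\Theta)$ for a suitable Schwartz vector and base point, followed by the subsequence/Diophantine argument, is exactly the intended argument. Your closing caveat that the conclusion really holds for almost every $\mathscr{Q}$ (inherited from the ``almost all $\alpha$'' in Theorem \ref{limit}) is a fair and correct reading of the corollary's implicit scope.
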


\section{$L^2$-lower bounds }\label{sec;5}
In this section we prove $L^2$- lower bounds of ergodic integrals on transverse torus.

\subsection{Structure of return map }\label{sec;returnmap}

The polarized Heisenberg group $\mathsf{H}^g_{pol} \approx \R^g \times \R^g \times \R $ is equipped with the group law $(x,y,z)\cdot (x',y',z') = (x+x',y+y',z+z'+yx')$.
%\begin{definition}
Reduced standard Heisenberg group is defined by quotient
$$\mathsf{H}_{red}^g := \mathsf{H}_{pol}^g/(\{0\}\times \{0\} \times \frac{1}{2}\Z) \approx \R^g \times \R^g \times \R / \frac{1}{2}\Z.$$
Then the Reduced standard lattice is $\mathsf{\Gamma}_{red}^g = \Z^g\times \Z^g \times \{0\} \subset \mathsf{H}_{red}^g$ and the quotient $\mathsf{H}_{red}^g/\mathsf{\Gamma}_{red}^g$ is isomorphic to the standard Heisenberg manifold $M = \mathsf{H}^g/\mathsf{\Gamma}$.
%\end{definition}

Given the standard frame $(X_i,Y_i,Z)$,  $(g+1)$-dimensional (transverse) torus is denoted by
$$\T_\mathsf{\Gamma}^{g+1} := \{\mathsf{\Gamma}\exp(\sum_{i=1}^g y_iY_i +zZ) \mid ({y_i},z) \in \R\times \R \}.$$

Now, we consider a return map of $\PP^{d,\alpha}$ on $\T_\mathsf{\Gamma}^{g+1}$ on the coordinates in reduced Heisenberg group.
 For $x = (x_1, \cdots, x_g) \in \R^g$, we write in the coordinate of $\mathsf{H}_{red}^g$ for convenience
%Any element of $\mathsf{\Gamma}_{red}^g$ can be written
$$\exp(x_1X_1^\alpha + \cdots +x_gX_g^\alpha) = (x_\alpha, x_\beta, w\cdot x ), \text{ for some } x_\alpha, x_\beta, \text{ and } w \in \R^d.$$
Then by group law,
$$\exp(x_1X_1^\alpha + \cdots +x_gX_g^\alpha)\cdot(0,y,z) = (x_\alpha, y + x_\beta, z+ w\cdot x )$$
and given $(n,m,0) \in \mathsf{\Gamma}_{red}^g$,
\begin{equation}\label{def;return}
\exp(x_1X_1^\alpha + \cdots +x_gX_g^\alpha)\cdot(0,y,z)\cdot (n,m,0) = \exp(x'_1X_1^\alpha + \cdots +x'_gX_g^\alpha)\cdot (0,y',z' )
\end{equation}
if and only if
\[
x_\alpha' = x_\alpha+n, y' = y + (x_\beta - x_\beta')  +m \text{ and } z' = z+ (w - w')\cdot x + n^\top(y+x_\beta).
\]

Assume $\langle X^\alpha_i, X_j\rangle \neq 0$ for all $i, j$, and we write the first return time for $\PP^{d,\alpha}$ action
$$t_{Ret} = (t_{Ret,1},\cdots, t_{Ret,d}) \in \R^d$$
on transverse torus $\T_\mathsf{\Gamma}^{g+1}$. %satisfying \eqref{def;return}. %where $t_{Ret,i} = \frac{1}{|\langle X^\alpha_i, X_1 + \cdots + X_g\rangle |}.$
We denote the domain for return time $U(t_{Ret}) = [0,t_{Ret,1}]\\ \times \cdots \times [0,t_{Ret,d}]$. Return map of action $\PP^{d,\alpha}$ on $\T_\mathsf{\Gamma}^{g+1}$ has a form of skew-shift
\begin{equation}
A_{\rho,\tau}(y, z) = (y+\rho,  z+v\cdot y+\tau) \text{ on } \R^g/\Z^g \times \R/K^{-1}\Z
\end{equation}
for some non-zero vectors $\rho, v \in \R^g$ and $\tau \in \R$.

Furthermore, $A_{\rho,\tau} = A_{d,\rho,\tau} \circ \cdots \circ A_{1,\rho,\tau}$ decomposes with  commuting linear skew-shifts
\begin{equation}\label{skew}
A_{i,\rho,\tau}(y, z) = (y+\rho_i,  z+v_{i}\cdot y+\tau_i) \text{ on } \R^g/\Z^g \times \R/K^{-1}\Z
\end{equation}
for some $\rho_i, v_i \in \R^g$ and $v_{i} \in \R^g$. For each $j \neq k$, it is easily verified that
$$A_{j,\rho,\tau}\circ A_{k,\rho,\tau} = A_{k,\rho,\tau}\circ A_{j,\rho,\tau}.$$

Given pair $(\m,n) \in \Z_{K|n|}^{g}\times \Z$, let $H_{(\m,n)}$ denote the corresponding factor and $C^\infty(H_{(\m,n)})$ be a subspace of smooth functions on $H_{(\m,n)}$.
% return map of  higher rank action can be computed and has constant return time to a transverse torus.
Denote $\{e_{\m,n}\mid (\m,n) \in \Z_{|n|}^{g}\times \Z \}$  the basis of characters on $\T_\mathsf{\Gamma}^{g+1}$ and for all $(y,z) \in \T^g \times \T$,
$$e_{\m,n}(y,z):= \exp[2\pi \iota (\m\cdot y + nKz)].$$
%Each $A_i$ acts on $f \in L^2(\T_\Gamma^{d+1})$ by composition $f\circ A_i$ and it corresponds to the Fourier coefficients by $e(m\cdot x) \mapsto e(A_i^tm\cdot x)$.
%
%We abuse the notation by writing transpose matrix $A_i^t$ as $A_i$. The orbit of action is written by $A^t(j) = A_1^{t_1}\cdots A_d^{t_d}(j)$ for $t = (t_1,\cdots t_d)$ and $j \in \Z^d$.
For each $A_{i,\rho,\sigma}$, we set $\bm{v_i} = (v_{i1},\cdots, v_{ig}) \in \Z_{K|n|}^{g}$. Then the orbit is identified  with the following dual orbit
\begin{align*}
\OO_{A_i}(\bm{m},n)  & = \{(\bm{m} + (nj_i)\bm{v_i},n), \ j_i \in  \Z \}\\
&= \{ (m_1 + (nv_{i1})j_i, \cdots, m_g + (nv_{ig})j_i, n ), \ j_i \in  \Z\} .
\end{align*}

If $n = 0$, the orbit $[(\m,0)] \subset \Z^{g}\times \Z$ of $(\m,0)$ is reduced to a single element. If $n \neq 0$,  then the dual orbit $[(\m, n)] \subset \Z^{g+1}$ of $(\m, n)$ for higher rank actions is described as follows:
$$\OO_{A}(\bm{m},n) = \{(m_k + n\sum_{i=1}^d(v_{ik} j_i), n)_{1\leq k \leq d} : j = (j_1, \cdots j_d)\in \Z^d\}.$$

It follows that every $A$-orbit for rank $\R^d$-action (or $A_i$-orbit) can be labeled uniquely by a pair $(\m, n) \in \Z_{|n|}^g \times \Z\backslash\{0\}$ with $\m = (m_1,\cdots, m_g)$.  Thus, the subspace of functions with non-zero central characters splits as a direct sum of components $H_{(\m,n)}$
$$L^2(\T_\mathsf{\Gamma}^{g+1}) = \bigoplus_{\omega \in \OO_A}H_\omega, \quad \text{where } H_\omega = \bigoplus_{(\m,n) \in \omega} \C e_{(\m,n)}.$$

\subsection{Higher cohomology for $\Z^d$-action of skew-shifts}
%Cohomological equation of higher rank toral automorphism was introduced by \cite{katok1995higher}. %Here we review the structures to relate previous return map.
%(see \cite{CF15,DT20})
In this subsection, we find relations with the return map for $\Z^d$ action of $\PP^{d,\alpha}$ on the torus $\T_\mathsf{\Gamma}^{g+1}$ and obstructions for solving cohomological equation $\omega = d \Omega$.

We will restrict  our interest to the following cocycle equation
\begin{equation}\label{eqn;cohskew}
\varphi(x,t) =  \mathfrak D\Phi(x,t), \quad x \in \T^g, \ t \in \Z^d
\end{equation}
where $d$-cocycle $\varphi : \T_\mathsf{\Gamma}^{g+1} \times \Z^d \rightarrow \R$, $(d-1)$ cochain $\Phi : \T_\mathsf{\Gamma}^{g+1} \rightarrow \R^d$, $\Phi = (\Phi_1, \cdots, \Phi_d)$, and
$\mathfrak D$ is coboundary operator
$$\mathfrak D \Phi = \sum_{i=1}^{d}(-1)^{i+1}\Delta_{i} \Phi_i $$ where $\Delta_{i} \Phi_i = \Phi_i \circ A_{i,\rho,\tau} - \Phi_i$.

In the work of Katoks, they proved the existence of solutions for cocycle equations by studying the dual equations $\hat\varphi =  \mathfrak D \hat\Phi$ in the space of Fourier coefficients (dual orbit) \cite[\S 2]{katok1995higher}. We apply this result to our $A$-orbit and verify the invariant distributions (or currents) explicitly.

\begin{proposition}\cite[Proposition 2.2]{katok1995higher}
A dual cocycle $\hat\varphi$ satisfies cocycle equation \eqref{eqn;cohskew} if and only if $\sum_{j \in \Z^d}\hat\varphi_{(\m,n)} \circ A^j = 0$.
\end{proposition}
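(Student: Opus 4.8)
The plan is to diagonalize the whole problem through the Fourier/dual-orbit decomposition $L^2(\T_\mathsf{\Gamma}^{g+1}) = \bigoplus_{\omega \in \OO_A}H_\omega$ set up above, reducing solvability of $\varphi = \mathfrak D\Phi$ to a purely algebraic statement about the top cohomology of the $\Z^d$-action on a single orbit. First I would note that each difference operator $\Delta_i\Phi_i = \Phi_i\circ A_{i,\rho,\tau} - \Phi_i$ acts on characters by permuting them within one dual orbit, $e_{(\m,n)}\circ A_{i,\rho,\tau} = c_i(\m,n)\,e_{A_i^{*}(\m,n)}$ for a unimodular phase $c_i$, so $\Delta_i$ preserves each $H_\omega$ and, after a gauge normalization absorbing the phases $c_i$ into the choice of basis along $\omega$, becomes $T_i - \mathrm{Id}$, where $T_i$ is translation by the $i$-th generator. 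Since every $A$-orbit with $n \neq 0$ is free, the map $j \mapsto A^{j}(\m,n)$ identifies $\omega$ with $\Z^d$, and the equation \eqref{eqn;cohskew} restricted to $H_\omega$ becomes the identity $\hat\varphi = \sum_{i=1}^d (-1)^{i+1}(T_i - \mathrm{Id})\hat\Phi_i$ of sequences on $\Z^d$; for $n = 0$ the orbit is a single point, $T_i = \mathrm{Id}$, and the equation degenerates to $\hat\varphi = 0$, which is already the asserted vanishing condition.

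In this language the claim is exactly that the image of the top Koszul differential of $\Z^d$ equals the kernel of the augmentation (orbit-sum) functional $\varepsilon(g) := \sum_{j \in \Z^d} g(j)$. The necessity direction is the easy one: applying $\varepsilon$ to $\hat\varphi = \sum_i (-1)^{i+1}(T_i - \mathrm{Id})\hat\Phi_i$ and using $\varepsilon((T_i - \mathrm{Id})h) = 0$ by reindexing the summation (telescoping) for every summable $h$, I obtain $\sum_{j}\hat\varphi_{(\m,n)}\circ A^{j} = \varepsilon(\hat\varphi) = 0$. The content therefore lies in the converse.

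For sufficiency I would run the discrete Poincar\'e lemma for $\Z^d$ by induction on $d$, writing $\Z^d = \Z^{d-1}\times \Z$. Forming the partial orbit-sum $F(j') := \sum_{j_d \in \Z}\hat\varphi(j', j_d)$ over the last coordinate, the hypothesis $\varepsilon(\hat\varphi) = 0$ gives $\varepsilon_{d-1}(F) = 0$, so by the inductive hypothesis $F$ is a Koszul coboundary on $\Z^{d-1}$, producing $\hat\Phi_1, \dots, \hat\Phi_{d-1}$; the residual sequence $\hat\varphi - \sum_{i<d}(-1)^{i+1}(T_i-\mathrm{Id})\hat\Phi_i$ then has vanishing $T_d$-partial sums along every line and is inverted in the last variable by discrete integration $\hat\Phi_d(j) = -\sum_{k \geq j_d}(\,\cdots)$, which converges precisely because those line-sums vanish. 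The base case $d=1$ is this same summation.

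The hard part will be not the formal identity but the analytic bookkeeping: ensuring the constructed $\hat\Phi_i$ lie in the correct dual Sobolev space rather than existing merely as formal sequences. Inverting $T_i - \mathrm{Id}$ and then reconstituting $\Phi_i$ as a genuine function on $\T_\mathsf{\Gamma}^{g+1}$ reintroduces the accumulated phases, effectively dividing by $c_i(\m,n) - 1$, whose moduli accumulate at $0$ along a skew-shift orbit; this is the usual small-divisor phenomenon, so the partial-summation inverse need not preserve the decay required for membership in the dual Sobolev space. I would therefore isolate the quantitative convergence, and the attendant loss of regularity, as a separate lemma controlled by the Diophantine non-degeneracy of the return-map data $(\rho, v, \tau)$, keeping the cohomological characterization above purely at the level of summable (or Schwartz-class) coefficients, exactly as in \cite[\S 2]{katok1995higher}.
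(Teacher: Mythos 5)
The paper does not prove this proposition at all; it is imported verbatim from Katok--Katok \cite[Proposition 2.2]{katok1995higher}, so there is no in-paper argument to compare against. Your reconstruction --- diagonalize over the dual orbits, gauge away the unimodular (quadratic) phases so that each $\Delta_i$ becomes $T_i-\mathrm{Id}$ acting on sequences over a free $\Z^d$-orbit, kill coboundaries with the augmentation functional for necessity, and run a discrete Poincar\'e lemma by induction on $d$ for sufficiency --- is the standard route and is essentially the cited proof. The necessity direction is fine as written, provided ``dual cocycle'' is read as rapidly decaying coefficients along the orbit, which does hold for smooth $\varphi$ because the frequencies $A^{j*}(\m,n)$ escape to infinity on a free orbit; note that the freeness you invoke is exactly where the nondegeneracy of the return-map data $(v_i)$ enters, and is what the paper's unique-labelling claim for $A$-orbits encodes.

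Two points need repair. First, in the induction step the primitives produced on $\Z^{d-1}$ must be extended to $\Z^d$ by tensoring with a fixed summable weight $\chi(j_d)$ normalized by $\sum_{j_d}\chi(j_d)=1$; on the literal reading of your sentence they are extended as constants in $j_d$, in which case the residual $\hat\varphi-\sum_{i<d}(-1)^{i+1}(T_i-\mathrm{Id})\hat\Phi_i$ is not summable along $j_d$-lines and its ``vanishing partial sums'' are undefined. With the weight inserted, each line sum equals $\sum_{j_d}\hat\varphi(j',j_d)-F(j')=0$ and your one-variable summation inverse applies. Second, the $n=0$ sector does not ``degenerate to $\hat\varphi=0$'': there the orbit is a single character and $\Delta_i$ acts by multiplication by $e^{2\pi\iota\,\m\cdot\rho_i}-1$, so the equation is solvable whenever some $\m\cdot\rho_i\notin\Z$ and the only unconditional obstruction is the mean; this sector is the classical cohomological equation for a toral translation and must be handled separately from the free-orbit combinatorics (it is also where your ``division by $c_i-1$'' small-divisor picture actually lives --- on the free orbits the inverse is partial summation and the regularity loss is governed by the growth of $|A^{j*}(\m,n)|$ instead). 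Your decision to quarantine the quantitative regularity bookkeeping in a separate lemma is appropriate and consistent with how the paper uses the proposition, namely only to extract the distributional obstructions $\mathcal{D}_{\m,n}$ in the lemma that follows.
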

For fixed $(\m,n) \in \Z^g \times \Z$ , we denote an obstruction for solving cohomological equation restricted to the $A$-orbit of $(\m,n)$ by $\mathcal{D}_{\m,n}(\varphi) = \sum_{j \in \Z^d} \hat\varphi_{(\m,n)} \circ A^j$. Since $A^j$ is composition of commuting toral automorphisms,
we obtain the following generalized formula (see \cite[\S5]{AFU11} and \cite[\S 11]{Ka03} for rank 1 map on $\T^2$).
%We generalize the construction in \cite[Thm 11.25]{Kat} to obtain the following result.
%Here we compute
\begin{lemma}\label{skew} There exist distributional obstructions to the existence of a smooth solution $\varphi \in C^\infty(H_{(\m,n)})$ of the cohomological equation (\ref{eqn;cohskew}).
A generator of the space of invariant distribution $\D_{\m,n}$ is given by
$$\mathcal{D}_{\m,n}(e_{a,b}) := e^{-2\pi \iota \sum_{i=1}^d[(\m\cdot \rho_i + nK\tau_i ) j_i + nK\tau_i{j_i\choose 2}]}$$
if $(a,b) = (m_k+K\sum_{i=1}^d(v_{ik} j_i),n)_{1\leq k \leq g}$
and 0 otherwise.
\end{lemma}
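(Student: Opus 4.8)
The plan is to compute explicitly the pullback of the characters $e_{\m,n}$ under the dual orbit maps $A^j$ and to read off the resulting phase factors, which are exactly the coefficients of the obstruction $\mathcal{D}_{\m,n}$. The Katok--Katok criterion quoted above reduces solvability of \eqref{eqn;cohskew} to the vanishing of $\varphi\mapsto\sum_{j\in\Z^d}\hat\varphi_{(\m,n)}\circ A^j$, so it suffices to evaluate this functional on each basis character.

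First I would determine the action of a single generator $A_{i,\rho,\tau}$ on a character. Substituting $A_{i,\rho,\tau}(y,z)=(y+\rho_i,\,z+v_i\cdot y+\tau_i)$ into $e_{\m,n}(y,z)=\exp[2\pi\iota(\m\cdot y+nKz)]$ gives
$$e_{\m,n}\circ A_{i,\rho,\tau}=\exp[2\pi\iota(\m\cdot\rho_i+nK\tau_i)]\,e_{\m+nKv_i,\,n},$$
so $A_i$ shifts the frequency by $\m\mapsto\m+nKv_i$ (matching the dual orbit $\OO_A(\m,n)$) and multiplies by the phase $\lambda_i(\m):=\exp[2\pi\iota(\m\cdot\rho_i+nK\tau_i)]$. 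In particular $\mathcal{D}_{\m,n}(e_{a,b})=0$ whenever $(a,b)$ lies off the orbit.

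Next I would iterate. Since the $A_i$ commute we may write $A^j=A_1^{j_1}\cdots A_d^{j_d}$, and applying the single-generator formula $j_i$ times produces the product $\prod_{l=0}^{j_i-1}\lambda_i(\m+l\,nKv_i)$; each factor contributes a phase linear in $l$, so summing over $l$ yields a term linear in $j_i$ together with the Gauss-sum term $\binom{j_i}{2}$ with coefficient $nK(v_i\cdot\rho_i)$, which the return-map relations of \S\ref{sec;returnmap} identify with $nK\tau_i$. The invariance recursion $c_{\m+nKv_i,n}=\lambda_i(\m)^{-1}c_{\m,n}$ dictates the inverse sign, so that collecting the contributions of all $d$ generators gives the asserted value $\exp[-2\pi\iota\sum_{i=1}^d((\m\cdot\rho_i+nK\tau_i)j_i+nK\tau_i\binom{j_i}{2})]$ at the orbit point indexed by $j$. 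Since the recursion fixes the coefficients up to one scalar per orbit, the space of invariant distributions supported on $\OO_A(\m,n)$ is one-dimensional and generated by $\D_{\m,n}$.

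I expect the principal difficulty to lie in the bookkeeping of the quadratic phase, and specifically in the vanishing of the mixed quadratic terms. Iterating $A_k^{j_k}$ and then $A_i^{j_i}$ a priori creates a cross term governed by $v_i\cdot\rho_k$; commutativity of the $A_i$ forces only the symmetry $v_i\cdot\rho_k=v_k\cdot\rho_i$, so the genuine input is that these cross pairings vanish for $i\neq k$, which is what makes the exponent split as a clean sum over $i$ with no $j_ij_k$ interaction. This cancellation must be extracted from the explicit diagonal coupling of the Heisenberg return map computed in \S\ref{sec;returnmap} (reflecting the canonical relations $[X_i,Y_j]=\delta_{ij}Z$). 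Once it is established, compatibility of the $d$ recursions—again a consequence of commutativity—guarantees that $\D_{\m,n}$ is well defined, completing the argument.
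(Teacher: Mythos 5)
Your proposal is correct and follows essentially the same route as the paper: both reduce, via the Katok--Katok dual criterion, to evaluating $\sum_{j\in\Z^d}\hat\varphi_{(\m,n)}\circ A_{\rho,\tau}^j$ on basis characters, compute $e_{\m,n}\circ A_{\rho,\tau}^j$ explicitly, and read off the phase by orthogonality of characters. The only difference is one of care: the paper obtains the product formula for $e_{\m,n}\circ A_{\rho,\tau}^j$ in a single unexplained ``direct computation,'' whereas you iterate generator by generator and correctly isolate the two identities that computation uses silently --- the identification of the diagonal quadratic coefficient $nK(v_i\cdot\rho_i)$ with $nK\tau_i$, and the vanishing of the cross pairings $v_i\cdot\rho_k$ for $i\neq k$ (commutativity alone gives only their symmetry), without which the exponent would not split as a sum over $i$ --- so your sketch, while stopping short of verifying these from the return-map structure of \S\ref{sec;returnmap} just as the paper does, is if anything the more scrupulous account.
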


\begin{proof}
From previous observation, there exists an obstruction
\begin{align}
\mathcal{D}_{\m,n}(\varphi) &= \sum_{j \in \Z^d}\int_{\T_\mathsf{\Gamma}^{g+1}}\varphi(x,y)\overline {e_{\m,n}\circ A_{\rho,\tau}^j}dxdy.
\end{align}
By direct computation, for fixed $j = (j_1, \cdots j_d)$,
$$e_{\m,n}\circ A_{\rho,\tau}^j(y,z) = \prod_{i=1}^d\big(e^{2\pi \iota [(\m\cdot \rho_i + nK\tau_i ) j_i + nK\tau_i{j_i\choose 2}]}\big)\big(e^{2\pi \iota (\m\cdot y+ K(z+n\sum_{k=1}^d(v_{ik} j_i)y_k))}\big).$$
%& = \sum_{j \in \Z^d}\int_{\T^{d+1}}\varphi(x,y)\overline {e(\sum_{j=1}^dj(m\cdot y+v_i\cdot y, nz)}dxdy
Then,  we choose $\hat \varphi = e_{a,b}$ for $(a,b) = (m_k+K\sum_{i=1}^d(v_{ik} j_i),n)_{1\leq k \leq g}$ in the non-trivial orbit $(n\neq 0)$,
\begin{equation}\label{eqn;identity}
\mathcal{D}_{\m,n}(e_{a,b}) = e^{-2\pi \iota \sum_{i=1}^d[(\m\cdot \rho_i + nK\tau_i ) j_i + nK\tau_i{j_i\choose 2}]}.
\end{equation}
\end{proof}

\subsection{Changes of coordinates}
For any frame $(X_i^\alpha,Y_i^\alpha,Z)_{i=1}^g$ and any $m \in M$, denote a transverse cylinder
$$\mathscr{C}_{\alpha,m} := \{m\exp(\sum_{i=1}^g y_i'Y_i^\alpha+z'Z) \mid (y',z') \in  U(t_{Ret}^{-1})  \times \T\} \subset M.$$

For any $\xi \in \T_{\mathsf{\Gamma}}^{g+1}$, let $\xi' \in \mathscr{C}_{\alpha,m}$ denote first intersection of the orbit $\{\mathsf{P}_t^{d,\alpha}(\xi) \mid t \in \R_+^d  \}$ with transverse cylinder $\mathscr{C}_{\alpha,m}$.
%Under changes of basis, we find return time for action $P_t^{d,\alpha}$.
Then, there exists a first return time to the cylinder $t(\xi) = (t_1(\xi), \cdots, t_d(\xi)) \in \R_+^d$ such that the map $\Phi_{\alpha,m} : \T_{\mathsf{\Gamma}}^{g+1} \rightarrow \mathscr{C}_{\alpha,m} $ is defined by
$$\xi' = \Phi_{\alpha,m}(\xi) = \mathsf{P}_{t(\xi)}^{d,\alpha}(\xi), \ \ \forall \xi \in \T_{\mathsf{\Gamma}}^{g+1}. $$

Let $(y,z)$ and $(y',z')$ denote the coordinates on $\T_{\mathsf{\Gamma}}^{g+1}$ and $\mathscr{C}_{\alpha,m}$ respectively, given by the exponential map
$$(y,z) \rightarrow \xi_{y,z}:= \mathsf{\Gamma}\exp(\sum_{i=1}^g y_iY_i+zZ), \quad (y',z') \rightarrow m\exp(\sum_{i=1}^g y_i'Y_i^\alpha+z'Z).$$

Recall that if $\alpha \in Sp_{2g}(\R)$, then for $1\leq i, j \leq g$ there exist matrices $A = (a_{ij}), B = (b_{ij}), C = (c_{ij}), D = (d_{ij})$ such that
\[
	\alpha :=
	\left[
	\begin{array}{cc}
	A &  B  \\
	%\hline
	C & D   \\
	\end{array}
	\right] \in Sp_{2g}(\R),
\]
satisfying $A^tD-C^tB = I_{2g}$, $C^tA = A^tC$, $D^tB = B^tD$, and $\det(A) \neq 0$.
Set
$$X_i^\alpha = \sum_{j = 1}^g(a_{ij}X_j + b_{ij}Y_j) + w_iZ, \quad Y_i^\alpha = \sum_{j = 1}^g(c_{ij}X_j + d_{ij}Y_j) + v_iZ.$$

 Let $x = \mathsf{\Gamma}\exp(\sum_{i=1}^d y_{x,i}Y_i+z_xZ)\exp(\sum_{i=1}^d t_{x,i}X_i) $, for some $(y_x,z_x) \in \T^d \times \R/K\Z$ and $t_x  = (t_{x,i})\in [0,1)^d$.
%For each time parameter $t_i(\xi)$, estimation of return time reduces to $\T_{\Gamma,i}^{2}$ and the return time $t_{\alpha, i}$ of each flow (or rank-1 action) to the transverse to 2-tori $\T_{\Gamma,i}^{2}$ $t_{i,\alpha} := \frac{1}{|\langle X_i,  X^\alpha_i\rangle|} \leq \norm{Y_i^\alpha}$ is finite (see \cite[\S 8]{forni2020time}).
Then,  the map $\Phi_{\alpha,x} : \T_{\mathsf{\Gamma}}^{g+1} \rightarrow \mathscr{C}_{\alpha,x}$ is defined by
$\Phi_{\alpha,x}(y,z) = (y',z')$ where
% \begin{cases}
%y_i' = \sum_{j=1}^d [a_{ij} (y_j - y_{x,j}) + b_{ij} t_{x,j}] \\
%z' = z + P(\alpha,x,y). \\
%\end{cases}
%\end{equation}
%That is,
\begin{equation}\label{11}
\begin{bmatrix}
    y'_{1}    \\
    y'_{2}    \\
    \vdots\\
    y'_{g}
\end{bmatrix} =
\begin{bmatrix}
    a_{11}       & a_{12} &  \cdots & a_{1g} \\
    a_{21}       & a_{22} & \cdots & a_{2g} \\
        \vdots&     \vdots&     \vdots& \vdots \\
    a_{g1}       & a_{g2} & \cdots & a_{gg}
\end{bmatrix}
\begin{bmatrix}
    y_{1} - y_{x,1}   \\
    y_{2} - y_{x,2}    \\
    \vdots\\
    y_{g} - y_{x,g}
\end{bmatrix} +
\begin{bmatrix}
    b_{11}       &   \cdots & b_{1g} \\
    b_{21}       &  \cdots & b_{2g} \\
        \vdots&          \vdots& \vdots \\
    b_{g1}       &  \cdots & b_{gg}
\end{bmatrix}
\begin{bmatrix}
    t_{x,1}   \\
    t_{x,2}    \\
    \vdots\\
    t_{x,g}
\end{bmatrix},
\end{equation}
and $z' = z + P(\alpha,x,y)$ for some degree 4 polynomial $P$.

Therefore, the map $\Phi_{\alpha,x}$ is invertible with
$$\Phi^*_{\alpha,x}(dy_1'\wedge \cdots dy_g' \wedge dz') =  \frac{1}{det(A)} dy_1\wedge \cdots dy_g \wedge dz.$$

Since $A^tD-C^tB = I_{2g}$, by direct computation we obtain a vector of return time %$t(\xi)$
\begin{equation}
\begin{bmatrix}
    t_1(\xi)    \\
    t_2(\xi)    \\
    \vdots\\
    t_g(\xi)
\end{bmatrix} =
\begin{bmatrix}
    d_{11}       &   \cdots & d_{1g} \\
    d_{21}       &  \cdots & d_{2g} \\
        \vdots&          \vdots& \vdots \\
    d_{d1}       &  \cdots & d_{gg}
\end{bmatrix}
\begin{bmatrix}
    t_{x,1}   \\
    t_{x,2}    \\
    \vdots\\
    t_{x,d}
\end{bmatrix}
+
\begin{bmatrix}
    c_{11}       & c_{12} &  \cdots & c_{1g} \\
    c_{21}       & c_{22} & \cdots & c_{2g} \\
        \vdots&     \vdots&     \vdots& \vdots \\
    c_{g1}       & c_{g2} & \cdots & c_{gg}
\end{bmatrix}
\begin{bmatrix}
    y_{1} - y_{x,1}   \\
    y_{2} - y_{x,2}    \\
    \vdots\\
    y_{g} - y_{x,g}
\end{bmatrix}.
\end{equation}
Then,
\begin{align*}
\norm{t(\xi)} \leq \max_i| t_i(\xi)|^g \leq
\max_i|\sum_{j=1}^g d_{ij} t_{x,i}+ c_{ij}(y_i -  y_{x,i}) |^g
 \leq \max_i\norm{Y^\alpha_i}^g.
 \end{align*}

\subsection{$L^2$-lower bound of functional}
We will prove the bounds for square mean of integrals along leaves of foliations of the torus $\T_\mathsf{\Gamma}^{g+1}$.
% into torus $\{\xi\exp(\sum_{i=1}^g y_iY_i) \mid y_i \in \T\}_{\xi \in \T_\mathsf{\Gamma}^{g+1}}$.
%$$\left\langle P^{d,\alpha}_U (\QQ^{d,0}_ym) , \omega_H \right\rangle := \int_U f_H\circ \mathsf{P}^{d,\alpha}_x (\QQ_y^{d,0}m)dx_1\cdots dx_d.$$

\begin{lemma}\label{8.1}
For all $\alpha = (X^\alpha_i,Y^\alpha_i,Z)$ and  every irreducible component $H:=H_n$ of central parameter $n\neq 0$, there exist a function $f_H$ and a constant $C>0$ such that 
$$|f_H|_{L^\infty(H)} \leq  Cvol{(U(t_{Ret}))}^{-1} |\mathcal{D}_\alpha^H(f_H)|,$$
$$\norm{f_H}_{\alpha,s} \leq Cvol{(U(t_{Ret}))}^{-1}|\mathcal{D}_\alpha^H(f_H)|\Big(1+ \frac{\Sigma(t_{Ret})}{vol{(U(t_{Ret}))}}\norm{Y}\Big)^s(1+n^2)^{s/2} $$
where $\norm{Y} := \max_{1\leq i \leq g}\norm{Y_i^\alpha}$ and $\Sigma(t_{Ret}) = \sum_{i=1}^g t_{Ret,i}$.\\

On rectangular domain ${U(\bm{T})}$, for all $m \in \T_\Gamma^{g+1}$ and $T^{(i)} \in \Z_{t_{Ret,i}}$
\begin{equation}\label{mind}
\norm{\left\langle \PPP^{d,\alpha}_{U(\bm{T})} (\QQ^{g,Y}_ym) , \omega_H \right\rangle}_{L^2(\T^g,dy)} = |\mathcal{D}_\alpha^H(f_H)|\left(\frac{vol{(U(\bm{T}))}}{vol{(U(t_{Ret}))}} \right)^{1/2}.
\end{equation}
In addition, whenever $H \perp H' \subset L^2(M)$ the functions
$$\left\langle \PPP^{d,\alpha}_{U(\bm{T})} (\QQ^{g,Y}_ym) , \omega_H \right\rangle \quad  \text{and} \quad \left\langle \PPP^{d,\alpha}_{U(\bm{T})} (\QQ^{g,Y}_ym) , \omega_{H'} \right\rangle $$
are orthogonal in $L^2(\T^g,dy)$.
\end{lemma}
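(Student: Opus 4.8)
The plan is to build $f_H$ directly from the skew-shift structure of the return map developed in \S\ref{sec;returnmap} and then to evaluate the $L^2(\T^g,dy)$-norm of the ergodic integral by Parseval on the dual $A$-orbit. First I fix the component $H=H_n$ with $n\neq 0$ and define $f_H$ to be a smooth function on $M$ supported in one flow-box of the $\PP^{d,\alpha}$-flow over a single fundamental domain of the first-return map $\Phi_{\alpha,m}:\T_\mathsf{\Gamma}^{g+1}\to\mathscr{C}_{\alpha,m}$, with transverse profile reproducing the generating character $e_{(\m,n)}$ of the orbit space $H_\omega$ and normalized by $vol(U(t_{Ret}))^{-1}$. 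With this normalization the first-return integral of $f_H$ equals $\mathcal{D}_\alpha^H(f_H)$ times $e_{(\m,n)}$ on $\T_\mathsf{\Gamma}^{g+1}$, and the cell-volume scaling yields at once $|f_H|_{L^\infty(H)}\leq C\,vol(U(t_{Ret}))^{-1}|\mathcal{D}_\alpha^H(f_H)|$. For the Sobolev bound I differentiate across the box: each derivative along the $Y_i^\alpha$-directions costs a factor comparable to $\norm{Y}$ divided by the box scale (producing $\frac{\Sigma(t_{Ret})}{vol(U(t_{Ret}))}\norm{Y}$), while each central $Z$-derivative brings down a factor $\sim|n|$ on $H_n$ by \eqref{def;derived}; collecting $s$ derivatives gives the factors $\big(1+\frac{\Sigma(t_{Ret})}{vol(U(t_{Ret}))}\norm{Y}\big)^s$ and $(1+n^2)^{s/2}$.

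To prove \eqref{mind} I use the special-flow picture. Since $T^{(i)}\in\Z_{t_{Ret,i}}$, the rectangle $U(\bm{T})$ is tiled by exactly $N:=vol(U(\bm{T}))/vol(U(t_{Ret}))=\prod_i T^{(i)}/t_{Ret,i}$ fundamental cells, and the Birkhoff integral splits as
$$
\left\langle \PPP^{d,\alpha}_{U(\bm{T})}(\QQ^{g,Y}_ym),\omega_H\right\rangle=\sum_{j}\left\langle \PPP^{d,\alpha}_{U(t_{Ret})}\big(A^j(\QQ^{g,Y}_ym)\big),\omega_H\right\rangle,
$$
the sum running over $j\in\prod_i\{0,\dots,N^{(i)}-1\}$ with $A=A_{\rho,\tau}$ the return map, each summand being the first-return integral over the cell $U(t_{Ret})$ based at $A^j(\QQ^{g,Y}_ym)$. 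By the choice of $f_H$ and the explicit phase computation of Lemma~\ref{skew}, the $j$-th summand equals a unimodular phase times $\mathcal{D}_\alpha^H(f_H)\,e_{(\m+n\sum_i v_i j_i,\,n)}(y)$, so the ergodic integral is, as a function of $y$, the Birkhoff sum $\mathcal{D}_\alpha^H(f_H)\sum_j(\text{phase}_j)\,e_{(\m+n\sum_i v_i j_i,\,n)}(y)$.

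Now I compute the norm by Parseval. Because $n\neq 0$ the dual $A$-orbit is free, so $j\mapsto\m+n\sum_i v_i j_i$ is injective on the index set and the $N$ characters appearing are pairwise orthogonal in $L^2(\T^g,dy)$; since every coefficient has modulus $|\mathcal{D}_\alpha^H(f_H)|$, Parseval gives the identity $N\,|\mathcal{D}_\alpha^H(f_H)|^2=\frac{vol(U(\bm{T}))}{vol(U(t_{Ret}))}|\mathcal{D}_\alpha^H(f_H)|^2$ for the squared norm, which is \eqref{mind} after taking square roots. For the last assertion, if $H\perp H'$ then their ergodic integrals are supported, in their Fourier expansions in $(y,z)$, on disjoint families of characters $e_{(\m,n)}$ — either on distinct central parameters $n\neq n'$ (already orthogonal in the $z$-variable) or on distinct $A$-orbits at the same $n$ — hence they are orthogonal in $L^2(\T^g,dy)$.

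The main obstacle is the phase bookkeeping in the reduction to the Birkhoff sum: one must verify that the phases accumulated by iterating the commuting linear skew-shifts $A_{i,\rho,\tau}$ agree exactly with the quadratic phase $e^{-2\pi\iota\sum_i[(\m\cdot\rho_i+nK\tau_i)j_i+nK\tau_i\binom{j_i}{2}]}$ of Lemma~\ref{skew}, and that each cell contributes the resonant coefficient $\mathcal{D}_\alpha^H(f_H)$ with modulus exactly $|\mathcal{D}_\alpha^H(f_H)|$, so that no cancellation spoils the Parseval sum. A secondary point is the freeness of the orbit (injectivity of $j\mapsto\m+n\sum_i v_i j_i$) guaranteeing that the $N$ characters are genuinely distinct.
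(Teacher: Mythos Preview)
Your proposal is correct and follows essentially the same route as the paper. The paper constructs $f_H$ as $R_\alpha^\chi(e_{\m,n})$ where $R_\alpha^\chi$ is the right inverse of the first-return-integral operator built from a bump function $\chi\in C_0^\infty(0,1)^g$---this is exactly your ``flow-box supported function with transverse profile $e_{(\m,n)}$ normalized by $vol(U(t_{Ret}))^{-1}$''---and then obtains the $L^\infty$ and Sobolev bounds from the explicit shape of $R_\alpha^\chi$, while the $L^2(\T^g,dy)$ identity comes from splitting $U(\bm{T})$ into return cells and invoking orthonormality of $\{e_{\m,n}\circ A^j_{\rho,\tau}\}_j$, just as you do via Parseval. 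Your worry about the quadratic phase bookkeeping is unnecessary for the norm identity (the phases are unimodular and the characters on the free orbit are orthogonal, so only $|\mathcal{D}_\alpha^H(f_H)|$ survives), and the paper indeed does not track them for this computation.
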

\begin{proof}

%For any irreducible representation $H$ with central parameter $n \neq 0$, there exists $\m \in \Z_{|n|}^d$ such that the operator $I_\alpha $ maps the space $M$ onto $H_{(\m, n)}$.
The operator $I_\alpha : L^2(M) \rightarrow L^2(\T_\Gamma^{g+1})$ is defined by
\begin{equation}\label{changesof}
f \rightarrow I_\alpha(f) := \int_{U(t_{Ret})} f\circ \PP_s^{d,\alpha} (\cdot)ds.
\end{equation}
Then operator $I_\alpha$ is surjective linear map of $L^2(M)$ onto $L^2(\T_\Gamma^{g+1})$ with a right inverse $R_\alpha^{\chi}$ defined as follows. Let $\chi \in C_0^\infty (0,1)^g$ be any function of jointly integrable with integral 1. For any $F \in L^2(\T_\Gamma^{g+1})$, let $R_{\alpha}^{\chi}(F) \in L^2(M)$ be a function defined by
$$R_\alpha^{\chi}(F)(\mathsf{P}^{d,\alpha}_v(x)) =  \frac{1}{vol{(U(t_{Ret}))}} \chi(\frac{v}{t_{Ret}})F(x), \ (x,v) \in \T_\Gamma^{g+1} \times U(t_{Ret}). $$
%$$f \rightarrow I_a(f):= \left\langle P^{d,\alpha}_{U(T)}(\cdot) , \omega \right\rangle, \text{ for all } f \in L^2(M)$$
 Then, it follows that there exists a constant $C_\chi>0$  such that
\begin{align}\label{eqn;Ralpha}
\begin{split}
\norm{R_\alpha^{\chi}(F)}_{\alpha,s} &\leq C_\chi vol{(U(t_{Ret}))}^{-1}(1+ \sum_{i=1}^g t^{-1}_{Ret,i}\norm{Y^\alpha_i})^s\norm{F}_{W^s(\T_\Gamma^{g+1})}\\
& \leq C_\chi vol{(U(t_{Ret}))}^{-1}\Big(1+ \frac{\Sigma(t_{Ret})}{vol{(U(t_{Ret}))}}\norm{Y}\Big)^s\norm{F}_{W^s(\T_\Gamma^{g+1})}.
\end{split}
\end{align}

As explained in \S \ref{sec;returnmap}, the space $L^2(\T_\Gamma^{g+1})$ decomposes as a direct sum of irreducible subspaces invariant under the action of each $A_{j,\rho,\sigma}$.
It follows that the subspace of functions with a non-zero central character can be split as direct sum of components $H_{(\m, n)}$ with $(\m, n) \in \Z_{|n|}^g \times \Z\backslash\{0\}$ with $\m = (m_1,\cdots, m_g)$. For a function $F \in H_{(\m, n)}$,  it is characterized by Fourier expansion
$$F = \sum_{j \in \Z^d} F_j e_{A^j(\bm{m},n)} = \sum_{j \in \Z^d} F_j e_{(m_k+K\sum_{i=1}^d(v_{ik} j_i),n)}.$$

Choose $f_H := R_\alpha^{\chi}(e_{\m,n}) \in C^\infty(H)$ such that %by \eqref{eqn;identity},%$I_\alpha(f_H) = e_{\m,n}$ and
\begin{align}
|D_\alpha(f^H_\alpha)| &= |D_{\m,n}(e_{\m,n})|= 1,\label{def:normalization}\\
\int_{U(t_{Ret})} f_H\circ \PP_t^{d,\alpha} (y,z)dt &= e_{\m,n}(y,z), \text{ for } (y,z) \in \T_\Gamma^{g+1}. \label{eqn;emn}
\end{align}

%By Proposition \ref{prop;3.13}, for $\PP$-action, the space of invariant currents $I_d(\pp,\mathscr{S}(\R^g)) \subset W^{-s}(\R^g)$ for $s > d/2+\epsilon$ for all $\epsilon >0$. That is,

%By normalization of invariant distributions in the Sobolev space, for any irreducible components $H$, there exists a non-unique function $f^H_\alpha$ such that
%\begin{equation}\label{def:normalization}
%\D_\alpha(f^H_\alpha) = \norm{f^H_\alpha}_s = 1.
%\end{equation}
%, we have $|\mathcal{D}_{H}(f_H)| = |\mathcal{D}_{(\m,n)}(e_{\m,n})| = 1$.
Therefore, it follows from \eqref{eqn;Ralpha} that
$$|f_H|_{L^\infty(H)} \leq  C_\chi vol{(U(t_{Ret}))}^{-1}, $$
$$\norm{f_H}_{\alpha,s} \leq Cvol{(U(t_{Ret}))}^{-1}|\D_\alpha^H(f_H)|\Big(1+ \frac{\Sigma(t_{Ret})}{vol{(U(t_{Ret}))}}\norm{Y}\Big)^s(1+n^2)^{s/2}. $$
%by  changes of variable in \eqref{eqn;emn},
Moreover, since $\{ e_{\m,n}\circ A^j_{\rho,\tau}  \}_{j \in \Z^d} \subset L^2(\T_\Gamma^{g},dy )$ is orthonormal, we  verify
\begin{align*}
\norm{\left\langle \PPP^{d,\alpha}_{U(\bm{T})} (\QQ^{g,Y}_yx) , \omega_H \right\rangle }_{L^2(\T^g,dy)} & = \norm{\sum_{j_d = 0}^{[\frac{T^{(d)}}{t_{Ret,d}}]-1}\cdots\sum_{j_1 = 0}^{[\frac{T^{(1)}}{t_{Ret,1}}]-1} e_{\m,n}\circ A^j_{\rho,\tau}}_{L^2(\T^g,dy)}\\
& = \left(\frac{vol{(U(\bm{T}))}}{vol{(U(t_{Ret}))}} \right)^{1/2}.
\end{align*}
\end{proof}

Recall that
\begin{equation}\label{def:irred}
L^2(M) = \bigoplus_{n\in \Z}H_n := \bigoplus_{n\in \Z}\bigoplus_{i=1}^{\mu(n)}H_{i,n}
\end{equation}
where $H_n =  \bigoplus_{i=1}^{\mu(n)}H_{i,n}$ is irreducible representation with a central parameter $n$ and $\mu(n)$ is countable by Howe-Richardson multiplicity formula.
\medskip

 For any infinite dimensional vector $\textbf{c} := (c_{i,n}) \in \ell^2$, let $\beta_\cc$ denote a weighted Bufetov functional
$$\beta_\cc = \sum_{n \in \Z}\sum_{i=1}^{\mu(n)} c_{i,n}\beta^{i,n}. $$
By orthogonal property and Corollary \ref{7.3}, the function $\beta_\textbf{c}(\alpha,\cdot,\textbf{T}) \in L^2(M)$ for all $(\alpha, \textbf{T}) \in Aut_0(\mathsf{H}^g) \times \R_+^d$. Furthermore,
\begin{equation*}%\label{eqn;danso}
\norm{\beta_\textbf{c}(\alpha,\cdot,\textbf{T})}^2_{L^2(M)} = \sum_{n \in \Z}\sum_{i=1}^{\mu(n)} |c_{i,n}|^2 \norm{\beta^{i,n}(\alpha,\cdot,\textbf{T})}^2_{L^2(H_{i,n})}
 \leq C^2|\textbf{c}|_{\ell^2}^2vol{(U(\bm{T}))}.
\end{equation*}
For any $\textbf{c} := (c_{i,n})$, let $|\textbf{c}|_s $ denote the norm defined by
\begin{equation}\label{eqn;cnorm}
|\textbf{c}|_s^2 = \sum_{n \in \Z\backslash\{0\} }\sum_{i=1}^{\mu(n)}(1+K^2n^2)^s|c_{i,n}|^2.
\end{equation}

\begin{lemma}\label{Crucial}
For any $s > s_{d,g}+1/2$, there exists a constant $C_s >0$ such that for all $\alpha \in DC(L)$, for all $\cc \in \ell^2$, and for all $z \in \T$
%$$|\int_{\T^d}\beta_{\cc}(a,\Phi_{\alpha,x}(\xi_{y,z}),T)dy| \leq C_s (t_\alpha + *) (1+L)(1+ t_\alpha^{-1})^s|\cc|_s $$
\begin{multline*}
\left|\norm{\beta_{\cc}(\alpha,\Phi_{\alpha,x}(\xi_{y,z}),\bm{T})}_{L^2(\T^g,dy)} - \left(\frac{vol{(U(\bm{T}))}}{vol{(U(t_{Ret}))}} \right)^{1/2}|\cc|_0 \right| \\
\leq C_s (vol{(U(t_{Ret}))} + vol{(U(t_{Ret}))}^{-1}) (1+L)\Big(1+ \frac{\Sigma(t_{Ret})}{vol{(U(t_{Ret}))}}\norm{Y}\Big)^s|\textbf{c}|_{s}.
\end{multline*}
\end{lemma}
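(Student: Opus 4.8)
The plan is to represent $\beta_\cc$ as the Bufetov cocycle $\beta^f$ of a single function and then transport the identity \eqref{mind} through the deviation formula of Corollary \ref{cor;dev}. Concretely, I would take the functions $f_{i,n} := f_{H_{i,n}} \in H_{i,n}$ constructed in Lemma \ref{8.1}, normalized by \eqref{def:normalization} so that $D_\alpha^{i,n}(f_{i,n}) = 1$, and set $f := \sum_{i,n} c_{i,n} f_{i,n}$ (the sum restricted to the components $n \neq 0$ to which Lemma \ref{8.1} applies). Since each invariant distribution $D_\alpha^{i,n}$ is supported on its own irreducible component, $D_\alpha^{i,n}(f) = c_{i,n}$, so by \eqref{BBB} we obtain $\beta^f(\alpha,\cdot,\bm T) = \beta_\cc(\alpha,\cdot,\bm T)$, and the problem reduces to comparing $\beta^f$ with the Birkhoff integral $\langle \PPP^{d,\alpha}_{U(\bm T)}\,\cdot\,,\omega_f\rangle$.

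First I would estimate $\norm{\omega_f}_{\alpha,s}$. Using orthogonality of the components together with the Sobolev bound in Lemma \ref{8.1} and the normalization $|D_\alpha^{i,n}(f_{i,n})| = 1$,
\[
\norm{\omega_f}_{\alpha,s}^2 = \sum_{i,n}|c_{i,n}|^2\,\norm{f_{i,n}}_{\alpha,s}^2 \le C\,vol(U(t_{Ret}))^{-2}\Big(1+\tfrac{\Sigma(t_{Ret})}{vol(U(t_{Ret}))}\norm{Y}\Big)^{2s}\sum_{i,n}|c_{i,n}|^2(1+n^2)^s .
\]
Since $K$ is fixed, $\sum_{i,n}|c_{i,n}|^2(1+n^2)^s \le C|\cc|_s^2$ by \eqref{eqn;cnorm}, whence
\[
\norm{\omega_f}_{\alpha,s} \le C\,vol(U(t_{Ret}))^{-1}\Big(1+\tfrac{\Sigma(t_{Ret})}{vol(U(t_{Ret}))}\norm{Y}\Big)^{s}|\cc|_s ,
\]
which will produce the $vol(U(t_{Ret}))^{-1}$ term in the final bound.

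Next I would apply the deviation estimate of Corollary \ref{cor;dev} pointwise at $m = \Phi_{\alpha,x}(\xi_{y,z})$ and integrate in $y$ over the unit-measure torus $\T^g$. The reverse triangle inequality in $L^2(\T^g,dy)$ together with the pointwise bound gives
\[
\Big|\,\norm{\beta_\cc(\alpha,\Phi_{\alpha,x}(\xi_{y,z}),\bm T)}_{L^2(\T^g,dy)} - \norm{\langle \PPP^{d,\alpha}_{U(\bm T)}\Phi_{\alpha,x}(\xi_{y,z}),\omega_f\rangle}_{L^2(\T^g,dy)}\Big| \le C_s(1+L)\norm{\omega_f}_{\alpha,s} .
\]
For the Birkhoff $L^2$-norm I would invoke the orthogonality across distinct components from Lemma \ref{8.1}, reducing it to $\big(\sum_{i,n}|c_{i,n}|^2\,\norm{\langle \PPP^{d,\alpha}_{U(\bm T)}\cdot,\omega_{i,n}\rangle}_{L^2(\T^g)}^2\big)^{1/2}$; when each $T^{(i)}\in\Z_{t_{Ret,i}}$, the identity \eqref{mind} evaluates each single-component norm exactly and yields precisely $(vol(U(\bm T))/vol(U(t_{Ret})))^{1/2}|\cc|_0$.

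The last step is to pass from integer multiples of the return time to general $\bm T$, and this is where I expect the main difficulty. Rounding each $T^{(i)}$ down to the nearest multiple of $t_{Ret,i}$ decomposes $U(\bm T)$ into full return cells (on which \eqref{mind} is exact) plus a union of partial ``slabs''; bounding the Birkhoff integral over these partial return cells by means of the $L^\infty$-estimate $|f_{i,n}|_{L^\infty}\le C\,vol(U(t_{Ret}))^{-1}$ and the orbit/orthogonality structure of \S\ref{sec;returnmap} contributes the remaining $vol(U(t_{Ret}))$ error. The genuinely delicate points are this boundary estimate and the bookkeeping for the coordinate change $\Phi_{\alpha,x}$ --- its Jacobian $1/\det(A)$ and the identification, established in \S\ref{sec;returnmap}, of the $y$-sweep of $\Phi_{\alpha,x}(\xi_{y,z})$ with the $\QQ^{g,Y}$-orbits used in Lemma \ref{8.1}. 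Collecting the two triangle inequalities and the boundary term then gives the stated two-sided bound.
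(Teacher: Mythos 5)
Your proposal follows essentially the same route as the paper's proof: build $f_{\cc}=\sum c_{i,n}f_{i,n}$ from the normalized functions of Lemma \ref{8.1}, bound $\norm{\omega_{f_\cc}}_{\alpha,s}$ by $vol(U(t_{Ret}))^{-1}\bigl(1+\tfrac{\Sigma(t_{Ret})}{vol(U(t_{Ret}))}\norm{Y}\bigr)^{s}|\cc|_s$, use the deviation estimate of Theorem \ref{6.2type}/Corollary \ref{cor;dev} together with the reverse triangle inequality in $L^2(\T^g,dy)$, evaluate the Birkhoff $L^2$-norm exactly via orthogonality and \eqref{mind} at integer multiples of the return time, and absorb both the rounding of $\bm{T}$ and the change of base point $\Phi_{\alpha,x}(\xi_{y,z})\mapsto\xi_{y,z}$ into an $L^\infty$-type error of size $vol(U(t_{Ret}))|\cc|_{\ell^1}$. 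The two steps you flag as delicate are exactly the ones the paper handles (also tersely) by the $L^\infty$ bound $|f_\cc|_{L^\infty}\le C|\cc|_{\ell^1}$ times $\norm{Y}$, respectively times $vol(U(t_{Ret}))$, so your outline is consistent with the paper's argument.
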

\begin{proof}
By Lemma \ref{skew}, there exists a function $f_{i,n} \in C^\infty(H_{i,n})$ with $|\D^{i,n}(f_{i,n})| = 1$. Let $f_{\cc} =\sum_{n \in \Z}\sum_{i=1}^{\mu(n)} c_{i,n}f_{i,n} \in C^\infty(M)$. %summing up all the functions on irreducibles. 
Then by the estimates in the Lemma \ref{8.1} and \eqref{eqn;cnorm},
\begin{align}\label{15}
&|f_{\cc}|_{L^\infty(M)} \leq C|\textbf{c}|_{\ell^1}.\\
&\norm{f_{\cc}}_{\alpha,s} \leq Cvol{(U(t_{Ret}))}^{-1}(1+ \frac{\Sigma(t_{Ret})}{vol{(U(t_{Ret}))}}\norm{Y})^s|\textbf{c}|_s.
\end{align}
By orthogonality,
$$\norm{\left\langle \PPP^{d,\alpha}_{U(\bm{T})}\circ \QQ_y^{g,Y}, \omega_{\cc} \right\rangle}_{L^2(\T^g,dy)} = \left(\frac{vol{(U(\bm{T}))}}{vol{(U(t_{Ret}))}} \right)^{1/2} |\textbf{c}|_0.$$
By the estimate in Lemma \ref{8.1} for each $f_{i,n}$ , for every $z \in \T$ and all $\textbf{T} \in \R_+^d$, we have
$$\norm{\left\langle \PPP^{d,\alpha}_{U(\bm{T})}(\Phi_{\alpha,x}(\xi_{y,z})), \omega_{\cc} \right\rangle - \left\langle \PPP^{d,\alpha}_{U(\bm{T})}(\xi_{y,z}), \omega_{\cc} \right\rangle}_{L^2(\T^g,dy)} \leq 2|f_{\cc}|_{L^\infty(M)}\norm{Y}. $$
%$$\norm{\int_{[0,T]^d}f_{\cc} (P^{d,\alpha}_{x}\circ \Phi_{\alpha,x}(\xi_{y,z}))dx - \int_{[0,T]^d}f_{\cc}\circ P^{d,\alpha}_{x}(\xi_{y,z})dx}_{L^2(\T^d,dy)} \leq 2|f_{\cc}|_{L^\infty(M)}\norm{Y} $$
Let  $U(\textbf{T}_{Ret}) = [0,\textbf{T}_{Ret,1}]\times \cdots \times [0,\textbf{T}_{Ret,g}]$ where $\textbf{T}_{Ret,i} := t_{Ret,i}([T^{(i)}/t_{Ret,i}]+1)$. Then,
$$\norm{\left\langle \PPP^{d,\alpha}_{U(\bm{T})}(\xi_{y,z}), \omega_{\cc} \right\rangle - \left\langle \PPP^{d,\alpha}_{U(\textbf{T}_{Ret})}(\xi_{y,z}), \omega_{\cc} \right\rangle}_{L^2(\T^g,dy)} \leq vol{(U(t_{Ret}))}|f_{\cc}|_{L^\infty(M)}. $$
Therefore, there exists a constant $C'>0$ such that
\begin{multline*}
 \left| \norm{\left\langle \PPP^{d,\alpha}_{U(\bm{T})}(\Phi_{\alpha,x}(\xi_{y,z})), \omega_{\cc} \right\rangle}_{L^2(\T^g,dy)}  - \left(\frac{vol{(U(\bm{T}))}}{vol{(U(t_{Ret}))}} \right)^{1/2} |\textbf{c}|_0\right| \\
 \leq C' vol{(U(t_{Ret}))}|\cc|_{\ell^1}.
 \end{multline*}

For all $s >  s_{d,g} + 1/2$, by asymptotic property of Theorem \ref{6.2type}, for some constant $C_s>0$,
$$\Big|\left\langle \PPP^{d,\alpha}_{U(\bm{T})}m , \omega \right\rangle - \beta_H(\alpha,m,\bm{T})\D_\alpha^H(f_H) \Big | \leq C_s(1+L)\norm{f}_{\alpha,s}.$$
Combining previous estimates for $\beta_{\cc} = \beta^{f_{\cc}}$, we have%(\ref{15}),
\begin{align*}
& \left|\norm{\beta_{\cc}(\alpha,\Phi_{\alpha,x}(\xi_{y,z}),\bm{T})}_{L^2(\T^g,dy)} - \left(\frac{vol{(U(\bm{T}))}}{vol{(U(t_{Ret}))}} \right)^{1/2} |\textbf{c}|_0\right| \\
& \leq C' vol{(U(t_{Ret}))}|\textbf{c}|_{\ell^1} + C_svol{(U(t_{Ret}))}^{-1}(1+L) \norm{f_{\cc}}_{\alpha,s}\\
& \leq  C_s'(vol{(U(t_{Ret}))} + vol{(U(t_{Ret}))}^{-1})(1+L)\big(1+ \frac{\Sigma(t_{Ret})}{vol{(U(t_{Ret}))}}\norm{Y}\big)^s|\textbf{c}|_{s}.
\end{align*}
Therefore, we derive the estimates in the statement.
\end{proof}

\section{Analyticity of functionals}\label{sec;6} %
In this section we prove that for all $\alpha \in DC$, the Bufetov functionals on any square are real analytic. %This result will be necessary in \S \ref{sec;7}

\subsection{Analyticity}
By the orthogonal property of cocycle $\beta_H$ on an irreducible component $H = H_n$ with a central parameter $n \in \Z \backslash \{0\}$, the following statement is immediate. For any $(m,\textbf{T}) \in M \times \R_+^d$ and $t\in \R$,
\begin{equation}\label{orth}
\beta_H(\alpha,\phi_t^Z(m),\textbf{T}) = e^{2\pi \iota Knt  }\beta_H(\alpha,m,\textbf{T}).
\end{equation}

\begin{definition}\label{def;sstretch}
For every $t \in \R$, for each $1 \leq i \leq d$, and $m \in M$, the \emph{stretched (in direction of Z) rectangle} is denoted by
\begin{equation}
[\Gamma_{\bm{T}}]^Z_{i,t}(m) := \{(\phi^Z_{t s_i })\circ\PP^{d,\alpha}_{\bm{s}}(m) \mid \bm{s} \in U(\textbf{T})\}.
\end{equation}
\end{definition}

%we write $\Gamma_\textbf{T}$ with action of $\PP^{d,X}$ generated by standard frame $X_i$.
For $\s = (s_1, \cdots, s_d)\in \R^d$, let us denote by the standard rectangle
$\Gamma_\textbf{T}({\s}) :=  (\gamma_1(s_1), \cdots, \gamma_d(s_d))$ for $\gamma_i(s_i) = \exp(s_iX_i) $.
Similarly, we also write the stretched rectangle
%For every $t \in \R$ and $m \in M$, the \emph{stretched rectangle} is denoted by
\begin{equation}\label{def;stretch}
[\Gamma_{\bm{T}}]^Z_{i,t}({\s}) := (\gamma_1(s_1), \cdots,\gamma_{i,t}^Z(s_i), \cdots \gamma_d(s_d))
\end{equation}
where $\gamma_{i,t}^Z(s_i):= \phi^Z_{ts_i}(\gamma_i(s_i))$ is a stretched curve.
%$\Gamma_T(\s) \in M$ be a rectangle with coordinate  $\in U(\bm{T})$.

\begin{definition}
The \emph{restricted rectangle}  $\Gamma_{T,i,s}$ of the standard rectangle $\Gamma_\textbf{T}$ is defined as a restriction on $i$-th coordinate given by
$$\Gamma_{T,i,s}(\s) := \Gamma_\textbf{T}|_{U_{T,i,s}}(\s), $$%\ \s \in U_{T,i,s}
where  $U_{T,i,s} = [0,T^{(1)}] \times \cdots \times \underbrace{[0,s]}_{\text{i-th}} \cdots \times [0,T^{(d)}]$ for some $0<s \leq T^{(i)}$.
\end{definition}

%We obtain the following lemma for stretched rectangle by applying orthogonal property.
\begin{lemma}\label{parts} For fixed elements $(X_i,Y_i,Z)$ satisfying commutation relation (\ref{commute}), the following formula for rank 1 action holds:
$$\hat\beta_H(\alpha,[\Gamma_{\bm{T}}]^Z_{i,t}) = e^{2\pi \iota t n KT^{(i)}}\hat\beta_H(\alpha,\Gamma_{\bm{T}}) - 2\pi \iota n K t \int_0^{T^{(i)}}e^{2\pi \iota n K t s_i}\hat\beta_H(\alpha,\Gamma_{T,i,s})ds_i.$$
\end{lemma}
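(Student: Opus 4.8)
The plan is to collapse the $d$-dimensional identity to a one-variable integration by parts in the stretched coordinate $s_i$, the phase $e^{2\pi\iota nKts_i}$ being generated slice by slice by the orthogonality relation \eqref{orth}. Writing $H=H_n$, I set
\[
u(s):=\hat\beta_H(\alpha,\Gamma_{T,i,s}),\qquad \Psi(s):=\hat\beta_H(\alpha,[\Gamma_{T,i,s}]^Z_{i,t}),
\]
so that $u(0)=\Psi(0)=0$, $u(T^{(i)})=\hat\beta_H(\alpha,\Gamma_{\bm T})$, and $\Psi(T^{(i)})=\hat\beta_H(\alpha,[\Gamma_{\bm T}]^Z_{i,t})$ is the quantity to be computed. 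The one fact I would isolate first is a \emph{central phase rule}: since $Z$ is central, $\phi^Z_r$ commutes with the generators $X^\alpha_j$, so $\phi^Z_r\circ\PP^{d,\alpha}_{\bm s}(m)=\PP^{d,\alpha}_{\bm s}(\phi^Z_r(m))$; hence the $\phi^Z_r$-image of a standard rectangle, or of any slice of it, based at $m$ is the corresponding object based at $\phi^Z_r(m)$, and by \eqref{orth} together with the additive property of Theorem \ref{hi} its functional is multiplied by exactly $e^{2\pi\iota nKr}$.

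Next I would decompose along $s_i$. Fix a partition $0=s_0<\dots<s_N=T^{(i)}$ of mesh $\delta$. By additivity $\Psi(T^{(i)})=\sum_k(\Psi(s_k)-\Psi(s_{k-1}))$, where $\Psi(s_k)-\Psi(s_{k-1})$ is the functional of the stretched slab over $[s_{k-1},s_k]$. Factoring out the constant base translation $\phi^Z_{ts_{k-1}}$ and applying the central phase rule exactly gives
\[
\Psi(s_k)-\Psi(s_{k-1})=e^{2\pi\iota nKts_{k-1}}\bigl(u(s_k)-u(s_{k-1})\bigr)+R_k,
\]
where $R_k=e^{2\pi\iota nKts_{k-1}}\bigl(\hat\beta_H(\alpha,\widetilde\Sigma_k)-\hat\beta_H(\alpha,\Sigma_k)\bigr)$ measures the difference between the slab $\widetilde\Sigma_k$ sheared by $\phi^Z_{t(s_i-s_{k-1})}$ and the unsheared slab $\Sigma_k$, the two agreeing on the bottom face $s_i=s_{k-1}$.

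The algebraic heart is an Abel summation of the main term: with $b_0=u(0)=0$,
\[
\sum_k e^{2\pi\iota nKts_{k-1}}\bigl(u(s_k)-u(s_{k-1})\bigr)=e^{2\pi\iota nKts_{N-1}}u(T^{(i)})-\sum_{k=1}^{N-1}\bigl(e^{2\pi\iota nKts_k}-e^{2\pi\iota nKts_{k-1}}\bigr)u(s_k).
\]
Because $s\mapsto e^{2\pi\iota nKts}$ is $C^1$ and $u$ is continuous (indeed bounded, by Theorem \ref{hi}), letting $\delta\to0$ sends the right-hand side to $e^{2\pi\iota nKtT^{(i)}}\hat\beta_H(\alpha,\Gamma_{\bm T})-2\pi\iota nKt\int_0^{T^{(i)}}e^{2\pi\iota nKts}\,\hat\beta_H(\alpha,\Gamma_{T,i,s})\,ds$, which is precisely the claimed identity; no separate bounded-variation hypothesis on $u$ is needed, since the Abel rearrangement only ever pairs the smooth factor against $u$.

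It therefore remains to show $\sum_k R_k\to0$ as $\delta\to0$, and this is the main obstacle. Here I would compare $\widetilde\Sigma_k$ and $\Sigma_k$ through the $(d+1)$-dimensional region they cobound, whose $Z$-extent is at most $t\delta$ while its remaining directions contribute a factor $O(\delta)$; arguing exactly as in the invariance Lemma \ref{lem;invar} — via the bounded property of Theorem \ref{hi} and the Stokes-type estimate of Lemma \ref{eqn;stokes} applied to the boundary of this region — one obtains $|R_k|=O(\delta^2)$ uniformly in $k$, whence $\sum_k|R_k|=O(\delta)\to0$. Every other ingredient (additivity, the central phase rule, and Abel summation) is elementary, so the uniform smallness of the shear remainder $R_k$ is the only delicate point; the rank-one hypothesis is exactly what makes the shear one-dimensional and this estimate clean.
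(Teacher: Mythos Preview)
Your approach is genuinely different from the paper's, and the decisive step --- the bound $|R_k|=O(\delta^2)$ --- does not go through as written.

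\medskip
\textbf{The gap.} You claim that the shear remainder $R_k$ can be controlled ``exactly as in the invariance Lemma \ref{lem;invar}''. But the mechanism in that lemma is specific to the $Y$--direction: the side pieces of $\partial D(\Gamma,\Gamma_\QQ)$ are $\QQ^{1,Y}$--orbits, and under the renormalization $r^t$ defining $\hat\beta_H$ the $Y$--direction \emph{contracts} at rate $e^{-t}$, which is precisely what makes \eqref{area0}--\eqref{area1} converge. The central direction $Z$ is fixed by every element of $Sp_{2g}(\R)$, so the cobounding $(d{+}1)$--region between $\widetilde\Sigma_k$ and $\Sigma_k$ has $Z$--side pieces that do \emph{not} contract under renormalization; its renormalized volume grows like $e^{(t_1+\dots+t_d)}$, and after the normalizing factor $e^{-(t_1+\dots+t_d)/2}$ you are left with a divergent quantity. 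Neither the bounded property of Theorem \ref{hi} (whose constant $C(\Gamma)$ is not uniform) nor Lemma \ref{eqn;stokes} yields $|R_k|=O(\delta^2)$. At best they give $|R_k|=O(\delta^{1/2})$, and then $\sum_k|R_k|$ diverges. There is also a circularity concern: you manipulate $\hat\beta_H$ on stretched slabs $\widetilde\Sigma_k$ and on $[\Gamma_{T,i,s}]^Z_{i,t}$ as if it were already defined there, but the extension of $\hat\beta_H$ to stretched rectangles (Corollary \ref{cor;extend}) is deduced \emph{from} this lemma.

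\medskip
\textbf{How the paper avoids this.} The paper never touches $\hat\beta_H$ on stretched objects until the very end. It performs the integration by parts at the level of the finite pairing $\langle[\Gamma_{\bm T}]^Z_{i,t},\omega\rangle$, where it is ordinary calculus, and obtains an extra term $\int_{U(\bm T)}\iota_Z\omega\circ[\Gamma_{\bm T}]^Z_{i,t}$. The crucial observation is then that the $\PP^{d,X}$--action is the identity on the center, so under renormalization this $\iota_Z\omega$--term does not grow, and after multiplying by $e^{-(t_1+\dots+t_d)/2}$ it tends to $0$. In other words, the paper exploits the centrality of $Z$ to \emph{kill} the error, whereas in your argument the centrality of $Z$ is exactly what prevents the Lemma \ref{lem;invar} contraction from working. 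If you want to salvage your route, you should carry out the Abel summation on the pre-limit projections $\B^{-s}_{\alpha,\bm t}$ rather than on $\hat\beta_H$, and identify the remainder with an $\iota_Z\omega$--type contribution before passing to the limit.
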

\begin{proof}
 Let $\alpha = (X_i, Y_i,Z)$ and $\omega$ be $d$-form supported on a single irreducible representation $H$.
We obtain the following formula for stretches of curve $\gamma_{i,t}^Z$ (see \cite[\S 4 and Lemma 9.1]{forni2020time}),
$$\frac{d\gamma_{i,t}^Z}{ds_i} = D\phi^Z_{ts_i}(\frac{d\gamma_i}{ds_i}) + tZ\circ \gamma_{i,t}^Z. $$

It follows that pairing is given by
\begin{align*}
\langle[\Gamma_\textbf{T}]^Z_{i,t}, \omega \rangle & = \int_{U(\bm{T})}\omega \Big(\frac{d\gamma_1}{ds_1}(s_1), \cdots, \frac{d\gamma_{i,t}^Z}{ds_i}(s_i) ,\cdots, \frac{d\gamma_d}{ds_d}(s_d)\Big) d\s \\
& = \int_{U(\bm{T})}e^{2\pi \iota n Kts_i}[\omega\Big(\frac{d\gamma_1}{ds_1}(s_1), \cdots, \frac{d\gamma_d}{ds_d}(s_d)\Big)] + \iota_Z\omega \circ [\Gamma_\textbf{T}]^Z_{i,t}(\s)d\s
\end{align*}
Denote a $(d-1)$-dimensional sub-rectangle $U_{d-1}(\bm{T})$ without $i$-th coordinates by $U(\bm{T}) = U_{d-1}(\bm{T}) \times [0,T^{(i)}]$. Integration by parts for a fixed $i$-th integral gives
\begin{align*}
&\int_{U(\bm{T})}e^{2\pi \iota n Kts_i}[\omega(\frac{d\gamma_1}{ds_1}(s_1), \cdots, \frac{d\gamma_d}{ds_d}(s_d))]d\s  \\
&= e^{2\pi \iota  n Kt{T^{(i)}}}\int_{U(\bm{T})}[\omega(\frac{d\gamma_1}{ds_1}(s_1), \cdots, \frac{d\gamma_d}{ds_d}(s_d))]d\s\\
& \quad- 2\pi \iota n Kt \int_{0}^{T^{(i)}}  e^{2\pi \iota  n Kts_i}\\&\quad \int_{U_{d-1}(\bm{T})}\left(\int_{0}^{s_i} [\omega(\frac{d\gamma_1}{ds_1}(s_1), \cdots,\frac{d\gamma_i}{ds_i}(r) \cdots, \frac{d\gamma_d}{ds_d}(s_d))]dr\right)d\s.
\end{align*}
Then, we have the following formula
\begin{multline*}
 \langle [\Gamma_\textbf{T}]_{i,t}^Z, \omega \rangle = e^{2\pi \iota  n Kt{T^{(i)}}}\langle [\Gamma_\textbf{T}], \omega \rangle - 2\pi \iota n Kt \int_{0}^{T^{(i)}} e^{2\pi \iota  n Kts_i} \langle\Gamma_{T,i,s}, \omega   \rangle ds_i\\
 +\int_{U(\bm{T})}(\iota_Z\omega \circ [\Gamma_\textbf{T}]^Z_{i,t})(\s)d\s.
 \end{multline*}
Since the action of $\PP_\textbf{t}^{d,X}$ for $\textbf{t} \in \R^d$ is identity on the center $Z$,
$$\lim_{t_1, \dotsc, t_d \rightarrow \infty} e^{-(t_1+ \cdots t_d)/2}\int_{U(\bm{T})} (\iota_Z(\PP_\textbf{t}^{d,X})^*\omega \circ [\Gamma_\textbf{T}]^Z_{i,t})(\s) d\s = 0.$$
%\lim_{t_d \rightarrow \infty}\cdots \lim_{t_1 \rightarrow \infty}
Thus, in particular concerning $d=1$, it follows by definition of the functional (Lemma \ref{35}), the statement holds.
\end{proof}

Here we define a restricted vector $\textbf{T}_{i,s}$ of $\textbf{T} = (T^{(1)},\cdots, T^{(d)}) \in \R^d$. For fixed $i \in [1,d]$, pick $s_i \in [0,T^{(i)}]$ such that
$\textbf{T}_{i,s} \in \R^d$ is a vector with its coordinates
%Given $\textbf{s} = (s_1, \cdots, s_d) \in \R_+^d$,
$$
{T}^{(j)}_{i,s} =
\begin{cases}
  {T}^{(j)} & \text{if} \quad j \neq i\\
  s_i & \text{if} \quad  j = i.
%$$\textbf{T}_1 = (T_1^{(1)},T_1^{(2)}, \cdots, T_1^{(d)})$$
\end{cases}
$$
% with ${T}^{(j)}_{i,s}= \textbf{T}^{(j)}$ and $\textbf{T}^{(i)}_{i,s}= s_i$.
Similarly, $\textbf{T}_{i_1,\cdots, i_k ,s}$ is a vector whose $i_1, \cdots, i_k$-th coordinates are replaced by $s_{i_1}, \cdots, s_{i_k}$.

\begin{lemma}\label{9.2} For $m\in M$ and $y_i \in \R$, the following property holds:
\begin{multline*}
\beta_H(\alpha,\phi_{y_i}^{Y_i}(m),\textbf{T}) = \\
e^{-2\pi \iota y_i n K{T}^{(i)}} \beta_H(\alpha,m,\textbf{T}) + 2\pi \iota  n Ky_i \int_0^{{T}^{(i)}} e^{-2\pi \iota y_i n Ks_i} \beta_H(\alpha,m,\textbf{T}_{i,s})ds_i.
\end{multline*}
\end{lemma}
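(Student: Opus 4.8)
The plan is to deduce the identity from the rank-one stretching formula of Lemma \ref{parts}, applied with stretching parameter $t=-y_i$, combined with the $\phi^{Y_i}$-invariance of $\hat\beta_H$. Write $R(p):=\Gamma^X_{\bm T}(p)$ for the standard rectangle based at $p$, so that $\beta_H(\alpha,p,\bm T)=\hat\beta_H(\alpha,R(p))$ by \eqref{def;cocycle}. I claim two geometric facts. First, that $R(\phi^{Y_i}_{y_i}(m))$ coincides, as a $d$-current, with the push-forward $(\phi^{Y_i}_{y_i})_*[\Gamma_{\bm T}]^Z_{i,-y_i}(m)$ of the stretched rectangle of Definition \ref{def;sstretch}; second, that $\hat\beta_H$ is invariant under this push-forward. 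Granting these, Lemma \ref{parts} with $t=-y_i$, together with the identifications $\hat\beta_H(\alpha,\Gamma_{\bm T}(m))=\beta_H(\alpha,m,\bm T)$ and $\hat\beta_H(\alpha,\Gamma_{T,i,s}(m))=\beta_H(\alpha,m,\bm T_{i,s})$, produces exactly the asserted formula, the sign $t=-y_i$ turning $e^{2\pi\iota tnKT^{(i)}}$ into $e^{-2\pi\iota y_inKT^{(i)}}$ and $-2\pi\iota nKt$ into $+2\pi\iota nKy_i$.

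For the first fact I would compute directly in $\mathsf H^g$. Since $[X_j,Y_i]=\delta_{ij}Z$ by \eqref{commute} and $Z$ is central, the step-two Baker--Campbell--Hausdorff identity gives $\exp(y_iY_i)\exp(\sum_j s_jX_j)=\exp(\sum_j s_jX_j)\exp(y_iY_i)\exp(-y_is_iZ)$. Transcribing this into the parametrisations of \S\ref{sec;repr} and using that $\phi^Z$ commutes with $\phi^{Y_i}$, one obtains for every $\bm s\in U(\bm T)$
\begin{equation*}
\PP^{d,\alpha}_{\bm s}(\phi^{Y_i}_{y_i}(m)) = \phi^{Y_i}_{y_i}\big(\phi^Z_{-y_is_i}(\PP^{d,\alpha}_{\bm s}(m))\big).
\end{equation*}
By Definition \ref{def;sstretch} the argument $\phi^Z_{-y_is_i}(\PP^{d,\alpha}_{\bm s}(m))$ is precisely the value at $\bm s$ of the stretched rectangle $[\Gamma_{\bm T}]^Z_{i,-y_i}(m)$; since both parametrisations run over the same domain $U(\bm T)$, this is exactly the claimed current identity $R(\phi^{Y_i}_{y_i}(m))=(\phi^{Y_i}_{y_i})_*[\Gamma_{\bm T}]^Z_{i,-y_i}(m)$. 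Note that the stretched rectangle lies in the class $\mathfrak R$ of generalized rectangles (see \eqref{def;genrec}), so $\hat\beta_H$ is defined on it by Corollary \ref{cor;extend} and is evaluated on it by Lemma \ref{parts}.

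It then remains to apply $\hat\beta_H$ to both sides. The invariance property (Theorem \ref{hi}(3)) gives $\hat\beta_H(\alpha,(\phi^{Y_i}_{y_i})_*[\Gamma_{\bm T}]^Z_{i,-y_i}(m))=\hat\beta_H(\alpha,[\Gamma_{\bm T}]^Z_{i,-y_i}(m))$, whence $\beta_H(\alpha,\phi^{Y_i}_{y_i}(m),\bm T)=\hat\beta_H(\alpha,[\Gamma_{\bm T}]^Z_{i,-y_i}(m))$. Feeding this into Lemma \ref{parts} with $t=-y_i$ and rewriting the functionals of the restricted rectangles via \eqref{def;cocycle} yields the stated formula. (The central-character relation \eqref{orth} is the source of the exponential weights, but it enters only through Lemma \ref{parts} and need not be invoked separately here.)

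The main obstacle is the second geometric fact: the invariance of $\hat\beta_H$ under $(\phi^{Y_i}_{y_i})_*$ for the stretched rectangle, which is not a standard rectangle. The invariance of Theorem \ref{hi} is proved in Lemma \ref{lem;invar} only for standard rectangles, so I would verify that its argument applies verbatim to $[\Gamma_{\bm T}]^Z_{i,-y_i}(m)$. One forms the $(d+1)$-dimensional current $D(\Gamma,\Gamma_Q)$ swept out by the $\phi^{Y_i}$-orbit joining $[\Gamma_{\bm T}]^Z_{i,-y_i}(m)$ to its push-forward; the only inputs are the uniform bound on the renormalized $(d+1)$-volume as in \eqref{ineq;area2} and the Sobolev bound on the boundary difference as in \eqref{area1}. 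Both depend only on the $X$-directions of the rectangle and on the $\phi^{Y_i}$-return length, not on the $Z$-stretch, so they persist for generalized rectangles in $\mathfrak R$; confirming this uniformity is the delicate point that must be checked with care.
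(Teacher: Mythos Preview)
Your argument is correct and is essentially the paper's proof, with one cosmetic difference in how the commutation identity is oriented. The paper writes the identity the other way around,
\[
(\phi^{Y_i}_{y_i})_*\Gamma^X_{\bm T}(m)=[\Gamma^X_{\bm T}(\phi^{Y_i}_{y_i}(m))]^Z_{i,\,y_i},
\]
so the stretched rectangle is based at $\phi^{Y_i}_{y_i}(m)$ with stretch $t=+y_i$. Invariance is then applied to the \emph{standard} rectangle $\Gamma^X_{\bm T}(m)$, which is exactly what Lemma~\ref{lem;invar} covers; no extension of invariance to generalized rectangles is needed. Lemma~\ref{parts} with $t=y_i$ gives
\[
\beta_H(\alpha,m,\bm T)=e^{2\pi\iota y_i nKT^{(i)}}\beta_H(\alpha,\phi^{Y_i}_{y_i}(m),\bm T)-2\pi\iota nKy_i\int_0^{T^{(i)}}e^{2\pi\iota nKy_is_i}\beta_H(\alpha,\phi^{Y_i}_{y_i}(m),\bm T_{i,s})\,ds_i,
\]
and the stated formula follows by replacing $(m,y_i)$ with $(\phi^{Y_i}_{y_i}(m),-y_i)$.

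Your orientation (stretched rectangle based at $m$, $t=-y_i$) lands on the claimed formula in one step, which is neater, but forces you to invoke $\QQ$-invariance on a stretched rectangle---the ``main obstacle'' you identify. The paper's orientation trades that obstacle for a trivial relabelling at the end. So your final paragraph, while correct in spirit, is unnecessary: simply swap the roles of $m$ and $\phi^{Y_i}_{y_i}(m)$ in your geometric identity and the issue disappears.
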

\begin{proof}
By definition \eqref{eqn;rect}, \eqref{def;stretch} and
commutation relation (\ref{commute}), it follows that
$$\phi_{y_i}^{Y_i}(\Gamma_\textbf{T}^X(m)) = [\Gamma_\textbf{T}^X(\phi_{y_i}^{Y_i}(m))]_{i,t}^Z.$$
By the invariance property of the functional $\hat\beta_H$ and Lemma \ref{parts},
\begin{align*}
& \beta_H(\alpha,m,\textbf{T})  = \hat\beta_H(\alpha,\phi_{y_i}^{Y_i}(\Gamma_\textbf{T}^X(m))  \\
& = e^{2\pi \iota y_i n K{T}^{(i)}}\hat\beta_H(\alpha,\Gamma_\textbf{T}^X(\phi_{y_i}^{Y_i}(m)) \\&\quad - 2\pi \iota n Ky_i \int_0^{{T}^{(i)}}  e^{2\pi \iota  n Ky_is_i}\hat\beta_H(\alpha,\Gamma_{T,i,s}^X(\phi_{s_i}^{Y_i}(m))ds_i\\
& = e^{2\pi \iota y_i n K{T}^{(i)}}\beta_H(\alpha,\phi_{y_i}^{Y_i}(m),\textbf{T}) - 2\pi \iota n Ky_i \int_0^{{T}^{(i)}}  e^{2\pi \iota  n Ky_is_i}\beta_H(\alpha,\phi_{s_i}^{Y_i}(m),\textbf{T}_{i,s})ds_i.
\end{align*}
Then the statement follows immediately.
\end{proof}

We extend previous Lemma \ref{9.2} to higher rank actions by induction argument.
\begin{lemma}[Rank-$d$ action]\label{9.3}
For $m\in M$ and $y = (y_1, \cdots, y_d) \in \R^d$, the following identity for the cocycle $\beta_H$ holds:
\begin{align}\label{eqn;lem9.3}
\begin{split}
&\beta_H(\alpha,\QQ_y^{d,Y}(m) ,\textbf{T})  = e^{-2\pi \iota \sum_{j=1}^d y_j n K{T}^{(j)}} \beta_H(\alpha,m,\textbf{T}) \\
&+ \sum_{k=1}^d\sum_{\substack{1 \leq i_1 < \cdots < i_k \leq d} }\prod_{j=1}^{k}(2\pi \iota n Ky_{i_j})e^{-2\pi \iota  n K(\sum_{\substack{l\notin \{i_1, \cdots, i_k\}}}y_{l}{T}^{(l)}) } \\
& \times   \int_0^{{T}^{(i_1)}}\cdots\int_0^{{T}^{(i_k)}}e^{-2\pi \iota  n K(y_{i_1}s_{i_1}+\cdots + y_{i_k}s_{i_k} )}\beta_H(\alpha,m,\textbf{T}_{i_1,\cdots, i_k ,s})ds_{i_k}\cdots ds_{i_1}.
\end{split}
\end{align}
\end{lemma}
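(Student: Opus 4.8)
The plan is to reduce everything to iterated application of the single-flow identity of Lemma \ref{9.2}, exploiting that the vector fields $Y_1,\dots,Y_d$ pairwise commute. First I would record that, since $[Y_i,Y_j]=0$ by \eqref{commute}, the action factors as a composition of commuting one-parameter flows,
\[
\QQ_y^{d,Y}(m)=\phi_{y_1}^{Y_1}\circ\cdots\circ\phi_{y_d}^{Y_d}(m),
\]
so that peeling off the outermost flow leaves a base point of the same form.

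To organize the bookkeeping it is convenient to encode Lemma \ref{9.2} in operator form. For each index $i$ I introduce two operators acting on a function $G$ of the rectangle variable $\textbf{T}\in\R^d$ by
\[
(E_iG)(\textbf{T}):=e^{-2\pi\iota y_i nKT^{(i)}}G(\textbf{T}),\qquad (J_iG)(\textbf{T}):=2\pi\iota nKy_i\int_0^{T^{(i)}}e^{-2\pi\iota y_i nKs_i}G(\textbf{T}_{i,s})\,ds_i.
\]
With this notation Lemma \ref{9.2} reads precisely $\beta_H(\alpha,\phi_{y_i}^{Y_i}(m),\cdot)=(E_i+J_i)\,\beta_H(\alpha,m,\cdot)$, and the key structural point is that $E_i$ and $J_i$ modify only the $i$-th coordinate of their argument; hence for $i\neq j$ the operators $E_i,J_i$ commute with $E_j,J_j$. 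The commutation of $J_i$ with $J_j$ is exactly Fubini's theorem applied to the two iterated integrals, which is legitimate since each integrand is a bounded measurable function on the compact rectangle of integration by the bounded property of Corollary \ref{hi2}.

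The induction on the number of applied flows then proceeds by peeling one flow at a time. Applying Lemma \ref{9.2} in the direction $i=1$ with base point $\phi_{y_2}^{Y_2}\circ\cdots\circ\phi_{y_d}^{Y_d}(m)$, then in the direction $i=2$ with base point $\phi_{y_3}^{Y_3}\circ\cdots\circ\phi_{y_d}^{Y_d}(m)$, and so on, is legitimate because each application of Lemma \ref{9.2} leaves the base point unchanged; after $d$ steps the base point is reduced to $m$ and, using the commutativity just established to move each factor past the integrals introduced earlier, I obtain
\[
\beta_H(\alpha,\QQ_y^{d,Y}(m),\textbf{T})=\Big[\prod_{i=1}^{d}(E_i+J_i)\,\beta_H(\alpha,m,\cdot)\Big](\textbf{T}).
\]
Expanding the product over the $2^d$ choices gives a sum indexed by the set $S=\{i_1<\cdots<i_k\}\subseteq\{1,\dots,d\}$ on which the factor $J$ is chosen, with the complement contributing factors $E$. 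The empty set reproduces the leading term $e^{-2\pi\iota nK\sum_{j}y_jT^{(j)}}\beta_H(\alpha,m,\textbf{T})$ of \eqref{eqn;lem9.3}, while a nonempty $S$ contributes exactly $\prod_{j}(2\pi\iota nKy_{i_j})$, the exponential $e^{-2\pi\iota nK\sum_{l\notin S}y_lT^{(l)}}$, and the $k$-fold integral of $\beta_H(\alpha,m,\textbf{T}_{i_1,\dots,i_k,s})$ against $e^{-2\pi\iota nK\sum_{j}y_{i_j}s_{i_j}}$, where the restricted vector $\textbf{T}_{i_1,\dots,i_k,s}$ is precisely the result of the successive $J_{i_1},\dots,J_{i_k}$ substitutions. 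This matches \eqref{eqn;lem9.3} term by term.

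I expect the main obstacle to be organizational rather than analytic: one must check that the operators attached to distinct coordinates genuinely commute, so that the iterated invocation of Lemma \ref{9.2} is unambiguous and the nested integrals reassemble into the symmetric sum over subsets of $\{1,\dots,d\}$. This hinges only on Fubini's theorem, valid because the relevant integrands are bounded and measurable on the compact domains $U(\textbf{T})$ thanks to Corollary \ref{hi2}; once commutativity is secured, the passage from the expanded operator product to the right-hand side of \eqref{eqn;lem9.3} is a direct account of which coordinates carry an exponential factor and which carry an integral.
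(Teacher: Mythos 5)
Your proposal is correct and follows essentially the same route as the paper: both proofs reduce the rank-$d$ identity to $d$ successive applications of the single-flow formula of Lemma \ref{9.2}, the paper organizing this as an induction on $d$ with an explicit rearrangement of terms, and you packaging the identical recursion as the expansion of the commuting operator product $\prod_{i=1}^d(E_i+J_i)$, with the needed cross-index commutations justified by Fubini exactly as required. The operator bookkeeping is a cleaner presentation of the same argument, not a different one.
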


\begin{proof}
We verified that the statement works for $d=1$ in Lemma \ref{9.2}.
Assume that \eqref{eqn;lem9.3} holds for rank $d-1$ action $\QQ_{y'}^{d-1,Y}$ by induction hypothesis. For convenience, we write
$$\QQ_y^{d,Y}(m) = \phi_{y_d}^{Y_d}\circ\QQ_{y'}^{d-1,Y}(m) \text{ for } y' \in \R^{d-1} \text{ and } y = (y',y_d) \in \R^d.$$
By applying Lemma \ref{9.2},
\begin{align}\label{eqn;9.2}
\begin{split}
\beta_H(\alpha,\QQ_y^{d,Y}(m),\textbf{T}) & = e^{-2\pi \iota y_d n K{T}^{(d)}} \beta_H(\alpha, \QQ_{y'}^{d-1,Y}(m),\textbf{T})\\
& + 2\pi \iota  n Ky_d \int_0^{{T}^{(d)}} e^{-2\pi \iota y_d n Ks_d} \beta_H(\alpha,\QQ_{y'}^{d-1,Y}(m),\textbf{T}_{d,s})ds_d  \\
& := I + II.
\end{split}
\end{align}
Firstly, by induction hypothesis,
\begin{align*}
I &=  e^{-2\pi \iota y_d n K{T}^{(d)}}\Bigg(e^{-2\pi \iota \sum_{j=1}^{d-1} y_j n K{T}^{(j)}} \beta_H(\alpha,m,\textbf{T}) \\
&+ \sum_{k=1}^{d-1}\sum_{\substack{1 \leq i_1 < \cdots < i_k \leq d-1} }\prod_{j=1}^{k}(2\pi \iota n Ky_{i_j})e^{-2\pi \iota  n K(\sum_{\substack{l\notin \{i_1, \cdots, i_k\}}}y_{l}{T}^{(l)} + y_d{T}^{(d)} )} \\
& \times   \int_0^{{T}^{(i_1)}}\cdots\int_0^{{T}^{(i_k)}}e^{-2\pi \iota  n K(y_{i_1}s_{i_1}+\cdots + y_{i_k}s_{i_k} )}\beta_H(\alpha,m,\textbf{T}_{i_1,\cdots, i_k ,s})ds_{i_k}\cdots ds_{i_1}\Bigg)\\
& = e^{-2\pi \iota \sum_{j=1}^d y_j n K{T}^{(j)}} \beta_H(\alpha,m,\textbf{T}) + III.
%& = III + IV.
\end{align*}
Then term $III$ contains iterated integrals on the restricted rectangles (from 0 to $(d-1)$-th) containing a term $e^{-2\pi \iota  n K y_d{T}^{(d)} }$ outside of iterated integrals.

 For the second part,  we apply induction hypothesis again for restricted rectangle $\textbf{T}_{d,s}$. Then,
% For part II, we should not apply lemma \ref{9.2} repeatedly!
%  By \eqref{eqn;9.2}, for $y' = (y'', y_{d-1})$,
%\begin{align*}
%II &= 2\pi \iota  n Ky_d \int_0^{{T}^{(d)}} e^{-2\pi \iota y_d n Ks_d} \big[e^{-2\pi \iota y_d n K{T}^{(d-1)}} \beta_H(\alpha, \QQ_{y''}^{d-2,Y}(m),\textbf{T}_{d,s}) \\
%&+ 2\pi \iota  n Ky_{d-1} \int_0^{{T}^{(d-1)}} e^{-2\pi \iota y_{d-1} n Ks_{d-1}} \beta_H(\alpha,\QQ_{y''}^{d-2,Y}(m),\textbf{T}_{d-1,d,s})ds_{d-1} \big] ds_d
%\end{align*}
\begin{align*}
II &=  2\pi \iota  n Ky_d \int_0^{{T}^{(d)}} e^{-2\pi \iota y_d n Ks_d} \big[e^{-2\pi \iota \sum_{j=1}^{d-1} y_j n K{T}^{(j)}} \beta_H(\alpha,m,\textbf{T}_{d,s})]ds_{d}\\
&+ \sum_{k=1}^{d-1}\sum_{\substack{1 \leq i_1 < \cdots < i_k \leq d-1} } (2\pi \iota  n Ky_d) \prod_{j=1}^{k}(2\pi \iota n Ky_{i_j})e^{-2\pi \iota  n K(\sum_{\substack{l\notin \{i_1, \cdots, i_k\}}}y_{l}{T}^{(l)} )} \\
& \times   \int_0^{{T}^{(d)}} \left( \int_0^{{T}^{(i_1)}}\cdots\int_0^{{T}^{(i_k)}} ds_{i_k}\cdots ds_{i_1} \right)ds_{d}  \\
& \times e^{-2\pi \iota  n K(y_{i_1}s_{i_1}+\cdots + y_{i_k}s_{i_k}+y_ds_d )}\beta_H(\alpha,m,\textbf{T}_{i_1,\cdots, i_k,d ,s}).
\end{align*}
%\begin{align*}
%II &=  2\pi \iota  n Ky_d \int_0^{{T}^{(d)}} e^{-2\pi \iota y_d n Ks_d} \big[e^{-2\pi \iota y_d n K{T}^{(d-1)}} \beta_H(\alpha, \QQ_{y''}^{d-2,Y}(m),\textbf{T}_{d,s})ds_{d}\\
%&+ \sum_{k=1}^{d-1}\sum_{\substack{1 \leq i_1 < \cdots < i_k \leq d-1} } (2\pi \iota  n Ky_d) \prod_{j=1}^{k}(2\pi \iota n Ky_{i_j})e^{-2\pi \iota  n K(\sum_{\substack{l\notin \{i_1, \cdots, i_k\}}}y_{l}{T}^{(l)} )} \\
%& \times   \int_0^{{T}^{(d)}} \left( \int_0^{{T}^{(i_1)}}\cdots\int_0^{{T}^{(i_k)}} ds_{i_k}\cdots ds_{i_1} \right)ds_{d}  e^{-2\pi \iota  n K(y_{i_1}s_{i_1}+\cdots + y_{i_k}s_{i_k}+y_ds_d )}\beta_H(\alpha,m,\textbf{T}_{i_1,\cdots, i_k,d ,s})
%\end{align*}
The term $II$ consist of 1 to $d$-th iterated integrals on the (from 1 to $d$-th) restricted rectangles containing $e^{-2\pi \iota  n K y_ds_d }$ inside of iterated integrals. Thus, by rearranging terms $II$ and $III$, we obtain all the terms in the expression \eqref{eqn;lem9.3}.
\end{proof}

\subsection{Extensions of domain}
In this subsection, the domain of functionals defined on standard rectangle $\Gamma^X_\textbf{T}$ extends to the class $\mathfrak{R}$ (see Definition \ref{def;rect}). Furthermore, the functional associated with the analytic norm extends to holomorphic function on a complex domain.
%\begin{theorem} $\hat\beta_H$ defined on standard rectangle $\Gamma^X_\textbf{T}$ extends to  the class $\mathfrak{R}$.
%%$\hat\beta_H$ is well-defined on $\mathfrak{R}$.
%\end{theorem}
\begin{proof}[{Proof of Corollary \ref{cor;extend}.}]
Firstly, we can extend our functional to class $(\QQ_y^{d,Y})_*\Gamma^X_\textbf{T}$ for any $y \in \R^d$ by invariance property (Lemma \ref{lem;invar}).
Similarly, by Lemma \ref{parts}, Bufetov functional defined on the standard rectangles extends to the class of generalized rectangle $(\phi^Z_{t_i z})\circ\PP^{d,\alpha}_{\bm{t}}(m)$.
Since the flow generated by $Z$ commutes with other actions $\PP$ and $\QQ$, for any standard rectangle $\Gamma = \Gamma(m)$ with a fixed point $m \in M$, we have $(\phi^Z_z)_*\Gamma(m) = \Gamma(\phi^Z_z(m))$ for any $z \in \R$. Therefore, by combining with the invariance under the action $\QQ$ from Lemma \ref{lem;invar}, the domain of Bufetov functional extends to the class $\mathfrak{R}$.
\end{proof}
% (see \eqref{def;genrec})

For any $R>0$, the \emph{analytic norm} is defined for all ${\bf c} \in \ell^2$ as
$$\norm{\bf{c}}_{\omega, R} = \sum_{n\neq 0}\sum_{i=1}^{\mu(n)}e^{nR}|c_{i,n}|.$$

 Let $\Omega_R$ denote the subspace of $\textbf{c} \in \ell^2$ such that $\norm{\textbf{c}}_{\omega, R}$ is finite.
\begin{lemma}\label{16} For $\textbf{c} \in \Omega_R$ and $\textbf{T}\in \R_{+}^d$, the function %, any $\alpha \in DC(L)$
$$\beta_\textbf{c}(\alpha,\QQ_y^{d,Y}\circ \phi_z^Z(m) ,\textbf{T}), \ (y,z) \in \R^d\times \T$$
extends to a holomorphic function in the domain %$\C^d \times \C/\Z$
\begin{equation}\label{eqn;D_{R,T}}
D_{R,T} := \{(y,z) \in \C^d\times \C/\Z \mid \sum_{i=1}^d |Im(y_i)|{T}^{(i)} + |Im(z)|< \frac{R}{2\pi K}  \}.
\end{equation}

The following bound holds: for any $R'<R$ there exists a constant $C>0$ such that, for all $(y,z) \in D_{R',T}$ we have
  \begin{multline*}
|\beta_\textbf{c}(\alpha,\QQ_y^{d,Y}\circ \phi_z^Z(m),\textbf{T})| \\
\leq  C_{R,R'}\norm{\bf{c}}_{\omega, R}(L + vol(U(\bm{T}))^{1/2}(1+E_M(a,\bm{T})) (1+K\sum_{i=1}^d |Im(y_i)|{T}^{(i)}).
  \end{multline*}
\end{lemma}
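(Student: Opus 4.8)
The plan is to produce the holomorphic extension term by term in the spectral decomposition $\beta_\cc = \sum_{n\neq 0}\sum_{i=1}^{\mu(n)}c_{i,n}\beta^{i,n}$, and then to control the resulting series on $D_{R',T}$. Since the central flow $\phi^Z$ commutes with $\QQ^{d,Y}_y$ and each $\beta^{i,n}(\alpha,\cdot,\textbf{T})$ lies in the irreducible component $H_{i,n}\subset H_n$, the orthogonality relation \eqref{orth} gives
$$\beta^{i,n}(\alpha,\QQ_y^{d,Y}\circ\phi_z^Z(m),\textbf{T}) = e^{2\pi\iota Knz}\,\beta^{i,n}(\alpha,\QQ_y^{d,Y}(m),\textbf{T}),$$
and Lemma \ref{9.3} expresses the remaining factor explicitly as a finite sum, over subsets $\{i_1<\cdots<i_k\}\subseteq\{1,\dots,d\}$, of products of the exponentials $e^{-2\pi\iota nK y_l T^{(l)}}$, the polynomial prefactors $\prod_{j}(2\pi\iota nK y_{i_j})$, and iterated integrals $\int_0^{T^{(i_1)}}\!\!\cdots\int_0^{T^{(i_k)}} e^{-2\pi\iota nK\sum_l y_{i_l}s_{i_l}}\,\beta^{i,n}(\alpha,m,\textbf{T}_{i_1,\dots,i_k,s})\,ds$. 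First I would observe that every factor on the right-hand side is entire in $(y,z)$ — the exponentials and polynomials trivially, and the iterated integrals by differentiation under the integral sign (the integrands are entire and jointly continuous on the compact domain of integration). Hence each component $\beta^{i,n}(\alpha,\QQ_y^{d,Y}\circ\phi_z^Z(m),\textbf{T})$ extends to an entire function of $(y,z)\in\C^d\times\C/\Z$, and this formula is the candidate extension of $\beta_\cc$.

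Next I would estimate the modulus of each component on $D_{R',T}$. Writing $\rho := \sum_{i=1}^d|\mathrm{Im}(y_i)|T^{(i)} + |\mathrm{Im}(z)|$, the exponential factors $e^{2\pi\iota Knz}$, $e^{-2\pi\iota nK y_lT^{(l)}}$ and $e^{-2\pi\iota nK\sum_l y_{i_l}s_{i_l}}$ (the latter with $s_{i_l}\in[0,T^{(i_l)}]$) together contribute at most $e^{2\pi K|n|\rho}$, which on $D_{R',T}$ is bounded by $e^{|n|R'}$ by definition of the domain \eqref{eqn;D_{R,T}}. The base functionals $\beta^{i,n}(\alpha,m,\textbf{T}_{i_1,\dots,i_k,s})$ over the restricted rectangles are bounded, by Lemma \ref{6.1type} applied to the normalized generator $f_{i,n}$ (whose Sobolev norm carries a factor $(1+n^2)^{s/2}$, cf. Lemma \ref{8.1}), by $C_s(1+n^2)^{s/2}(L+vol(U(\textbf{T}))^{1/2}(1+E_\M(\alpha,\textbf{T})))$, uniformly in the restriction since $vol(U(\textbf{T}_{i_1,\dots,s}))\leq vol(U(\textbf{T}))$.

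The main obstacle is to show that the polynomial prefactors $\prod_j(2\pi\iota nK y_{i_j})$ do not spoil the estimate and, in particular, that the bound is uniform in $\mathrm{Re}(y)$, producing only the linear factor $1+K\sum_i|\mathrm{Im}(y_i)|T^{(i)}$. The key point is that each prefactor $2\pi\iota nK y_{i_j}$ is compensated by the oscillatory integral against $e^{-2\pi\iota nK y_{i_j}s_{i_j}}$: integrating by parts once in each variable $s_{i_j}$ trades the prefactor for boundary contributions together with lower-order integrals of $\partial_{s}\beta^{i,n}(\alpha,m,\textbf{T}_{\cdot,s})$, which by the cocycle/additive property of Corollary \ref{hi2} are again functionals over lower-dimensional faces and obey bounds of the same type. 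Crucially, after this integration by parts no factor of $\mathrm{Re}(y)$ remains — only the monotone exponential weight $e^{2\pi nK\,\mathrm{Im}(y_{i_j})s_{i_j}}$ survives — so the estimate is uniform in $\mathrm{Re}(y)$, and collecting the boundary terms produces at most the asserted factor $1+K\sum_i|\mathrm{Im}(y_i)|T^{(i)}$ (a product of such factors over the $2^d$ subsets being absorbed into the exponential weight below). I expect the careful bookkeeping of these boundary and lower-order terms, keeping the volume factors matched to $vol(U(\textbf{T}))^{1/2}$, to be the most delicate part of the argument.

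Finally, I would sum over the spectrum. Combining the three estimates, each component is bounded on $D_{R',T}$ by $C_{s}\,|c_{i,n}|\,(1+n^2)^{s/2}e^{|n|R'}\,(L+vol(U(\textbf{T}))^{1/2}(1+E_\M(\alpha,\textbf{T})))(1+K\sum_i|\mathrm{Im}(y_i)|T^{(i)})$. Since $R'<R$, the subexponential factor is absorbed, $(1+n^2)^{s/2}e^{|n|R'}\leq C_{R,R'}\,e^{|n|R}$, so that (using the reality symmetry $H_{-n}=\overline{H_n}$ to reduce to $n>0$, which matches the weight $e^{nR}$ of the analytic norm) the series is dominated by $C_{R,R'}\sum_{n}\sum_i e^{|n|R}|c_{i,n}| = C_{R,R'}\norm{\cc}_{\omega,R}<\infty$ for $\cc\in\Omega_R$. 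This bound is uniform on compact subsets of $D_{R,T}$, so by the Weierstrass theorem the sum of the entire component extensions is holomorphic on $D_{R,T}$ and agrees with $\beta_\cc(\alpha,\QQ_y^{d,Y}\circ\phi_z^Z(m),\textbf{T})$ for real $(y,z)$; passing the modulus estimate to the limit then yields the stated bound on $D_{R',T}$.
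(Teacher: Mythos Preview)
Your approach coincides with the paper's: expand $\beta_\cc$ over irreducibles, apply \eqref{orth} together with Lemma~\ref{9.3} to obtain the explicit entire formula in $(y,z)$, bound the base functionals $\beta^{i,n}(\alpha,m,\textbf{T}_{i_1,\dots,i_k,s})$ via the estimate behind Lemma~\ref{6.1type}, and sum against the analytic norm to get uniform convergence on compacta of $D_{R,T}$. The paper is in fact terser than you on exactly the point you flag as delicate --- it passes directly from the prefactors $\prod_j(2\pi\iota nK\,y_{i_j})$ to a bound involving only $\prod_j|\mathrm{Im}(y_{i_j})|T^{(i_j)}$ without spelling out how the dependence on $\mathrm{Re}(y)$ is eliminated. (Minor remark: the factor $(1+n^2)^{s/2}$ you import from Lemma~\ref{8.1} is not needed here, since the single-component bound \eqref{eqn;excursion} carries no Sobolev weight; it is harmless, being absorbed by $e^{|n|(R-R')}$.)

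Your integration-by-parts remedy, however, trades one gap for another: it requires $s\mapsto\beta^{i,n}(\alpha,m,\textbf{T}_{i_1,\dots,i_k,s})$ to be differentiable with controlled derivative, and nothing in the paper establishes this --- the additive and bounded properties yield only boundedness of the slab functional, not smallness as the slab width tends to zero. A cleaner way to kill the $\mathrm{Re}(y)$-dependence, and likely what the paper has in mind, is to absorb the real part into the base point \emph{before} invoking Lemma~\ref{9.3}: for $y=a+ib$ with $a\in\R^d$ set $m'=\QQ_a^{d,Y}\circ\phi_{\mathrm{Re}(z)}^Z(m)\in M$, so that the uniform-in-$m$ bound \eqref{eqn;excursion} applies at $m'$; then use Lemma~\ref{9.3} and \eqref{orth} with the purely imaginary shift $(ib,i\,\mathrm{Im}(z))$ (the identity extends off the reals by analytic continuation, since both sides are entire and agree on $\R^d\times\T$). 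The prefactors become $2\pi\iota nK(ib_{i_j})$, depending only on $|\mathrm{Im}(y)|$, and after bounding the $k$-fold integral by $\prod_j T^{(i_j)}$ one obtains the asserted estimate directly, without differentiating the functional.
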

\begin{proof}
%Let
%$$D_{R,T,i} := \{(y_i,z) \in \C\times \C/\Z \mid |Im(y_i)|\textbf{T}^{(i)} + |Im(z)|< \frac{R}{2\pi K}  \}.$$

By Lemma \ref{9.3} and (\ref{orth}),
\begin{align*}
&\beta_\textbf{c}(\alpha,\QQ_y^{d,Y}\circ \phi_z^Z(m) ,\textbf{T})  = e^{(z-2\pi \iota \sum_{j=1}^d y_j n K{T}^{(j)})} \beta_H(\alpha,m,\textbf{T}) \\
&+ \sum_{k=1}^d\sum_{\substack{1 \leq i_1 < \cdots < i_k \leq d} }\prod_{j=1}^{k}(2\pi \iota n Ky_{i_j})e^{-2\pi \iota  n K(\sum_{\substack{l\notin \{i_1, \cdots, i_k\}}}y_{l}{T}^{(l)} )} \\
& \times    e^{2\pi \iota  n Kz}\int_0^{{T}^{(i_1)}}\cdots\\&\int_0^{{T}^{(i_k)}}e^{-2\pi \iota  n K(y_{i_1}s_{i_1}+\cdots + y_{i_k}s_{i_k} )}\beta_H(\alpha,m,\textbf{T}_{i_1,\cdots, i_k ,s})ds_{i_k}\cdots ds_{i_1}.
\end{align*}
%\begin{align*}
%& \beta_H(\alpha,\QQ_y^{d,\alpha}\circ \phi_z^Z(m),\textbf{T}) \\
%& =  e^{2\pi \iota(z- \sum_{j=1}^d y_j n K\textbf{T}^{(j)})} \beta_H(\alpha,m,\textbf{T})+  e^{2\pi \iotaz}\sum_{k=1}^d\sum_{1 \leq i_1 < \cdots < i_k \leq d }\prod_{j=1}^{k}(2\pi \iota n Ky_{i_j}) \\
%& \times  \int_0^{\textbf{T}^{(i_1)}}e^{2\pi \iota  n Ky_is}\left(\int_0^{\textbf{T}^{(i_2)}}e^{2\pi \iota  n Ky_is} \cdots\left(\int_0^{\textbf{T}^{(i_k)}}e^{2\pi \iota  n Ky_{i}s}\beta_H(\alpha,\phi_{s}^{Y_i}(m),\textbf{T}_{i,s})ds\right)\cdots \right) ds
%\end{align*}

In view of Lemma \ref{6.1type}, for each variable $(y_i,z) \in \C\times \C/\Z$, we have %for $(y,z) \in \C^d\times \C/\Z$
\begin{align*}
&|\beta_\textbf{c}(\alpha,\QQ_y^{d,Y}\circ \phi_z^Z(x) ,\textbf{T})| \\
& \leq  (L + vol(U(\bm{T}))^{1/2}(1+E_M(a,\bm{T}))\Bigg(C_1\sum_{n\neq 0}\sum_{i=1}^{\mu(n)}e^{nR}|c_{i,n}|e^{2\pi |Im(z - \sum_{i=1}^d{T}^{(i)}y_i) |nK} \\
& + \sum_{k=1}^d C_k \Big(\sum_{1 \leq i_1 < \cdots < i_k \leq d }\prod_{j=1}^{k}(|Im(y_{i_j})|{T}^{(i_j)}) \\
& \times \sum_{n\neq 0}\sum_{i=1}^{\mu(n)}n|c_{i,n}|e^{2\pi (|Im(z)| + \sum_{j=1}^k {T}^{(i_j)}|Im(y_{i_j})|)nK}\Big)\Bigg).
\end{align*}
Therefore, the functional $\beta_\textbf{c}(\alpha,\QQ_y^{d,Y}\circ \phi_z^Z(m) ,\textbf{T})$ is bounded by a series of holomorphic functions on $\C^d \times \C / \Z$ and it converges uniformly on compact subsets of domain $D_{R,T}$. Thus it is holomorphic on the set $D_{R,T}$.
\end{proof}

\section{Measure estimation for bounded-type}\label{sec;7}
%Bufetov functionals were constructed for $\alpha \in A$ under Diophantine condition, which depends on backward orbit under renormalization in the moduli space $\M_g$.
In this section, we prove a measure estimation of Bufetov functional with an automorphism $\alpha$ of bounded-type. This result is a generalization of \S 11 in \cite{forni2020time}.

Let $\OO_r$ denote the space of holomorphic functions on the ball $B_\C(0,r) \subset \C^n$.
\begin{theorem}\cite[Theorem 1.9]{bru}\label{ball}   For any $f \in \OO_r$, there is a constant $d:= d_f(r)>0$ such that for any convex set $D \subset B_\R(0,1) := B_\C(0,1) \cap \R^n$, for any measurable subset $U \subset D$
$$\sup_{D}|f| \leq \left(\frac{4n Leb(D)}{Leb(U)} \right)^d\sup_{U}|f|.$$
\end{theorem}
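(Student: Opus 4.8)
The plan is to recognize Theorem \ref{ball} as a Remez-type inequality for holomorphic functions and to reduce it to the classical polynomial Remez inequality by a bounded-valency argument. First I would make precise the effective degree $d = d_f(r)$. For $f \in \OO_r$ with $r>1$, Jensen's formula bounds the number of zeros of $f$ (and more generally of $f-c$) inside the unit disk of any complex line $\ell$ meeting $B_\C(0,1)$ by a fixed multiple of $\log\big(\sup_{B_\C(0,r)}|f| / \sup_{B_\C(0,1/2)}|f|\big)$. This quantity, being uniform over all such lines, is exactly what $d_f(r)$ measures: it says that $f$ has bounded valency in a neighborhood of $B_\R(0,1)$, so that along every real segment it behaves like a polynomial of degree at most $d$.

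Next I would settle the one-dimensional case. Restricting $f$ to a complex line $\ell$ produces a one-variable holomorphic function $f_\ell$ with at most $d$ zeros in the unit disk, hence a factorization $f_\ell = P \cdot g$ with $\deg P \le d$ and $g$ non-vanishing. Using the minimum-modulus principle together with Harnack's inequality applied to the harmonic function $\log|g|$, the non-vanishing factor $g$ has uniformly bounded oscillation on the real diameter, so on that segment $f_\ell$ obeys the same Remez estimate as a degree-$d$ polynomial. The classical one-dimensional Remez inequality then yields, for a real segment $I$ and a measurable $E \subset I$,
$$
\sup_{I}|f_\ell| \;\le\; \Big(\frac{4|I|}{|E|}\Big)^{d}\,\sup_{E}|f_\ell|.
$$

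Finally I would lift this to the $n$-dimensional convex set $D$ via the integral-geometric slicing argument of Brudnyi and Ganzburg: one expresses the ratio $\mathrm{Leb}(U)/\mathrm{Leb}(D)$ through the one-dimensional relative measures of $U\cap\ell$ inside $D\cap\ell$ as $\ell$ ranges over a family of chords of $D$, applies the one-variable bound on each chord, and integrates, the dimensional loss producing the factor $4n$ in place of $4$. The hard part will be exactly this multidimensional reduction, together with making the valency bound genuinely uniform over all directions, i.e. ensuring that the single exponent $d_f(r)$ controls every slice simultaneously and that the geometry of an arbitrary convex $D \subset B_\R(0,1)$ is compatible with the chord decomposition. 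Once the slicing is set up, the holomorphic input enters only through the uniform valency bound, and the rest is the polynomial Remez inequality applied fibrewise.
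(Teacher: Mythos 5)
The first thing to say is that the paper does not prove this statement at all: it is imported verbatim, with citation, as Theorem 1.9 of Brudnyi \cite{bru}, so there is no internal proof to compare your argument against. Your sketch is therefore an attempt to reprove Brudnyi's theorem from scratch. As such it does capture the correct global architecture: a uniform one-dimensional Remez-type bound along complex lines, followed by the Brudnyi--Ganzburg integral-geometric slicing over chords of the convex body $D$, which is indeed where the factor $4n$ and the convexity hypothesis enter.

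The genuine gap is in your first step, where you identify $d_f(r)$ with a Jensen-type zero count and claim that Jensen's formula bounds the number of solutions of $f=c$ uniformly in $c$. It does not: a function of the form $1+\epsilon\sin(Nz)$ with $\epsilon$ extremely small has Bernstein index $\log(\sup_{B_\C(0,r)}|f|/\sup_{B_\C(0,1/2)}|f|)$ close to $0$ but valency of order $N$, because the ratio $\sup|f-c|/\sup|f-c|$ over the two balls degenerates as $c$ approaches values of $f$. In \cite{bru} the exponent $d_f(r)$ is the \emph{Chebyshev degree}, defined intrinsically so that the one-dimensional inequality holds, and the logical direction is the opposite of yours: Proposition 1.7 (quoted as \eqref{chev} in this paper) bounds $d_f(r)$ \emph{by} the valency $\nu_f((1+r)/2)$, and the finiteness of that valency must be established separately --- in the present paper this is done by a normal-family/Hurwitz compactness argument (Lemma \ref{lem;lem10.3}), not by Jensen's formula. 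Your Harnack step also silently discards a multiplicative constant of size $e^{Cd}$ coming from the non-vanishing factor $g$ in the factorization $f_\ell=P\cdot g$; absorbing that constant changes the exponent by a bounded factor, which is exactly why the theorem is stated with the intrinsic quantity $d_f(r)$ rather than a literal zero count. With $d_f(r)$ understood as in \cite{bru}, the remainder of your outline (one-dimensional Remez on each chord, then lifting to $D$ by slicing) is the standard and correct route.
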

%The best constant $d_f(r)$ stated in the previous theorem is called the \emph{Chebyshev degree}.
We say that a holomorphic function $f$ defined in a disk is \emph{$p$-valent}
if it assumes no value more than $p$-times there. We also say that $f$ is \emph{0-valent} if it
is a constant.

\begin{definition}\cite[Definition 1.6]{bru}  Let $\LL_t$ denote the set of one-dimensional complex affine spaces $L \subset \C^n$ such that $L \cap  B_\C(0,t) \neq \emptyset$.
For $f \in \OO_r$, the number
$$\nu_f(t):=\sup_{L \in \LL_t}\{\text{valency of }f \mid L \cap  B_\C(0,t) \neq \emptyset \}$$
is called the \emph{valency} of $f$ in $B_\C(0,t)$.
\end{definition}
By Proposition 1.7 of \cite{bru},  for any $f \in \OO_r$ with finite valency $\nu_f(t)$ for any $t \in [1,r)$, there is a constant $c := c(r) >0$ such that
\begin{equation}\label{chev}
d_f(r) \leq c\nu_f(\frac{1+r}{2}).
\end{equation}

\begin{lemma}\cite[Lemma 10.3]{forni2020time}\label{lem;lem10.3} Let $R>r>1$. For any normal family $\F \subset \OO_R$, assume that no functions in $\overline \F = \emptyset$ are constant along a one-dimensional complex line. Then we have
\[ \sup_{f \in \F} \nu_f(r) < \infty.\]
\end{lemma}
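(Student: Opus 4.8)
The plan is to argue by contradiction, combining normality with a Hurwitz-type convergence of zeros along complex lines. Suppose $\sup_{f\in\F}\nu_f(r)=\infty$. Then there exist $f_k\in\F$, complex affine lines $L_k\in\LL_r$, and values $w_k\in\C$ such that the restriction of $f_k$ to $L_k\cap B_\C(0,r)$ attains $w_k$ with total multiplicity at least $N_k$, where $N_k\to\infty$. I would parametrize each line by its foot of perpendicular from the origin, writing $L_k=\{a_k+\zeta v_k:\zeta\in\C\}$ with $a_k\perp v_k$, $|v_k|=1$, and $|a_k|<r$; with this normalization the trace $L_k\cap B_\C(0,t)$ is exactly the disk $\{|\zeta|<\sqrt{t^2-|a_k|^2}\}$ for every $t\le R$, which will make the later geometric estimates transparent.

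First I would extract a single subsequence along which all the relevant data converge. By normality $f_k\to f_\infty$ uniformly on compact subsets of $B_\C(0,R)$, with $f_\infty\in\overline\F$. The parameters $(a_k,v_k)$ range over a compact set, so after passing to a subsequence $a_k\to a_\infty$, $v_k\to v_\infty$, and $|a_k|\to\alpha\le r$, yielding a limiting line $L_\infty=\{a_\infty+\zeta v_\infty\}$. Finally, since the $f_k$ are uniformly bounded on the compact set $\overline{B_\C(0,r)}$, the values $w_k$ are bounded and a further subsequence gives $w_k\to w_\infty$.

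Next I would restrict to the lines. Set $g_k(\zeta)=f_k(a_k+\zeta v_k)$ and $g_\infty(\zeta)=f_\infty(a_\infty+\zeta v_\infty)$. Because $a_k\perp v_k$ one has $|a_k+\zeta v_k|=\sqrt{|a_k|^2+|\zeta|^2}$, so each $g_k$ is holomorphic on $\{|\zeta|<\sqrt{R^2-|a_k|^2}\}$, while the zeros of $g_k-w_k$ carrying the multiplicity lie in $\{|\zeta|<\sqrt{r^2-|a_k|^2}\}$. Writing $\beta=\sqrt{r^2-\alpha^2}$ and using $R>r$, for small $\ep$ and large $k$ the closed disk $\{|\zeta|\le\beta+\ep\}$ is contained in the common domain of holomorphy and is mapped into a fixed compact subset of $B_\C(0,R)$; hence $g_k\to g_\infty$ uniformly there, and for large $k$ all of the $\ge N_k$ zeros of $g_k-w_k$ lie inside this disk.

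I would then conclude by Hurwitz's theorem: if $g_\infty-w_\infty$ were not identically zero it would have only finitely many zeros in $\{|\zeta|\le\beta+\ep/2\}$, and for large $k$ the zero counts of $g_k-w_k$ there would stabilize to that finite number, contradicting $N_k\to\infty$. Therefore $g_\infty\equiv w_\infty$, i.e. $f_\infty$ is constant along $L_\infty$; since $f_\infty\in\overline\F$, this contradicts the standing hypothesis that no element of $\overline\F$ is constant along a one-dimensional complex line, and the bound follows. The step I expect to be the main obstacle is the geometric bookkeeping in the degenerate regime $\alpha\to r$, where the trace disks $\{|\zeta|<\sqrt{r^2-|a_k|^2}\}$ shrink and the zeros may accumulate at $\zeta=0$ or near the boundary of the trace; the orthogonal parametrization is exactly what keeps the inclusion $\{|\zeta|\le\beta+\ep\}\subset\{|\zeta|<\sqrt{R^2-|a_k|^2}\}$ uniform and lets Hurwitz apply even as the disks collapse to a point.
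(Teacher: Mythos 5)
Your argument is correct and is essentially the standard proof of this lemma: the paper states it without proof, citing \cite[Lemma 10.3]{forni2020time}, and the argument there is the same contradiction scheme (extract a locally uniformly convergent subsequence by normality, pass to a limit line by compactness of the space of lines meeting the closed ball, and apply Hurwitz/the argument principle to force the limit function to be constant along the limit line). Your orthogonal parametrization of the lines and the explicit handling of the degenerate regime $|a_k|\to r$ are exactly the right bookkeeping, so there is nothing to add.
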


\begin{lemma}\label{11.1}
Let $L>0$ and $\B \subset DC(L)$ be a bounded subset. Given $R>0$, for all $\textbf{c} \in \Omega_R$ and all $\textbf{T}^{(i)}>0$, denote $\F(\textbf{c},\bm{T})$ by the family of real analytic functions of the variable $y \in [0,1)^d$ and
$$\F(\textbf{c},\bm{T}):= \{ \beta_{\textbf{c}}(\alpha,\Phi_{\alpha,x}(\xi_{y,z}), \bm{T}) \mid (\alpha,x,z) \in \B \times M  \times \T \}. $$
Then there exist $\textbf{T}_\B := (\textbf{T}^{(i)}_\B)$ and $\rho_\B >0$, such that for every $(R,\bm{T})$ with  $R/\textbf{T}^{(i)} \geq \rho_\B$, $\textbf{T}^{(i)} \geq \textbf{T}^{(i)}_\B$ and for all $\textbf{c} \in \Omega_R\backslash \{0\}$, we have
\begin{equation}\label{eqn;finiteval}
\sup_{f \in \F(\textbf{c},\bm{T})} \nu_f < \infty.
\end{equation}
\end{lemma}

\begin{proof}
Since $\B \subset \M$ is bounded, for each time $t_i \in \R$ and $1 \leq i \leq g$,
$$0 < t_{\B}^{min} = \min_i\inf_{\alpha \in \B} t_{Ret,i,\alpha} \leq \max_i\sup_{\alpha \in \B} t_{Ret,i,\alpha} = t_{\B}^{max} < \infty.$$

For any $\alpha \in \B$ and $x \in M$, the map $\Phi_{\alpha,x} : [0,1)^d \times \T \rightarrow \prod_{i=1}^d[0,t_{\alpha,i}) \times \T$ in (\ref{11}) extends to a complex analytic diffeomorphism $\hat\Phi_{\alpha,x}: \C^d \times \C / \Z \rightarrow \C^d \times \C / \Z$. By Lemma \ref{16}, it follows that for fixed $z \in \T$, real analytic function $\beta_{\cc}(\alpha,\Phi_{\alpha,x}(\xi_{y,z}), \bm{T})$
extends to a holomorphic function defined on a region
$$H_{\alpha,m,R,t} := \{y \in \C^d \mid \sum_{i=1}^d |Im(y_i)| \leq h_{\alpha,m,R,t} \}. $$
%$$H_{\alpha,m,R,t} = \{(y_i) \in \C^d \mid  |Im(y_i)| \leq h_{\alpha,m,R,t} \} ?$$
By boundedness of the set $\B \subset \M$, it follows that
$$\inf_{(\alpha,x) \in \B \times M} h_{\alpha,m,R,t}:= h_{R,T} >0.$$
We remark that the function $h_{\alpha,m,R,t}$ and its lower bound $h_{R,T}$ can be obtained from the formula \eqref{11} for the polynomial $\Phi_{\alpha,x}$ and the definition of the domain $D_{R,T}$ in the formula \eqref{eqn;D_{R,T}}.

For every $r>1$, there exists $\rho_\B >1$ such that for every $(R,\textbf{T})$ with $R/\textbf{T}^{(i)} > \rho_\B$,
\begin{equation}\label{eqn;oo}
\beta_{\cc}(\alpha,\Phi_{\alpha,x}(\xi_{y,z}), \textbf{T}) \in \OO_r
\end{equation}
as a function of $y \in \T^d$.

By Lemma \ref{16}, the family $\F(c,\textbf{T})$ is uniformly bounded and normal. By Lemma \ref{Crucial} for the non-zero $L^2$-lower bound of functionals, for sufficiently large pair \textbf{T}, no sequence from $\F(c,\textbf{T})$ can converge to a constant. Therefore, by Lemma  \ref{lem;lem10.3} for the family $\F = \F(c,\textbf{T})$, the main statement follows.
\end{proof}

We derive measure estimates of Bufetov functionals on the rectangular domain. %Recall that $\beta(\alpha, m, T) := \beta(\alpha, m, \textbf{T})$ with $\textbf{T} = (T, \cdots, T) \in \R^d$.

\begin{lemma}\label{lem:prev}Let $\alpha \in DC$ such that the forward orbit of $\R^d$-action $\{r_{\bm{t}}[\alpha]\}_{\bm{t} \in \R^d_+}$ is contained in a compact set of $\M_g$. There exist $R, C, \delta>0$ and $\bm{T}_0 \in \R_+^d $ such that, for every $\textbf{c} \in \Omega_R \backslash \{0\}$, $\bm{T} \geq \bm{T}_0$ and for every $\epsilon >0$, we have
$$vol(\{m \in M \mid  |\beta_\textbf{c}(\alpha, m ,\bm{T})| \leq \epsilon vol(U(\bm{T}))^{1/2}  \}) \leq C\epsilon^{\delta}. $$
\end{lemma}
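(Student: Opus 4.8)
The plan is to trade the large rectangle $\bm T$ for a bounded automorphism at a fixed renormalized scale, using the scaling relation \eqref{eqn;excursion tT}, and then to run a slicewise Remez-type estimate via Theorem \ref{ball}. Let $\B$ denote the compact closure of the forward orbit $\{r_{\bm t}[\alpha]\}_{\bm t\in\R^d_+}$, which by hypothesis lies in some $DC(L)$. I would first fix a renormalized box $\bm t_\B$ and a value of $R$ so that the valency hypotheses of Lemma \ref{11.1} hold (i.e. $R/t_\B^{(i)}\ge\rho_\B$ and $t_\B^{(i)}\ge T_\B^{(i)}$) for the family $\F(\cc,\bm t_\B)$ indexed by $\B\times M\times\T$. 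Given a target $\bm T=\bm t_\B\,\bm S$ (coordinatewise) with $\bm S$ large, \eqref{eqn;excursion tT} gives
$$\beta_\cc(\alpha,m,\bm T)=vol(U(\bm S))^{1/2}\,\beta_\cc(r_{\log\bm S}(\alpha),m,\bm t_\B),$$
and since $vol(U(\bm T))=vol(U(\bm t_\B))\,vol(U(\bm S))$ the defining inequality $|\beta_\cc(\alpha,m,\bm T)|\le\epsilon\,vol(U(\bm T))^{1/2}$ is equivalent to $|\beta_\cc(\alpha',m,\bm t_\B)|\le\epsilon\,vol(U(\bm t_\B))^{1/2}$, where $\alpha':=r_{\log\bm S}(\alpha)\in\B$. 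This is exactly where the bounded-orbit hypothesis enters: the renormalized functional stays in the controlled family $\F(\cc,\bm t_\B)$ uniformly in $\bm S$, so the whole analysis may be carried out at the single scale $\bm t_\B$.

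At the fixed scale I would disintegrate the Haar measure $vol$ along the coordinates of \S\ref{sec;returnmap}: fixing the transverse data $(x,z)$ and the flow parameter, a point is parametrized by $y$ through $\Phi_{\alpha',x}(\xi_{y,z})$, and by the Jacobian computation following \eqref{11} the slice measure differs from $dy$ only by the bounded factor $\det(A)^{-1}$. On each slice the function $f(y):=\beta_\cc(\alpha',\Phi_{\alpha',x}(\xi_{y,z}),\bm t_\B)$ is real analytic and, by Lemma \ref{16}, extends holomorphically to a ball of radius $r>1$, so $f\in\OO_r$; moreover by Lemma \ref{11.1} together with the Chebyshev-type bound \eqref{chev}, its valency, hence $d_f(r)$, is bounded by a constant $d_0$ that is independent of $(\cc,\alpha',x,z)$ and of $\bm S$. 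Applying Theorem \ref{ball} with $D$ a (rescaled) convex fundamental domain of the torus and $U=\{y\in D:|f(y)|\le\epsilon\,vol(U(\bm t_\B))^{1/2}\}$, and rearranging, gives
$$Leb(U)\le C\,Leb(D)\Big(\frac{\sup_U|f|}{\sup_D|f|}\Big)^{1/d_f}\le C\Big(\frac{\epsilon\,vol(U(\bm t_\B))^{1/2}}{\sup_D|f|}\Big)^{1/d_0}.$$

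To close the slice estimate I would bound $\sup_D|f|\ge\norm{f}_{L^2(dy)}$ from below by the $L^2$ lower bound of Lemma \ref{Crucial}, whose main term has size $\simeq vol(U(\bm t_\B))^{1/2}|\cc|_0$; this yields $Leb(U)\le C\epsilon^{1/d_0}$ on each slice (the range $\epsilon$ large being trivial as the total measure is finite). Integrating over the bounded transverse parameters $(x,z)$, with the measure factor controlled by $\det(A)^{-1}$ over the compact set $\B$, then produces $vol\big(\{m:|\beta_\cc(\alpha,m,\bm T)|\le\epsilon\,vol(U(\bm T))^{1/2}\}\big)\le C\epsilon^{\delta}$ with $\delta=1/d_0$. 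The main obstacle is precisely this final lower bound: the error in Lemma \ref{Crucial} carries the higher Sobolev norm $|\cc|_s$, and for $\cc$ concentrated at a high central frequency $n$ the ratio of error to main term behaves like $(1+K^2n^2)^{s/2}/vol(U(\bm t_\B))^{1/2}$, which is not controlled at a fixed scale. The delicate point is therefore to balance the scale $\bm t_\B$ (which must be taken large for the $L^2$ lower bound to dominate) against the holomorphy radius, which through the domain \eqref{eqn;D_{R,T}} and the constraint $R/t_\B^{(i)}\ge\rho_\B$ of Lemma \ref{11.1} pushes $\bm t_\B$ to be bounded once $R$ is fixed; it is here that the analytic hypothesis $\cc\in\Omega_R$ and the freedom to enlarge $R$ are essential, and where the argument must be carried out with care.
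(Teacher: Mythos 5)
Your proposal follows essentially the same route as the paper's proof: reduce to a fixed renormalized scale $\bm{T}_0$ via the scaling property and the compactness of the forward orbit, disintegrate the volume via Fubini and the coordinates $\Phi_{\alpha,x}(\xi_{y,z})$, apply Lemma \ref{11.1}, the valency bound \eqref{chev} and Theorem \ref{ball} on each slice, and use Lemma \ref{Crucial} for the non-degeneracy of $\sup_y|f|$. The delicate balancing of $R$ against $\bm{T}_0$ that you flag at the end is exactly what the paper resolves by taking $R$ and $\bm{T}_0$ from the conclusion of Lemma \ref{11.1}, so your argument is correct and matches the paper's.
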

\proof
Since $\alpha \in DC$ and the orbit $\{r_{\bm{t}}[\alpha]\}_{\bm{t} \in \R_+}$ is contained in a compact set, there exists $L>0$ such that $r_{\bf{t}}[\alpha] \in DC(L)$ for all $\bm{t} \in \R^d_+$. Then, we choose $\textbf{T}_0 \in \R^d$ and $R>0$ from the conclusion of Lemma \ref{11.1}. By the scaling property,
$$\beta_\textbf{c}(\alpha,m ,\bm{T}) = \left(\frac{vol(U(\bm{T}))}{vol(U(\bm{T_0}))} \right)^{1/2}\beta_\textbf{c}(g_{\log(\bm{T/T_0})}[\alpha],m , \bm{T}_0).$$
By Fubini's theorem, it suffices to estimate
$$Leb(\{y \in [0,1]^d \mid  |\beta_\textbf{c}(\alpha,\Phi_{\alpha,x}(\xi_{y,z}), \bm{T}_0)| \leq \epsilon  \}). $$
Let $\delta^{-1} := c(r)\sup_{f \in \F(\textbf{c},\textbf{T}_0)} \nu_f(\frac{1+r}{2})< \infty $ as in \eqref{chev} and \eqref{eqn;finiteval}. By Lemma \ref{Crucial}, we have
$$\inf_{(\alpha,x,z) \in \B \times M \times \T} \sup_{y \in [0,1]^d}|\beta_\textbf{c}(\alpha,\Phi_{\alpha,x}(\xi_{y,z}) ,\bm{T}_0)|>0 $$
so that the functional is not trivial.
By Theorem \ref{ball} for the unit ball $D = B_\R(0,1)$ and setting
$$U = \{y \in [0,1]^d \mid  |\beta_\textbf{c}(\alpha,\Phi_{\alpha,x}(\xi_{y,z}), \bm{T}_0)| \leq \epsilon  \},$$
by the bound in \eqref{chev} for $d_f(r)$, there exists a constant $C>0$ such that for all $\epsilon >0$ and $(\alpha,x,z) \in \B \times M \times \T$,
$$Leb(\{y \in [0,1]^d \mid  |\beta_\textbf{c}(\alpha,\Phi_{\alpha,x}(\xi_{y,z}) , \bm{T}_0)| \leq \epsilon  \}) \leq C\epsilon^{\delta}. $$
Then the statement follows from the Fubini theorem.
\qed

\begin{corollary}\label{cor;volume} Let $\alpha$ be as in the previous Lemma \ref{lem:prev}. There exist $R, C, \delta>0$ and $\bm{T}_0 \in \R_+^d $ such that, for every $\textbf{c} \in \Omega_R \backslash \{0\}$, $\bm{T} \geq \bm{T}_0$ and for every $\epsilon >0$, we have
%$T_0$ such that for $T> t_0$ and for every analytic function $f_{\cc}$,
$$vol\left(\{m\in M \mid |\langle \PPP^{d,\alpha}_{U(\bm{T})}m , \omega_{\cc} \rangle | \leq \epsilon vol{(U(\bm{T}))}^{1/2}  \}\right) \leq C\epsilon^{\delta}.$$
\end{corollary}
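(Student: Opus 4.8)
The plan is to deduce Corollary~\ref{cor;volume} from Lemma~\ref{lem:prev} by transferring the measure estimate from the Bufetov functional $\beta_\cc$ to the Birkhoff integral current $\langle \PPP^{d,\alpha}_{U(\bm{T})}m, \omega_\cc\rangle$. The key observation is that, by the asymptotic formula of Theorem~\ref{6.2type} (equivalently Corollary~\ref{cor;dev}), these two quantities differ by a term that is bounded \emph{uniformly} in $m$, whereas the quantities themselves grow like $vol(U(\bm{T}))^{1/2}$. So for $\bm{T}$ large the set where the Birkhoff integral is small is contained in a slightly enlarged version of the set where $\beta_\cc$ is small, and the latter is controlled by Lemma~\ref{lem:prev}.

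First I would fix $\alpha\in DC$ with precompact forward orbit, and take $L>0$ so that $r_{\bm t}[\alpha]\in DC(L)$ for all $\bm t\in\R_+^d$; then import the constants $R,C,\delta>0$ and $\bm T_0$ produced by Lemma~\ref{lem:prev}, possibly shrinking $R$ or enlarging $\bm T_0$. Fix $\cc\in\Omega_R\setminus\{0\}$, which in particular lies in $W^s$ with $s>s_{d,g}+1/2$ after the central-character weighting, so that Theorem~\ref{6.2type} applies to the form $\omega_\cc$. The theorem gives a constant $C_s>0$ with
\begin{equation*}
\left|\langle \PPP^{d,\alpha}_{U(\bm T)}m,\omega_\cc\rangle - \beta_\cc(\alpha,m,\bm T)\right|\le C_s(1+L)\norm{\omega_\cc}_{\alpha,s}=:C_0
\end{equation*}
for \emph{every} $m\in M$, where $C_0$ is independent of $m$ and of $\bm T$. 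Hence if $|\langle \PPP^{d,\alpha}_{U(\bm T)}m,\omega_\cc\rangle|\le \epsilon\,vol(U(\bm T))^{1/2}$, then $|\beta_\cc(\alpha,m,\bm T)|\le \epsilon\,vol(U(\bm T))^{1/2}+C_0\le 2\epsilon\,vol(U(\bm T))^{1/2}$, the last inequality holding once $vol(U(\bm T))^{1/2}\ge C_0/\epsilon$. This gives the inclusion of sets
\begin{equation*}
\{m : |\langle \PPP^{d,\alpha}_{U(\bm T)}m,\omega_\cc\rangle|\le \epsilon\,vol(U(\bm T))^{1/2}\}\subseteq \{m : |\beta_\cc(\alpha,m,\bm T)|\le 2\epsilon\,vol(U(\bm T))^{1/2}\},
\end{equation*}
so applying Lemma~\ref{lem:prev} with $2\epsilon$ in place of $\epsilon$ yields the volume bound $C\,(2\epsilon)^\delta=2^\delta C\,\epsilon^\delta$, which is of the desired form after absorbing $2^\delta$ into the constant.

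The one wrinkle to address is the regime of small $vol(U(\bm T))$, i.e.\ when $vol(U(\bm T))^{1/2}<C_0/\epsilon$, since the absorption of the additive error $C_0$ into $\epsilon\,vol(U(\bm T))^{1/2}$ requires $\bm T$ large relative to $\epsilon$. I expect this to be the main (though minor) obstacle: one cannot simply fix $\bm T_0$ once and for all independently of $\epsilon$. The clean way to handle it is to note that if $\epsilon\ge c_0$ for a fixed threshold $c_0>0$, the bound $vol(\cdots)\le 1\le c_0^{-\delta}\epsilon^\delta$ is trivial since the total volume is $1$; and for $\epsilon<c_0$ one enlarges $\bm T_0$ (or restricts to $\bm T$ with $vol(U(\bm T))$ exceeding a fixed threshold, as already built into $\bm T\ge\bm T_0$) so that $\epsilon\,vol(U(\bm T))^{1/2}\ge 2C_0$, making the comparison valid. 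Choosing the constants in this order—$L$, then $R$ and $\bm T_0$ from Lemma~\ref{lem:prev}, then the final $C$ and $\delta$—closes the argument, and the conclusion holds for every $\epsilon>0$ and all $\bm T\ge\bm T_0$ by adjusting $C$ to cover the trivial large-$\epsilon$ range.
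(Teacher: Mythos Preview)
Your overall strategy---transferring the estimate of Lemma~\ref{lem:prev} from $\beta_\cc$ to the Birkhoff integral via the uniform asymptotic of Corollary~\ref{cor;dev}---is the natural one, and since the paper states the result as a corollary without proof this is clearly the intended argument.

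There is, however, a genuine (if minor) gap in your handling of the wrinkle. You propose, for $\epsilon<c_0$, to enlarge $\bm T_0$ ``so that $\epsilon\,vol(U(\bm T))^{1/2}\ge 2C_0$''; but $\bm T_0$ must be fixed independently of $\epsilon$, and for any given $\bm T\ge\bm T_0$ this inequality fails once $\epsilon<2C_0\,vol(U(\bm T))^{-1/2}$. What your inclusion actually yields, for every $\epsilon>0$ and $\bm T\ge\bm T_0$, is
\[
vol\big(\{m:|\langle\PPP^{d,\alpha}_{U(\bm T)}m,\omega_\cc\rangle|\le\epsilon\,vol(U(\bm T))^{1/2}\}\big)\ \le\ C\big(\epsilon+C_0\,vol(U(\bm T))^{-1/2}\big)^\delta,
\]
which for $\epsilon$ below the threshold $C_0\,vol(U(\bm T))^{-1/2}$ gives only a bound of order $vol(U(\bm T))^{-\delta/2}$, not $C\epsilon^\delta$. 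This weaker bound is what follows cleanly from your transfer argument (and is what one actually uses). If the literal uniform-in-$\epsilon$ statement is wanted, one should note that the Birkhoff integral $m\mapsto\langle\PPP^{d,\alpha}_{U(\bm T)}m,\omega_\cc\rangle$ is itself real-analytic on the transverse torus (since $\cc\in\Omega_R$ forces $f_\cc$ analytic), so the machinery of Lemma~\ref{11.1} and Theorem~\ref{ball} can be applied to it directly rather than via $\beta_\cc$; the required normality and non-degeneracy over $\bm T\ge\bm T_0$ then come from the renormalization identity~\eqref{eqn;birkhoff} together with the $L^2$ lower bound of Lemma~\ref{8.1}, in parallel with the proof of Lemma~\ref{lem:prev}.
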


\section*{Acknowledgments} The author deeply appreciates Giovanni Forni for his valuable suggestions to improve the draft. He acknowledges Daniel Sell, Rodrigo Trevi\~no, and Corinna Ulcigrai for giving several comments. He is also grateful to Oliver Butterley, Jacky Jia Chong, Krzysztof Fr\c aczek, Osama Khail, and Lucia D. Simonelli for fruitful discussions. This work was initiated when the author visited the Institut de Mathematiques de Jussieu-Paris Rive Gauche in Paris, France. He acknowledges invitation and hospitality during the visit. Lastly, the author is thankful to the referee for helpful comments and suggestions for improvement in the presentation of this work.

\bibliographystyle{amsalpha}
%\bibliography{project}

\end{document}